\documentclass[11pt]{article}
\usepackage[margin=1in]{geometry}
\usepackage{setspace} 
\usepackage{amsmath, amsthm, amssymb, mathtools, bm}
\usepackage{natbib}
\usepackage{bbm}
\usepackage{enumitem}
\usepackage{hyperref}
\usepackage{comment}
\usepackage{xcolor}
\usepackage{algorithm}
\usepackage{booktabs}
\usepackage[noend]{algpseudocode}
\title{Sharp Debiasing for Smooth Functional Estimation in Banach Spaces}
\author{Woonyoung Chang and Arun Kumar Kuchibhotla\\
Carnegie Mellon University}
\date{\today}

\newtheorem{theorem}{Theorem}[section]
\newtheorem{assumption}{Assumption}

\newtheorem{lemma}[theorem]{Lemma}
\newtheorem{proposition}[theorem]{Proposition}
\newtheorem{corollary}[theorem]{Corollary}
\theoremstyle{definition}

\newtheorem{remark}[theorem]{Remark}
\newtheorem{example}{Example}

\DeclarePairedDelimiterX{\mnorm}[1]
{\vvvert}
{\vvvert}
{\ifblank{#1}{\:\cdot\:}{#1}}

\DeclareMathOperator*{\argmin}{arg\,min}

\newcommand{\ip}[1]{\langle#1\rangle}

\newcommand{\Norm}[1]{\left\Vert#1\right\Vert}
\newcommand{\norm}[1]{\Vert#1\Vert}
\newcommand{\Abs}[1]{\left|#1\right|}
\newcommand{\abs}[1]{|#1|}
\newcommand{\Set}[1]{\left\{#1\right\}}
\newcommand{\set}[1]{\{#1\}}
\newcommand{\Real}{\mathbb R}

\newcommand{\A}{\mathfrak{A}}

\def\Var{\mbox{Var}}

\def\tr{{\rm tr}}
\def\op{{\rm op}}

\def\diag{\mbox{diag}}

\def\diag{\mbox{Diag}}

\def\cv{\mathbf c}

\def\ev{\mathbf e}

\def\gv{\mathbf g}
\def\hv{\mathbf h}

\def\gv{\mathbf g}

\def\lv{\mathbf l}

\def\tv{\mathbf t}

\def\xv{\mathbf x}

\def\Av{\mathbf A}
\def\Bv{\mathbf B}
\def\Cv{\mathbf C}
\def\Dv{\mathbf D}

\def\Fv{\mathbf F}

\def\Iv{\mathbf I}

\def\Nv{\mathbf N}

\def\Rv{\mathbf R}

\newcommand{\Ac}{\mathcal{A}}
\newcommand{\Bc}{\mathcal{B}}
\newcommand{\Cc}{\mathcal{C}}
\newcommand{\Dc}{\mathcal{D}}
\newcommand{\Ec}{\mathcal{E}}
\newcommand{\Fc}{\mathcal{F}}
\newcommand{\Gc}{\mathcal{G}}

\newcommand{\Ic}{\mathcal{I}}
\newcommand{\Jc}{\mathcal{J}}

\newcommand{\Lc}{\mathcal{L}}

\newcommand{\Nc}{\mathcal{N}}

\newcommand{\Pc}{\mathcal{P}}

\newcommand{\Rc}{\mathcal{R}}
\newcommand{\Sc}{\mathcal{S}}
\newcommand{\Tc}{\mathcal{T}}

\newcommand{\Xc}{\mathcal{X}}

\newcommand{\Bb}{\mathbb{B}}

\newcommand{\Eb}{\mathbb{E}}

\newcommand{\Nb}{\mathbb{N}}

\newcommand{\Pb}{\mathbb{P}}

\newcommand{\Sb}{\mathbb{S}}

\newcommand{\Ck}{\mathfrak{C}}

\newcommand{\Sk}{\mathfrak{S}}

\newcommand{\ak}{\mathfrak{a}}

\newcommand{\dk}{\mathfrak{d}}

\begin{document}

\maketitle
\begin{abstract}
    This paper studies the estimation of smooth functionals $f(\theta)$ of a mean parameter $\theta = \mathbb{E}_P[W]$ for a distribution $P$ on a general Banach space. We propose a cross-fitted estimator based on a single sample splitting and establish non-asymptotic moment bounds and Berry--Esséen bounds for both $m$-smooth and infinitely smooth functionals under the finite moment assumptions. Our framework is applied to precision matrix estimation and the inference of projection parameters in high-dimensional regression. In these Euclidean settings, the proposed estimators achieve asymptotic normality under the dimension regime $d \log^2(en) = o(n)$ without requiring any structural assumptions (e.g., sparsity). We discuss computational relaxations that enables polynomial-time implementation for a range of matrix functionals.
\end{abstract}
\section{Introduction}\label{sec:intro}
For a separable Banach space $(\Bb,\norm{\cdot})$ and a distribution $P$ on $\Bb$, let $P$ belong to a collection of distributions $\Pc$ such that the mean $\theta=\theta(P) = \mathbb{E}_{W\sim P}[W]$ exists and lies in an open set $\Theta \subseteq \Bb$. In this paper, we study the estimation of $f(\theta)$ for a smooth functional $f$, where $f: \Theta \to \mathbb{R}$ of smoothness $m=s+\rho$, ($s=\lceil m\rceil-1$,  $\rho\in(0,1]$) admits the local expansion:
\begin{equation}\label{eq:Frechet}
f(x + h) = f(x) + \sum_{k=1}^s \frac{\Dc ^k f(x)[h,\ldots,h]}{k!} + o(\|h\|^{s+\rho}).
\end{equation}

The scope of this problem is substantially broad. Several familiar Euclidean examples include the estimation of a linear contrast $c^\top\beta=c^\top\Sigma^{-1}\Gamma$ in linear regression where $\Sigma=\Eb[XX^\top]\in\Real^{d\times d}$, $\Gamma = \Eb[XY]\in\Real^d$, and $c\in\Real^d$ \citep{mammen1993bootstrap,ChangKuchibhotlaRinaldo2023ProjectionInference,AgrawallaBalasubramanianGhosal2026OnlineLSSGDInference}, and the estimation of the precision matrix functional $\eta_1^\top\Sigma^{-1}\eta_2$ for  $\eta_1,\eta_2\in\Real^d$ \citep{BodnarParolya2024RevivingPseudoInverses}. In a separable Hilbert space, this includes the estimation of functionals of a covariance operator. Here, for $\Sigma = \Eb[X\otimes X]$, the target $f(\Sigma)$ may be the trace functional \(\tr(\Sigma)\), the squared Hilbert--Schmidt norm \(\|\Sigma\|_{\rm HS}^{2}\), or a regularized inverse-type functional such as \(\langle u,(\Sigma+\lambda I)^{-1}v\rangle\) \citep{Koltchinskii2021,Koltchinskii2025Estimation,KoltchinskiiLi2026FunctionalEstimation}. A classic nonparametric example is the Gaussian sequence model $Y_j=\theta_j+n^{-1/2}\xi_j$ for $j\ge 1,$ where \(\xi_j\stackrel{\mathrm{i.i.d.}}{\sim}N(0,1)\). In this setting, the target of estimation is a smooth functional $f(\theta)$ for $\theta=(\theta_j)_{j\ge 1}$, often over Sobolev ellipsoids, with a canonical example being the quadratic functional $f(\theta)=\sum_{j=1}^\infty\theta_j^2$ for $\theta\in\ell_2$ \citep{BickelRitov1988IntegratedSquaredDerivatives,DonohoNussbaum1990MinimaxQuadratic,BalakrishnanKennedyWasserman2023StructureAgnosticLimits}.

A natural starting point to this problem is a plug-in  approach. A local $m(>1)$-smoothness yields the approximation,
\begin{equation}\label{eq:plugin-bias-intro}
f(\hat\theta)-f(\theta)
=
\Dc^1f(\theta)[\hat\theta-\theta]+o_P(\norm{\hat\theta-\theta}).
\end{equation} In classical regular parametric models, the linear term $\Dc^1f(\theta)[\hat\theta-\theta]$ is typically dominant and plug-in estimate $f(\hat\theta)$, built from efficient estimators $\hat\theta$ (e.g., MLEs), inherit asymptotic normality and efficiency for smooth functionals, often of parametric rates. This result is standard in the classical asymptotic and semi-parametric efficiency theory.

In high- and infinite-dimensional setting, however, the problem becomes considerably delicate. This stems from the fact that the linear term in \eqref{eq:plugin-bias-intro} is most often $\sqrt{n}$-consistent and asymptotically normal, but the remainder term need not be negligible at the target scale even if $\hat\theta$ is unbiased or consistent, especially in high-/infinite-dimensional spaces. Moreover, the well-known ``elbow phenomenon" in nonparametric minimax literature indicates that the optimal rates are not necessarily parametric but depend sharply on both functional smoothness and model/space complexity \citep{IbragimovNemirovskiiKhasminskii1987,CaiLow2005}, implying that estimating $f(\theta)$ can be a fundamentally different problem from estimating $\theta$. In such cases, a naive plug-in estimator is suboptimal.

\subsection{Related Works}
Recent works has been focusing on the `design' of estimators of $f(\theta)$ whose bias is reduced to scale in the target asymptotics. One common approach to bias reduction is through (iterative) bootstrap bias-correction \citep{HallMartin1988}. Significant contributions to this area have been made in a series of papers by Koltchinskii and coauthors regarding functionals of covariance operators \citep{Koltchinskii2018,Koltchinskii2021}, functionals in high-dimensional models \citep{Koltchinskii2022}, and in Gaussian shift/parametric location models \citep{KoltchinskiiZhilova2021aEfficient,KoltchinskiiZhilova2021bEstimationNormal,KoltchinskiiZhilova2021cEstimationLocation,KoltchinskiiWahl2023FunctionalLog-Concave}. For an estimator $\hat\theta$ of $\theta$ based on i.i.d.\ observations from $\Nc(\theta,\Sigma)$ in $\Bb$, they define an estimator-induced operator $\Tc f:=\Eb_\theta[f(\hat\theta)]$ and a bias operator $\Bc = \Tc - I$, i.e., $(\Bc f)(\theta):=\Eb_\theta[f(\hat\theta)]-f(\theta)$. A bias-reduced target is obtained by truncating the Neumann series for $(I+\Bc)^{-1}$; for $k\geq 0$, $ f_k := \sum_{j=0}^k(-1)^j\Bc^j f$ and estimate $f(\theta)$ by $f_k(\hat\theta)$ with the exact bias $\Eb_\theta[ f_k(\hat\theta)] -f(\theta)= (-1)^k\Bc^{k+1}f(\theta).$

To implement $\Bc$ from data, these works use a bootstrap chain. For $\vartheta\in\Theta$, write $\mathcal L_\vartheta(\hat\theta)$ for the sampling distribution of $\hat\theta$ under $P_\vartheta$ (assuming the identifiability of $\vartheta\mapsto P_\vartheta$). Starting from $\hat\theta^{(0)}:=\hat\theta$, generate $\hat\theta^{(j+1)}\mid \hat\theta^{(j)}\sim \mathcal L_{\hat\theta^{(j)}}(\hat\theta)$ from Monte Carlo sample $X^{(j+1)}_{1:b}\sim P_{\hat\theta^{(j)}}^{\otimes b}$ and setting $\hat\theta^{(j+1)}=\hat\theta(X_{1:b}^{(j+1)})$, so that conditional Monte Carlo averages along the chain approximate $\mathbb E_{\hat\theta^{(j)}}[\cdot]$ and yield computable surrogates for the iterates $\Bc^j f$. In structurally rich models, it is often possible to control the bias operator in an operator norm, $\|\Bc\|_{\Bb\to\Bb}\asymp \sqrt{\dk/n}$, for some effective dimension $\dk$, leading for $m$-smooth functionals to the composite rate $n^{-1/2}+(\sqrt{\dk/n})^m$, often minimax-optimal (and in some regimes asymptotically efficient) \citep{Koltchinskii2018,Koltchinskii2021,Koltchinskii2022,KoltchinskiiWahl2023FunctionalLog-Concave}. This rate exhibits a phase transition: the parametric rate is attainable only when $d=o(n^\gamma)$ for $\gamma\leq \frac{m-1}{m}$ \citep{KoltchinskiiZhilova2021bEstimationNormal,Koltchinskii2022,KoltchinskiiLi2026FunctionalEstimation}. Establishing such guarantees typically requires nontrivial concentration and, in sharper forms, Gaussian approximation for $f(\hat\theta)-\Eb_\theta[\hat\theta]$ and its iterated corrections, in a way that is often highly model-specific. Recently, \cite{Schaefer2024MobiusIteratedBootstrap} has clarified the operator-theoretic structure of iterative bootstrap bias correction as a combinatorial inversion problem on partition lattices.

A second route to bias reduction is to combine plug-in estimators computed at different scales so that the leading terms in their bias expansions cancel. Suppose that for a family of estimators $\{\hat\theta_{\tau} : \tau \in \mathcal T\}$ indexed by a scale parameter $\tau$, the corresponding plug-in estimators admit an expansion:
\[
\mathbb E_\theta[f(\hat\theta_{\tau})]=f(\theta)+\sum_{\ell=1}^{L} a_\ell(\theta)\tau^\ell + o(\tau^{L}).
\]
If one chooses scales $\tau_1,\dots,\tau_J$ and coefficients $c_1,\dots,c_J$ satisfying $\sum_{j=1}^J c_j = 1$ and $\sum_{j=1}^J c_j \tau_j^\ell = 0$ for $\ell=1,\ldots,r$, then the aggregated estimator
\[
f_{\mathrm{agg}}:=\sum_{j=1}^J c_j f(\hat\theta_{\tau_j})
\]
cancels the first $r$ bias terms and satisfies $\mathbb E_\theta[f_{\mathrm{agg}}]-f(\theta)=o(\tau^r).$

From this viewpoint, jackknife \citep{Quenouille1956NotesOnBias}, extrapolation \citep{Richardson1911DeferredApproach}, and related ensemble methods are instances of this bias-removal principle. There is no need to estimate the bias terms $a_\ell(\theta)$ directly, but instead one chooses weights so that the low-order monomials in the bias vanish which avoids explicit derivative calculations. In the binomial model, \citet{JiaoHan2020} show that the behaviors of delete-$k$ jackknife and $k$:th iterated bootstrap corrections can be compared through a common approximation framework. Recent high-dimensional results further show that such corrections can enlarge the regime in which plug-in inference is valid \citep{cattaneo2019two, lin2024worthwhilejackknifebreakingquadratic}. \citep{Koltchinskii2025Estimation} studied the estimation of functionals of covariance operator of Gaussian in a separable Banach space and showed the minimax optimal error rates are attained by such estimators.

A closely related formulation is extrapolation methods. Here one introduces an auxiliary perturbation level
$\lambda \ge 0$ and studies a family of estimators $\hat f(\lambda)$ satisfying $\mathbb E_\theta[\hat f(\lambda)]=f(\theta)+\beta_1(\theta)\lambda+\cdots+\beta_q(\theta)\lambda^q+o(\lambda^q).$ One then extrapolates this curve back to the target by either fitting a polynomial in $\lambda$ or, equivalently, by forming weights $c_j$ over a grid $\lambda_1,\dots,\lambda_J$ so that
\[
\hat f_{\mathrm{ext}}
=
\sum_{j=1}^J c_j \hat f(\lambda_j),
\qquad
\sum_{j=1}^J c_j \lambda_j^\ell = 0,\ \ell=1,\dots,r.
\]
In measurement-error problems, simulation-extrapolation (SIMEX) is the canonical example: one adds artificial noise, tracks the induced bias as a function of $\lambda$, and extrapolates to the zero-error target. \cite{CookStefanski1994SIMEX} and \cite{StefanskiCook1995SIMEXJackknife} made this connection explicit by showing that SIMEX can be interpreted as a measurement-error jackknife. 

A third approach, and the one closest in spirit to our estimator, is to debias by matching the higher-order terms in a functional expansion with stochastic corrections. At a formal level, the estimators are constructed so as to reproduce the polynomial leading terms in the expansion of $f(\theta)$ in \eqref{eq:Frechet} while keeping the higher-order terms sufficiently degenerate. This is closely connected to von Mises expansions \citep{vonMises1947} and, in semiparametric and nonparametric inference, to the higher-order influence function expansion \citep{RobinsLiTchetgenTchetgenVanderVaart2008}; see \citet{Kennedy2024SDRTDMLReview} for a recent survey. A central message of this literature is that minimax estimation of nonlinear functionals can often be achieved by combining higher-order pathwise expansions with suitable stochastic corrections and orthogonality/projection ideas, thereby preserving valid inference even in the presence of many nuisance parameters.

Among recent works, \citet{ZhouLiZhang2021HighOrderStatisticalFunctionalExpansion} is perhaps closest to our construction in the form of the correction terms: they develop a high-order statistical functional expansion in separable Hilbert spaces in which the corrections are realized through degenerate $U$-statistics, yielding an estimator with a small variance. Their exact order-$s$ estimator requires computing correction terms up to order $s$, and hence involves averaging over all distinct $k$-tuples at level $k$; as a result, the naive computation of their estimator may be prohibitive once $s$ is not very small. The later work \citet{ZhouLiZhang2023} obtains a substantially more explicit and computationally efficient estimator, but does so by exploiting the special additive structure $f(\theta)=\sum_{j=1}^d f_j(\theta_j)$ together with a Fourier-analytic construction, making it considerably more specialized than the present Banach-space setting. On the other hand, \citet{KoltchinskiiLi2026FunctionalEstimation} studies smooth functional estimation in general Banach spaces via Taylor expansion and sample splitting, and proves minimax-optimality and normal approximation results in a few parametric models. Their order-$s$ estimators are built from independent blockwise `base estimators' at each level $k=0,1,\dots,s$, requiring $1+\frac{s(s+1)}{2}$ base estimators in total. This reduces the effective sample size available to each block and may deteriorate the finite sample performance and efficiency. 

Finally, it is worth distinguishing our use of \emph{debiasing} from the de-biased inference in high-dimensional regression and graphical models under structural assumptions (e.g., sparsity). Works in that literature typically begins with a regularized estimator and corrects its leading regularization bias to recover valid asymptotics for a low-dimensional parameter. The bias we address here rather arises from the nonlinearity of the target functional $f$ itself in high-/infinite-dimensional regimes.


\subsection{Contributions}
Our first contribution is a general high-order debiasing framework for smooth functional estimation in Banach spaces based on a single sample split. The key structural point is that sample splitting is used not merely for convenience but to preserve the conditional degeneracy of the higher-order correction terms around an independent pilot. At the same time, the symmetrized cross-fitted construction does not incur a first-order efficiency loss. 

Our second contribution is a non-asymptotic statistical theory for our estimator under finite moment assumptions. We establish moment bounds and Berry--Esséen bounds for $m$-smooth functionals, and extend the theory to classes of infinitely differentiable functionals. For finite-order smoothness, our results imply asymptotic normality and efficiency under
\[
\dk=o(n), \qquad r_n=o(n^{-1/(2m)}),
\] up to constants in $m$, where $m$ denotes the order of smoothness of functional $f$, $\dk$ denotes the effective dimension of $\theta(P)$, and $r_n$ denotes the convergence rate of pilot estimators. For infinitely differentiable functionals, properly choosing the truncation order $s_n\asymp\log(n)$ as a function of $n$ yields parametric local behavior and asymptotic normality and efficiency under
\[
\dk=o\bigg(\frac{n}{\log^{2\alpha}(en)}\bigg), \qquad r_n=o\bigg(\frac{1}{\log^{\alpha-1}(en)}\bigg),
\] where $\alpha\geq 1$ denotes the order of Gevrey regularity (see Section~\ref{sec:infsmooth} for the definition of Gevrey class). This extrapolates the results for functionals of finite smoothness, and to the best of our knowledge, is a new type of result in the functional estimation literature.

Our third contribution is computational. In full generality,  exact evaluation of our estimator, which particularly make use of $U$-statistic of order $s\geq 1$ can be expensive, and in some applications with $s\asymp \log n$ it is super-polynomial. We show that for many classes of matrix functionals satisfying the product structure, the higher-order correction terms can be reorganized recursively. For such functionals, we propose a permutation-randomized estimator which can be computed within polynomial algebraic operations without sacrificing theoretical guarantees.

Finally, we turn our general theory into concrete problems in high-dimensional inference. For precision matrix functionals and projection parameters in linear regression, our estimators are asymptotically normal under the dimension regime,
\[
d\log^2(en)=o(n),
\]
without sparsity or other structural assumptions and only under fourth moment conditions. To the best of our knowledge, this is the most permissive currently available dimension regime for these problems under such weak moment assumptions.

\subsection{Notation and Preliminaries}Throughout this paper, $\Bb$ denotes a Banach space equipped with the norm $\|\cdot\|$. Let $(\Bb^*,\norm{\cdot}_*)$ be its dual and we write $\langle h, h^*\rangle:=h^*(h)$ for $h\in\Bb$ and $h^*\in\Bb^*$. We denote by $\mathcal{L}^k(\Bb)$ the space of bounded $k$-linear forms $T_k: \Bb \times \cdots \times \Bb \to \Real$, and by $\mathcal{L}_{\rm sym}^k(\Bb)$ the subspace of symmetric $k$-linear forms. For $T_k \in \mathcal{L}^k(\Bb)$, we denote its evaluation on $(h_1, \dots, h_k) \in \Bb^k$ by $T_k[h_1, \dots, h_k]$. We adopt the shorthand $h^{\otimes k}$ to represent the $k$-tuple $(h, \dots, h)$, such that $T_k[h^{\otimes k}] := T_k[h, \dots, h]$. More generally, for any $m \in \{0, \dots, k\}$, the notation $T_k[h^{\otimes (k-m)}, v_1, \dots, v_m]$ represents the evaluation where the first $k-m$ arguments are fixed as $h$. We denote the operator norm of a multilinear map $T \in \mathcal{L}^k(\Bb)$ as $$\|T_k\|_{\mathrm{S}} = \sup \{ |T_k[h_1, \dots, h_k]| : \|h_1\| = \dots = \|h_k\| = 1 \},$$ which defines a Banach space $(\Lc^k(\Bb),\norm{\cdot}_{\rm S})$. Given Banach space $(E,\norm{\cdot}_E)$, a subset $U\subseteq \Bb$, and a function $g:\Bb\to E$, we denote
\begin{equation*}
    \norm{g}_{{\rm Lip}_\rho(U)}:=\sup_{x,y\in U}\frac{\norm{g(x)-g(y)}_{E}}{\norm{x-y}^\rho},\quad \rho\in(0,1].
\end{equation*} We denote by $\widehat{\otimes}_{\pi}^k \Bb$ the completed $k$-fold projective tensor product of $\Bb$. It is well known that the dual space $(\widehat{\otimes}_{\pi}^k \Bb)^*$ is isometrically isomorphic to $\mathcal{L}^k(\Bb)$. Under this duality, for $T_k \in \mathcal{L}^k(\Bb)$ and $u \in \widehat{\otimes}_{\pi}^k \Bb$, we denote the dual pairing by $T_k[u]$. For a functional $f: \Bb \to \Real$, we denote its $k$-th order Fréchet derivative at $x \in \Bb$ by $\Dc^k f(x) \in \mathcal{L}_{\rm sym}^k(\Bb)$.

\subsection{Problem Setup and Methods}

Given i.i.d.\ observations $W_1, \dots, W_n \sim W\in\Bb$ and any $T_k\in\Lc_{\rm sym}^k(\Bb)$ ($k\geq 1$), we denote $T_k[\bar{U}^{(k)}(\cdot)]$ as the $k$-subsample average action on the shifted samples: for $b\in\Bb$,
\begin{equation*}T_k[\bar{U}^{(k)}(b)] := \binom{n}{k}^{-1} \sum_{1 \leq j_1 < \cdots < j_k \leq n} T_k\big[W_{j_1} - b, \dots, W_{j_k} - b\big].\end{equation*}
Here, the notation $\bar{U}^{(k)}(b)$ serves as a symbolic placeholder taking values in $\widehat{\otimes}_{\pi}^k \Bb$. Formally, we may view $T_k[\bar{U}^{(k)}(b)]$ as the evaluation of $T_k$ on the symmetric $k$-tensor $U$-statistic,
\begin{equation}\label{eq:U-function}
\bar{U}^{(k)}(b) = \binom{n}{k}^{-1} \sum_{1 \leq j_1 < \cdots < j_k \leq n} (W_{j_1} - b) \otimes_{\rm s} \cdots \otimes_{\rm s} (W_{j_k} - b),
\end{equation}
where $\otimes_{\rm s}$ denotes the symmetric tensor product, that is, $h_1\otimes_{\rm s}h_2 = \frac{1}{2}(h_1\otimes h_2+h_2\otimes h_1)$.

Our construction is motivated by the high order degenerate stochastic expansion of functionals \citep[Proposition~2.1]{ZhouLiZhang2021HighOrderStatisticalFunctionalExpansion}, which in turn can be generalized as follows.
\begin{proposition}\label{prop:1.1}For any $\tilde \theta\in \Theta$, the following deterministic identity holds:
\begin{equation}
    f(\tilde \theta) + \sum_{k=1}^s\frac{\Dc^kf(\tilde \theta)[\bar U^{(k)}(\tilde\theta)]}{k!} = f(\theta) + \sum_{k=1}^s\frac{\Dc^kf(\theta)[\bar U^{(k)}(\theta)]}{k!} - \mathrm{Rem}_s,\label{eq:prop:1.1}
\end{equation} where
\begin{equation*}
\mathrm{Rem}_s = \sum_{k=0}^s \frac{1}{k!} \mathcal{J}^{s-k} \Delta^{(s)} [(\theta - \tilde\theta)^{\otimes (s-k)}, \bar U^{(k)}(\theta)].
\end{equation*} Here, $\mathcal{J}$ is the Riemann-Liouville operator where $\Jc^0g=g(1)$ and $\Jc^m g = \int_0^1 g(t)(1-t)^{m-1}\,dt/\Gamma(m)$ for $m>0$, and $\Delta^{(s)}(t) = \Dc^s f(\tilde\theta + t(\theta - \tilde\theta)) - \Dc^s f(\tilde\theta)$, $t\in[0,1]$.\end{proposition}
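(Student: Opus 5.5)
The plan is to reduce the identity to two ingredients: a binomial re-centering formula for the tensor $U$-statistics, followed by a Taylor expansion with integral remainder, carried out separately for each order of $U$-statistic. Throughout, write $\delta:=\theta-\tilde\theta$ and $X_j:=W_j-\theta$, and use the convention $\bar U^{(0)}(\cdot)=1$ with $T_0=f(\cdot)$ for the $k=0$ terms. I will assume $f$ is $s$ times Fréchet differentiable with $t\mapsto \Dc^s f(\tilde\theta+t\delta)$ continuous on the segment $[\tilde\theta,\theta]$. I will also assume this segment lies in $\Theta$ (e.g.\ $\Theta$ convex); this is implicit in the definition of $\Delta^{(s)}$.

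\textbf{Step 1 (re-centering).} I will show that for every symmetric $T_k\in\Lc^k_{\rm sym}(\Bb)$,
\begin{equation*}
T_k[\bar U^{(k)}(\tilde\theta)] = \sum_{j=0}^k \binom{k}{j}\, T_k\big[\delta^{\otimes(k-j)},\bar U^{(j)}(\theta)\big].
\end{equation*}
The argument starts from $W_{j_i}-\tilde\theta = X_{j_i}+\delta$ and expands each summand $T_k[X_{j_1}+\delta,\dots,X_{j_k}+\delta]$ by multilinearity and symmetry. This expresses it as a sum over subsets $J\subseteq S=\{j_1,\dots,j_k\}$ of the terms $T_k[\delta^{\otimes(k-|J|)},(X_i)_{i\in J}]$. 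Now fix $J$ with $|J|=j$. The number of $k$-subsets $S$ containing $J$ is $\binom{n-j}{k-j}$. Combined with the normalization, this gives the factor
\begin{equation*}
\binom{n-j}{k-j}\Big/\binom{n}{k}=\binom{k}{j}\Big/\binom{n}{j},
\end{equation*}
which yields exactly $\binom{k}{j}$ times the $j$-th $U$-statistic average. This counting identity is the one place where real care is needed. The passage from the pairing with $\widehat\otimes_\pi^k\Bb$ to finite sums is harmless, because all objects involved are finite sums of elementary tensors.

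\textbf{Step 2 (reindexing).} I will plug Step 1 into the left side of \eqref{eq:prop:1.1} and swap the order of summation, setting $i=k-j$. Using $\frac{1}{k!}\binom{k}{j}=\frac{1}{j!\,i!}$, the left side becomes
\begin{equation*}
\sum_{j=0}^s \frac{1}{j!}\sum_{i=0}^{s-j}\frac{1}{i!}\,\Dc^{i+j}f(\tilde\theta)\big[\delta^{\otimes i},\bar U^{(j)}(\theta)\big].
\end{equation*}
For fixed $j$, the inner sum is the order-$(s-j)$ Taylor polynomial at $\tilde\theta$, evaluated at $\theta$, of the scalar map $x\mapsto \Dc^j f(x)[\bar U^{(j)}(\theta)]$. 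The $i$-th derivative of this map in direction $\delta$ is $\Dc^{i+j}f(x)[\delta^{\otimes i},\bar U^{(j)}(\theta)]$.

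\textbf{Step 3 (Taylor with integral remainder).} For $j<s$, I will apply Taylor's formula of order $s-j-1$ with integral remainder along $t\mapsto\tilde\theta+t\delta$:
\begin{equation*}
\Dc^jf(\theta)[\bar U^{(j)}(\theta)]=\sum_{i=0}^{s-j-1}\frac{\Dc^{i+j}f(\tilde\theta)[\delta^{\otimes i},\bar U^{(j)}(\theta)]}{i!}+\int_0^1\frac{(1-t)^{s-j-1}}{(s-j-1)!}\,\Dc^sf(\tilde\theta+t\delta)\big[\delta^{\otimes(s-j)},\bar U^{(j)}(\theta)\big]\,dt .
\end{equation*}
Next I split $\Dc^sf(\tilde\theta+t\delta)=\Dc^sf(\tilde\theta)+\Delta^{(s)}(t)$. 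Since $\int_0^1(1-t)^{s-j-1}/(s-j-1)!\,dt=1/(s-j)!$, the $\Dc^sf(\tilde\theta)$ part reproduces the missing $i=s-j$ term. The $\Delta^{(s)}$ part is exactly $\Jc^{s-j}\Delta^{(s)}[\delta^{\otimes(s-j)},\bar U^{(j)}(\theta)]$. Hence the inner sum equals
\begin{equation*}
\Dc^jf(\theta)[\bar U^{(j)}(\theta)]-\Jc^{s-j}\Delta^{(s)}[\delta^{\otimes(s-j)},\bar U^{(j)}(\theta)].
\end{equation*}
For $j=s$ the inner sum is the single term $\Dc^sf(\tilde\theta)[\bar U^{(s)}(\theta)]$. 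This equals $\Dc^sf(\theta)[\bar U^{(s)}(\theta)]-\Delta^{(s)}(1)[\bar U^{(s)}(\theta)]$, which matches the convention $\Jc^0g=g(1)$.

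\textbf{Conclusion.} Summing over $j$ with weights $1/j!$ gives the right side of \eqref{eq:prop:1.1}, with $\mathrm{Rem}_s$ exactly as stated. I expect the main obstacle to be the bookkeeping in Step 1, namely getting the combinatorial weights to collapse to $\binom{k}{j}$. The rest is the standard integral form of Taylor's theorem applied to the real-valued function $t\mapsto \Dc^jf(\tilde\theta+t\delta)[\bar U^{(j)}(\theta)]$.
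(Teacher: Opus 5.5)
Your proposal is correct and follows essentially the same route as the paper. Both arguments combine the binomial re-centering identity $\bar U^{(k)}(\tilde\theta)=\sum_{j}\binom{k}{j}\,\delta^{\otimes(k-j)}\otimes_{\rm s}\bar U^{(j)}(\theta)$ with a Taylor expansion, with integral remainder, of $t\mapsto \Dc^jf(\tilde\theta+t\delta)[\bar U^{(j)}(\theta)]$ for each order $j$, with the remainder split as $\Dc^sf(\tilde\theta)+\Delta^{(s)}(t)$. The differences are cosmetic: you re-center before expanding rather than after, and you verify the counting identity $\binom{n-j}{k-j}/\binom{n}{k}=\binom{k}{j}/\binom{n}{j}$ explicitly, whereas the paper only asserts it.
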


Given $\tilde\theta$, the quantity in the left-hand side is fully attainable from the data. \cite{ZhouLiZhang2021HighOrderStatisticalFunctionalExpansion} considered taking the sample average, $\tilde\theta =n^{-1}\sum_{i=1}^nW_i$.

For any non-stochastic $T_k \in \mathcal{L}_{\rm sym}^k(\Bb)$ $(k\geq 1)$ and $\theta=\Eb[W]$, the scalar $T_k[\bar{U}^{(k)}(\theta)]$ can be viewed as completely degenerate $U$-statistic, yet involving unknown $\theta$, in a sense that for all $j\in[k]$,
\begin{equation*}
    \mathbb{E}\big( T_k[W_1-\theta,\ldots,W_k-\theta]\mid \{W_i:1\leq i\leq k, i\neq j\}\big) = 0 \quad\mbox{almost surely}.
\end{equation*} 
This follows from the fact that the multilinearity and boundedness of $T_k$ permit the interchange of the linear form operator and the Bochner expectation (see, e.g., \cite[Chapter 3.5]{delaPenaGine1999Decoupling}). Indeed, it follows from standard variance formula of $U$-statistics that $${\rm Var}\big(T_k[\bar U_n^{(k)}(\theta)]\big)=\binom{n}{k}^{-1}{\rm Var}\big( T_k[W_1-\theta,\ldots,W_k-\theta]\big).$$ While this appears to be an $O(n^{-k})$ decay, the variance also scales in the product of centered moments, which nonetheless remain controllable under regularity. Consequently, for a functional proxy $f^\dagger:\Bb\to\Real$,
\begin{equation*}
    f^\dagger(x) := f(x) + \sum_{k=1}^s\frac{\Dc^kf(x)[\bar U^{(k)}(x)]}{k!},
\end{equation*} it is clear that the data-dependent functional $f^\dagger(\theta)$ is unbiased, i.e., $\Eb[ f^\dagger(\theta)]=f(\theta)$, and has a small variance. Proposition~\ref{prop:1.1} can thus be interpreted as a first-order expansion of the proxy $f^\dagger(\theta)$ around the pilot estimator $\tilde\theta$. Notably, for a fixed $\tilde\theta$, the quantity $\mathcal{J}^{s-k} \Delta^{(s)} [(\theta - \tilde\theta)^{\otimes (s-k)}, \bar U^{(k)}(\theta)]$, which constitutes the remainder in \eqref{eq:prop:1.1}, is itself also a degenerate $U$-statistic of order $k$. This structure further motivates the use of sample splitting, allowing us to employ an independent estimator to maintain the (conditional) degeneracy.

To formally define our estimator, we consider a sample of $N = 2n$ independent and identically distributed observations $W_1, \dots, W_{2n}$ drawn from a distribution $P$. We employ a sample-splitting strategy by partitioning the data into two disjoint subsets of equal size: $\mathcal{S}_1 = \{W_1, \dots, W_n\}$ and $\mathcal{S}_2 = \{W_{n+1}, \dots, W_{2n}\}$. For a fixed $b \in \mathcal{B}$, let $\bar{U}^{(k)}(b)$ denote the $U$-statistic defined in \eqref{eq:U-function} computed using the observations in $\mathcal{S}_1$. Let $\hat\theta_{\mathcal{S}_2}$ be an estimator of $\theta$ constructed solely from the complementary subsample $\mathcal{S}_2$. We define the one-sided cross-fitted estimator as:\begin{equation*}\widehat{f}_s(\mathcal{S}_1, \mathcal{S}_2) = f(\hat\theta_{\mathcal{S}_2}) + \sum_{k=1}^s \frac{1}{k!} \mathcal{D}^k f(\hat\theta_{\mathcal{S}_2}) \left[ \bar{U}^{(k)}(\hat\theta_{\mathcal{S}_2}) \right].\end{equation*}The symmetric counterpart, $\widehat{f}_s(\mathcal{S}_2, \mathcal{S}_1)$, is obtained by interchanging the roles of $\mathcal{S}_1$ and $\mathcal{S}_2$. The proposed estimator is the cross-fitted average:\begin{equation} \label{eq:estimator_def}\widehat{f}_s = \frac{1}{2} \left( \widehat{f}_s(\mathcal{S}_1, \mathcal{S}_2) + \widehat{f}_s(\mathcal{S}_2, \mathcal{S}_1) \right).\end{equation}

\section{Results for \texorpdfstring{$m$}{m}-Smooth Functionals}\label{sec:msmooth}
Our results will be presented through the moments of the underlying distribution $P$ and the stochastic error of the preliminary estimators $\hat\theta_{\Sc_1}$ and $\hat\theta_{\Sc_2}$. We define the weak and strong variance as
\begin{equation*}
    \nu :=\nu(P)=\sup_{\norm{u}_*\leq 1}\Eb_{W\sim P}\big[|\langle W-\theta, u\rangle|^2\big]\quad\mbox{and}\quad V:=V(P)=\Eb_{W\sim P}\big[\norm{W-\theta}^2\big].
\end{equation*}Throughout, it will be assumed that $\hat\theta_{\Sc_1}$ and $\hat\theta_{\Sc_2}$ are identically distributed. We denote $\hat\theta_n:=\hat\theta_{n,P}$ be a generic estimator sharing this common distribution, such that $\hat\theta_{\Sc_1}\overset{d}{=}\hat\theta_{\Sc_2}\overset{d}{=}\hat\theta_n$. We will sometimes suppress the dependence on $P$.
\subsection{Moment Bounds}
The following theorem provides a non-asymptotic characterization of the $L_2$-error of our estimator. For a functional $f$ of smoothness $m=s+\rho$, ($s=\lceil m\rceil-1$,  $\rho\in(0,1]$), we recall
\begin{align*}
    &\norm{\Dc^k f (\theta)}_{\rm S} = \sup\Set{|\Dc^k f (\theta)[h_1,\ldots,h_k]|:\norm{h_1}=\cdots=\norm{h_k}=1},\qquad 1\leq k\leq s\\
    &\norm{\Dc^s f}_{{\rm Lip}_\rho(U)} = \sup_{x,y\in U}\frac{\norm{\Dc^s f(x)-\Dc^s f(y)}_{\rm S}}{\norm{x-y}^\rho}.
\end{align*}

\begin{theorem}\label{thm:moment-msmooth}
    For $s=\lceil m\rceil-1\geq 0$, we have
    \begin{align}\label{eq:thm:moment-msmooth:1}
    &\Eb^{1/2}\left[|\widehat{f}_s-f(\theta)|^2\right]\leq \sqrt{2}\norm{\Dc^s f}_{{\rm Lip}_\rho(\Theta)}\Eb^{1/2}\norm{\hat\theta_n-\theta}^{2m}+\sqrt{\frac{2\nu}{n}}\sum_{k=1}^s\norm{\Dc^k f(\theta)}_{\rm S}\bigg(\sqrt{\frac {V}{n}}\bigg)^{k-1}\nonumber\\
    &\quad+\sqrt{\frac{2e\nu}{n}}\norm{\Dc^s f}_{{\rm Lip}_\rho(\Theta)}\Eb^{1/2}\bigg[\bigg(\norm{\hat\theta_n - \theta}^2\bigvee\,\frac{V}{n}\bigg)^{m-1}\bigg]\mathbbm{1}(s\geq1).
    \end{align} Moreover, if $s\geq 1$ then 
    \begin{align}\label{eq:thm:moment-msmooth:2}
        &\Eb^{1/2}\left[N\big(\widehat{f}_s-f(\theta)-\Dc^1 f(\theta)[\bar W_N-\theta]\big)^2\right]\leq \sqrt{4n}\norm{\Dc^s f}_{{\rm Lip}_\rho(\Theta)}\Eb^{1/2}\norm{\hat\theta_n-\theta}^{2m}\nonumber\\
        &\qquad+\sqrt{4\nu}\sum_{k=1}^s\norm{\Dc^k f(\theta)}_{\rm S}\bigg(\sqrt{\frac {V}{n}}\bigg)^{k-1} +\sqrt{4e\nu}\norm{\Dc^s f}_{{\rm Lip}_\rho(\Theta)}\Eb^{1/2}\bigg[\bigg(\norm{\hat\theta_n - \theta}^2\bigvee\,\frac{V}{n}\bigg)^{m-1}\bigg],
    \end{align} where $\bar W_N = N^{-1}\sum_{i=1}^N W_i$ is  the full sample average.
\end{theorem}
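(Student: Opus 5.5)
Apply Proposition~\ref{prop:1.1} to each one-sided estimator separately, with $\tilde\theta=\hat\theta_{\Sc_2}$ and the $U$-statistics built on $\Sc_1$ (and symmetrically). This gives
\[
\widehat f_s(\Sc_1,\Sc_2)-f(\theta)=\sum_{k=1}^s\frac{\Dc^kf(\theta)[\bar U^{(k)}_{\Sc_1}(\theta)]}{k!}-\mathrm{Rem}_s^{(1)},
\]
where $\mathrm{Rem}_s^{(1)}$ splits into two pieces. The $k=0$ piece is the deterministic-given-$\Sc_2$ term $\Jc^s\Delta^{(s)}[(\theta-\hat\theta_{\Sc_2})^{\otimes s}]$. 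The $k\ge1$ pieces are $\frac1{k!}\Jc^{s-k}\Delta^{(s)}[(\theta-\hat\theta_{\Sc_2})^{\otimes(s-k)},\bar U^{(k)}_{\Sc_1}(\theta)]$. Conditionally on $\Sc_2$, each such piece is a completely degenerate $U$-statistic of order $k$ in $\Sc_1$.

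Averaging the two sides and applying Minkowski's inequality, the bound splits into three groups, matching the three terms of \eqref{eq:thm:moment-msmooth:1}.

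\textbf{Pilot bias term.} Since $\|\Delta^{(s)}(t)\|_{\rm S}\le t^\rho\|\Dc^sf\|_{{\rm Lip}_\rho}\|\theta-\hat\theta\|^\rho$, we get
\[
|\Jc^s\Delta^{(s)}[\cdot]|\le \|\Dc^sf\|_{{\rm Lip}_\rho}\|\hat\theta-\theta\|^{m}\,\Jc^s(t^\rho)\le \|\Dc^sf\|_{{\rm Lip}_\rho}\|\hat\theta-\theta\|^{m}.
\]
Taking $L_2$ norms and averaging the two copies gives at most $\|\Dc^sf\|_{{\rm Lip}_\rho}\Eb^{1/2}\|\hat\theta_n-\theta\|^{2m}$; the factor $\sqrt2$ leaves room.

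\textbf{Leading degenerate terms.} The $U$-variance formula gives
\[
\Var(T_k[\bar U^{(k)}(\theta)])=\binom nk^{-1}\Eb\, T_k[W_1-\theta,\dots,W_k-\theta]^2 .
\]
Freeze one argument, use $|T_k[h_1,\dots,h_k]|=|\langle h_1, T_k[\cdot,h_2,\dots]\rangle|$ with the functional of dual norm $\le\|T_k\|_{\rm S}\prod_{j\ge2}\|h_j\|$, and use independence. This bounds the second moment by $\|T_k\|_{\rm S}^2\,\nu V^{k-1}$. Together with $\binom nk^{-1}\le k!^2/\ldots$ (concretely $\binom nk^{-1}/k!^2\le n^{-k}$), this gives $\sqrt{\nu/n}\,\|\Dc^kf(\theta)\|_{\rm S}(V/n)^{(k-1)/2}$ per term. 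Averaging the two sides, which are not independent, and using Minkowski (or $(a+b)^2\le2a^2+2b^2$) produces the $\sqrt2$.

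\textbf{Remainder degenerate terms.} Conditionally on $\Sc_2$, $T_k:=\Jc^{s-k}\Delta^{(s)}[(\theta-\hat\theta)^{\otimes(s-k)},\cdot]$ is a fixed $k$-linear form. Its norm is at most
\[
\|\Dc^sf\|_{{\rm Lip}_\rho}\,\|\hat\theta-\theta\|^{s-k+\rho}\,\Jc^{s-k}(t^\rho)\le \|\Dc^sf\|_{{\rm Lip}_\rho}\|\hat\theta-\theta\|^{m-k}/(s-k)!.
\]
The same variance computation then bounds the conditional second moment of the $k$-th term by $\frac{\nu}{n}\|\Dc^sf\|^2\,\|\hat\theta-\theta\|^{2(m-k)}(V/n)^{k-1}/(s-k)!^2$. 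I expect the main obstacle to be summing these over $k=1,\dots,s$ without losing a factor $s$. The plan is to bound $\|\hat\theta-\theta\|^{2(m-k)}(V/n)^{k-1}\le (\|\hat\theta-\theta\|^2\vee V/n)^{m-1}$ and $\sum_k 1/(s-k)!^2\le\sum_j1/j!\le e$. This requires adding the conditional variances, which needs the terms of different orders to be orthogonal. They are: conditionally on $\Sc_2$, completely degenerate $U$-statistics of distinct orders are uncorrelated. So I should take the square root only after summing the variances, giving $\sqrt{e\nu/n}$. The outer expectation over $\Sc_2$ then yields $\Eb^{1/2}[(\cdot)^{m-1}]$, and the two sides combine with the factor $\sqrt2$.

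\textbf{Second bound \eqref{eq:thm:moment-msmooth:2}.} Observe that the $k=1$ leading terms average to
\[
\tfrac12\Dc^1f(\theta)[\bar W_{\Sc_1}-\theta+\bar W_{\Sc_2}-\theta]=\Dc^1f(\theta)[\bar W_N-\theta].
\]
Subtracting this removes the $k=1$ leading term. Multiplying the remaining bounds by $\sqrt N=\sqrt{2n}$ gives the stated constants: $\sqrt{2n}\cdot\sqrt2=\sqrt{4n}$, and $\sqrt{2n}\sqrt{2\nu/n}=\sqrt{4\nu}$. I keep the full sum from $k=1$ only because it is an upper bound.
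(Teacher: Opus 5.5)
Your proposal is correct and follows essentially the same route as the paper. The shared ingredients are the Proposition~\ref{prop:1.1} decomposition around the independent pilot, conditional degeneracy and cross-order orthogonality of the $U$-statistics given $\Sc_2$, Lemma~\ref{lem:varbound} for the $\nu V^{k-1}$ factor, the bound $\Jc^{s-k}(t^\rho)\le 1/(s-k)!$, and $\sum_k 1/(s-k)!^2\le e$ combined with the $(\|\hat\theta-\theta\|^2\vee V/n)^{m-1}$ trick. The only difference is bookkeeping: you separate the pilot-bias, leading and remainder pieces by Minkowski, instead of using the paper's conditional bias--variance decomposition with $\|\Dc^kf(\theta)-\tilde\Delta^{(s)}_k\|_{\rm S}^2\le 2(\cdots)$. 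Your route actually gives constant $1$ where the statement has $\sqrt2$, so averaging the two sides does not ``produce'' the $\sqrt2$; it is simply slack.
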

The results in Thereom~\ref{thm:moment-msmooth} should be compared to those in \cite{ZhouLiZhang2021HighOrderStatisticalFunctionalExpansion} and \cite{KoltchinskiiLi2026FunctionalEstimation}. Following the notation in the latter, we let
\begin{equation}\label{eq:notation-kolt}
    \ak_p := \sup_{n\geq 1}\sup_{\norm{u}_*\leq 1}n\Eb^{2/p}[|\ip{\hat\theta_n-\theta,u}|^p]\quad\mbox{and}\quad \dk_p:=\sup_{n\geq 1}n\Eb^{2/p}[\norm{\hat\theta_n-\theta}^p],\quad p\geq 2.
\end{equation} Under this notation, our bound in \eqref{eq:thm:moment-msmooth:1} simplifies, up to constants depending on $s$, to
\begin{align}\label{eq:ourbound}
    \max_{1\leq k\leq s}\norm{\Dc^kf(\theta)}_{\rm S} \sqrt{\frac{\nu}{n}}\bigg(\sqrt{\frac{V}{n}}\bigg)^{k-1}+\norm{\Dc^s f}_{{\rm Lip}_\rho(\Theta)}\bigg(\sqrt{\frac{V\vee \dk_{2m}}{n}}\bigg)^{m}.
\end{align} Over the distributions class where $\nu\lesssim1$ and $V\vee\dk_{2m}\lesssim d$, the error scales as $1/{\sqrt{n}} + (\sqrt{d/n})^m$. This rate matches the minimax lower bound in $d$-dimensional Gaussian shift model \citep[Proposition~2.2 and 3.1]{KoltchinskiiLi2026FunctionalEstimation}. 

The $L_2$-moment bound derived in Theorem~2.1 of \cite{KoltchinskiiLi2026FunctionalEstimation} is given by:
\begin{equation}\label{eq:bound_KL}
    \max_{1\leq k\leq s}\sup_{x\in\Bb}\norm{\Dc^kf(x)}_{\rm S}\sqrt{\frac{\ak_2}{n}}\bigg(\sqrt{\frac{\dk_{2m}}{n}}\bigg)^{k-1} + \norm{\Dc^s f}_{{\rm Lip}_\rho(\Theta)}\bigg(\sqrt{\frac{\dk_{2m}}{n}}\bigg)^{m},
\end{equation} again up to constant in $s$. Our bound \eqref{eq:ourbound} offers two primary refinements. First, we relax the requirement of uniform control over functional derivatives across the entire space $\Bb$ to pointwise control at $\theta$. Second, \eqref{eq:ourbound} is strictly tighter in the worst case. As shown in Proposition~\ref{prop:lowerbound}, for any $n\geq 1$ and any measurable estimator $\hat\theta_n$, there exists a distribution $P$ such that $\nu \leq \ak_2$ and $V \leq \dk_2 \leq \dk_{2m}$. While in a favorable Gaussian setting, the sample mean satisfies $\ak_p\asymp \nu$ and $\dk_p\asymp V$ for all $p\geq 1$, we do not know the existence of estimator that simultaneously attains such directional and norm moment bounds in general.

\cite{ZhouLiZhang2021HighOrderStatisticalFunctionalExpansion} studied smooth functionals on Hilbert spaces without sample splitting. Theorem~3.2 of \cite{ZhouLiZhang2021HighOrderStatisticalFunctionalExpansion} yields a bound that scales as:
\begin{equation}\label{eq:bound_ZLZ}
        \max_{1\leq k\leq s}\norm{\Dc^kf(\theta)}_{\rm S} \sqrt{\frac{\nu}{n}}\bigg(\sqrt{\frac{V}{n}}\bigg)^{k-1} +\norm{\Dc^s f}_{{\rm Lip}_\rho(\Theta)}\Set{\bigg(\sqrt{\frac{V}{n}}\bigg)^{m}+\frac{\Eb\|W-\theta\|^{2m}}{n^{m-1/2}}}.
\end{equation} Under a finite $(2m)$:th moment of $\norm{W-\theta}$, we may take the sample means as the pilot estimators where the Burkholder-Rosenthal inequality \citep{Pinelis2012Rosenthal} implies that a remainder term, composing our bound \eqref{eq:ourbound}, satisfies:
\begin{equation*}
    \bigg(\sqrt{\frac{\dk_{2m}}{n}}\bigg)^{m}\lesssim_m\bigg(\sqrt{\frac{V}{n}}\bigg)^{m} + \frac{\Eb^{1/2}\norm{W-\theta}^{2m}}{n^{m-1/2}},
\end{equation*} where $\lesssim_m$ denotes the inequality up to constant in $m$. Thus, our result \eqref{eq:ourbound} is no worse than \eqref{eq:bound_ZLZ} and holds under more general Banach settings without requiring stronger moment conditions.
\begin{proposition}\label{prop:lowerbound}
    Let $(\Bb,\norm{\cdot})$ be a Banach space. Consider i.i.d. $W_1,\ldots,W_n\sim P$ with mean $\theta(P) = \Eb_{P}[W]$. Let $\Pc_1(\nu_0):=\{P:\sup_{\norm{u}_*\leq 1}\Eb_P|\ip{W-\theta(P),u}|^2\leq \nu_0\}$ and $\Pc_2(V_0):=\{P:\Eb_P\norm{W-\theta(P)}^2\leq V_0\}$ for some $\nu_0,V_0>0$. Then
    \begin{align}
        \inf_{n\geq 1}\inf_{\hat\theta_n}\sup_{P\in\Pc_1(\nu_0)}&\sup_{\norm{u}_*\leq 1}n\Eb_P|\ip{\hat\theta_n-\theta(P),u}|^2\geq\nu_0,\label{prop:lowerbound:1}\\
        \inf_{n\geq 1}\inf_{\hat\theta_n}\sup_{P\in\Pc_2(V_0)}&n\Eb_P\norm{\hat\theta_n-\theta(P)}^2\geq V_0,\label{prop:lowerbound:2}
    \end{align} where $\inf_{\hat\theta_n}$ denotes the infimum taken over all measurable estimators $\hat\theta_n=\hat\theta_n(W_1,\ldots,W_n)$.
\end{proposition}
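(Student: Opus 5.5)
Both bounds reduce to a one-dimensional minimax lower bound for estimating a mean over a two-parameter Gaussian family. Fix $n\ge1$ and an arbitrary estimator $\hat\theta_n$; it suffices to show the inner supremum is at least $\nu_0$ (resp. $V_0$), since then the infimum over $\hat\theta_n$ and over $n$ keeps the bound. Pick $e\in\Bb$ with $\norm{e}=1$. By Hahn--Banach, there is $u_0\in\Bb^*$ with $\norm{u_0}_*=1$ and $\ip{e,u_0}=1$.

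\textbf{The model.} Consider $W=\mu e+\sigma Z e$ with $Z\sim N(0,1)$, $\mu\in\Real$ and $\sigma>0$ to be chosen. Then $\theta(P_\mu)=\mu e$ and
\[
\sup_{\norm{u}_*\le1}\Eb|\ip{W-\theta,u}|^2=\sigma^2\sup_{\norm{u}_*\le1}\ip{e,u}^2=\sigma^2 .
\]
Similarly $\Eb\norm{W-\theta}^2=\sigma^2$. Taking $\sigma^2=\nu_0$ gives $P_\mu\in\Pc_1(\nu_0)$ for every $\mu$, and taking $\sigma^2=V_0$ gives $P_\mu\in\Pc_2(V_0)$.

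\textbf{Reduction to one dimension.} For the first bound, use the single direction $u=u_0$:
\[
\sup_{P\in\Pc_1}\sup_{u}n\Eb_P|\ip{\hat\theta_n-\theta,u}|^2\ \ge\ \sup_\mu n\Eb_\mu|T-\mu|^2,\qquad T:=\ip{\hat\theta_n,u_0}.
\]
Here $T$ is a measurable real function of the sample. The sample $W_i=X_ie$ is in measurable bijection with $X_i=\ip{W_i,u_0}$, and $X_1,\dots,X_n$ are i.i.d. $N(\mu,\sigma^2)$. So $T$ is an estimator of $\mu$ from a Gaussian sample. For the second bound, use $\norm{\hat\theta_n-\mu e}\ge|\ip{\hat\theta_n-\mu e,u_0}|=|T-\mu|$, since $\norm{u_0}_*=1$. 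The same reduction then applies with $\sigma^2=V_0$.

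\textbf{Minimax risk of the Gaussian mean.} It remains to show $\sup_{\mu}\Eb_\mu(T-\mu)^2\ge\sigma^2/n$ for every estimator $T$. This is the classical fact that the sample mean is minimax, and I would prove it by a Bayesian argument. Put the prior $\mu\sim N(0,\tau^2)$. The Bayes risk is the posterior variance $(n/\sigma^2+1/\tau^{2})^{-1}$, which lower bounds the sup risk. Letting $\tau\to\infty$ gives $\sigma^2/n$. Multiplying by $n$ yields $\nu_0$ and $V_0$ respectively.

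The main obstacle is technical rather than conceptual. One needs measurability of $T$ and finiteness of the risk; if the risk is infinite for some $\mu$, the bound holds trivially. One must also confirm that the Bayes bound applies to arbitrary measurable $T$. This holds because the posterior mean minimizes the conditional squared loss, so the Bayes risk of any $T$ is at least the posterior-variance integral. Separability of $\Bb$ is not needed beyond ensuring that $W$ is Bochner measurable, and this is immediate for a one-dimensional support.
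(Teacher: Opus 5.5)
Your proposal is correct and follows essentially the same route as the paper. Both arguments use a rank-one Gaussian family $W=Ze$ along a norming pair $(e,u_0)$ obtained from Hahn--Banach. Both then reduce the directional risk to scalar Gaussian mean estimation via $T=\langle\hat\theta_n,u_0\rangle$, and deduce the norm bound from $\|\hat\theta_n-\mu e\|\ge|T-\mu|$.

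The only difference is in the scalar minimax fact $\sup_\mu \mathbb{E}_\mu(T-\mu)^2\ge\sigma^2/n$: you prove it with a $N(0,\tau^2)$-prior Bayes argument, whereas the paper simply cites it as a standard lemma. Your use of the absolute value $|\langle\cdot,u_0\rangle|$ before squaring is also slightly cleaner than the paper's unsigned inequality.
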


\subsection{Asymptotic Normality}

In this section, we study the asymptotically normality and efficiency of our estimator. Towards this, we assume the estimator $\hat\theta_n$ of $\theta$ satisfies certain stochastic bounds on their norms:
\begin{assumption}\label{asmp:theta_tilde_general}
    There exist a non-increasing function $r_{n,P}:(0,1)\to\Real_+$ and a sequence $\delta_{n,P}\to0$ such that
    \begin{equation*}
        \Pb_P\bigg(\norm{\hat\theta_n-\theta}\leq r_{n,P}(\delta_{n,P})\bigg)\geq 1-\delta_{n,P}.
    \end{equation*}
\end{assumption}
\noindent Assumption~\ref{asmp:theta_tilde_general} allows the usage of robust estimators that require the knowledge of the target coverage.

\begin{example}[Hilbert Space]\label{ex:hilbert} Let $\Bb$ be a Hilbert space and let $\Cv = \Eb[(W-\theta)\otimes (W-\theta)]$ be a covariance operator. Then $\nu = \norm{\Cv}_{\rm op}$ and $V = {\rm tr}(\Cv)$. The ratio $V/\nu = {\rm tr}(\Cv)/\norm{\Cv}_{\rm op}=:\mathsf{r}(\Cv)$ is commonly referred to as the effective rank of $\Cv$, see also \cite{KoltchinskiiLounici2017Concentration} for extended notion to the covariance operator for Gaussian on a separable Banach space. The sample mean $\bar W_n = n^{-1}\sum_{i=1}^nW_i$ satisfies $\Eb\norm{\bar W_n-\theta}^2 = n^{-1}\Eb\norm{W-\theta}^2=\norm{\Cv}_{\rm op}\mathsf{r}(C)/n$, and thus it follows from simple Chebyshev inequality that for any $\delta\in(0,1)$,
\begin{equation*}
    \Pb\bigg(\norm{\bar W_n-\theta}\leq \norm{\Cv}_{\rm op}^{1/2}\sqrt{\frac{\mathsf{r}(\Cv)}{n\delta}}\bigg)\geq1- \delta.
\end{equation*} It is well-known that the sample mean may not be the best choice in heavy-tail setting, and there has been growing literature on robust mean estimators that attain sub-Gaussian tail bounds only under the existence of the second moment or weaker. For instance, the median-of-mean tournament estimate $\hat\theta_{n,\delta}$ (for given $\delta\in(0,1)$) attains
\begin{equation*}
    \Pb\bigg(\norm{\hat\theta_{n,\delta}-\theta}\lesssim \norm{\Cv}_{\rm op}^{1/2}\sqrt{\frac{\mathsf{r}(\Cv)+\log(e/\delta)}{n}}\bigg)\geq1- \delta,
\end{equation*} where $\lesssim$ hides universal constants \citep{LugosiMendelson2019SubGaussianMean}. One can choose a vanishing sequence $\delta_n$ for Assumption~\ref{asmp:theta_tilde_general}.
\end{example}

We shall refer the following moment ratio as the \emph{effective dimension} of the distribution, which, as written, is defined independently of functional $f$:
\begin{equation*}
    d_{\rm eff}=d_{{\rm eff}}(P) :=\frac{V(P)}{\nu(P)}.
\end{equation*} 
\begin{example}[Smooth Banach Space]\label{ex:smBanach} Let $\Bb$ be a separable and $(2,\beta)$-smooth\footnote{$(\Bb,\norm{\cdot})$ is $(2,\beta)$-smooth if $\norm{x+y}^2+\norm{x-y}^2\leq 2(\norm{x}^2+\beta\norm{y}^2)$ for any $x,y\in\Bb$.} Banach space. The sample mean $\bar W_n$ satisfies $\Eb\norm{\bar W_n-\theta}^2 \leq (4\beta^2/n)\Eb\norm{W-\theta}^2$ \citep{Pisier1975MartingalesUniformlyConvex}. This is an example of Nemirovski's inequality \citep{DumbgenVanDeGeerVeraarWellner2010Nemirovski}. The Chebyshev inequality then yields
\begin{equation*}
    \Pb\bigg(\norm{\bar W_n-\theta}\leq 4\beta\nu^{1/2}\sqrt{\frac{d_{\rm eff}}{n\delta}}\bigg)\geq1-\delta, \quad \delta\in(0,1).
\end{equation*} For a given $\delta\in(0,1)$, robust mean estimator $\hat\theta_{n,\delta}$ can be designed to have a sharper tail inequality, such as, trimmed mean estimators \citep{WhitehouseChuggMartinezTaboadaRamdas2025MeanBanachInfiniteVariance} or geometric median-of-means estimators \citep{Minsker2015GeometricMedianBanach}, as
\begin{equation*}
    \Pb\bigg(\norm{\hat\theta_{n,\delta}-\theta}\lesssim \beta\nu^{1/2}\sqrt{\frac{d_{\rm eff}\log(e/\delta)}{n}}\bigg)\geq1- \delta.
\end{equation*}

\end{example}

More examples of the robust covariance-operator estimators in Hilbert and Euclidean settings, including the particular constructions used in our applications to the precision-matrix and regression functionals, are collected in Section~\ref{sec:appl:precision}.

For a number $\bar\nu$ and sequences $\set{\dk>0},\set{r_n>0}$, $\{\delta_n\in(0,1)\}$, denote the class of distribution which satisfies Assumption~\ref{asmp:theta_tilde_general} as
\begin{align*}
    \Pc(\bar\nu,\dk,r_n,\delta_n)=\bigg\{P\in\Pc:~&\nu(P)\leq \bar \nu,~d_{{\rm eff}}(P)\leq \dk,~\delta_{n,P}\leq \delta_n,~r_{n,P}(\delta_{n,P})\leq r_n\bigg\}.
\end{align*}

\begin{theorem}\label{thm:normality-msmooth} For $s=\lceil m\rceil-1\geq 1$ and $P\in \Pc(\bar\nu,\dk,r_n,\delta_n)$, suppose that for some $U\supseteq B(\theta(P),r_n)$ and a constant $C_0>0$, $\norm{\Dc^s f}_{{\rm Lip}_\rho(U)}\leq C_0,$ and $\max_{2\leq k\leq s}\norm{\Dc^kf(\theta(P))}_{\rm S}\leq C_0.$ For any such $f$, let $\sigma_f^2 = \mathrm{Var}_P\big(\Dc^1f(\theta(P))[W-\theta(P)]\big)$ and $\mu_{3,f} = \Eb_P\big[|\Dc^1f(\theta(P))[W-\theta(P)]|^3\big]$. Then
    \begin{align*}
        &\sup_{t\in\Real}\Abs{\Pb_P\left(\frac{\sqrt{N}\,(\widehat{f}_s - f(\theta(P)))}{\sigma_f}\leq t\right)-\Phi(t)}\\
        &\quad\lesssim_{\bar\nu,s} \frac{\mu_{3,f}}{\sigma_f^3\sqrt{n}}+\frac{\sqrt{n}r_n^{m}}{\sigma_f} + \frac{1}{\sigma_f^{2/3}}\bigg[\bigg(\frac{\dk}{n}\bigg)^{1\wedge(m-1)} + r_n^{2m-2}\bigg]^{1/3} + \delta_{n}.
    \end{align*}
\end{theorem}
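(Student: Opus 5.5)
We follow the standard linearization-plus-perturbation route. Write $L_N := \Dc^1 f(\theta)[\bar W_N-\theta] = N^{-1}\sum_{i=1}^N \Dc^1 f(\theta)[W_i-\theta]$ and $R := \widehat f_s - f(\theta) - L_N$. For any $\varepsilon>0$ we use the elementary inequality
\begin{equation*}
\sup_t\Abs{\Pb\big(\sqrt N(\widehat f_s-f(\theta))/\sigma_f\le t\big)-\Phi(t)}\le \sup_t\Abs{\Pb\big(\sqrt N L_N/\sigma_f\le t\big)-\Phi(t)} + \Pb\big(\sqrt N|R|>\varepsilon\sigma_f\big)+\frac{\varepsilon}{\sqrt{2\pi}}.
\end{equation*}
The first term is handled by the classical Berry--Esséen theorem for the i.i.d.\ sum $L_N$, which gives the term $\mu_{3,f}/(\sigma_f^3\sqrt n)$. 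The remaining work is a high-probability bound on $\sqrt N R$.

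To control $R$, we decompose through Proposition~\ref{prop:1.1} applied to each one-sided piece. We condition on $\Sc_2$ and work on the event $E_2=\{\norm{\hat\theta_{\Sc_2}-\theta}\le r_n\}$, and symmetrically for $\Sc_1$; the union of the complements has probability at most $2\delta_n$, which yields the $\delta_n$ term. On $E_2$, the one-sided estimator $\widehat f_s(\Sc_1,\Sc_2)$ equals $f(\theta)+\sum_{k=1}^s \Dc^kf(\theta)[\bar U^{(k)}(\theta)]/k! - \mathrm{Rem}_s$. The $k=1$ term is the half-sample linear term, and averaging the two sides gives exactly $L_N$. The $k\ge2$ terms are completely degenerate $U$-statistics. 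By the variance formula together with $|T_k[h_1,\dots,h_k]|$ bounded via one weak factor and the rest in norm, they have second moment $\lesssim C_0^2 (\nu/n)(V/n)^{k-1}$. After multiplying by $N$, this contributes $\nu(\dk\nu/n)^{k-1}\lesssim_{\bar\nu}\dk/n$.

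For the remainder $\mathrm{Rem}_s$, we again use conditional degeneracy given $\Sc_2$. The $k=0$ summand is deterministic given $\Sc_2$ and bounded by $C_0\norm{\theta-\hat\theta}^{m}\le C_0 r_n^m$, since $\Jc^{s}$ of a $\rho$-Hölder increment gives the $m$-th power; this produces $\sqrt n r_n^m/\sigma_f$ directly, with no probabilistic loss. For $k\ge1$, the conditional second moment is at most $C_0^2 r_n^{2(s-k+\rho)}(\nu/n)(V/n)^{k-1}$, computed exactly as in the proof of Theorem~\ref{thm:moment-msmooth}, with $\Lip_\rho$ on $U\supseteq B(\theta,r_n)$ replacing the global constant because the segment stays in the ball. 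Multiplying by $N$ and summing over $k$, we obtain a bound $\lesssim_{\bar\nu,s} (r_n^2\vee \dk/n)^{m-1}$, which is $\lesssim r_n^{2m-2}+(\dk/n)^{m-1}$. Combining with the previous paragraph, the mean-zero part $R'$ of $\sqrt N R$ (on the good events) satisfies $\Eb[R'^2]\lesssim (\dk/n)^{1\wedge(m-1)}+r_n^{2m-2}=:A$. Here we use that $(\dk/n)^{k-1}\le(\dk/n)^{1\wedge(m-1)}$ when $\dk\le n$; otherwise the bound is trivial.

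Finally, Chebyshev gives $\Pb(|R'|>\varepsilon\sigma_f/2)\le 4A/(\varepsilon^2\sigma_f^2)$. Balancing against $\varepsilon$ by taking $\varepsilon=(A/\sigma_f^2)^{1/3}$ yields the term $\sigma_f^{-2/3}A^{1/3}$. The deterministic piece $\sqrt N C_0 r_n^m$ is absorbed by shifting $t$ and using the Lipschitz property of $\Phi$, which gives $\sqrt n r_n^m/\sigma_f$.

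The main obstacle is the conditional-moment bound for $\mathrm{Rem}_s$ with only local Hölder control. One must check that the Riemann--Liouville integral of $\Delta^{(s)}$ is a fixed (given $\Sc_2$) symmetric multilinear form of norm $\lesssim C_0 r_n^\rho$. Then the degenerate $U$-statistic variance bound with mixed weak/strong moments applies uniformly in $k$. We would reuse the corresponding lemma behind Theorem~\ref{thm:moment-msmooth}, applied conditionally on $E_2$.
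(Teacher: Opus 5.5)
Your proposal is correct and follows essentially the same route as the paper. You linearize through Proposition~\ref{prop:1.1} and apply the classical Berry--Ess\'een bound to the linear term. You then control each one-sided remainder by its conditional mean (the $Q_0$ term, at most $C_0 r_n^m$ on the good event) and its conditional variance (as in Theorem~\ref{thm:condmoment}), and finish with Chebyshev and optimization of the threshold. The only difference is cosmetic: the paper keeps the two one-sided remainders separate and applies conditional Chebyshev to each inside Lemma~\ref{lem:BElemma}, whereas you merge them and use the second moment on the good event, which yields the same bound.
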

The third-moment condition $\mu_{3,f}<\infty$ is only introduced for a parametric leading term; asymptotic normality
still holds under finite second moments, and one can obtain $n^{-\delta/2}$ leading term under $(2+\delta)$:th moments \citep{Bikelis1966RemainderCLT,Shevtsova2020LowerBoundsNonuniformCLT}.

If $\dk=o(n)$, $r_n=o(n^{-1/(2m)})$, and $\delta_n=o(1)$, then $\widehat{f}_s$ is asymptotically normal with efficient variance $\sigma_f^2$. Under the moment bound on the pilot \eqref{eq:notation-kolt}, one may take $r_n(\delta)=\delta^{-\frac{1}{2m}}\sqrt{\dk_{2m}/n}$ (a faster rate is viable; see Example~\ref{ex:smBanach}), so that with $\delta=\delta_n\to0$ arbitrarily slowly, $\dk_p=O(n^\gamma)$ and $\gamma<\frac{m-1}{m}$ suffice for asymptotic normality. \cite[Corollary~2.3]{KoltchinskiiLi2026FunctionalEstimation} further proved the $2$-Wasserstein convergence under additional assumptions that $\ak_{2m}=O(1)$ and that the projections of the pilot estimator $\langle \hat\theta_n-\theta,u\rangle$ converges to normal distribution in $2$-Wasserstein distance, uniformly over $\|u\|_*\le1$. Their results are stated for functionals satisfying $\|\Dc^1 f\|_{\Cc^{s-1}(U)}\le C_0$, where for $r>0$ the H\"older $\Cc^r$-norm on an open set $U\subseteq\Theta$ is
\[
\|g\|_{\Cc^{r}(U)}:=\max\Big\{\max_{0\le k\le \lceil r\rceil-1}\sup_{x\in U}\|\Dc^k g(x)\|_{\rm S},\ \|\Dc^s g\|_{{\rm Lip}_{r+1-\lceil r\rceil}(U)}\Big\},
\]which is stronger than the requirement imposed in Theorem~\ref{thm:normality-msmooth}. In a Hilbert-space setting, $d_{\rm eff}$ corresponds to the effective rank of the covariance operator (Example~\ref{ex:hilbert}). \cite[Theorem~4.1]{ZhouLiZhang2021HighOrderStatisticalFunctionalExpansion} obtain asymptotic normality and efficiency under $d_{\rm eff}=o(n^{1-1/m})$, without sample splitting.

\section{Results for Infinitely Differentiable Functionals}\label{sec:infsmooth}
The results presented in the preceding section, as well as those in the existing literature on high order debiasing of smooth functionals \citep{Koltchinskii2021, Koltchinskii2022, ZhouLiZhang2021HighOrderStatisticalFunctionalExpansion, KoltchinskiiLi2026FunctionalEstimation}, indicate that asymptotic normality for a functional $f(\theta)$ requires regularity beyond the mere consistent estimation of $\theta$. Specifically, the order of smoothness $m$ must scale with the relative growth of effective dimension to the sample size or the rate of the preliminary estimator. In this section, we demonstrate that these can be naturally extrapolated to certain classes of infinitely differentiable functionals, allowing for efficient estimation for even broader distributional classes.

For $\alpha\ge 1$, $R>0$ and $U\subseteq \Theta$, a functional $f\in \Cc^\infty(U)$, define a semi-norm
\begin{equation}\label{eq:gevrey}
    \Norm{f}_{\Gc^\alpha(U),R}:=\sup_{x\in U}\sup_{k\geq 0}\frac{\norm{\Dc^k f(x)}_{\rm S}}{R^k (k!)^\alpha}.
\end{equation} A functional $f$ is said to be \emph{Gevrey of order $\alpha$ on $U$} if there exists $R$ such that $\Norm{f}_{\Gc^\alpha(U),R}<\infty$, that is, we write
\begin{equation*}
    f\in \Gc^\alpha(U):=\bigcup_{R>0}\set{f\in \Cc^\infty(U): \Norm{f}_{\Gc^\alpha(U),R}<\infty}.
\end{equation*}
This is the natural extension of the usual finite-dimensional Gevrey definition to Banach spaces, with Fréchet derivatives in place of partial derivatives. For the purposes of this paper, we work with a global Roumieu-type bound on $U$, which keeps notation simple and is sufficient for our arguments. Related definitions in the ultra-differentiable literature are often stated locally on relatively compact sets \citep{Komatsu1979,KrieglMichorRainer2015,TeofanovTomicZigic2024ExtendedGevrey}.

It is immediate that $\Gc^\alpha(U)\subset\Gc^{\alpha'}(U)$ whenever $1\leq \alpha\leq \alpha'$. For $\alpha=1$, if $f\in \Gc^1(U)$ with $\norm{f}_{\Gc^1(U),R}<\infty$, then for every $x\in U$, the Taylor expansion is convergent, and 
\begin{equation*}
    f(x+ h) = f(x) + \sum_{k=1}^\infty\frac{\Dc^kf(x)[h^{\otimes k}]}{k!},
\end{equation*} for all $h\in\Bb$ such that $[x,x+h]\subset U$ and $\norm{h}<1/R$. In the finite-dimensional case, the Gevrey hierarchy provides a refinement from locally analytic functions toward the class of infinitely differentiable functions, though this interpolation is not exhaustive, i.e., $\cup_{\alpha\geq 1}\Gc^\alpha(U)\subsetneq \mathcal{C}^\infty(U)$.

\subsection{Moment Bounds}
For an infinitely differentiable functional $f$, the truncation level $s$ in our expansion can be chosen to balance the stochastic and remainder terms. We obtain sharp local moment bounds for a suitable choice of $s$ with respect to the Gevrey quantification. For $f\in\Gc^\alpha(U)$ with $U$ convex, then \eqref{eq:gevrey} yields explicit bounds on both operator norms and Lipschitz moduli of the Fr\'echet derivatives. In particular, for every $s\ge 0$,
\begin{equation*}
    \norm{\Dc^s f}_{{\rm Lip}_1(U)}\leq \sup_{x\in U} \norm{\Dc^{s+1} f(x)}_{\rm S}\leq \Norm{f}_{\Gc^\alpha(U),R} R^{s+1}\{(s+1)!\}^\alpha.
\end{equation*}
Theorem~\ref{thm:moment-gevrey} below shows that with an appropriate truncation level, the local $L_2$ error of $\widehat{f}_s$ attains a parametric rate.

\begin{theorem}\label{thm:moment-gevrey} For some $r_n>0$, $P\in\Pc$ and $f\in\Gc^\alpha(U)$ with $B(\theta(P),r_n)\subseteq U$ and $\norm{f}_{\Gc^\alpha(U),R}<\infty$, suppose the following condition holds.
\begin{equation}\label{eq:cond:gevrey}
    R\,r_n\log^{\alpha-1}(en)\vee R\log^{\alpha}(en)\sqrt{\frac{V(P)}{n}}\leq e^{-1}.
\end{equation} Denote the event $\Ec = \set{\max\{\norm{\hat\theta_{\Sc_1}-\theta(P)},\norm{\hat\theta_{\Sc_2}-\theta(P)}\}\leq r_n}$. Then, our estimator $\widehat{f}_s$ with $s=\lfloor\log(en)\rfloor$ satisfies that for a universal constant $C>0$,
    \begin{align*}
        &\Eb_P^{1/2}\Big[|\widehat{f}_s-f(\theta(P))|^2\mathbbm{1}(\Ec)\Big]\leq C\,2^{\alpha}(R\vee R^2)\norm{f}_{\Gc^\alpha(U),R}\bigg(\sqrt{\frac{\nu(P)}{n}}+\frac{r_n}{n}\bigg).\\
       & \Eb_P^{1/2}\Big[N\big(\widehat{f}_s-f(\theta(P))-\Dc^1 f(\theta(P))[\bar W_N-\theta(P)]\big)^2\mathbbm{1}(\Ec)\Big]\\
       &\quad\quad\leq C\,2^{\alpha}(R\vee R^2)\norm{f}_{\Gc^\alpha(U),R}\bigg(\sqrt{\frac{\nu(P) V(P)}{n}}+\frac{r_n}{\sqrt{n}}+\frac{\sqrt{\nu(P)}}{n^{3/4}}\bigg).
    \end{align*}
\end{theorem}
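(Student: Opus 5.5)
We would prove Theorem~\ref{thm:moment-gevrey} by applying the deterministic identity of Proposition~\ref{prop:1.1} to each one-sided estimator with $\tilde\theta=\hat\theta_{\Sc_2}$ (resp.\ $\hat\theta_{\Sc_1}$), the $U$-statistics being built from the other half. This gives
\[
\widehat f_s(\Sc_1,\Sc_2)-f(\theta)=\sum_{k=1}^s\frac{\Dc^kf(\theta)[\bar U^{(k)}(\theta)]}{k!}-\mathrm{Rem}_s,
\]
and we would symmetrize at the end. The error thus splits into two parts. The \emph{oracle part} is $\sum_{k\ge1}\Dc^kf(\theta)[\bar U^{(k)}(\theta)]/k!$; its $k=1$ term is $\Dc^1f(\theta)[\bar W_n-\theta]$. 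The \emph{remainder part} is $\mathrm{Rem}_s$. We work on $\Ec$, which is measurable with respect to the pilot halves. Conditionally on $\Sc_2$, each summand of $\mathrm{Rem}_s$ with $k\ge1$ is a completely degenerate $U$-statistic in $\Sc_1$, because $\Jc^{s-k}\Delta^{(s)}[(\theta-\tilde\theta)^{\otimes(s-k)},\cdot]$ is a fixed symmetric $k$-linear form. Hence the $L_2$ norms can be computed term by term.

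\textbf{Step 1 (variance of degenerate pieces).} For a fixed $T_k\in\Lc^k_{\rm sym}$, degeneracy gives
\[
\Eb\, T_k[\bar U^{(k)}(\theta)]^2=\binom nk^{-1}\Eb\, T_k[W_1-\theta,\dots,W_k-\theta]^2 .
\]
Conditioning on $W_2,\dots,W_k$ and using the weak variance in one slot, we bound this by $\binom nk^{-1}\norm{T_k}_{\rm S}^2\,\nu V^{k-1}$. Since $\binom nk^{-1}\le k^k/n^k$, the $k$-th term is at most
\[
\frac{\norm{T_k}_{\rm S}}{k!}\sqrt{\frac{\nu}{n}}\Big(\sqrt{\frac{V}{n}}\Big)^{k-1}k^{k/2}.
\]
This is the same computation as in Theorem~\ref{thm:moment-msmooth}, now with explicit $k$-dependence. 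For the oracle part we insert the Gevrey bound $\norm{\Dc^kf(\theta)}_{\rm S}\le \norm f R^k(k!)^\alpha$. The $k$-th term is then roughly
\[
\norm f R\sqrt{\nu/n}\,\big(R\,(k!)^{\alpha-1}\sqrt{k^k\cdots}\,\sqrt{V/n}\big)^{k-1}.
\]
Using $k!\le k^k$ and $k\le s\le\log(en)$, the ratio between consecutive terms is at most $R\log^{\alpha}(en)\sqrt{V/n}\cdot O(1)\le e^{-1}\cdot O(1)$ by \eqref{eq:cond:gevrey}. The sum is therefore geometric and dominated by its first term, which gives the $\sqrt{\nu/n}$ contribution (the factor $2^\alpha$ absorbs the constants). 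For the second display we drop the $k=1$ term and keep the $k=2$ term, of order $R^2\sqrt{\nu V}/n$.

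\textbf{Step 2 (remainder).} By the mean value theorem and the Gevrey bound, on $\Ec$ we have $\norm{\Delta^{(s)}(t)}_{\rm S}\le \norm f R^{s+1}((s+1)!)^\alpha\, t\, r_n$. The operator $\Jc^{s-k}$ contributes a factor $1/(s-k+1)!$. The $k$-th summand of $\mathrm{Rem}_s$ then has conditional $L_2$ norm at most
\[
\norm f R^{s+1}((s+1)!)^\alpha\frac{r_n^{s-k+1}}{(s-k+1)!\,k!}\sqrt{\nu/n}\,(\sqrt{V/n})^{k-1}k^{k/2}
\]
for $k\ge1$. The $k=0$ term is deterministic given the pilot and of order $\norm f R^{s+1}((s+1)!)^{\alpha-1}r_n^{s+1}$. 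This last quantity is the \textbf{main obstacle}. Using $(s+1)!^{\alpha-1}\le (s+1)^{(s+1)(\alpha-1)}$ and $s+1\approx\log(en)$, it is at most $\norm f\,(R r_n\log^{\alpha-1}(en))^{s+1}\le \norm f e^{-(s+1)}\le \norm f/(en)$ by \eqref{eq:cond:gevrey}. To recover the stated $r_n/n$, we would keep one factor $Rr_n$ aside, giving $R\,r_n\, e^{-s}\lesssim Rr_n/n$. For $k\ge1$, the factor $((s+1)!)^\alpha/((s-k+1)!\,k!)$ is split via the binomial coefficient $\binom{s+1}{k}\le 2^{s+1}$ times $(s+1)!^{\alpha-1}$. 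Each remaining factor is then either $Rr_n\log^{\alpha-1}$ or $R\log^{\alpha}\sqrt{V/n}$, both at most $e^{-1}$. The $2^{s+1}$ must be beaten by $e^{-(s+1)}$, and this is where we would be careful: we may need to slightly rebalance, e.g.\ by using $(2/e)^{s}$ together with $s=\lfloor\log(en)\rfloor$, so that $(2/e)^s\lesssim n^{-0.3}$. The $k\ge1$ remainder terms are anyway multiplied by $\sqrt{\nu/n}$. What we need there is that they sum to at most $\sqrt{\nu/n}$ in the first bound, and to at most $\sqrt\nu\, n^{-3/4}$ (times $\sqrt N$) in the second. The latter should follow from keeping one extra small factor raised to the power $s/2$ or so, giving $n^{-1/4}$; this is the source of the $n^{-3/4}$ term.

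\textbf{Step 3 (combine).} We average the two one-sided estimators and use Minkowski's inequality. In the second claim we subtract $\Dc^1f(\theta)[\bar W_N-\theta]$, which is exactly the average of the two $k=1$ oracle terms, and multiply by $\sqrt N$. The $k\ge2$ oracle terms then give $\sqrt{\nu V/n}$, the $k=0$ remainder gives $r_n/\sqrt n$, and the $k\ge1$ remainder gives $\sqrt\nu/n^{3/4}$. The constants $R\vee R^2$ and $2^\alpha$ collect the prefactors.
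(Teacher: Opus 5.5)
Your architecture is the paper's: Proposition~\ref{prop:1.1} with the pilot taken from the other half, conditional degeneracy and orthogonality across orders (Theorem~\ref{thm:condmoment} and Corollary~\ref{cor:condmoment}), a geometric series for the oracle terms, and \eqref{eq:cond:gevrey} to make the $k=0$ remainder $O(Rr_n/n)$.

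\textbf{The gap.} It lies in Step~2, in the remainder pieces with $k\ge1$. This is exactly where the $\sqrt{\nu}/n^{3/4}$ term of the second bound has to come from.

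Bounding $\binom{s+1}{k}$ by $2^{s+1}$ uniformly in $k$ introduces a factor $2^{s}\asymp n^{\log 2}$, and in your allocation nothing pays for it. Condition \eqref{eq:cond:gevrey} supplies in total only $e^{-s}\le 1/n$: one factor $e^{-1}$ for each of the $s-k+1$ pilot factors $Rr_n\log^{\alpha-1}(en)$, and one for each of the $k-1$ variance factors $R\log^{\alpha}(en)\sqrt{V/n}$. What survives is $(2/e)^s\asymp n^{-(1-\log 2)}\approx n^{-0.31}$.

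That loss is essentially harmless for the first bound. For the second bound, however, it only shows that $\sqrt N$ times the $k\ge1$ remainder is $O(\sqrt{\nu}\,n^{-0.31}\,\mathrm{polylog}(n))$, not $O(\sqrt\nu/n^{3/4})$.

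Your sketched rescue does not close this, for two reasons.
\begin{itemize}
\item Within your allocation there is no spare small factor to raise to the power $s/2$. For small $k$, the pilot factors are already used at exactly their budget.
\item The target exponent is misjudged. Since $\sqrt N\sqrt{\nu/n}\asymp\sqrt\nu$, the $k\ge1$ remainder must be $\sqrt{\nu/n}\cdot O(n^{-3/4})$, not $\sqrt{\nu/n}\cdot O(n^{-1/4})$.
\end{itemize}

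\textbf{The fix.} Keep the falling factorial instead of forming a binomial coefficient.
\begin{itemize}
\item Spend the Taylor weight $1/k!$ on the $U$-statistic normalization: $\frac{1}{(k!)^2}\binom{n}{k}^{-1}=\frac{1}{k!(n)_k}\le n^{-k}$.
\item Use $\frac{(s+1)!}{(s-k+1)!}\le 2s^k$ and $(s+1)!\le 2s^s$.
\item Then $((s+1)!)^{\alpha-1}\le 2^{\alpha-1}s^{s(\alpha-1)}$ gives exactly one factor $s^{\alpha-1}$ to each of the $s$ small factors.
\item The $s^k$ gives one extra factor $s$ to each of the $k-1$ variance factors. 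This is precisely why \eqref{eq:cond:gevrey} carries $\log^{\alpha}(en)$ on $\sqrt{V/n}$ but only $\log^{\alpha-1}(en)$ on $r_n$.
\end{itemize}
Each factor is then at most $e^{-1}$, and only a polylogarithm remains. The $\sqrt n$-scaled conditional variance of the $k\ge1$ remainder is at most $2^{2\alpha+1}\norm{f}_{\Gc^\alpha(U),R}^2R^2\nu\, s^3e^{-2s}\lesssim 2^{2\alpha}\norm{f}_{\Gc^\alpha(U),R}^2R^2\nu\log^3(en)/n^2$. Hence $\sqrt N$ times this remainder is $O(\sqrt\nu\log^{3/2}(en)/n)\le C\sqrt\nu/n^{3/4}$. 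The stated $n^{-3/4}$ is just a loose absorption of the polylog.

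\textbf{Smaller bookkeeping points.}
\begin{itemize}
\item Use $(s+1)!\le 2s^s$ with $s\le\log(en)$, rather than $(s+1)^{(s+1)(\alpha-1)}$ with ``$s+1\approx\log(en)$''. The latter overshoots $\log(en)$; the former is what yields the stated $2^{\alpha}$ rather than $e^{O(\alpha)}$ in the $k=0$ term.
\item In the oracle sum, take consecutive variance ratios with exact factorials, $R^2(k+1)^{2\alpha-1}V/(n-k)$. Replacing $k!$ by $k^k$ first costs up to $e^{\alpha-1}$ per step and can break the geometric decay when $\alpha$ is large.
\end{itemize}
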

The results in Theorem~\ref{thm:moment-gevrey} are local and suffice to derive asymptotic normality and efficiency in the next section. The particular choice $s=\lfloor\log(en)\rfloor$ is convenient but not essential. More generally, any diverging sequence $s_n\to\infty$ that satisfies the growth conditions below yields parametric control of local moments:
\begin{align*}
    &\frac{\log(en)}{\log\frac{1}{Rr_n}}\ll s_n \ll (Rr_n)^{-\frac{1}{\alpha-1}}\wedge\bigg(R\sqrt{\frac{V}{n}}\bigg)^{-\frac{1}{\alpha}},&&\alpha>1,\\
    &\frac{\log(en)}{\log\frac{1}{Rr_n}}\ll s_n \ll \bigg(R\sqrt{\frac{V}{n}}\bigg)^{-\frac{1}{\alpha}},&&\alpha=1.
\end{align*}

\subsection{Asymptotic Normality}
\begin{theorem}\label{thm:normality-gevrey}For $P\in\Pc$, suppose Assumption~\ref{asmp:theta_tilde_general} holds and let $r_n = r_{n,P}(\delta_{n,P})$. Let $f\in\Gc^\alpha(U)$ with $U\supseteq B(\theta(P),r_n)$ and let $R$ be such that $\norm{f}_{\Gc^\alpha(U),R}<\infty$. If $r_n$ satisfies the condition~\eqref{eq:cond:gevrey} eventually as $n\to\infty$, then our estimator $\widehat{f}_s$ with $s=\lfloor\log(en)\rfloor$ is asymptotically normal, i.e., $\sqrt{N}(\widehat{f}_s - f(\theta))\xrightarrow{D}\Nc(0,\sigma_f^2)$. Moreover, under \eqref{eq:cond:gevrey}, there exists a universal constant $C>0$,
\begin{align*}
    &\sup_{t\in\Real}\Abs{\Pb_P\left(\frac{\sqrt{N}\,(\widehat{f}_s - f(\theta(P)))}{\sigma_f}\leq t\right)-\Phi(t)}\leq C\bigg[ \frac{\mu_{3,f}}{\sigma_f^3\sqrt{n}}+\frac{\norm{f}_{\Gc^\alpha(U),R}Rr_n}{\sigma_f\sqrt{n}}\\
    &\quad +\bigg(\frac{2^\alpha\norm{f}_{\Gc^\alpha(U),R}R^{2}}{\sigma_f}\bigg)^{2/3}\bigg\{\bigg(\frac{\nu(P) V(P)}{n}\bigg)^{1/3} + \frac{\nu(P)^{1/3}}{\sqrt{n}}\bigg\}\bigg]+2\delta_{n,P}.
\end{align*}
\end{theorem}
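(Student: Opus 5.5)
The plan is to decompose $\sqrt{N}(\widehat f_s-f(\theta))$ into a linear term plus a remainder and combine a classical Berry--Esséen bound with a perturbation argument. Set $L_N:=\sqrt{N}\,\Dc^1 f(\theta)[\bar W_N-\theta]$ and $\Delta_N:=\sqrt{N}(\widehat f_s-f(\theta))-L_N$. Since $L_N/\sigma_f$ is a normalized sum of $N$ i.i.d.\ centered scalars, the Berry--Esséen theorem gives
\[
\sup_t\bigl|\Pb(L_N\le t\sigma_f)-\Phi(t)\bigr|\lesssim \frac{\mu_{3,f}}{\sigma_f^3\sqrt n}.
\]
For any $\epsilon>0$ and any event $\Ec$, the standard perturbation inequality reads
\[
\sup_t\bigl|\Pb\bigl((L_N+\Delta_N)\le t\sigma_f\bigr)-\Phi(t)\bigr|\le \sup_t\bigl|\Pb(L_N\le t\sigma_f)-\Phi(t)\bigr|+\frac{\epsilon}{\sqrt{2\pi}\,\sigma_f}+\Pb\bigl(|\Delta_N|\mathbbm 1(\Ec)>\epsilon\bigr)+\Pb(\Ec^c).
\]
The Lipschitz-in-$t$ property of $\Phi$ is what produces the $\epsilon/\sigma_f$ term. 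Take $\Ec$ to be the event of Theorem~\ref{thm:moment-gevrey}. Assumption~\ref{asmp:theta_tilde_general}, applied to each of $\hat\theta_{\Sc_1}$ and $\hat\theta_{\Sc_2}$ (they are identically distributed) with a union bound, gives $\Pb(\Ec^c)\le 2\delta_{n,P}$, which is the $2\delta_{n,P}$ in the statement.

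Next, I control $\Delta_N$ on $\Ec$ using the second display of Theorem~\ref{thm:moment-gevrey}, whose hypotheses hold under \eqref{eq:cond:gevrey}. Write $A:=2^\alpha(R\vee R^2)\norm{f}_{\Gc^\alpha(U),R}$. That display yields
\[
\Eb^{1/2}\bigl[\Delta_N^2\mathbbm 1(\Ec)\bigr]\le CA\Bigl(\sqrt{\nu V/n}+r_n/\sqrt n+\sqrt\nu\,n^{-3/4}\Bigr).
\]
I split this into the $r_n$ piece and the remaining piece $B:=CA(\sqrt{\nu V/n}+\sqrt\nu n^{-3/4})$. One could simply use Chebyshev, $\Pb(|\Delta_N|\mathbbm 1(\Ec)>\epsilon)\le \Eb[\Delta_N^2\mathbbm 1(\Ec)]/\epsilon^2$, and optimize $\epsilon/\sigma_f+B^2/\epsilon^2$ at $\epsilon=(\sigma_f B^2)^{1/3}$. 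That gives a term of order $(B/\sigma_f)^{2/3}$, which is the $(\cdot)^{2/3}\{(\nu V/n)^{1/3}+\nu^{1/3}/\sqrt n\}$ term in the statement. The $r_n$ contribution should instead enter linearly, as $\norm{f}Rr_n/(\sigma_f\sqrt n)$. To get that, I will go back into the proof decomposition of Theorem~\ref{thm:moment-gevrey} and isolate the part of $\Delta_N$ that comes from the pilot-dependent remainder ($\mathrm{Rem}_s$ in Proposition~\ref{prop:1.1}) at low order. This part is bounded in $L_1$ or almost surely on $\Ec$ by $\lesssim \norm{f}R r_n/\sqrt n$, and a first-moment (Markov-free) absorption then lets it enter the Kolmogorov distance through the Lipschitz property of $\Phi$, costing $\Eb|\cdot|/\sigma_f$. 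If that separation turns out to be awkward, the fallback is to absorb $r_n/\sqrt n$ into the Chebyshev term, which gives a slightly weaker but still valid bound.

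Finally, asymptotic normality follows because \eqref{eq:cond:gevrey} forces $Rr_n\to0$ and $V/n\to0$, so $\nu V/n\to0$ since $\nu\le V$. Hence every term tends to zero, given $\sigma_f>0$ fixed and $\mu_{3,f}<\infty$. If $\mu_{3,f}$ is infinite, one instead uses the Lindeberg CLT for $L_N$ together with Slutsky's lemma, since $\Delta_N\mathbbm 1(\Ec)\to0$ in $L_2$ and $\Pb(\Ec^c)\to0$. The main obstacle is the split of the second-moment bound of Theorem~\ref{thm:moment-gevrey} so that the $r_n$ term appears linearly rather than to the power $2/3$. This requires reopening the remainder bounds there, namely the Gevrey sums $\sum_k R^k(k!)^\alpha r_n^{s-k}(V/n)^{k/2}/k!$ with $s=\lfloor\log(en)\rfloor$, to identify which piece is pilot-driven.
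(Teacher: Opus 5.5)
Your plan is essentially the paper's argument: the paper feeds the two one-sided remainders into Lemma~\ref{lem:BElemma}, where the conditional mean given the pilot's half-sample enters linearly through the Lipschitz property of $\Phi$, and the conditional variance enters through a conditional Chebyshev step that produces the cube-root terms; this is exactly the bias/fluctuation split you describe. The piece you want to isolate is precisely the $\Sc_2$-measurable term $Q_0=-\Jc^{s}\Delta^{(s)}[(\theta-\hat\theta_{\Sc_2})^{\otimes s}]$ (the $k=0$ part of $\mathrm{Rem}_s$), already bounded on the event in \eqref{eq:pf.thm:3.1:1}--\eqref{eq:pf.thm:3.1:2}, and \eqref{eq:pf.thm:3.1:4} shows that the conditionally centered remainder has conditional variance free of $r_n$ (the pilot-dependent pieces of order $k\geq 1$ contribute only order $\nu/n^{3/2}$), so no further reopening of the Gevrey sums is needed---just note that it is this almost-sure bound on the event, not an $L_1$ bound, that lets the $r_n$ term enter linearly.
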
 A finite third moment $\mu_{3,f}<\infty$ is not essential; for bounds under weaker moment assumptions, see the discussion around Theorem~\ref{thm:normality-msmooth}.

Under $P\in\Pc$ with $\nu(P)\lesssim1$ and $d_{\rm eff}(P)\lesssim \dk$, our estimator is asymptotically normal if
\begin{equation*}
    r_n = o(1/\log^{\alpha-1}(en))\quad\mbox{and}\quad \dk = o(n/\log^{2\alpha}(en)),
\end{equation*} with KS distance of order at most $1/\sqrt{n}+(\dk/n)^{1/3}$. Notably, for $\alpha=1$ (locally analytic functionals in Euclidean setting), mere consistency with an arbitrarily slow rate $r_n\to 0$ suffices for $\sqrt{n}$-consistency and asymptotic normality. We here remark that our analysis is non-asymptotic, and the bound in Theorem~\ref{thm:normality-gevrey} applies to functionals of finite smoothness of order $\lfloor\log(en)\rfloor+1$ through the truncated Gevrey semi-norm:
\begin{equation*}
    \Norm{f}^\dagger_{\Gc^\alpha(U),R}:=\sup_{x\in U}\sup_{0\leq k\leq \lfloor\log(en)\rfloor+1}\frac{\norm{\Dc^k f(x)}_{\rm S}}{R^k (k!)^\alpha}.
\end{equation*}

\section{Applications}\label{sec:appl}

\subsection{Precision Matrix Estimation}\label{sec:appl:precision}
This section studies the estimation of precision matrix functionals under various distributional assumptions. Consider a random vector $X$ in $\Real^d$ with a finite second moment and assume the population gram matrix $\Sigma =\Eb[XX^\top]\in\Real^{d\times d}$ be positive definite. We aim to estimate the functional \begin{equation}\label{eq:precisionmatrixfunctional} \omega := \omega(\Sigma) = \eta_1^\top \Sigma^{-1} \eta_2, \quad \text{for any }~\eta_1, \eta_2 \in \Real^d\setminus\{0_d\}. \end{equation} Suppose that we observe $N=2n$ i.i.d. random vectors $X_1,\ldots,X_{2n}$ with finite directional moments.

\begin{enumerate}[label=\textbf{(M)(X)}]
    \item\label{asmp:momentX} There exist constants $K_x\geq1$ and $q_x\geq 4$ such that $\sup_{u\in\Sb^{d-1}}\Eb[|u^\top\Sigma^{-1/2}X|^{q_x}]\leq K_x^{q_x}$.
    \end{enumerate} 
Let $\Bb=\Sb\Pb^{d}$, then the map $\Sigma\mapsto\omega(\Sigma)$ is infinitely continuously differentiable on $\Bb$. For $H,h_1,\ldots,h_k\in\Bb$ and $k\geq 1$, the $k$:th order Fréchet derivative $\Dc^k\omega(h):\Bb^{\otimes k}\to \Real$ is given by, 
\begin{equation}\label{eq:precisionmatrix_Fréchetderivatives}
    \Dc^k\omega(H)[h_1,\ldots, h_k] = (-1)^{k}\sum_{\pi\in\Sk_k}\eta_1^\top H^{-1}h_{\pi(1)}H^{-1}\cdots H^{-1}h_{\pi(k)}H^{-1}\eta_2,
\end{equation} where the summation is taken over all permutations in $\Sk_k=\{\pi:[k]\to[k]\}$. Since $\norm{\Dc^k\omega (H)}_{\rm S}\leq k!\lambda_{\rm min}^{-k-1}(H)$, $\omega$ is Gevrey of order $1$ on the subsets of positive definite matrices whose eigenvalues are bounded below. For any $d\times d$ matrix $M$, the tensor $U$-statistic component is of the form:
\begin{equation*}
    \bar U^{(k)}(M) = \binom{n}{k}^{-1} \sum_{1 \leq j_1 < \cdots < j_k \leq n}(X_{j_1}X_{j_1}^\top-M)\otimes_{\rm s}\cdots\otimes_{\rm s} (X_{j_k}X_{j_k}^\top-M),\quad k\geq 1.
\end{equation*} 

We assume that there is a matrix estimator for $\Sigma$ with a reasonable rate of convergence.
\begin{enumerate}[label=\textbf{(P1)},leftmargin=1.2cm]
    \item\label{asmp:Sigmapilot} There exists an estimator $\tilde\Sigma:=\tilde\Sigma(X_1,\ldots,X_n)\in\Real^{d\times d}$, a non-increasing function $r_n:(0,1)\to\Real^+$, and a sequence $\delta_n$ with $\delta_n\to0$ as $n\to\infty$ such that
    \begin{equation*}
        \Pb\bigg(\norm{\Sigma^{-1/2}\tilde\Sigma\Sigma^{-1/2}-I_d}_{\rm op}\geq r_n(\delta_n)\bigg)\leq \delta_n.
    \end{equation*}
\end{enumerate} 
Given an estimator $\tilde\Sigma_2=\tilde\Sigma(X_{n+1},\ldots,X_{2n})$, we define a matrix estimator $\tilde\Omega(\Sc_1,\Sc_2)$ be such that
\begin{equation*}
    \eta_1^\top \big[\tilde\Omega(\Sc_1,\Sc_2)\big] \eta_2 = \eta_1^\top \tilde\Sigma_2^{-1}\eta_2 + \sum_{k=1}^{s_n}\frac{\Dc^k\omega(\tilde\Sigma_2)[\bar U^{(k)}(\tilde\Sigma_2)]}{k!},\quad\forall\eta_1,\eta_2\in\Real^d
\end{equation*} for a potentially diverging sequence $\{s_n:n\geq 1\}.$ Similarly, we define $\tilde\Omega(\Sc_2,\Sc_1)$ by swapping the roles of $\Sc_1$ and $\Sc_2$, with $\tilde\Sigma_1=\tilde\Sigma(X_1,\ldots,X_n)$. Our final estimator is given as
\begin{equation}\label{eq:precisionmatrixest.}
    \tilde\Omega = \frac{1}{2}\big(\tilde\Omega(\Sc_1,\Sc_2) + \tilde\Omega(\Sc_2,\Sc_1)\big).
\end{equation}
We remark that matrices $\tilde\Sigma_1,\tilde\Sigma_2$ are invertible with a high probability under \ref{asmp:Sigmapilot}, and thus our estimator is well defined, again with a high-probability.

Let $\sigma_\omega^2 = {\rm Var}(\eta_1^\top\Sigma^{-1}XX^\top\Sigma^{-1}\eta_2)$ and the normalized variance by $\bar\sigma_\omega^2 = \sigma_\omega^2/(\norm{\eta_1}_{\Sigma^{-1}}^2\norm{\eta_2}_{\Sigma^{-1}}^2)$. Theorem below provides sufficient conditions for the asymptotic normality: for all $\eta_1,\eta_2\in\Real^d\setminus\{0_d\}$,
\begin{equation}\label{eq:asymnor_PM}
        \frac{\sqrt{N}(\eta_1^\top\tilde\Omega\,\eta_2-\eta_1^\top\Sigma^{-1}\eta_2)}{\sigma_\omega}\xrightarrow{D}\Nc(0,1),\quad\mbox{as}\quad N\to\infty,
\end{equation} and a precise Berry--Esséen bound for the Kolmogorov-Smirnov (KS) distance,
\begin{equation*}
    {\rm BE}_n(\eta_1^\top\tilde\Omega\,\eta_2):=\sup_{t\in\Real}\bigg|\Pb\bigg(\frac{\sqrt{N}(\eta_1^\top\tilde\Omega\,\eta_2-\eta_1^\top\Sigma^{-1}\eta_2)}{\sigma_\omega}\leq t\bigg)-\Phi(t)\bigg|.
\end{equation*}


\begin{theorem}\label{thm:PM1}
    Suppose that Assumption~\ref{asmp:momentX} and \ref{asmp:Sigmapilot} hold. Let $r_n^*=r_n(\delta_n)$ for $n\geq 1.$
    \begin{enumerate}
        \item For a fixed $s_n=s\geq 1$, if $d=o(n)$ and $\sqrt{n}(r_n^*)^{s+1}=o(1)$, then asymptotic normality \eqref{eq:asymnor_PM} holds. Moreover, let $\bar q_x = q_x\wedge 6$. If $(2r_n^*)^2\vee (64K_x^4s^2d/n)\leq e^{-2}$, then there exists a constant $\Ck=\Ck(s)>0$ such that 
        \begin{align}\label{eq:thm:PM1:1}
        {\rm BE}_n(\eta_1^\top\tilde\Omega\,\eta_2)\leq \frac{\Ck K_x^{\bar q_x}}{(\bar \sigma_\omega\wedge 1)^{\bar q_x/2}}\bigg[\frac{1}{n^{\bar q_x/4-1}}+\bigg(\frac{d}{n}\bigg)^{1/3}+\sqrt{n}(r_n^*)^{s+1} + (r_n^*)^{2s/3}\bigg]+2\delta_n.
        \end{align}
        \item Let $s_n=\lfloor \log(en)\rfloor$. If $d\log^2(en) = o(n)$ and $r_n^*\leq (2e)^{-1}$ eventually, then \eqref{eq:asymnor_PM} holds. Moreover, if $(2r_n^*)^2\vee (64K_x^4d\log^2(en)/n)\leq e^{-2}$, then there exists a universal constant $\Ck>0$ such that
    \begin{equation}\label{eq:thm:PM1:2}
    {\rm BE}_n(\eta_1^\top\tilde\Omega\,\eta_2)\leq \frac{\Ck K_x^{\bar q_x}}{(\bar \sigma_\omega\wedge 1)^{\bar q_x/2}}\bigg[\frac{1}{n^{\bar q_x/4-1}}+\bigg(\frac{d}{n}\bigg)^{1/3}\bigg]+2\delta_n.
    \end{equation}
    \end{enumerate}
\end{theorem}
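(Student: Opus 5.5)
The plan is to reduce Theorem~\ref{thm:PM1} to the general results, Theorem~\ref{thm:normality-msmooth} for part 1 and Theorem~\ref{thm:normality-gevrey} for part 2. This uses a whitening reparametrization that makes the problem isotropic.

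\emph{Whitening.} Set $Z=\Sigma^{-1/2}X$, so that $\Eb[ZZ^\top]=I_d$. Put $\tilde\eta_i=\Sigma^{-1/2}\eta_i/\norm{\eta_i}_{\Sigma^{-1}}$, so $\norm{\tilde\eta_i}_2=1$. Then $\eta_1^\top H^{-1}\eta_2=\norm{\eta_1}_{\Sigma^{-1}}\norm{\eta_2}_{\Sigma^{-1}}\,\tilde\eta_1^\top G^{-1}\tilde\eta_2$ with $G=\Sigma^{-1/2}H\Sigma^{-1/2}$. The derivative formula \eqref{eq:precisionmatrix_Fréchetderivatives} is equivariant under this congruence, so the estimator $\tilde\Omega$ built from the $X_i$ equals the rescaled estimator built from the $Z_i$ with pilot $\Sigma^{-1/2}\tilde\Sigma\Sigma^{-1/2}$. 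Hence we may assume $\Sigma=I_d$ and $\norm{\eta_i}_2=1$, so that $\sigma_\omega=\bar\sigma_\omega$.

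We take $\Bb=\Sb^d$ with the operator norm and $W=ZZ^\top$, so $\theta=I_d$. By \ref{asmp:Sigmapilot}, $\norm{\hat\theta_n-\theta}_{\rm op}\le r_n^*$ with probability at least $1-\delta_n$. We choose $U=\{H:\lambda_{\min}(H)\ge 1/2\}$, which contains $B(I,r_n^*)$ once $r_n^*\le 1/2$. The bound $\norm{\Dc^k\omega(H)}_{\rm S}\le k!\lambda_{\min}^{-k-1}(H)$ gives $\norm{\omega}_{\Gc^1(U),2}\le 2$, i.e.\ $R=2$ and $\alpha=1$, together with the Lipschitz constants needed in part 1.

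\emph{The main obstacle: the operator norm versus the nuclear dual.} In the operator norm the dual norm is the nuclear norm. Hence $\nu=\sup_{\norm{u}_{\rm nuc}\le1}\Eb\ip{ZZ^\top-I,u}^2$, and by extremality over rank-one $u=vv^\top$ this is $\lesssim K_x^4$. However, $V=\Eb\norm{ZZ^\top-I}_{\rm op}^2\approx\Eb\norm{Z}^4\asymp d^2$ is too large. A naive application would therefore give only $d^2=o(n)$, not $d=o(n)$.

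To avoid this I will not invoke the theorems as black boxes. Instead I will reprove the variance bounds on the degenerate $U$-statistic terms directly for the specific multilinear forms. The term $\Dc^k\omega(H)[\bar U^{(k)}]$ is a sum over permutations of chains $\eta_1^\top H^{-1}(W_{j_1}-I)H^{-1}\cdots(W_{j_k}-I)H^{-1}\eta_2$. The second moment of one chain is bounded, by iterated conditioning on the independent factors, by $\nu\cdot(\sup_{\norm{a}=1}\Eb\norm{(ZZ^\top-I)a}^2)^{k-1}\lesssim K_x^4(K_x^4 d)^{k-1}$. So the effective $V$ entering through $(\sqrt{V/n})^{k-1}$ is of order $K_x^4d$ rather than $d^2$. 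This is what the condition $64K_x^4s^2d/n\le e^{-2}$ reflects; the $s^2$, or $\log^2(en)$ in part 2, absorbs the $k!$ combinatorics from the permutations against $1/k!$ and the $s$-dependence of the Gevrey choice.

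\emph{Assembling the bounds.} With these refined moment bounds I will rerun the proofs of Theorems~\ref{thm:moment-msmooth}/\ref{thm:moment-gevrey} and \ref{thm:normality-msmooth}/\ref{thm:normality-gevrey}, conditioning on the pilot event. The remainder $\mathrm{Rem}_s$ is controlled conditionally on $\tilde\Sigma_2$ using $\norm{\Delta^{(s)}}\lesssim (r_n^*)\cdot s!\,2^{s}$. This yields a bias of order $\sqrt n (r_n^*)^{s+1}$ and a degenerate part of variance of order $(r_n^*)^{2s}+d/n$. The resulting terms enter the Kolmogorov bound through the standard perturbation step $\Pb(|\Delta|>\epsilon)+\epsilon$, optimized via Chebyshev, which produces the exponents $1/3$ seen in \eqref{eq:thm:PM1:1}.

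The leading linear term is $N^{-1/2}\sum_i(\eta_1^\top Z_iZ_i^\top\eta_2-\eta_1^\top\eta_2)$. For it I will use the Berry--Esseen bound with $(2+\delta)$ moments, taking $\bar q_x/2$ moments of $\eta_1^\top ZZ^\top\eta_2$, which are $\le K_x^{\bar q_x}$ by Cauchy--Schwarz. This gives the term $n^{1-\bar q_x/4}/\bar\sigma_\omega^{\bar q_x/2}$.

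In part 2 the choice $s=\lfloor\log(en)\rfloor$ with $r_n^*\le(2e)^{-1}$ makes $\sqrt n(2r_n^*)^{s+1}\le\sqrt n\,e^{-s-1}\lesssim n^{-1/2}$, so the pilot terms are absorbed. Finally, asymptotic normality follows because each displayed term vanishes under the stated rate conditions.
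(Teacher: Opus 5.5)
Your plan is correct and has the same architecture as the paper's proof. The paper likewise does not call Theorems~\ref{thm:normality-msmooth} or \ref{thm:normality-gevrey} but redoes the argument directly. Proposition~\ref{prop:1.1} splits the statistic into the influence term plus $\Rv_{1,n},\Rv_{2,n}$, and their conditional means and variances on the pilot events are fed into Lemma~\ref{lem:BElemma}. The choice $s_n=\lfloor\log(en)\rfloor$ then makes both $\sqrt n(2r_n^*)^{s_n+1}$ and $s_n\big(4(r_n^*)^2\vee 64K_x^4s_n^2d/n\big)^{s_n/3}$ of order $n^{-1/2}$; in part 2 you should check the second term too, not only the bias.

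The genuine difference is the key lemma that produces $d^{k-1}$ in place of $V^{k-1}\asymp d^{2(k-1)}$. The paper (Theorems~\ref{thm:B1} and \ref{thm:B2}) bounds $\sum_{\pi,\pi'}|\Eb[\mathrm{chain}_\pi\,\mathrm{chain}_{\pi'}]|$ by expanding the centered factors $Z_iZ_i^\top-I_d$ into non-centered products. It then applies the graph-based induction for monomials of random vectors (Theorems~\ref{thm:C2}, \ref{thm:C3}, \ref{thm:C.1.4}). You instead apply Cauchy--Schwarz over the $k!$ permutations and peel off one independent factor at a time. This uses only $\sup_{\|a\|_2=\|b\|_2=1}\Eb[(a^\top(ZZ^\top-I_d)b)^2]\le K_x^4$ and $\|\Eb[(ZZ^\top-I_d)^2]\|_{\rm op}\le\sup_{\|a\|_2=1}\Eb[\|Z\|_2^2(a^\top Z)^2]\le K_x^4d$. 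Your route is valid, needs only fourth moments, avoids the $2^k$ from expanding centered factors, and is much shorter. The paper's machinery is more general-purpose: it handles products of two chains with repeated indices and extra linear factors, and is reused with $\varepsilon$ in Theorems~\ref{thm:B3}--\ref{thm:B4}.

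One step you should make explicit concerns the degenerate remainder terms $\tilde\Delta^{(s)}_k[\bar U^{(k)}(\theta)]$. The norm bound $\|\Delta^{(s)}\|_{\rm S}\lesssim r_n^*$ alone reintroduces $V\asymp d^2$ through Lemma~\ref{lem:varbound}. Peeling the two derivative chains in $\Delta^{(s)}(t)$ separately loses the factor $t\,r_n^*$. That would give $(r_n^*)^{2(s-1)/3}$ instead of $(r_n^*)^{2s/3}$, and a non-vanishing term when $s=1$. You need the telescoping identity, with $\Bv_t$ the whitened inverse along the segment and $\Bv_t-\Bv_0=\Bv_0(\Bv_0^{-1}-\Bv_t^{-1})\Bv_t$. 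It writes each such kernel as a sum of $(s+1)!$ chains, each with $k$ random factors and one deterministic factor of norm $\lesssim t\,r_n^*$. Only then do you peel, as in Theorem~\ref{thm:B2}.
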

The normalized variance $\bar\sigma_\omega^2$ admits the lower bound
\begin{equation}\label{eq:lowerbound_normalizedvariance}
    \bar\sigma_\omega^2
    \ge
    \inf_{u,v\in\Sb^{d-1}}
    \Var\!\big(u^\top \Sigma^{-1/2}XX^\top\Sigma^{-1/2}v\big).
\end{equation}
Therefore, the Berry--Esséen  bounds in Theorem~\ref{thm:PM1} can be made uniform over all directions $\eta_1,\eta_2\in\Real^d\setminus\{0_d\}$ with an additional condition. For $X\sim\Nc(\mu,\Sigma)$, the right-hand side of \eqref{eq:lowerbound_normalizedvariance} equals 1 for $d\geq 3$, and is no smaller than $1$ for $d=1,2$.

The dimension requirement for asymptotic normality and the Berry--Esséen  bound are driven by the convergence rate of a pilot covariance estimate $\tilde\Sigma_2$ in the operator norm. Under Assumption~\ref{asmp:momentX}, one may use the robust Gram/covariance estimator of \citet[Theorem~1.3]{OliveiraRico2024ImprovedCovariance} with a sub-Gaussian tail behavior; see also the related development in \citet{AbdallaZhivotovskiy2024covariance}. Specializing their guarantee to the uncontaminated case and choosing the confidence level $\delta = 1/n$, there exists a measurable estimator $\widehat{\mathsf{E}}_{1}=\widehat{\mathsf{E}}_1(X_1,\ldots,X_n)\in\mathbb{R}^{d\times d}$ such that, whenever $n \gtrsim c(r(\Sigma)+\log(en))$,
\begin{equation}\label{eq:pilot_OR}
\mathbb{P}\!\left(
\bigl\|\widehat{\mathsf{E}}_1-\Sigma\bigr\|_{\rm op}
\lesssim\kappa_4^2\,\|\Sigma\|_{\rm op}\sqrt{\frac{\mathsf{r}(\Sigma)+\log(en)}{n}}
\right)\ge 1-\frac{1}{n},
\end{equation}
where $\kappa_4$ is the marginal $L_4$--$L_2$ constant (in our notation, $\kappa_4 \leq K_x$ under \ref{asmp:momentX}). Using this estimator as a pilot, we yield the following corollary.

\begin{corollary}\label{cor:PM1} Suppose that Assumption~\ref{asmp:momentX} holds. Then there exists an estimator $\hat\Omega^\dagger = \hat\Omega^\dagger(X_1,\ldots,X_{2n})\in\Real^{d\times d}$ such that (1) if $d\log^2(en)=o(n)$ then \eqref{eq:asymnor_PM} holds. Also, there exists a universal constant $\Ck>0$ such that if $\Ck K_x^4d\log^2(en)\leq n$, then
\begin{equation*}
    {\rm BE}_n(\eta_1^\top\tilde\Omega^\dagger\,\eta_2)\leq \frac{\Ck K_x^{\bar q_x}}{(\bar \sigma_\omega\wedge 1)^{\bar q_x/2}}\bigg[\frac{1}{n^{\bar q_x/4-1}}+\bigg(\frac{d}{n}\bigg)^{1/3}\bigg]
\end{equation*}
\end{corollary}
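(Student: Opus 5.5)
The plan is to obtain Corollary~\ref{cor:PM1} from part~2 of Theorem~\ref{thm:PM1} with $s_n=\lfloor\log(en)\rfloor$, using the robust Gram estimator of \citet{OliveiraRico2024ImprovedCovariance} as the pilot $\tilde\Sigma$ in \ref{asmp:Sigmapilot}. The estimator $\tilde\Omega^\dagger$ is then \eqref{eq:precisionmatrixest.} with $\tilde\Sigma_1=\widehat{\mathsf{E}}_1(X_1,\ldots,X_n)$ and $\tilde\Sigma_2=\widehat{\mathsf{E}}_1(X_{n+1},\ldots,X_{2n})$, each tuned at confidence level $\delta=1/n$. (If one of them is singular, on a set of probability at most $1/n$, set the estimator to zero.)

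The first task is to verify \ref{asmp:Sigmapilot} with an explicit $r_n^*$. Apply \eqref{eq:pilot_OR} to the whitened vectors $Z=\Sigma^{-1/2}X$. Their Gram matrix is $I_d$, so $\|\Sigma_Z\|_{\rm op}=1$, $\mathsf r(I_d)=d$, and the $L_4$--$L_2$ constant satisfies $\kappa_4\le K_x$ by \ref{asmp:momentX}. One point needs care: the estimator must not require knowledge of $\Sigma$. I would handle this by appealing to affine equivariance of the Oliveira--Rico construction; if that is not available, I would instead use the fact that their bound is stated in relative form, $\|\Sigma^{-1/2}\widehat{\mathsf E}\Sigma^{-1/2}-I\|_{\rm op}$, which is how their Theorem~1.3 is phrased for the uncontaminated case. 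Either route gives
\[
\Pb\Big(\|\Sigma^{-1/2}\tilde\Sigma\Sigma^{-1/2}-I_d\|_{\rm op}\ge C_1K_x^2\sqrt{(d+\log(en))/n}\Big)\le 1/n
\]
whenever $n\ge c(d+\log(en))$. So we may take $r_n^*=C_1K_x^2\sqrt{(d+\log(en))/n}$ and $\delta_n=1/n$.

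Next, check the hypotheses of part~2. If $d\log^2(en)=o(n)$, then $r_n^*\to0$, so $r_n^*\le(2e)^{-1}$ eventually, and $n\gtrsim d+\log(en)$ holds eventually; this gives \eqref{eq:asymnor_PM}. For the quantitative statement, choose the universal constant $\Ck$ large enough that $\Ck K_x^4 d\log^2(en)\le n$ implies three things:
\begin{itemize}[leftmargin=*]
\item the sample-size requirement $n\ge c(d+\log(en))$, using $K_x\ge1$ and $\log^2(en)\ge\log(en)$;
\item $(2r_n^*)^2=4C_1^2K_x^4(d+\log(en))/n\le e^{-2}$, since $d+\log(en)\le 2d\log^2(en)$;
\item $64K_x^4 d\log^2(en)/n\le e^{-2}$.
\end{itemize}
Then \eqref{eq:thm:PM1:2} applies, with the term $2\delta_n=2/n$.

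The last step is to absorb the $2/n$ into the displayed bracket. Since $\bar q_x\le 6$, we have $n^{-(\bar q_x/4-1)}\ge n^{-1/2}\ge n^{-1}$. Moreover $K_x\ge1$ and $(\bar\sigma_\omega\wedge1)^{-\bar q_x/2}\ge1$, so $2/n$ is dominated by the prefactor times $n^{-(\bar q_x/4-1)}$ after enlarging $\Ck$. This yields the stated bound with no additive $\delta$ term.

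The main obstacle is the pilot step: applying \eqref{eq:pilot_OR} in the relative (whitened) norm without knowing $\Sigma$. Everything after that is constant bookkeeping.
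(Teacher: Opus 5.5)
Your skeleton matches the paper's: an Oliveira--Rico pilot at level $\delta=1/n$, then verifying \ref{asmp:Sigmapilot} with $r_n^*=cK_x^2\sqrt{(d+\log(en))/n}$ and $\delta_n=1/n$, then part~2 of Theorem~\ref{thm:PM1}. Your bookkeeping is fine, including $d+\log(en)\le 2d\log^2(en)$ and absorbing $2/n$ into the $n^{-(\bar q_x/4-1)}$ term. You diverge at the pilot step, and your caution there is warranted.

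The paper applies Oliveira--Rico to $X$ itself, which gives \eqref{cond:robcov} in absolute operator norm with $\mathsf r(\Sigma)$. It then converts to the relative norm using $\mathsf r(\Sigma)\le d$ and
\[
\|\Sigma^{-1/2}\widehat{\mathsf E}_1\Sigma^{-1/2}-I_d\|_{\rm op}\le\|\Sigma\|_{\rm op}^{-1}\|\widehat{\mathsf E}_1-\Sigma\|_{\rm op}.
\]
That inequality is false. The correct factor is $\lambda_{\min}(\Sigma)^{-1}$: take $\Sigma=\mathrm{diag}(1,\epsilon)$ and $\widehat{\mathsf E}_1-\Sigma=\mathrm{diag}(0,\epsilon)$, so the left side is $1$ and the right side is $\epsilon$. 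The paper's route therefore only yields $r_n^*$ inflated by the condition number $\|\Sigma\|_{\rm op}/\lambda_{\min}(\Sigma)$, which \ref{asmp:momentX} does not control. Whitening, as you propose, is the natural way to get $d=\mathsf r(I_d)$ with no condition number.

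As written, however, your step is still conditional, so there is a gap you need to close. The bound for $\widehat{\mathsf E}(Z_1,\dots,Z_n)$ with $Z_i=\Sigma^{-1/2}X_i$ is fine: $\Sigma_Z=I_d$, and \ref{asmp:momentX} is exactly $L_{q_x}$--$L_2$ equivalence for $Z$, so $\kappa_4\le K_x$. Transferring this to an estimator computable without $\Sigma$ requires
\[
\widehat{\mathsf E}(AX_1,\dots,AX_n)=A\,\widehat{\mathsf E}(X_1,\dots,X_n)A^\top\quad\text{for invertible }A,
\]
and you have not checked this. An estimator built to control absolute error uniformly over unit directions need not have this property.

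Your second justification contradicts the paper's own description \eqref{eq:pilot_OR} of their Theorem~1.3, which is in absolute form with $\mathsf r(\Sigma)$, not relative form. To finish, you must do one of the following:
\begin{itemize}
\item verify affine equivariance of the Oliveira--Rico construction;
\item cite a genuinely multiplicative guarantee $\|\Sigma^{-1/2}\widehat{\mathsf E}\Sigma^{-1/2}-I_d\|_{\rm op}\lesssim K_x^2\sqrt{(d+\log(1/\delta))/n}$;
\item switch to a pilot that has such a guarantee.
\end{itemize}
With that in hand, the rest of your argument goes through, and it in fact repairs the corresponding step of the paper's proof.
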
Corollary~\ref{cor:PM1} requires $d\log^2(en)=o(n)$ for asymptotic normality. To our knowledge, this is the most permissive dimension regime currently available under only a fourth-moment condition.

The estimator in Corollary~\ref{cor:PM1} is not, however, computationally tractable as stated. The difficulty has two sources. First, a direct evaluation of the $U$-statistics of order up to $\log(en)$ entails super-polynomial running time. Second, the pilot in \eqref{eq:pilot_OR} is information-theoretic and is not designed for efficient computation. Section~\ref{sec:comrelax} discusses a possible relaxation of the former. For the latter, one may replace $\widehat{\mathsf{E}}_1$ by a polynomial-time pilot at the cost of a logarithmic factor. For example, the spectrum-wise truncated estimator $\widehat{\mathsf{E}}_2$ of \cite{KeMinskerRenSunZhou2019UserFriendly} is computable in polynomial time and satisfies, for all $\delta\in(0,1)$,
\[
\mathbb{P}\!\left(
\|\widehat{\mathsf{E}}_2-\Sigma\|_{\rm op}
\lesssim
\kappa_4^2\,\|\Sigma\|_{\rm op}\sqrt{\frac{r(\Sigma)\{\log (ed)+\log(e/\delta)\}}{n}}
\right)\ge 1-\delta.
\] Related computationally efficient constructions based on matrix-valued $M$-estimation were studied in \citet{WeiMinsker2017CovarianceHeavyTailed}.

By contrast, using the sample Gram matrix typically yields only polynomial-tail control and correspondingly slower operator-norm rates. Under \ref{asmp:momentX}, Proposition~\ref{prop:samcov} shows that for 
$\hat\Sigma_2=n^{-1}\sum_{i=n+1}^{2n}X_iX_i^\top$, if $n\gtrsim_{K_x,q_x} r(\Sigma)$, then
\begin{equation*}
    \Pb\bigg(\norm{\hat\Sigma_2-\Sigma}_{\rm op}\lesssim_{K_x,q_x} \norm{\Sigma}_{\rm op}\bigg[\sqrt{\frac{r(\Sigma)+\log(e/\delta)}{n}}+\frac{r(\Sigma)}{n^{1-2/q_x}\delta^{2/q_x}}\bigg]\bigg)\geq1-\delta.
\end{equation*} The term $r(\Sigma)/n^{1-2/q_x}$ is unavoidable for the sample Gram matrix in general; see Proposition~\ref{prop:sampcov.opt} for a corresponding lower bound. As a benchmark, under a sub-Gaussian design, the sample covariance enjoys standard sub-Gaussian operator-norm concentration \citep{Vershynin2012SampleCovariance}.

\begin{enumerate}[label=\textbf{(SG)(X)}]
    \item\label{asmp:sg} There exists constants $K_{\rm sg}\geq1$ such that $\sup_{u\in\Sb^{d-1}}\Eb[\exp({|u^\top \Sigma^{-1/2}X|^2}/{K_{\rm sg}^2})]\leq 2.$
\end{enumerate}
Under~\ref{asmp:sg}, \ref{asmp:momentX} automatically holds for any $q_x\geq 4$ with $K_x = K_{\rm sg}\sqrt{2eq_x}$.
\begin{corollary}\label{cor:PM2} Let $\widehat\Omega$ be the matrix estimator in \eqref{eq:precisionmatrixest.} with $\tilde\Sigma_1 = \hat\Sigma_1$, $\tilde\Sigma_2 = \hat\Sigma_2$ and $s_n=\lfloor\log(en)\rfloor$.
\begin{enumerate}[left=0pt]
    \item Under Assumption~\ref{asmp:sg}, $\widehat\Omega$ satisfies the asymptotic normality in \eqref{eq:asymnor_PM} if $d\log^2(en)=o(n)$. Moreover, there exists a universal constant $\Ck>0$ such that if $\Ck K_{\rm sg}^4d\log^2(en)\leq n$, then
\begin{equation*}
    {\rm BE}_n(\eta_1^\top\widehat\Omega\,\eta_2)\leq \frac{\Ck K_{\rm sg}^6}{(\bar \sigma_\omega\wedge 1)^3}\bigg(\frac{d}{n}\bigg)^{1/3}.
\end{equation*}
\item Under Assumption~\ref{asmp:momentX} with $q_x>4$, the asymptotic normality in \eqref{eq:asymnor_PM} holds if $d=o(n^{1-2/q_x})$. Moreover, there exists a constant $\Ck=\Ck(K_x,q_x)>0$ such that 
\begin{equation*}
    {\rm BE}_n(\eta_1^\top\widehat\Omega\,\eta_2)\leq \frac{\Ck}{(\bar \sigma_\omega\wedge 1)^{\bar q_x/2}}\bigg[\frac{1}{n^{\bar q_x/4-1}}+\bigg(\frac{d}{n}\bigg)^{1/3}+\bigg(\frac{d}{n^{1-2/q_x}}\bigg)^{\frac{q_x}{q_x+2}}\bigg].
\end{equation*}
\end{enumerate}
\end{corollary}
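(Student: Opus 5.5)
The plan is to read Corollary~\ref{cor:PM2} as a direct specialization of Theorem~\ref{thm:PM1}(2), with the sample Gram matrix $\hat\Sigma_j$ as the pilot. The only new input needed is a verification of Assumption~\ref{asmp:Sigmapilot}: an explicit choice of $r_n(\delta)$ and $\delta_n$ for the whitened error $\norm{\Sigma^{-1/2}\hat\Sigma\Sigma^{-1/2}-I_d}_{\rm op}$. Since $\Sigma^{-1/2}\hat\Sigma\Sigma^{-1/2}$ is the sample Gram matrix of the isotropic vectors $Z_i=\Sigma^{-1/2}X_i$, the effective rank satisfies $\mathsf r(I_d)=d$. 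Both parts therefore reduce to operator-norm concentration of an isotropic sample Gram matrix.

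\emph{Part 1 (sub-Gaussian).} Under \ref{asmp:sg}, the $Z_i$ are isotropic with $\psi_2$-marginals bounded by $K_{\rm sg}$. The standard net argument \citep{Vershynin2012SampleCovariance} gives, for every $\delta\in(0,1)$,
\[
\Pb\Big(\norm{\hat Z-I_d}_{\rm op}\lesssim K_{\rm sg}^2\big(\sqrt{(d+\log(1/\delta))/n}+(d+\log(1/\delta))/n\big)\Big)\ge 1-\delta .
\]
I will take $\delta_n=n^{-1}$, so that $r_n^*\lesssim K_{\rm sg}^2\sqrt{(d+\log(en))/n}$. The condition $\Ck K_{\rm sg}^4 d\log^2(en)\le n$ then gives $(2r_n^*)^2\le e^{-2}$, and it also implies $64K_x^4d\log^2(en)/n\le e^{-2}$ once we use $K_x=K_{\rm sg}\sqrt{2eq_x}$ with $q_x=6$ (so $\bar q_x=6$). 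With these choices, \eqref{eq:thm:PM1:2} yields a bound of order $K_{\rm sg}^6(\bar\sigma_\omega\wedge1)^{-3}[n^{-1/2}+(d/n)^{1/3}]+2/n$. Both $n^{-1/2}$ and $1/n$ are dominated by $(d/n)^{1/3}$ because $d\ge1$, and $K_{\rm sg}^6/(\bar\sigma_\omega\wedge1)^3\ge1$, so the stated bound follows. Asymptotic normality follows from the qualitative part of Theorem~\ref{thm:PM1}(2), since $d\log^2(en)=o(n)$ forces $r_n^*\to0$.

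\emph{Part 2 (finite $q_x$ moments).} Here I will use Proposition~\ref{prop:samcov} in whitened form:
\[
r_n(\delta)\lesssim\sqrt{(d+\log(e/\delta))/n}+d\,n^{-(1-2/q_x)}\delta^{-2/q_x}, \qquad n\gtrsim d .
\]
Part 2 of Theorem~\ref{thm:PM1} requires $d\log^2(en)=o(n)$; this is implied by $d=o(n^{1-2/q_x})$ when $q_x>4$. It also requires $r_n^*\le(2e)^{-1}$, which we obtain by choosing $\delta_n\to0$ slowly enough that $d\,n^{-(1-2/q_x)}\delta_n^{-2/q_x}\to0$. This gives the asymptotic normality claim.

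For the Berry--Esséen bound the $\delta_n$ must be optimized, and this is the main obstacle. The natural route is to apply the bound \eqref{eq:thm:PM1:2} on the good event and pay $2\delta_n$ off it, with $\delta$ chosen so that the pilot-rate condition holds. That approach only forces $\delta\gtrsim (d/n^{1-2/q_x})^{q_x/2}$, which would give exponent $q_x/2$, not the stated $q_x/(q_x+2)$. The exponent $q_x/(q_x+2)$ suggests balancing $\delta$ against a term linear in $r_n(\delta)$, namely $\delta\asymp a\,\delta^{-2/q_x}$ with $a=d/n^{1-2/q_x}$, which gives $\delta\asymp a^{q_x/(q_x+2)}$. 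So I expect the actual proof to go through the general finite-$s$ bound \eqref{eq:thm:PM1:1} or its underlying Theorem~\ref{thm:normality-gevrey}. There the pilot error enters the KS bound linearly, through the term of order $\norm{f}R r_n/(\sigma\sqrt n)$ or the $(r_n)^{2s/3}$-type contribution. Making this precise means tracking how $r_n^*$ enters the proof of Theorem~\ref{thm:PM1}(2) beyond the eventual requirement $r_n^*\le(2e)^{-1}$.

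Once that linear dependence is in hand, the plan is to set $\delta_n=(d/n^{1-2/q_x})^{q_x/(q_x+2)}$ and verify that the sub-Gaussian component $\sqrt{(d+\log(en))/n}$ of $r_n^*$ is absorbed into $(d/n)^{1/3}+n^{1-\bar q_x/4}$. When $a$ is not small, the bound is trivial because the right-hand side exceeds $1$ for a suitable choice of $\Ck$. Together these give the displayed inequality.
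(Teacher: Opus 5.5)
Your Part~1 and both asymptotic-normality claims follow the paper's proof: a sample Gram pilot with $\delta_n=1/n$ and $q_x=6$ under \ref{asmp:sg}, the whitened Proposition~\ref{prop:samcov} under \ref{asmp:momentX}, and then Theorem~\ref{thm:PM1}(2). The gap is in the Part~2 Berry--Esseen bound. The ``main obstacle'' you identify is not an obstacle, and the detour you put in place of the direct argument rests on a false premise and is never carried out.

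Write $a=d/n^{1-2/q_x}$. You computed correctly what the direct route gives. Take $\delta_n=c_0a^{q_x/2}$ with $c_0=c_0(K_x,q_x)$ large enough that the heavy-tail part $a\delta_n^{-2/q_x}$ of $r_n^*$ is at most $(4e)^{-1}$. Then apply \eqref{eq:thm:PM1:2} and pay $2\delta_n$, which leaves the term $a^{q_x/2}$. You may assume $a\le c(K_x,q_x)\le 1$, since otherwise the claimed right-hand side exceeds $1$ once $\Ck$ is large. In that regime:
\begin{itemize}
\item $\log(2/\delta_n)\lesssim_{q_x}\log(en)$, because $a\ge n^{-(1-2/q_x)}$.
\item $d\log^2(en)/n\le a\,n^{-2/q_x}\log^2(en)\lesssim_{q_x}a$, so $r_n^*\le(2e)^{-1}$ and the dimension condition of Theorem~\ref{thm:PM1}(2) both hold.
\item $a^{q_x/2}\le a^{q_x/(q_x+2)}$, since $q_x/2\ge q_x/(q_x+2)$.
\end{itemize}
So the route you set aside already proves the stated inequality, and in fact a sharper one.

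The ``linear dependence'' on $r_n^*$ you plan to extract does not exist for this estimator. With $s_n=\lfloor\log(en)\rfloor$, the proof of Theorem~\ref{thm:PM1}(2) uses $r_n^*$ only through the requirement $r_n^*\le(2e)^{-1}$. Once that holds, $\sqrt n(2r_n^*)^{s_n+1}\le n^{-1/2}$ and $s_n\bigl(4(r_n^*)^2\vee 64K_x^4s_n^2d/n\bigr)^{s_n/3}\le 2n^{-1/2}$. This is why $r_n^*$ is absent from \eqref{eq:thm:PM1:2}. Your two proposed alternatives do not help either:
\begin{itemize}
\item \eqref{eq:thm:PM1:1} is for a fixed $s$, so it does not cover the estimator in the corollary.
\item The only term linear in $r_n$ in Theorem~\ref{thm:normality-gevrey} is of order $r_n/(\sigma_f\sqrt n)$, which is negligible and cannot produce an $a^{q_x/(q_x+2)}$ trade-off.
\end{itemize}

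The exponent $q_x/(q_x+2)$ in the paper comes only from its choice $\delta=a^{q_x/(q_x+2)}$, which equalizes $\delta$ and $a\delta^{-2/q_x}$. With that choice, $r_n^*\lesssim\sqrt{(d+\log(en))/n}+a^{q_x/(q_x+2)}$, which is at most $(2e)^{-1}$ whenever $a$ is small. Then \eqref{eq:thm:PM1:2} applies verbatim, with the extra term $2\delta_n=2a^{q_x/(q_x+2)}$, and the remaining regime is handled by enlarging $\Ck$. Your final choice of $\delta_n$ coincides with the paper's. The fix is to drop the ``once that linear dependence is in hand'' step and apply \eqref{eq:thm:PM1:2} directly.
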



\subsection{Estimation of Linear Regression Functional}
 Consider a regression tuple $(Y,X)\in\Real\times\Real^d$ where $Y$ is a scalar response and $X$ is a $d$-dimensional covariate vector. The vector of projection parameters \citep{bujaS1} is defined as the minimizer of the population $L_2$-loss:
\begin{equation}\label{eq:PP}
\beta := \argmin_{\gamma \in \Real^d} \Eb[(Y - X^\top \gamma)^2].
\end{equation}
Assume the joint distribution of $(Y, X)$ admits the second moments, and let $\Sigma = \Eb[XX^\top]$ and $\Gamma = \Eb[XY]$. If $\Sigma$ is invertible, the parameter $\beta$ is uniquely given by the functional
\begin{equation} \label{eq:regressionfunctional}
\beta := \beta(\Sigma, \Gamma) = \Sigma^{-1} \Gamma.
\end{equation}
In this section, we study on the estimation of the linear functional $\beta_\eta := \eta^\top \beta$ for an arbitrary direction $\eta \in \Real^d\setminus\{0_d\}$.

Suppose that $N=2n$ i.i.d.\ observations $\{(Y_i,X_i)\}_{i=1}^{2n}$ are available, and split the sample into two independent subsamples, $\Sc_1=\{(Y_i,X_i):i\in\{1,\ldots,n\}\}$ and $\Sc_2=\{(Y_i,X_i):i\in\{n+1,\ldots,2n\}\}$, each of size $n$. In addition to distribution assumptions \ref{asmp:momentX} and \ref{asmp:sg} on covariates $X$, we impose the following moment condition on the population residuals (termed by \cite{bujaS1}).

\begin{enumerate}[label=\textbf{(M)($\varepsilon$)},leftmargin=1.5cm]
    \item There exist constants $K_\varepsilon\geq1$ and $q_\varepsilon\geq 2$ such that $(\Eb[|Y-X^\top\beta|^{q_\varepsilon}])^{1/q_\varepsilon}\leq K_\varepsilon$.\label{asmp:momente}
\end{enumerate}

The map $(\Sigma,\Gamma)\mapsto \beta_\eta(\Sigma,\Gamma)$ is infinitely Fr\'echet differentiable and it belongs to the Gevrey class of order $1$. Precisely, let $\Bb = \Sb\Pb^d \oplus \Real^d$ be the product space of positive definite matrices and vectors. For $H = (A, B) \in \Bb$ and increments $h_j = (a_j, b_j) \in \Bb$ for $j=1, \ldots, k$, the symmetric $k$-linear form $\Dc^k \beta_\eta(H): \Bb^{\otimes k} \to \Real$ is given by
\begin{equation} \label{eq:regressionfunctional_derivative}
\Dc^k \beta_\eta(H)[h_1, \dots, h_k] = (-1)^k \sum_{\pi\in\Sk_k} \eta^\top A^{-1} a_{\pi(1)} A^{-1} \cdots A^{-1} (a_{\pi(k)} A^{-1}B - b_{\pi(k)}).
\end{equation}

We assume that there exist estimators $(\tilde\Sigma,\tilde\Gamma)$ with the following concentration guarantees.

\begin{enumerate}[label=\textbf{(P2)}]
    \item\label{asmp:SigmapilotGammapilot} There exist estimators $\tilde\Sigma:=\tilde\Sigma(X_1,\ldots,X_n)\in\Real^{d\times d}$ and $\tilde\Gamma:=\tilde\Gamma(X_1,\ldots,X_n)\in\Real^d$, non-increasing functions $r_{1,n},r_{2,n}:(0,1)\to\Real^+$, and sequences $\delta_{1,n},\delta_{2,n}\in(0,1)$ with $\delta_{1,n}\vee\delta_{2,n}\to0$ as $n\to\infty$ such that
    \begin{equation*}
        \Pb\bigg(\norm{\Sigma^{-1/2}\tilde\Sigma\Sigma^{-1/2}-I_d}_{\rm op}\geq r_{1,n}(\delta_{1,n})\bigg)\leq \delta_{1,n},\quad \Pb\bigg(\norm{\tilde\Gamma-\tilde\Sigma\beta}_{\Sigma^{-1}}\geq r_{2,n}(\delta_{2,n})\bigg)\leq \delta_{2,n}.
    \end{equation*}
\end{enumerate}
For $\Sc_2$-measurable estimators $\tilde\Sigma_2 = \tilde\Sigma(X_{n+1},\ldots,X_{2n})$, $\tilde\Gamma_2=\tilde\Gamma(X_{n+1},\ldots,X_{2n})$ and an integer sequence $\{s_n\ge 1\}$, we define an estimator $\tilde\beta(\Sc_1,\Sc_2)$ as
\begin{equation*}
\eta^\top \tilde\beta(\Sc_1,\Sc_2) = \eta^\top \tilde\Sigma_2^{-1} \tilde\Gamma_2 + \sum_{k=1}^{s_n} \frac{\Dc^k \beta_\eta(\tilde\Sigma_2, \tilde\Gamma_2)[\bar U^{(k)}(\tilde\Sigma_2, \tilde\Gamma_2)]}{k!},
\end{equation*} where, for $k\ge 1$, the tensor $U$-statistic component is 
\begin{align*}
    \bar U^{(k)}(\tilde\Sigma_2, \tilde\Gamma_2)= \binom{n}{k}^{-1}\sum_{1\leq j_1<\ldots<j_k\leq n}(X_{j_1}X_{j_1}^\top-\tilde\Sigma_2,X_{j_1}Y_{j_1}-\tilde\Gamma_2)\otimes_{\rm s}\cdots\\
    \cdots\otimes_{\rm s} (X_{j_k}X_{j_k}^\top-\tilde\Sigma_2,X_{j_k}Y_{j_k}-\tilde\Gamma_2).
\end{align*} We denote the cross fitted estimator as
\begin{equation}\label{eq:PPestimate}
    \tilde\beta=\frac{1}{2}\big(\tilde\beta(\Sc_1,\Sc_2)+\tilde\beta(\Sc_2,\Sc_1)\big).
\end{equation}
Let $\sigma_\beta^2=\eta^\top\Sigma^{-1}H\Sigma^{-1}\eta$ for $H={\rm Var}\big(X(Y-X^\top\beta)\big)$ and denote the normalized variance  as $\bar\sigma_\beta^2:=\sigma_\beta^2/\|\eta\|_{\Sigma^{-1}}^2$. The matrix $\Sigma^{-1}H\Sigma^{-1}$ is the usual sandwich variance, which is efficient in nonparametric linear regression \citep{White1980HC0,bujaS1}.
 Theorem below provides the mild sufficient conditions for the asymptotic normality of a linear contrast $\eta^\top\tilde\beta$: for any $\eta\in\Real^d\setminus\{0_d\}$,
\begin{equation}\label{eq:asympnorm.PP}
    \frac{\sqrt{N}(\eta^\top\tilde\beta-\eta^\top\beta)}{\sigma_\beta}\xrightarrow{D}\Nc(0,1),\quad \mbox{as}\quad N\to\infty,
\end{equation}together with explicit bounds on the KS-distance:
\begin{equation*}
    {\rm BE}_n(\eta^\top\tilde\beta):=\sup_{t\in\Real}\bigg|\Pb\bigg(\frac{\sqrt{N}(\eta^\top\tilde\beta-\eta^\top\beta)}{\sigma_\beta}\leq t\bigg)-\Phi(t)\bigg|.
\end{equation*}

\begin{theorem}\label{thm:PP1} Suppose that Assumption~\ref{asmp:momentX} and ~\ref{asmp:momente} hold with $q_{x\varepsilon}:=(1/q_x + 1/q_\varepsilon)^{-1}\geq 2$, and that \ref{asmp:SigmapilotGammapilot} holds. Let $r_{1,n}^* = r_{1,n}(\delta_{1,n})$ and $r_{2,n}^* = r_{2,n}(\delta_{2,n})$ for $n\geq 1$.
\begin{enumerate}
    \item Let $\tilde\beta$ be the estimator in \eqref{eq:PPestimate} with a fixed $s_n=s\geq 1$. If $d=o(n)$, $\sqrt{n}(r_{1,n}^*)^sr_{2,n}=o(1)$, and $(r_{1,n}^*)^{s-1}r_{2,n}=o(1)$, then $\tilde\beta$ satisfies \eqref{eq:asympnorm.PP}. Moreover, let $\bar q_{x\varepsilon}=q_{x\varepsilon}\wedge 3$. Then, there exists a constant $\Ck = \Ck(s)>0$ such that
    \begin{align*}
        {\rm BE}_n(\eta^\top\tilde\beta)&\leq \frac{\Ck\,(K_xK_\varepsilon)^{\bar q_{x\varepsilon}}}{(\bar\sigma_\beta\wedge 1)^{\bar q_{x\varepsilon}}}\bigg[\frac{1}{n^{\bar q_{x\varepsilon}/2-1}}+\bigg(\frac{d}{n}\bigg)^{1/3}\\
        &\qquad+\sqrt{n}(r_{1,n}^*)^sr_{2,n}+(r_{1,n}^*\vee r_{2,n}^*)^{2/3}(r_{1,n}^*)^{2s/3-2/3}\bigg]+\delta_{1,n} + \delta_{2,n}.
    \end{align*}
    \item Let $\tilde\beta$ be the estimator in \eqref{eq:PPestimate} with $s_n=\lfloor\log(en)\rfloor.$ If $d\log^2(en)=o(n)$ and $(2er_{1,n}^*)\vee r_{2,n}^*\leq 1$ eventually, then $\tilde\beta$ satisfies \eqref{eq:asympnorm.PP}. Moreover, if $(2r_{1,n}^*)^2\vee (8K_x^4d\log^2(en))/n\leq e^{-2}$ and $r_{2,n}^*\leq 1$, then there exists a universal constant $\Ck>0$ such that
    \begin{equation*}
        {\rm BE}_n(\eta^\top\tilde\beta)\leq \frac{\Ck\,(K_xK_\varepsilon)^{\bar q_{x\varepsilon}}}{(\bar\sigma_\beta\wedge 1)^{\bar q_{x\varepsilon}}}\bigg[\frac{1}{n^{\bar q_{x\varepsilon}/2-1}}+\bigg(\frac{d}{n}\bigg)^{1/3}\bigg]+\delta_{1,n} + \delta_{2,n}.
    \end{equation*}
\end{enumerate}
\end{theorem}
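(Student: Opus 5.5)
The plan is to deduce Theorem~\ref{thm:PP1} from the general Berry--Esséen results, Theorem~\ref{thm:normality-msmooth} for part 1 and Theorem~\ref{thm:normality-gevrey} for part 2. The data are $W=(XX^\top,XY)\in\Bb=\Sb\Pb^d\oplus\Real^d$, the target is $f=\beta_\eta$, and the pilots are $(\tilde\Sigma_j,\tilde\Gamma_j)$. First I would reduce to whitened coordinates. Replace $X$ by $Z=\Sigma^{-1/2}X$ and $\eta$ by $\Sigma^{-1/2}\eta/\norm{\eta}_{\Sigma^{-1}}$, so that $\Sigma=I_d$. Next I would reparametrize the vector component by the residual moment, $\Gamma\mapsto\Gamma-\Sigma\beta$, i.e. take $W=(ZZ^\top-I,\,Z\varepsilon)$ with $\varepsilon=Y-X^\top\beta$. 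The estimator is equivariant under this affine change, because $\Dc^k\beta_\eta$ in \eqref{eq:regressionfunctional_derivative} is built from $A^{-1}$ and the combination $a A^{-1}B-b$. In this frame \ref{asmp:SigmapilotGammapilot} states exactly that $\norm{\tilde\Sigma-I}_{\rm op}\le r_{1,n}^*$ and $\norm{\tilde\Gamma-\tilde\Sigma\beta}_2\le r_{2,n}^*$ with high probability. I would equip $\Bb$ with the norm $\norm{(a,b)}=\norm{a}_{\rm op}\vee\norm{b}_2$.

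Second, I need the moment quantities. Under \ref{asmp:momentX} and \ref{asmp:momente}, Hölder's inequality with $q_{x\varepsilon}\ge2$ gives $\nu\lesssim K_x^2(1\vee K_\varepsilon^2)$. For $V$, the operator norm is the problem: $\Eb\norm{ZZ^\top-I}_{\rm op}^2$ scales like $d^2$, which is too large. So I would not apply the generic bound verbatim. Instead I would reuse the structure of its proof, namely the decomposition of Proposition~\ref{prop:1.1} plus the conditional degeneracy given $\Sc_2$, and bound the $k$-th order degenerate $U$-statistic variances directly. The derivative is a product of matrices, so a typical term of $\Dc^k\beta_\eta(I,\beta)[W_{j_1},\dots,W_{j_k}]$ has the form $\eta^\top (Z_{j_1}Z_{j_1}^\top-I)\cdots(Z_{j_k}\varepsilon_{j_k})$. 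Its second moment telescopes through the Gram structure to $\lesssim (C K_x^4 d)^{k-1}K_x^2K_\varepsilon^2$. This plays the role of $\nu(V/n)^{k-1}$ with effective $V\asymp K_x^4 d$, and it explains the hypothesis $64K_x^4d\log^2(en)/n$-type conditions. Summing over $k\le s$, or over $k\le\log(en)$ with the $k!$ from the permutation sum cancelling $1/k!$, gives the $(d/n)$ contributions.

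Third, the remainder. The Lipschitz and Gevrey constants satisfy $\norm{\Dc^k\beta_\eta(H)}_{\rm S}\lesssim k!\lambda_{\min}(A)^{-k-1}(1+\norm{A^{-1}B})$ on the ball $\norm{A-I}_{\rm op}\le 1/2$, which gives $\alpha=1$ with $R\asymp 2$. The remainder $\mathrm{Rem}_s$ is sharper than the generic $r_n^m$ bound because $\beta_\eta$ is affine in $B$. The Taylor remainder of order $s+1$ is therefore at most linear in the $\Gamma$-error $\tilde\Gamma-\tilde\Sigma\beta$ and of order $s$ in $\tilde\Sigma-I$. Expanding $\eta^\top\tilde\Sigma^{-1}\tilde\Gamma-\eta^\top\beta=\eta^\top\tilde\Sigma^{-1}(\tilde\Gamma-\tilde\Sigma\beta)$ and applying a Neumann series in $\tilde\Sigma-I$ yields the bias $\sqrt n(r_{1,n}^*)^s r_{2,n}^*$. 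The conditional-variance remainder yields $(r_1\vee r_2)^{2/3}r_1^{2(s-1)/3}$. For part 2, with $s=\lfloor\log(en)\rfloor$ and $2e r_1\le1$, this bias is $\le n^{1/2}e^{-\log(en)}\lesssim n^{-1/2}$, so it vanishes.

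Finally, for the leading term I would apply the classical Berry--Esséen inequality, or the Bikelis form under $\bar q_{x\varepsilon}$ moments, to $\Dc^1\beta_\eta[\bar W_N-\theta]=-\eta^\top\Sigma^{-1}\bar{(X\varepsilon)}_N$, whose variance is $\sigma_\beta^2$. The remaining error terms are then absorbed by the perturbation (Chebyshev plus $\epsilon$-smoothing) step used in Theorem~\ref{thm:normality-msmooth}, which produces the cube-root exponents. I expect the main obstacle to be the second step: obtaining the $(K_x^4d)^{k-1}$ variance bound for the high-order degenerate $U$-statistics with only fourth moments, uniformly with constants $C^k$ rather than $(k!)^c$. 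This is needed so that $s\asymp\log n$ still leads only to $d\log^2(en)=o(n)$.
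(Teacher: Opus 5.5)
Your proposal is correct and is essentially the paper's argument. The proof of Theorem~\ref{thm:PP1} likewise does not invoke Theorems~\ref{thm:normality-msmooth} or \ref{thm:normality-gevrey}. Instead it works directly with the decomposition of Proposition~\ref{prop:1.1}, in effectively the whitened/residual coordinates you describe (via $\norm{\tilde\Gamma-\tilde\Sigma\beta}_{\Sigma^{-1}}$ and $\gamma_i=M_i\beta+X_i\varepsilon_i$). It bounds the leading and remainder degenerate $U$-statistic variances by $(K_x^4d)^{k-1}K_x^2K_\varepsilon^2$-type quantities (Theorems~\ref{thm:B3}--\ref{thm:B4}), bounds the conditional bias by $2^{s+2}(r_{1,n}^*)^{s}r_{2,n}^*$, and combines a Bikelis-type bound for the linear term with Lemma~\ref{lem:BElemma}.

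The step you flag as the main obstacle is done in the paper via the graph-based two-chain moment bounds of Theorems~\ref{thm:C2}--\ref{thm:C4}, but your ``telescoping'' idea already suffices. Bound each single chain by conditioning successively on its later factors, using $\Eb\norm{(ZZ^\top-I)v}_2^2\le K_x^4d\norm{v}_2^2$ and $\Eb[(c^\top(ZZ^\top-I)v)^2]\le K_x^4\norm{c}_2^2\norm{v}_2^2$ for fixed $c,v$, where $Z=\Sigma^{-1/2}X$. Then apply Minkowski over the $k!$ permutations. This gives a bound no worse than the paper's $(k!)^2 2^{k-1}K_x^{4k-2}K_\varepsilon^2d^{k-1}$, using only fourth moments of $Z$ and $q_{x\varepsilon}\ge 2$ for the $Z\varepsilon$ endpoint.
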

Since $\bar\sigma_\beta^2\ge \lambda_{\min}(\Sigma^{-1/2}H\Sigma^{-1/2})$, Theorem~\ref{thm:PP1} implies uniform Gaussian approximation over $\eta\in\Real^d\setminus\{0\}$ under additional spectral condition. The following corollary specializes the result to method-of-moments pilots. We let $\hat\Gamma_1=n^{-1}\sum_{i=1}^{n}X_iY_i$ and $\hat\Gamma_2=n^{-1}\sum_{i=n+1}^{2n}X_iY_i$.

\begin{corollary}\label{cor:PP1} Let $\tilde\beta$ be the estimator in \eqref{eq:PPestimate} with $(\tilde\Sigma_1,\tilde\Gamma_1,\tilde\Sigma_2,\tilde\Gamma_2)=(\hat\Sigma_1,\hat\Gamma_1,\hat\Sigma_2,\hat\Gamma_2)$ and $s_n=\lfloor\log(en)\rfloor.$ 
\begin{enumerate}
    \item Suppose that Assumption~\ref{asmp:momentX} and \ref{asmp:momente} hold with $q_x>4$ and $q_{x\varepsilon}>2.$ If $d=o(n^{1-2/q_x})$ then the asymptotic normality~\eqref{eq:asympnorm.PP} holds. Moreover, there exists a constant $\Ck = \Ck(K_x,K_\varepsilon,q_x,q_\varepsilon)>0$ such that 
    \begin{equation*}
        {\rm BE}_n(\eta^\top\tilde\beta)\leq \frac{\Ck}{(\bar\sigma_\beta\wedge 1)^{\bar q_{x\varepsilon}}}\bigg[\frac{1}{n^{\bar q_{x\varepsilon}/2-1}}+\bigg(\frac{d}{n}\bigg)^{1/3}+\bigg(\frac{d}{n^{1-2/q_x}}\bigg)^{\frac{q_x}{q_x+2}}\bigg].
    \end{equation*}
    \item Suppose that Assumption~\ref{asmp:sg} and \ref{asmp:momente} hold with $q_{\varepsilon}>2.$ If $d\log^2(en) = o(n)$, then the asymptotic normality~\eqref{eq:asympnorm.PP} holds. Moreover, for any $\Delta\in(0,q_{\varepsilon}-2)\cap(0,1]$, there exists a constant $\Ck = \Ck(q_\varepsilon,\Delta)>0$ such that if $\Ck K_{\rm sg}^4d\log^2(en)\leq n$, then
    \begin{equation*}
        {\rm BE}_n(\eta^\top\tilde\beta)\leq \frac{\Ck\,(K_{\rm sg}K_\varepsilon)^{2+\Delta}}{(\bar\sigma_\beta\wedge 1)^{2+\Delta}}\bigg[\frac{1}{n^{\Delta/2}}+\bigg(\frac{d}{n}\bigg)^{1/3}\bigg].
    \end{equation*}
\end{enumerate}
\end{corollary}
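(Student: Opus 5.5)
The plan is to reduce Corollary~\ref{cor:PP1} to Theorem~\ref{thm:PP1}. With the method-of-moments pilots, the only work is to verify \ref{asmp:SigmapilotGammapilot}, namely to find explicit rates $r_{1,n}^*,r_{2,n}^*$ and failure probabilities $\delta_{1,n},\delta_{2,n}$, and then feed them into the Berry--Esséen bound of part~2 of Theorem~\ref{thm:PP1} with $s_n=\lfloor\log(en)\rfloor$.

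\emph{Step 1: the Gram-matrix pilot.} Work in the whitened coordinates $Z=\Sigma^{-1/2}X$, so that $\Eb[ZZ^\top]=I_d$ and $\norm{\Sigma^{-1/2}\hat\Sigma_2\Sigma^{-1/2}-I_d}_{\rm op}=\norm{n^{-1}\sum_i Z_iZ_i^\top-I_d}_{\rm op}$.
\begin{itemize}
\item Under \ref{asmp:sg}, use the standard sub-Gaussian covariance bound \citep{Vershynin2012SampleCovariance} with $\delta_{1,n}=1/n$. This gives $r_{1,n}^*\lesssim K_{\rm sg}^2\sqrt{(d+\log(en))/n}$.
\item Under \ref{asmp:momentX} with $q_x>4$, use Proposition~\ref{prop:samcov} in whitened form, where $r(I_d)=d$. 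This gives $r_{1,n}(\delta)\lesssim\sqrt{(d+\log(e/\delta))/n}+d\,n^{-(1-2/q_x)}\delta^{-2/q_x}$.
\end{itemize}

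\emph{Step 2: the cross-moment pilot.} Note that $\hat\Gamma_2-\hat\Sigma_2\beta=n^{-1}\sum_i X_i\varepsilon_i$ with $\varepsilon=Y-X^\top\beta$, which is a mean-zero sum because $\Eb[X\varepsilon]=0$. Its second moment is
\[
\Eb\norm{\hat\Gamma_2-\hat\Sigma_2\beta}_{\Sigma^{-1}}^2=n^{-1}\Eb[\norm{Z}^2\varepsilon^2].
\]
Bound $\Eb[\norm{Z}^2\varepsilon^2]=\sum_j\Eb[Z_j^2\varepsilon^2]$ by Hölder using $q_{x\varepsilon}\ge2$; this is $\lesssim dK_x^2K_\varepsilon^2$. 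Chebyshev then gives $r_{2,n}(\delta)\lesssim K_xK_\varepsilon\sqrt{d/(n\delta)}$. The condition $r_{2,n}^*\le1$ in Theorem~\ref{thm:PP1} only needs $r_{2,n}^*=O(1)$, so we may take $\delta_{2,n}=(d/n)^{1/3}$, which gives $r_{2,n}^*\lesssim (d/n)^{1/6}\to0$.

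\emph{Step 3: plug into Theorem~\ref{thm:PP1}.}
\begin{itemize}
\item \textbf{Sub-Gaussian case.} When $\Ck K_{\rm sg}^4 d\log^2(en)\le n$, we have $(2r_{1,n}^*)^2\le e^{-2}$ and the required $d\log^2(en)/n$ condition holds. We also use $K_x=K_{\rm sg}\sqrt{2eq_x}$ with $q_x$ chosen large enough that $q_{x\varepsilon}\ge 2+\Delta$. Theorem~\ref{thm:PP1}(2) then yields the stated bound, with $\bar q_{x\varepsilon}$ replaced by $2+\Delta$. The additive terms are $\delta_{1,n}=1/n$ and $\delta_{2,n}\lesssim(d/n)^{1/3}$, both absorbed.
\item \textbf{Heavy-tailed case.} Part~2 of Theorem~\ref{thm:PP1} requires $2er_{1,n}^*\le1$, while $\delta_{1,n}$ appears additively. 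Balance $d n^{-(1-2/q_x)}\delta^{-2/q_x}\asymp1$ against $\delta$ by choosing $\delta_{1,n}=C(d/n^{1-2/q_x})^{q_x/(q_x+2)}$, with the exponent chosen so that $r_{1,n}^*$ stays below a small constant. This produces the extra term $(d/n^{1-2/q_x})^{q_x/(q_x+2)}$.
\end{itemize}
Asymptotic normality follows in both cases because every term in the bound vanishes under the stated dimension regime.

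\textbf{Main obstacle.} The delicate step is the heavy-tailed choice of $\delta_{1,n}$: the $\delta^{-2/q_x}$ dependence in Proposition~\ref{prop:samcov} must be balanced precisely to recover the exponent $q_x/(q_x+2)$. We must also check that $d\log^2(en)\lesssim n$ is implied by $d=o(n^{1-2/q_x})$, which holds since $n^{-2/q_x}\log^2(en)\to0$.
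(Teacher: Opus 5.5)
Your proposal is correct. Apart from one step it follows the paper. The reduction to part~2 of Theorem~\ref{thm:PP1} matches the paper, and so do both Gram-matrix pilots: the sub-Gaussian bound with $\delta_{1,n}=1/n$, and Proposition~\ref{prop:samcov} with $\delta_{1,n}\asymp(d/n^{1-2/q_x})^{q_x/(q_x+2)}$.

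The genuine difference is the $\Gamma$-pilot. The paper applies the Einmahl--Li Fuk--Nagaev-type inequality to $\Sigma^{-1/2}X_i\varepsilon_i$ with $\delta=n^{1-q_{x\varepsilon}/2}$. This gives the sharp rate $r_{2,n}^*\asymp K_xK_\varepsilon\sqrt{(d+\log(en))/n}$ in \eqref{cond:samgamma}. Its failure probability is absorbed into the Berry--Esséen term $n^{-(\bar q_{x\varepsilon}/2-1)}$ (resp.\ $n^{-\Delta/2}$).

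You use only $\Eb\norm{\hat\Gamma_2-\hat\Sigma_2\beta}_{\Sigma^{-1}}^2\le K_x^2K_\varepsilon^2d/n$ and Chebyshev. This works because, with $s_n=\lfloor\log(en)\rfloor$, the proof of Theorem~\ref{thm:PP1}(2) uses $r_{2,n}^*$ only through $r_{2,n}^*\le 1$. The price is $\delta_{2,n}\asymp(d/n)^{1/3}$, which is absorbed into the existing $(d/n)^{1/3}$ term instead.

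What each route buys:
\begin{itemize}
\item Your route is more elementary: it needs no Banach-space Fuk--Nagaev bound, and only $q_{x\varepsilon}\ge 2$ is used at this step.
\item The paper's sharper bound is what the fixed-$s$ regime of Theorem~\ref{thm:PP1}(1) would require, since $\sqrt n(r_{1,n}^*)^s r_{2,n}$ enters there. It is also reused in Corollary~\ref{cor:PP2}.
\end{itemize}

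Minor slips:
\begin{itemize}
\item Chebyshev with $\delta_{2,n}=(d/n)^{1/3}$ gives $r_{2,n}^*\lesssim K_xK_\varepsilon(d/n)^{1/3}$, not $(d/n)^{1/6}$. This is harmless.
\item Theorem~\ref{thm:PP1}(2) needs $r_{2,n}^*\le 1$ exactly, not merely $O(1)$. Either put the factor $K_x^2K_\varepsilon^2$ into $\delta_{2,n}$, or use the usual ``otherwise the bound is trivial'' argument.
\item The exponent $q_x/(q_x+2)$ comes from setting $d\,n^{-(1-2/q_x)}\delta^{-2/q_x}$ equal to $\delta$ itself, as in \eqref{cond:samcov}, not to a constant. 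Setting it equal to a small constant is also admissible, since only $2er_{1,n}^*\le1$ is needed, and would even give the smaller term $(d/n^{1-2/q_x})^{q_x/2}$.
\end{itemize}
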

Under sub-Gaussian covariates, the method-of-moments pilots suffice for asymptotic normality of $\eta^\top\tilde\beta$ provided that $q_\varepsilon>2$ and $d\log^2(en)=o(n)$. If $q_\varepsilon>3$, the Berry--Esséen bound scales as $\frac{1}{\sqrt{n}}+(\frac{d}{n})^{1/3}$. Under Assumption~\ref{asmp:momentX} alone, this construction requires $d=o(n^{1-2/q_x})$, which matches the maximal regime in which the sample Gram matrix is consistent in operator norm (Proposition~\ref{prop:samcov} and \ref{prop:sampcov.opt}). It is nevertheless possible to obtain asymptotic normality over a near-full dimensional range by using an information-theoretic pilot for $\Sigma$. Recall the robust matrix estimator $\widehat{\mathsf{E}}_{1}$ in \eqref{eq:pilot_OR}. We set $\tilde\Sigma_2=\widehat{\mathsf{E}}_{1}(X_{n+1},\ldots,X_{2n})$ and $\tilde\Gamma_2=\tilde\Sigma_2\hat\beta_2$, where $\hat\beta_2=\hat\Sigma_2^{-1}\hat\Gamma_2$ is the ordinary least squares estimator computed from $\Sc_2$. The estimators $(\tilde{\Sigma}_1, \tilde{\Gamma}_1)$ are defined analogously using the observations in $\mathcal{S}_1$.

\begin{corollary}\label{cor:PP2}
    There exists an estimator $\tilde\beta^\dagger=\tilde\beta^\dagger(Y_1,X_1,\ldots,Y_{2n},X_{2n})$ such that the asymptotic normality~\eqref{eq:asympnorm.PP} holds if $d\log^2(en) = o(n)$. Moreover, if $(8K_x^4d\log^2(en))/n\leq e^{-2}$, then there exists a constant $\Ck=\Ck(q_{xy})>0$ such that
    \begin{equation*}
        {\rm BE}_n(\eta^\top\tilde\beta)\leq \frac{\Ck\,(K_xK_\varepsilon)^{\bar q_{x\varepsilon}}}{(\bar\sigma_\beta\wedge 1)^{\bar q_{x\varepsilon}}}\bigg[\frac{1}{n^{\bar q_{x\varepsilon}/2-1}}+\bigg(\frac{d}{n}\bigg)^{1/3}\bigg].
    \end{equation*}
\end{corollary}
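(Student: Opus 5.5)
The plan is to derive Corollary~\ref{cor:PP2} from part~2 of Theorem~\ref{thm:PP1}. We take $s_n=\lfloor\log(en)\rfloor$ and the pilots $\tilde\Sigma_j=\widehat{\mathsf{E}}_1(\Sc_j)$ and $\tilde\Gamma_j=\tilde\Sigma_j\hat\beta_j$, where $\hat\beta_j$ is the OLS estimator on $\Sc_j$. Most of the work is checking Assumption~\ref{asmp:SigmapilotGammapilot} with $r_{1,n}^*\le(2e)^{-1}$, $r_{2,n}^*\le 1$, and $\delta_{1,n},\delta_{2,n}\lesssim n^{-1}$ (up to polynomially small terms) that can be absorbed into the Berry--Esséen bound.

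\textbf{Step 1: the $\Sigma$-pilot.} Apply \eqref{eq:pilot_OR} to the whitened vectors $\Sigma^{-1/2}X$, using affine equivariance, or equivalently apply it with $\mathsf r(I_d)=d$. This gives, with probability at least $1-1/n$,
\[
\norm{\Sigma^{-1/2}\tilde\Sigma\Sigma^{-1/2}-I_d}_{\rm op}\lesssim K_x^2\sqrt{(d+\log(en))/n}.
\]
So we may take $r_{1,n}^*\asymp K_x^2\sqrt{d/n}$ and $\delta_{1,n}=1/n$. The hypothesis $8K_x^4d\log^2(en)/n\le e^{-2}$, with the universal constant adjusted as needed, gives $2er_{1,n}^*\le1$.

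\textbf{Step 2: the $\Gamma$-pilot.} Write
\[
\tilde\Gamma-\tilde\Sigma\beta=\tilde\Sigma(\hat\beta-\beta)=\tilde\Sigma\hat\Sigma^{-1}\cdot\frac1n\sum_i X_i\varepsilon_i,
\]
where $\varepsilon_i=Y_i-X_i^\top\beta$. In the $\Sigma^{-1}$-norm this is bounded by
\[
\norm{\Sigma^{-1/2}\tilde\Sigma\Sigma^{-1/2}}_{\rm op}\,\norm{\Sigma^{1/2}\hat\Sigma^{-1}\Sigma^{1/2}}_{\rm op}\,\Big\|\frac1n\sum_i\Sigma^{-1/2}X_i\varepsilon_i\Big\|_2 .
\]
\begin{itemize}
\item The first factor is at most $1+r_{1,n}^*\le2$.
\item The third factor has second moment $n^{-1}\Eb\norm{\Sigma^{-1/2}X\varepsilon}^2\lesssim dK_x^2K_\varepsilon^2/n$, by Hölder with $q_{x\varepsilon}\ge2$. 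Chebyshev then makes it $o(1)$ with probability $1-\delta_{2,n}$, with $\delta_{2,n}\to0$ slowly.
\item The middle factor needs only a lower bound on $\lambda_{\min}(\Sigma^{-1/2}\hat\Sigma\Sigma^{-1/2})$. This is the one-sided small-ball direction, which holds under fourth moments: the lower tail of the sample Gram matrix requires no heavy-tail control (Oliveira / Koltchinskii--Mendelson type lower bound). It gives $\lambda_{\min}\ge1/2$ with probability $1-e^{-cn}$ once $d\lesssim n/K_x^4$.
\end{itemize}
Hence $r_{2,n}^*\le1$ eventually.

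\textbf{Step 3: conclusion.} Plug these into Theorem~\ref{thm:PP1}(2). The asymptotic normality statement follows because $d\log^2(en)=o(n)$ and all $\delta$'s vanish.

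For the explicit bound, the main obstacle is that $\delta_{2,n}$ obtained from Chebyshev is not $O(n^{-1})$ and is not dominated by the displayed rate. To handle this, I would exploit that condition $r_{2,n}^*\le1$ only needs a constant threshold. Use Chebyshev at level $\delta_{2,n}\asymp dK_x^2K_\varepsilon^2/n\le(d/n)^{1/3}$ (since $d/n\le1$). This is absorbed into the $(d/n)^{1/3}$ term once the prefactor $(K_xK_\varepsilon)^{\bar q_{x\varepsilon}}/(\bar\sigma_\beta\wedge1)^{\bar q_{x\varepsilon}}\ge1$ is accounted for. Adding $\delta_{1,n}=1/n\le n^{-(\bar q_{x\varepsilon}/2-1)}$ and the exponentially small Gram failure probability yields the stated bound.
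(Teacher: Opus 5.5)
Your proposal is correct and follows the paper's proof in outline. You use the same pilots $\tilde\Sigma_j=\widehat{\mathsf{E}}_1(\Sc_j)$ and $\tilde\Gamma_j=\tilde\Sigma_j\hat\beta_j$, the same factorization $\tilde\Gamma-\tilde\Sigma\beta=\tilde\Sigma\hat\Sigma^{-1}(\hat\Gamma-\hat\Sigma\beta)$, Oliveira's lower-tail bound for $\lambda_{\min}(\Sigma^{-1/2}\hat\Sigma\Sigma^{-1/2})$, and then part~2 of Theorem~\ref{thm:PP1}.

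The one real difference is how you control $\|\hat\Gamma-\hat\Sigma\beta\|_{\Sigma^{-1}}=\|n^{-1}\sum_i\Sigma^{-1/2}X_i\varepsilon_i\|_2$.
\begin{itemize}
\item The paper uses the Einmahl--Li Fuk--Nagaev bound \eqref{cond:samgamma}. This gives a genuine rate $r_{2,n}^*\asymp K_xK_\varepsilon\sqrt{(d+\log(en))/n}$ with failure probability $n^{1-q_{x\varepsilon}/2}$, absorbed into the CLT term $n^{-(\bar q_{x\varepsilon}/2-1)}$.
\item You note that with $s_n=\lfloor\log(en)\rfloor$, Theorem~\ref{thm:PP1}(2) only needs $r_{2,n}^*\le 1$. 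So Chebyshev at a constant threshold suffices, and its failure probability $\lesssim K_x^2K_\varepsilon^2d/n\le(K_xK_\varepsilon)^{\bar q_{x\varepsilon}}(d/n)^{1/3}$ is absorbed into the dimension term instead.
\end{itemize}
Your route is more elementary. It also covers the boundary case $q_{x\varepsilon}=2$ in the asymptotic-normality claim, where the paper's $\delta_{2,n}=n^{1-q_{x\varepsilon}/2}$ does not vanish. The paper's route instead gives rate information on the $\Gamma$-pilot, which would be needed, for example, in the fixed-$s$ regime of Theorem~\ref{thm:PP1}(1).

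One caveat applies to both arguments. Your Step~1 whitening needs $\widehat{\mathsf{E}}_1$ to be affine equivariant, or a relative-error version of \eqref{eq:pilot_OR}. Neither follows from \eqref{eq:pilot_OR} as stated, so you should check it for the Oliveira--Rico construction. The paper's corresponding step, in the proof of Corollary~\ref{cor:PM1}, uses $\|\Sigma^{-1/2}\widehat{\mathsf{E}}_1\Sigma^{-1/2}-I_d\|_{\rm op}\le\|\Sigma\|_{\rm op}^{-1}\|\widehat{\mathsf{E}}_1-\Sigma\|_{\rm op}$. The valid constant there is $\lambda_{\min}(\Sigma)^{-1}$, so the paper's step needs the same property to avoid a condition-number factor.
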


\subsection{Computational Relaxation under Product Structure of Derivatives}\label{sec:comrelax}

Direct computation of our estimator can be prohibitively expensive because evaluating $U$-statistics of order $k$ naively requires averaging over $\binom{n}{k}$ subsets for each $k$, and explicit formulas for $\Dc^k f(\cdot)$ often introduce additional sums over permutations of $[k]$ through noncommutative products.

In this section, we identify a structural condition on the Frech\'et differentials, common to functionals of matrices and operators, enabling scalable computation. Suppose that $\Bb$ is a unital Banach algebra, which is closed under the left-multiplication, with submultiplicative norm: $\norm{ab}\leq \norm{a}\norm{b}$ for $a,b\in\Bb$. We consider $m$-smooth functionals $f$ whose Frechet differentials obeys the following structure.
\begin{assumption}\label{asmp:f_product_structure}
    For $s=\lceil m\rceil-1\geq1$ and each $k\in[s]$ there exists $T_k(\cdot):\Bb\to\Lc^k(\Bb)$ such that for all $x\in\Theta$ and $h_1,\dots,h_k\in\Bb$,
    \begin{equation}\label{eq:f_product_structure}
        \Dc^k f(x)[h_1,\ldots,h_k] = \sum_{\sigma\in\Sk_k} T_k(x)[h_{\sigma(1)},\ldots,h_{\sigma(k)}],
    \end{equation} where $T_k(x)[h_1,\ldots,h_k]=\Lambda_{k}(x)\!\big(
G_{k,0}(x)\,h_1\,G_{k,1}(x)\cdots h_k\,G_{k,k}(x)\big)$, for some $\Lambda_{k}:\Bb\to\Bb^*$, and maps
$G_{k,j}:\Theta\to \Bb$ for $j=0,1,\dots,k$.
\end{assumption}

In addition to the precision-matrix and regression functionals discussed in Section~\ref{sec:appl}, Assumption~\ref{asmp:f_product_structure} encompasses several canonical examples in matrix algebras. For instance, the matrix entropy $f(A) = \log\det(A)$ on the cone of positive definite matrices $\mathbb{S}\mathbb{P}^d$ satisfies the required structure; specifically, its $k$-th order differential is given by $$D^k f(A)[h_1,\dots,h_k] = \frac{(-1)^{k-1}}{k}\sum_{\sigma\in \Sk_k}{\rm tr}(A^{-1}h_{\sigma(1)}A^{-1}\cdots h_{\sigma(k)}).$$ Moreover, the Stieltjes transform (or penalized inverse) defined by $f(A) = \tr(B(A-zI_d)^{-1})$ on $\mathbb{C}^{d\times d}$ admits a product structure for $B \in \mathbb{C}^{d\times d}$, where $$D^k f(A)[h_1,\dots,h_k] = (-1)^k\sum_{\sigma\in \Sk_k}{\rm tr}(B(A-zI_d)^{-1}h_{\sigma(1)}(A-zI_d)^{-1}\cdots h_{\sigma(k)}(A-zI_d)^{-1}).$$

Under \eqref{eq:f_product_structure}, the $k$-th order term of the expansion can be expressed as an average over the symmetric group $\Sk_n$. Specifically, for $F_k(\pi):=\binom{n}{k}^{-1}\sum_{1\le i_1<\cdots<i_k\le n}T_k(\hat\theta_{\Sc_2})[H_{\pi(i_1)},\ldots,H_{\pi(i_k)}]$,
\begin{equation}\label{eq:Fk_pi_general}
    \frac{1}{k!}\,\Dc^k f(\hat\theta_{\Sc_2})\big[\bar U^{(k)}(\hat\theta_{\Sc_2})\big] =\frac{1}{n!}\sum_{\pi\in \Sk_n}F_k(\pi).
\end{equation}Naive computation of $F_k(\pi)$ still requires averaging over $\binom{n}{k}$ subsets. However, the associative product structure in Assumption~\ref{asmp:f_product_structure} allows us to use dynamic recursion, reducing the overall cost from combinatorial to polynomial. Algorithm~\ref{alg:ro_dp_general} computes our estimator in $O(bns^2)$ algebra multiplications/additions (see Proposition~\ref{prop:recursion} for the computational gurantee). This complexity excludes the cost of constructing the preliminary estimators and evaluating the derivative objects.

\begin{algorithm}[ht]
\caption{Permutation-randomized estimator via dynamic programming }
\label{alg:ro_dp_general}
\begin{algorithmic}[1]
\Require Data $W_1,\dots,W_n\in\Bb$, pilot $\hat\theta_{\Sc_2}\in\Theta$, expansion order $s\in\mathbb{N}$, number of permutations $B\in\mathbb{N}$.
\Require For each $k\in\{2,\dots,s\}$: linear functional $\Lambda_k(\hat\theta_{\Sc_2}):\Bb\to\mathbb{R}$ and factors $G_{k,j}(\hat\theta_{\Sc_2})\in\Bb$ for $j=0,\dots,k$.
\Ensure Estimator in \eqref{eq:PRE}.

\State Compute $H_i \gets W_i-\hat\theta_{\Sc_2}$ for $i=1,\dots,n$.
\State Initialize $\bar F_k^{(B)}\gets 0$ for $k=2,\dots,s$.
\For{$r=1,\dots,b$}
    \State Draw $\pi\sim{\rm Unif}(\Sk_n)$.
    \For{$k=2,\dots,s$}
        \State $Y_0 \gets G_{k,0}(\hat\theta_{\Sc_2})$; \quad $Y_j \gets 0_{\Bb}$ for $j=1,\dots,k$.
        \For{$t=1,\dots,n$}
            \For{$j=\min\{t,k\},\min\{t,k\}-1,\dots,1$}
                \State $Y_j \gets Y_j \;+\; Y_{j-1}\,H_{\pi(t)}\,G_{k,j}(\hat\theta_{\Sc_2})$.
            \EndFor
        \EndFor
        \State $F_k(\pi) \gets \binom{n}{k}^{-1}\,\Lambda_k(\hat\theta_{\Sc_2})\!\big(Y_k\big)$.
        \State $\bar F_k^{(b)} \gets \bar F_k^{(b)} + F_k(\pi)/b$.
    \EndFor
\EndFor
\State \Return $f(\hat\theta_{\Sc_2})+\Dc^1 f(\hat\theta_{\Sc_2})[\bar W_{\Sc_1}-\hat\theta_{\Sc_2}]+\sum_{k=2}^s \bar F_k^{(b)}$.
\end{algorithmic}
\end{algorithm}

Let $\{\pi_{kr}:1\leq r\leq b,2\leq k\leq s\}$\footnote{One may alternatively reuse a common set of permutations $\{\pi_r:1\le r\le b\}$ across all orders $2\le k\le s$. This simplifies implementation, yet introduces dependence and an additional factor (at most linear in $s$) in their statistical guarantees.} be i.i.d.\ $b(s-1)$ permutations of $[n]$, independent of everything else, and set $\bar F_k^{(b)} := \frac{1}{b}\sum_{r=1}^b F_k(\pi_{kr})$. We define the (one-sided) permutation-randomized estimator as
\begin{equation*}
    \widehat{f}_s^{(b)} (\Sc_1,\Sc_2) := f(\hat\theta_{\Sc_2})+\Dc^1f(\hat\theta_{\Sc_2})[\bar W_{\Sc_1}-\hat\theta_{\Sc_2}]+\sum_{k=2}^s \bar F_k^{(b)}.
\end{equation*} We similarly define $\widehat{f}_s^{(b)} (\Sc_2,\Sc_1)$ by interchanging the use of $\Sc_1$ and $\Sc_2$ with same permutations $\{\pi_{kr}:1\leq r\leq b,2\leq k\leq s\}$. Final cross-fitted estimator is given by
\begin{equation}\label{eq:PRE}
    \widehat{f}_s^{(b)} = \frac{1}{2}\bigg(\widehat{f}_s^{(b)} (\Sc_1,\Sc_2) + \widehat{f}_s^{(b)} (\Sc_2,\Sc_1)\bigg).
\end{equation}

\subsubsection{Statistical Guarantee}
In this section, we quantify the statistical cost of permutation randomization. The results herein do not require the associative product structure of Assumption~\ref{asmp:f_product_structure}. We rather rely on a multilinear form expression through $T_k$ in \eqref{eq:f_product_structure}, which is always guaranteed for symmetric $k$-linear forms $\Dc^k f$ by taking $T_k(x)[h_1,\ldots,h_k]=\sum_{\sigma\in\Sk_k} \Dc^k f(x)[h_{\sigma(1)},\ldots,h_{\sigma(k)}]$. We first consider $m$-smooth functionals.

\begin{theorem}\label{thm:normality-msmooth-MC} Under the conditions in Theorem~\ref{thm:normality-msmooth}, assume $s\geq 2$ and $f$ satisfies Assumption~\ref{asmp:f_product_structure} with $\max_{2\leq k\leq s}\sup_{x\in U}\norm{T_k}_{\rm S}\leq C_0.$ Let ${\rm BE}_n$ be the Berry--Esséen  bound in Theorem~\ref{thm:normality-msmooth}, then
\begin{align*}
    &\sup_{t\in\Real}\Abs{\Pb_P\left(\frac{\sqrt{N}\,(\widehat{f}_s^{(b)} - f(\theta(P)))}{\sigma_f}\leq t\right)-\Phi(t)}\lesssim_{\bar\nu,s} {\rm BE}_n + \frac{1}{b^{1/3}\sigma_f^{2/3}}\bigg(r_n^2\vee\frac{\dk}{n}\bigg)^{1/3} + \delta_{n}.
\end{align*}
\end{theorem}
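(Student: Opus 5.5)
The plan is to write $\widehat f_s^{(b)}=\widehat f_s+\mathcal{M}$, where
\[
\mathcal{M}:=\tfrac12\sum_{k=2}^s\Big[\big(\bar F_k^{(b)}(\Sc_1,\Sc_2)-\Eb_\pi \bar F_k^{(b)}(\Sc_1,\Sc_2)\big)+\big(\bar F_k^{(b)}(\Sc_2,\Sc_1)-\Eb_\pi \bar F_k^{(b)}(\Sc_2,\Sc_1)\big)\Big]
\]
is the Monte Carlo error. By \eqref{eq:Fk_pi_general}, $\Eb_\pi F_k(\pi)=\frac1{k!}\Dc^kf(\hat\theta_{\Sc_2})[\bar U^{(k)}(\hat\theta_{\Sc_2})]$, so conditionally on the data $\mathcal{M}$ has mean zero. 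Also $\widehat f_s^{(b)}$ coincides with $\widehat f_s$ in the $k=0,1$ terms, since $\Dc^1f(\hat\theta)[\bar U^{(1)}(\hat\theta)]=\Dc^1f(\hat\theta)[\bar W_{\Sc_1}-\hat\theta]$.

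The Kolmogorov distance is then handled by the standard perturbation argument. For any $\epsilon>0$,
\[
\sup_t\big|\Pb(\sqrt N(\widehat f_s^{(b)}-f)/\sigma_f\le t)-\Phi(t)\big|\le {\rm BE}_n+\epsilon+\Pb(\sqrt N|\mathcal{M}|/\sigma_f>\epsilon)+\Pb(\Ec^c),
\]
where $\Ec$ is the event that both pilots lie within $r_n$ of $\theta$, so $\Pb(\Ec^c)\le2\delta_n$. On $\Ec$, Chebyshev applied conditionally gives $\Pb(\sqrt N|\mathcal{M}|/\sigma_f>\epsilon)\le N\,\Eb[\mathcal{M}^2\mathbbm 1(\Ec)]/(\sigma_f^2\epsilon^2)$. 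If I can show
\[
\Eb[\mathcal{M}^2\mathbbm1(\Ec)]\lesssim_{\bar\nu,s}\frac{1}{bn}\Big(r_n^2\vee\frac{\dk}{n}\Big),
\]
then optimizing $\epsilon^3\asymp (r_n^2\vee \dk/n)/(b\sigma_f^2)$ yields exactly the stated term $b^{-1/3}\sigma_f^{-2/3}(r_n^2\vee\dk/n)^{1/3}$.

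The main work is that second-moment bound. Because the permutations are independent across $k$ and $r$, the conditional variance of $\mathcal{M}$ is at most $\frac{s}{b}\sum_k \Eb[F_k(\pi)^2\mid\text{data}]$, summed over both halves. So it suffices to bound $\Eb[F_k(\pi)^2\mathbbm 1(\Ec)]$ for a single uniform $\pi$, jointly over $\pi$ and the data. For fixed $\pi$, $F_k(\pi)$ is an average over ordered increasing tuples of $T_k(\hat\theta_{\Sc_2})[H_{\pi(i_1)},\dots,H_{\pi(i_k)}]$, where $H_i=(W_i-\theta)+(\theta-\hat\theta_{\Sc_2})$. Since $\pi$ is independent of the data and the $W_i$ are exchangeable, $F_k(\pi)\overset{d}{=}F_k(\mathrm{id})$. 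Conditioning on $\Sc_2$, I expand multilinearly in the centered parts $W_i-\theta$ and the shift $\theta-\hat\theta_{\Sc_2}$. Each resulting term is a non-symmetric but still completely degenerate $U$-statistic of order $j\le k$ in the $W_i-\theta$; degeneracy holds because $T_k$ is bounded multilinear and the kernel is evaluated at independent centered arguments. Its second moment is therefore at most $\binom nj^{-1}\sup_{\|u\|_*\le1}$-type products. Concretely, I bound one coordinate by $\nu$ and the others by $V$, exactly as in the proof of Theorem~\ref{thm:moment-msmooth}, giving $\|T_k\|_{\rm S}^2 r_n^{2(k-j)}\,\nu V^{j-1}n^{-j}$ on $\Ec$, and hence the order-$j$ contribution is $\lesssim C_0^2\,\nu n^{-1}(V/n)^{j-1}r_n^{2(k-j)}$.

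One subtlety needs checking. The leading $j=1$ piece contributes $\nu n^{-1}r_n^{2(k-1)}$ and the $j=0$ piece is $T_k[(\theta-\hat\theta)^{\otimes k}]$ of order $r_n^{2k}$, which is not $O(1/n)$. However, the $j=0$ piece is deterministic given the data and identical for every $\pi$, so it cancels in $F_k-\Eb_\pi F_k$. I therefore bound the conditional variance rather than the raw second moment. More generally, every term depending on $\pi$ involves at least one centered $W$, giving the factor $\nu/n$ times $(r_n^2\vee V/n)^{k-1}$. Since $k\ge2$, this is $\lesssim \frac{\nu}{n}(r_n^2\vee \bar\nu\dk/n)$ once $r_n^2\vee V/n\le1$, and $\nu\le\bar\nu$, $V\le\bar\nu\dk$ give the claimed bound.

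I expect the main obstacle to be this degeneracy bookkeeping for ordered, non-symmetrized kernels under a random permutation: one must verify that the cross terms between different index tuples vanish in expectation, just as for symmetric $U$-statistics. I would handle it by conditioning on $\Sc_2$ and on $\pi$, then using that any kernel term with an index appearing once has conditional mean zero.
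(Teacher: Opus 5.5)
Your proposal is correct and follows essentially the paper's route. The paper likewise writes $\widehat f_s^{(b)}$ as $\widehat f_s$ plus conditionally mean-zero Monte Carlo errors and controls them with a Chebyshev-type perturbation lemma (Lemma~\ref{lem:BElemma}, conditioning on $\sigma(\Sc_2)$). It bounds the variance of $F_k(\pi)$ through the same multilinear expansion into degenerate pieces (Lemma~\ref{lem:PRE}), using, as you do, that the shift-only term $T_k(\hat\theta_{\Sc_2})[(\theta-\hat\theta_{\Sc_2})^{\otimes k}]$ does not depend on $\pi$ and drops out. The differences are cosmetic: you merge the two halves into one event $\Ec$ and apply an unconditional Markov bound to $\mathcal{M}^2\mathbbm{1}(\Ec)$, and your extra factor $s$ is unnecessary because the $\pi_{kr}$ are independent across $(k,r)$.
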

Theorem~\ref{thm:normality-msmooth-MC} shows that permutation randomization contributes an additional Berry--Esséen  term of order $b^{-1/3}\big(r_n^2\vee \dk/n\big)^{1/3}$. In particular, even $b=1$ yields the asymptotic normality under $r_n =o(1)$ and $\dk = o(n)$.

We next state an analogue for infinitely differentiable functionals. For $R>0$ and $U\subseteq\Theta$, define a semi-norm:
\begin{equation*}
    \norm{T_{\geq 0}}_{L_\infty(U),R} = \sup_{k\geq 0}\frac{\sup_{x\in U}\norm{T_k(x)}_{\rm S}}{R^k}.
\end{equation*} Since $\norm{\Dc^k f}_{\rm S}\leq k!\,\norm{T_k}_{\rm S}$ (Assumption~\ref{asmp:f_product_structure}), the bound $\|T_{\ge 0}\|_{L_\infty(U),R}<\infty$ implies Gevrey-$1$ regularity of $f$ on $U$; in particular one has $\|f\|_{\Gc^1(U),R}\leq \|T_{\ge 0}\|_{L_\infty(U),R}$.
Moreover, if
\[
R_0:=\sup_{k\ge 0}\sup_{x\in U}\max\Big\{\|\Lambda_k(x)\|_*,\ \|G_{k,0}(x)\|,\ldots,\|G_{k,k}(x)\|\Big\}<\infty,
\]
then $\|T_{\ge 0}\|_{L_\infty(U),R_0}<\infty$.

\begin{theorem}\label{thm:normality-gevrey-MC}
    For $P\in\Pc$, suppose Assumption~\ref{asmp:theta_tilde_general} holds and let $r_n = r_{n,P}(\delta_{n,P})$. For $U\supseteq B(\theta(P),r_n)$ and $R>0$, let $\norm{T_{\geq 0}}_{L_\infty(U),R}<\infty$. For any $b\geq 1$, if $r_n$ satisfies \eqref{eq:cond:gevrey} with $\alpha=1$ eventually as $n\to\infty$, then $\widehat{f}_s^{(b)}$ with $s=\lfloor\log(en)\rfloor$ is asymptotically normal, i.e., $\sqrt{n}(\widehat{f}_s - f(\theta))\xrightarrow{D}\Nc(0,\sigma_f^2)$. Moreover, under \eqref{eq:cond:gevrey} with $\alpha=1$, there exists a universal constant $C>0$ such that
\begin{align*}
    &\sup_{t\in\Real}\Abs{\Pb_P\left(\frac{\sqrt{N}\,(\widehat{f}_s^{(b)} - f(\theta(P)))}{\sigma_f}\leq t\right)-\Phi(t)}\leq C\bigg[ \frac{\mu_{3,f}}{\sigma_f^3\sqrt{n}}+\frac{\norm{T_{\geq 0}}_{L_\infty(U),R}Rr_n}{\sigma_f\sqrt{n}}\\
    &\quad +\bigg(\frac{\norm{T_{\geq 0}}_{L_\infty(U),R}R^{2}}{\sigma_f}\bigg)^{2/3}\bigg\{\bigg(\frac{\nu(P) V(P)}{n}\bigg)^{1/3} + \frac{\nu(P)^{1/3}}{\sqrt{n}}+\bigg(\frac{\nu(P)r_n^2}{b}\bigg)^{1/3}\bigg\}\bigg]+\delta_{n,P}.
\end{align*}
\end{theorem}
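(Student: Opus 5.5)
The plan is to write $\widehat{f}_s^{(b)}=\widehat{f}_s+\mathcal{M}_b$. Here $\widehat{f}_s$ is the exact cross-fitted estimator with $s=\lfloor\log(en)\rfloor$. The Monte Carlo fluctuation is
\[
\mathcal{M}_b=\tfrac12\sum_{k=2}^s\big(\bar F_k^{(b)}(\Sc_1,\Sc_2)-\Eb_\pi \bar F_k^{(b)}(\Sc_1,\Sc_2)\big)+(\text{swap}).
\]
By \eqref{eq:Fk_pi_general}, $\Eb_\pi F_k(\pi)=\Dc^kf(\hat\theta_{\Sc_2})[\bar U^{(k)}(\hat\theta_{\Sc_2})]/k!$ conditionally on the data, so $\mathcal{M}_b$ is conditionally mean zero. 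The first-order term is not randomized, so $\mathcal{M}_b$ involves only $k\ge2$. From the proof of Theorem~\ref{thm:normality-gevrey} I reuse the decomposition
\[
\sqrt N(\widehat f_s-f(\theta))=\sqrt N\,\Dc^1f(\theta)[\bar W_N-\theta]+\Rc_n .
\]
I also reuse the standard perturbation lemma: for a linear statistic $L$ and perturbation $\Delta$, $\mathrm{KS}(L+\Delta)\le \mathrm{KS}(L)+C\,\epsilon/\sigma_f+\Pb(|\Delta|>\epsilon)$, optimized with $\Pb(|\Delta|>\epsilon)\le \Eb[\Delta^2\mathbbm 1(\Ec)]/\epsilon^2+\Pb(\Ec^c)$. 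This gives the cube-root terms $(\Eb[\Delta^2\mathbbm 1(\Ec)]/\sigma_f^2)^{1/3}$. The bias piece of order $\norm{f}Rr_n/\sqrt n$ is handled separately, as in that theorem. So it suffices to bound $N\,\Eb[\mathcal{M}_b^2\mathbbm 1(\Ec)]$ by $C\norm{T_{\ge0}}^2_{L_\infty(U),R}R^4\,\nu r_n^2/b$. We may also add the bound $\nu V/n+\nu/n^{3/2}$, which is already present. The $\Rc_n$ terms then follow verbatim from Theorem~\ref{thm:normality-gevrey} with $\alpha=1$, using $\|f\|_{\Gc^1(U),R}\le \|T_{\ge0}\|_{L_\infty(U),R}$.

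For the variance of $\mathcal M_b$, independence of the $\pi_{kr}$ across $r$ and $k$ gives
\[
\Eb_\pi\mathcal M_b^2\lesssim b^{-1}\sum_{k=2}^s \Var_\pi(F_k(\pi)).
\]
I bound $\Var_\pi F_k(\pi)\le \Eb_\pi F_k(\pi)^2$, conditional on $\Sc_2$. Write $H_i=(W_i-\theta)+(\theta-\hat\theta_{\Sc_2})=:Z_i+\Delta$ and expand $T_k(\hat\theta_{\Sc_2})[H_{\pi(i_1)},\dots,H_{\pi(i_k)}]$ multilinearly. This gives terms with $j$ copies of $Z$ and $k-j$ copies of $\Delta$. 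The pure-$\Delta$ term ($j=0$) is the constant $T_k[\Delta^{\otimes k}]$, which is independent of $\pi$ and cancels in the centering. For fixed $\pi$, each term with $j\ge1$ is, after averaging over $i_1<\dots<i_k$, an order-$j$ degenerate $U$-statistic in the $Z$'s, conditionally on $\Sc_2$. It is not symmetric, because $T_k$ is not symmetric, but it is still completely degenerate, since each argument is a centered $Z$ or the constant $\Delta$.

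The degenerate-$U$-statistic second moment computation from the proof of Theorem~\ref{thm:moment-gevrey} then bounds the $\Sc_1$-expectation for fixed $\pi$, and Fubini lets me average over $\pi$. The terms I get are:
\[
j=1:\quad \lesssim \binom{k}{1}^2\|T_k\|_{\rm S}^2\,\frac{\nu}{n}\,\|\Delta\|^{2(k-1)};\qquad
j\ge2:\quad \lesssim \binom kj^2\|T_k\|_{\rm S}^2\,\frac{\nu}{n}\Big(\frac{V}{n}\Big)^{j-1}\|\Delta\|^{2(k-j)}.
\]
For $j=1$ the directional variance $\nu$ appears because $\Delta$ is fixed. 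On $\Ec$ we have $\|\Delta\|\le r_n$, and $\|T_k\|_{\rm S}\le\|T_{\ge0}\|R^k$. Summing over $j,k$ with the condition \eqref{eq:cond:gevrey} ($Rr_n\le e^{-1}$, $R\log(en)\sqrt{V/n}\le e^{-1}$, $\alpha=1$) makes the combinatorial factors $\binom kj^2$ summable geometrically. The sum is dominated by $k=2$, where the $j=1$ term gives $\|T\|^2R^4 r_n^2\nu/n$ and the $j=2$ term gives $\|T\|^2R^4\nu V/n^2$. After multiplying by $N$ and dividing by $b$, this yields $\nu r_n^2/b$ plus terms absorbed into $\nu V/n$.

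The main obstacle is the non-symmetric, permutation-dependent kernel. I must check that the degenerate variance bound holds for $T_k$ rather than its symmetrization, and that the cross terms between different $j$ vanish or are controlled. I would handle both by orthogonality of distinct degenerate components (Hoeffding orthogonality, conditional on $\Sc_2$), which holds for each fixed $\pi$ regardless of symmetry. I would then conclude with Cauchy–Schwarz across $j$, which costs at most a factor $k$; that factor is absorbed geometrically.

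Finally, for the cross-fitted version, the same permutations are used on both sides, so I would bound the two halves separately with $(a+b)^2\le2a^2+2b^2$. The event $\Ec$ has $\Pb(\Ec^c)\le2\delta_{n,P}$ (the stated bound uses $\delta_{n,P}$; I will track constants or accept $2\delta_{n,P}$). Asymptotic normality follows because every term vanishes under \eqref{eq:cond:gevrey} for any fixed $b\ge1$.
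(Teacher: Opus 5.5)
Your proposal is correct and follows essentially the paper's route. You reuse Theorem~\ref{thm:normality-gevrey} for the exact estimator via $\norm{f}_{\Gc^1(U),R}\le \norm{T_{\geq 0}}_{L_\infty(U),R}$, treat the permutation fluctuation as a conditionally mean-zero perturbation controlled by the multilinear expansion around $\theta$ (orthogonality across degrees, Cauchy--Schwarz over positions, Lemma~\ref{lem:varbound}, which is the paper's Lemma~\ref{lem:PRE}), sum geometrically under \eqref{eq:cond:gevrey} with $\alpha=1$ so that $k=2$ dominates, and close with Lemma~\ref{lem:BElemma}.

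One small slip: for $j\ge 2$ the order-$j$ contribution is of size $\norm{T_k}_{\rm S}^2\,k^{2j}\nu V^{j-1}\norm{\theta-\hat\theta_{\Sc_2}}^{2(k-j)}/(j!\,n^j)$, not $\binom{k}{j}^2\norm{T_k}_{\rm S}^2\nu V^{j-1}\norm{\theta-\hat\theta_{\Sc_2}}^{2(k-j)}/n^j$; you dropped the $j!$ coming from $\binom{n}{j}^{-1}$. This does not affect the argument: the factor $k^{2(j-1)}$ is absorbed by the $\log^2(en)$ in \eqref{eq:cond:gevrey}, so the geometric summation and your final bound $\norm{T_{\geq0}}_{L_\infty(U),R}^2R^4\nu(r_n^2+V/n)/b$ (up to a universal constant) still hold.
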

We conclude this section with remarks on the inadequacy of incomplete $U$-statistics in our applications.
\begin{remark}
A classical alternative is the incomplete $U$-statistic $U^{\mathrm{inc}}_{n,k}$ via $b$ random $k$-subsampling. However, its variance scales as $\Var(U_{n,k}) + \zeta_k/b$ where $\zeta_k$ denotes the variance of kernel. In our setting, $\zeta_k$ scales as $\nu V^{k-1}$ (Lemma~\ref{lem:varbound}), growing exponentially in $k$ or polynomially in the effective dimension. To suppress this, the number of subsamples needs to be prohibitively large. In contrast, the permutation randomized estimator in \eqref{eq:PRE} exploits and tailored to the specific algebraic structure and is fundamentally different to the incomplete $U$-statistic.
\end{remark}
\section{Numerical Experiments}
\begin{table}[tp]
\centering
\small
\setlength{\tabcolsep}{4pt}
\begin{tabular}{ccrrrrrrr}
\toprule
$\gamma$ & $d$ & Plug-in & Jackknife & HODSE (o2) & K\&L (o2) & IB (o2) & C\&K Full (o2) & C\&K PRE (o2) \\
\midrule
0.15 & 2   & 1.000 & 1.002 & 1.002 & 2.526 & 1.049 & 1.003 & \textbf{0.948} \\
0.2  & 3   & 1.000 & \textbf{0.987} & 0.987 & 5.072 & 1.550 & 1.211 & 1.296 \\
0.25 & 5   & 1.000 & 1.007 & 1.007 & 5.066 & 1.590 & \textbf{0.954} & 1.086 \\
0.3  & 7   & 1.000 & 0.995 & 0.995 & 3.499 & 1.146 & 0.977 & \textbf{0.952} \\
0.35 & 11  & \textbf{1.000} & 1.039 & 1.038 & 3.990 & 1.766 & 1.071 & 1.134 \\
0.4  & 15  & 1.000 & 1.024 & 1.023 & 2.364 & 0.908 & 1.010 & \textbf{0.849} \\
0.45 & 22  & 1.000 & 0.966 & 0.968 & 3.488 & 0.929 & \textbf{0.918} & 0.971 \\
0.5  & 31  & 1.000 & \textbf{0.982} & 0.983 & 3.053 & 1.146 & 1.012 & 1.594 \\
0.55 & 44  & 1.000 & 0.997 & \textbf{0.995} & 4.479 & 1.000 & 1.309 & 1.820 \\
0.6  & 63  & \textbf{1.000} & 1.000 & 1.002 & 12.232 & 1.375 & 1.613 & 2.073 \\
0.65 & 89  & 1.000 & \textbf{0.993} & 0.995 & 49.682 & 1.078 & 2.883 & 4.524 \\
0.7  & 125 & 1.000 & 0.971 & 0.983 & 92.685 & \textbf{0.949} & 5.316 & 7.623 \\
0.75 & 177 & \textbf{1.000} & 1.023 & 1.028 & 4807.042 & 1.513 & 88.608 & 87.770 \\
\bottomrule
\end{tabular}
\caption{Median squared-error ratios relative to the plug-in estimator. Values below 1 indicate improvement over plug-in. The smallest value in each row is bolded.}
\label{tab:all_med_ratio}
\end{table}

\begin{table}[tp]
\centering
\small
\setlength{\tabcolsep}{4pt}
\begin{tabular}{ccrrrrr}
\toprule
$\gamma$ & $d$ & Plug-in & Jackknife & K\&L (best) & IB (best) & C\&K PRE (best) \\
\midrule
0.15 & 2   & 1.000 & 1.002 & 1.504 (o1) & 1.040 (o3) & \textbf{0.920 (o8)} \\
0.2  & 3   & 1.000 & \textbf{0.987} & 3.865 (o1) & 1.279 (o1) & 1.103 (o8) \\
0.25 & 5   & 1.000 & 1.007 & 2.202 (o1) & 0.949 (o1) & \textbf{0.882 (o5)} \\
0.3  & 7   & 1.000 & 0.995 & 2.263 (o1) & 1.043 (o1) & \textbf{0.911 (o7)} \\
0.35 & 11  & \textbf{1.000} & 1.039 & 2.193 (o1) & 1.108 (o1) & 1.111 (o4) \\
0.4  & 15  & 1.000 & 1.024 & 2.246 (o1) & 0.908 (o2) & \textbf{0.849 (o2)} \\
0.45 & 22  & 1.000 & 0.966 & 2.164 (o1) & \textbf{0.929 (o2)} & 0.971 (o2) \\
0.5  & 31  & 1.000 & 0.982 & 2.653 (o1) & \textbf{0.929 (o1)} & 1.163 (o3) \\
0.55 & 44  & 1.000 & 0.997 & 2.409 (o1) & 1.000 (o2) & \textbf{0.844 (o1)} \\
0.6  & 63  & \textbf{1.000} & 1.000 & 5.313 (o1) & 1.038 (o1) & 2.073 (o2) \\
0.65 & 89  & 1.000 & \textbf{0.993} & 3.078 (o1) & 1.078 (o2) & 3.024 (o1) \\
0.7  & 125 & 1.000 & 0.971 & 5.323 (o1) & \textbf{0.801 (o1)} & 1.244 (o1) \\
0.75 & 177 & \textbf{1.000} & 1.023 & 16.674 (o1) & 1.195 (o1) & 7.288 (o1) \\
\bottomrule
\end{tabular}
\caption{Median squared-error ratios for practical higher-order methods. For K\&L, IB, and C\&K PRE, the best order in $\{1,\dots,8\}$ is reported (parentheses). The smallest value in each row is bolded.}
\label{tab:high_med_ratio}
\end{table}
We consider a regression projection problem. For each \(i=1,\dots,N\), with \(N=1000\), we generate i.i.d. pairs \((X_i,Y_i)\), where
\[
X_i \sim N(0,\Sigma),
\qquad
\Sigma_{jk}=(0.6)^{|j-k|}, \quad 1\leq j,k\leq d,
\]
and the dimension is \(d=\lfloor n^\gamma\rfloor\) for \(\gamma\in\{0.15,0.20,\ldots,0.75\}\). The response is generated from the nonlinear heteroskedastic model
\[
Y_i
=
\sin(X_{i1})
+
\frac12\bigl(X_{i2}^2-0.6\bigr)
+
(1+0.3X_{i1}^2)\varepsilon_i,
\]
where \(\varepsilon_i\sim N(0,1)\) is independent of \(X_i\). We take \(\eta=\ev_1\), the first canonical basis vector, and consider the scalar target \(\theta=\eta^\top\beta\), where \(\beta\) is the population linear projection coefficient in \eqref{eq:PP}. In this setting, the projection target is available in closed form: $\beta=\exp(-1/2)\ev_1.$

We compare the standard plug-in and jackknife estimators against several higher-order alternatives. Our analysis is divided into two parts:
\begin{itemize}
    \item We include the second-order corrected estimator of the High-Order Degenerate Statistical Expansion (HODSE) \citep{ZhouLiZhang2021HighOrderStatisticalFunctionalExpansion}, the estimator of \citet{KoltchinskiiLi2026FunctionalEstimation} (K\&L), and the iterative Gaussian multiplier bootstrap (IB) \citep{HallMartin1988}, alongside our full estimator  \eqref{eq:estimator_def} (C\&K Full) and permutation randomized estimator \eqref{eq:PRE} (C\&K PRE). This setup provides a baseline while ensuring the computationally intensive HODSE and C\&K Full procedures remain feasible.
    \item We then focus on the more scalable methods (K\&L, IB, and C\&K PRE), reporting the best-performing order $k \in \{1,\dots,8\}$ for each. We exclude order 0 as K\&L and IB reduces to the plug-in estimator.
\end{itemize} Throughout these experiments, C\&K PRE uses a single randomized permutation ($b=1$), and IB is implemented with a Monte Carlo sample size of 40 for each iterative bias correction.

Table entries represent median squared-error ratios relative to the plug-in estimator across 100 Monte Carlo replications. We report medians because higher-order methods occasionally produce extreme values; corresponding mean squared-error ratios are provided in Appendix.

\bibliographystyle{plainnat}
\bibliography{bib}

\appendix
\numberwithin{equation}{section}
\numberwithin{theorem}{section}
\numberwithin{figure}{section}
\numberwithin{table}{section}
\renewcommand{\theequation}{\thesection.\arabic{equation}}
\renewcommand{\thelemma}{\thesection.\arabic{lemma}}
\renewcommand{\theproposition}{\thesection.\arabic{proposition}}
\renewcommand{\thetheorem}{\thesection.\arabic{theorem}}
\renewcommand{\thecorollary}{\thesection.\arabic{corollary}}
\renewcommand{\theremark}
{\thesection.\arabic{remark}}
\renewcommand{\thefigure}{\thesection.\arabic{figure}}
\renewcommand{\thetable}{\thesection.\arabic{table}}
\section{Additional Numerical Results}
For completeness, we report the mean squared-error ratios in Table~\ref{tab:all_mean_ratio} and \ref{tab:high_med_ratio} corresponding to the median-based comparisons in the main text. Each entry is computed from 100 Monte Carlo replications.

\begin{table}[tp]
\centering
\small
\setlength{\tabcolsep}{4pt}
\begin{tabular}{ccrrrrrrr}
\toprule
$\gamma$ & $d$ & Plug-in & Jackknife & HODSE (o2) & K\&L (o2) & IB (o2) & C\&K Full (o2) & C\&K PRE (o2) \\
\midrule
0.15 & 2   & \textbf{1.000} & 1.005 & 1.005 & 2.852 & 1.174 & 1.010 & 1.046 \\
0.2  & 3   & 1.000 & 1.006 & 1.006 & 3.187 & 1.204 & \textbf{0.988} & 1.054 \\
0.25 & 5   & 1.000 & 1.005 & 1.005 & 4.825 & 1.259 & \textbf{0.987} & 1.009 \\
0.3  & 7   & 1.000 & 1.003 & 1.003 & 3.161 & 1.154 & 0.998 & \textbf{0.994} \\
0.35 & 11  & 1.000 & 1.007 & 1.007 & 2.533 & 1.188 & 1.002 & \textbf{0.971} \\
0.4  & 15  & 1.000 & 1.005 & 1.004 & 3.125 & 1.079 & \textbf{0.989} & 1.084 \\
0.45 & 22  & 1.000 & 1.003 & 1.003 & 3.077 & 1.222 & \textbf{0.959} & 1.025 \\
0.5  & 31  & \textbf{1.000} & 1.006 & 1.006 & 3.102 & 1.146 & 1.042 & 1.187 \\
0.55 & 44  & \textbf{1.000} & 1.005 & 1.004 & 5.292 & 1.175 & 1.396 & 1.647 \\
0.6  & 63  & 1.000 & 1.003 & 1.003 & 11.868 & \textbf{0.989} & 1.781 & 2.456 \\
0.65 & 89  & \textbf{1.000} & 1.004 & 1.003 & 57.555 & 1.151 & 4.205 & 5.640 \\
0.7  & 125 & \textbf{1.000} & 1.001 & 1.001 & 168.938 & 1.093 & 9.555 & 11.164 \\
0.75 & 177 & \textbf{1.000} & 1.002 & 1.001 & 6718.941 & 1.055 & 63.399 & 71.483 \\
\bottomrule
\end{tabular}
\caption{Mean squared-error ratios relative to the plug-in estimator. Each entry is based on 100 Monte Carlo replications. Values below 1 indicate improvement over plug-in. The smallest value in each row is shown in boldface.}
\label{tab:all_mean_ratio}
\end{table}

\begin{table}[tp]
\centering
\small
\setlength{\tabcolsep}{4pt}
\begin{tabular}{ccrrrrr}
\toprule
$\gamma$ & $d$ & Plug-in & Jackknife & K\&L (best) & IB (best) & C\&K PRE (best) \\
\midrule
0.15 & 2   & 1.000 & 1.005 & 2.175 (o1) & 1.047 (o1) & \textbf{0.990 (o3)} \\
0.2  & 3   & 1.000 & 1.006 & 2.295 (o1) & 1.038 (o1) & \textbf{0.958 (o7)} \\
0.25 & 5   & 1.000 & 1.005 & 2.535 (o1) & 0.989 (o1) & \textbf{0.987 (o3)} \\
0.3  & 7   & 1.000 & 1.003 & 2.522 (o1) & 1.033 (o1) & \textbf{0.965 (o7)} \\
0.35 & 11  & 1.000 & 1.007 & 1.945 (o1) & 1.061 (o1) & \textbf{0.971 (o2)} \\
0.4  & 15  & 1.000 & 1.005 & 2.710 (o1) & \textbf{0.984 (o1)} & 0.990 (o8) \\
0.45 & 22  & \textbf{1.000} & 1.003 & 2.121 (o1) & 1.009 (o1) & 1.010 (o6) \\
0.5  & 31  & 1.000 & 1.006 & 2.434 (o1) & \textbf{0.984 (o1)} & 1.187 (o2) \\
0.55 & 44  & \textbf{1.000} & 1.005 & 3.273 (o1) & 1.045 (o1) & 1.218 (o1) \\
0.6  & 63  & 1.000 & 1.003 & 4.687 (o1) & \textbf{0.989 (o2)} & 2.107 (o1) \\
0.65 & 89  & \textbf{1.000} & 1.004 & 5.107 (o1) & 1.036 (o1) & 2.762 (o1) \\
0.7  & 125 & \textbf{1.000} & 1.001 & 6.589 (o1) & 1.004 (o1) & 2.293 (o1) \\
0.75 & 177 & \textbf{1.000} & 1.002 & 14.110 (o1) & 1.050 (o1) & 5.580 (o1) \\
\bottomrule
\end{tabular}
\caption{Mean squared-error ratios relative to the plug-in estimator. For K\&L, IB, and C\&K PRE, the best order in \(\{1,\dots,8\}\) is reported, with the selected order shown in parentheses. Each entry is based on 100 Monte Carlo replications. The smallest value in each row is shown in boldface.}
\label{tab:high_mean_ratio}
\end{table}
\section{Proofs of Main Results}
We first establish the conditional mean and variance bounds. Recall
\begin{equation*}
    \widehat{f}_s(\mathcal{S}_1, \mathcal{S}_2) = f(\hat\theta_{\mathcal{S}_2}) + \sum_{k=1}^s \frac{1}{k!} \mathcal{D}^k f(\hat\theta_{\mathcal{S}_2}) \left[ \bar{U}^{(k)}(\hat\theta_{\mathcal{S}_2}) \right].
\end{equation*}

\begin{theorem}\label{thm:condmoment}
    Let $\hat r_n = \norm{\hat\theta_{\Sc_2}-\theta}$ and $B_n=B(\theta,\hat r_n)$. Then
    \begin{align}
        &\Big|\Eb\left[\widehat{f}_s(\mathcal{S}_1, \mathcal{S}_2)-f(\theta)\mid\Sc_2\right]\Big|\leq \frac{\Gamma(\rho+1)\norm{\Dc^s f}_{{\rm Lip}_\rho(B_n)}\hat r_n^{s+\rho}}{\Gamma(s+\rho+1)},\label{thm:3.2:1}\\
        &{\rm Var}\left(\widehat{f}_s(\mathcal{S}_1, \mathcal{S}_2)\mid \Sc_2\right)\leq2\sum_{k=1}^s\bigg(\norm{\Dc^kf(\theta)}_{\rm S}^2 + \norm{\Dc^s f}^2_{{\rm Lip}_\rho(B_n)}\frac{\Gamma(\rho+1)^2\hat r_n^{2s-2k+2\rho}}{\Gamma(s-k+\rho+1)^2}\bigg)\frac{\nu V^{k-1}}{k!(n)_k}.\label{thm:3.2:2} 
    \end{align} Moreover, if $s\geq 1$ then
    \begin{align}\label{thm:3.2:3}
        &\Big|\Eb\left[\sqrt{n}\big(\widehat{f}_s(\mathcal{S}_1, \mathcal{S}_2)-f(\theta)-\Dc^1f(\theta)[\bar W_{\Sc_1}-\theta]\big)\mid\Sc_2\right]\Big|\leq \frac{\Gamma(\rho+1)\norm{\Dc^s f}_{{\rm Lip}_\rho(B_n)}\sqrt{n}\hat r_n^{s+\rho}}{\Gamma(s+\rho+1)},\nonumber\\
        &{\rm Var}\left(\sqrt{n}\big(\widehat{f}_s(\mathcal{S}_1, \mathcal{S}_2)-f(\theta)-\Dc^1f(\theta)[\bar W_{\Sc_1}-\theta]\big)\mid\Sc_2\right) \leq 2\sum_{k=2}^s\norm{\Dc^kf(\theta)}_{\rm S}^2\frac{\nu V^{k-1}}{k!(n-1)_{k-1}}\nonumber\\
        &\qquad+ 2\sum_{k=1}^s \norm{\Dc^s f}^2_{{\rm Lip}_\rho(B_n)}\frac{\Gamma(\rho+1)^2\hat r_n^{2s-2k+2\rho}}{\Gamma(s-k+\rho+1)^2}\frac{\nu V^{k-1}}{k!(n-1)_{k-1}}.
    \end{align}.
\end{theorem}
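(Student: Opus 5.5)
Throughout, condition on $\Sc_2$, so that $\tilde\theta:=\hat\theta_{\Sc_2}$ is fixed and $W_1,\ldots,W_n$ are i.i.d.\ from $P$. Apply Proposition~\ref{prop:1.1} with this $\tilde\theta$ and the $U$-statistics built from $\Sc_1$. This gives
\begin{equation*}
\widehat f_s(\Sc_1,\Sc_2)-f(\theta)=\sum_{k=1}^s\frac{1}{k!}\Big\{\Dc^kf(\theta)-\mathcal{J}^{s-k}\Delta^{(s)}[(\theta-\tilde\theta)^{\otimes(s-k)},\cdot\,]\Big\}[\bar U^{(k)}(\theta)]-\mathcal{J}^{s}\Delta^{(s)}[(\theta-\tilde\theta)^{\otimes s}].
\end{equation*}
Write $A_k:=\Dc^kf(\theta)-\mathcal{J}^{s-k}\Delta^{(s)}[(\theta-\tilde\theta)^{\otimes(s-k)},\cdot\,]\in\Lc^k_{\rm sym}(\Bb)$. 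This is an $\Sc_2$-measurable deterministic symmetric $k$-linear form.

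\textbf{Step 1: degeneracy and the conditional mean.} Each $A_k[\bar U^{(k)}(\theta)]$ is a completely degenerate $U$-statistic conditionally on $\Sc_2$, by the argument given after Proposition~\ref{prop:1.1}. Hence its conditional mean is zero, and the conditional bias is exactly $-\mathcal{J}^s\Delta^{(s)}[(\theta-\tilde\theta)^{\otimes s}]$. Along the segment from $\tilde\theta$ to $\theta$, which lies in $B_n$, we have
\begin{equation*}
\norm{\Delta^{(s)}(t)}_{\rm S}\le \norm{\Dc^sf}_{{\rm Lip}_\rho(B_n)}\,t^\rho\hat r_n^\rho .
\end{equation*}
Therefore
\begin{equation*}
\Big|\mathcal{J}^s\Delta^{(s)}[\cdot]\Big|\le \norm{\Dc^sf}_{{\rm Lip}_\rho(B_n)}\,\hat r_n^{s+\rho}\int_0^1 t^\rho(1-t)^{s-1}\,dt\,/\,\Gamma(s),
\end{equation*}
and the Beta integral equals $\Gamma(\rho+1)/\Gamma(s+\rho+1)$. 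This gives \eqref{thm:3.2:1}; the case $s=0$ uses $\mathcal{J}^0$ directly. By the same computation, $\norm{\mathcal{J}^{s-k}\Delta^{(s)}[(\theta-\tilde\theta)^{\otimes(s-k)},\cdot]}_{\rm S}\le \norm{\Dc^sf}_{{\rm Lip}_\rho(B_n)}\Gamma(\rho+1)\hat r_n^{s-k+\rho}/\Gamma(s-k+\rho+1)$. For $k=s$ this uses $\mathcal{J}^0$, giving $\hat r_n^\rho$.

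\textbf{Step 2: conditional variance.} Degenerate $U$-statistics of different orders are uncorrelated, and within one order the variance equals $\binom nk^{-1}\Var(A_k[W_1-\theta,\ldots,W_k-\theta])$. So I will prove a lemma: for $T\in\Lc^k(\Bb)$,
\begin{equation*}
\Eb\,T[W_1-\theta,\ldots,W_k-\theta]^2\le\norm{T}_{\rm S}^2\,\nu V^{k-1}.
\end{equation*}
The plan is to condition on $W_2,\ldots,W_k$. The map $h\mapsto T[h,W_2-\theta,\ldots]$ is a linear functional of dual norm at most $\norm{T}_{\rm S}\prod_{j\ge2}\norm{W_j-\theta}$. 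Its second moment in $W_1$ is therefore at most $\nu$ times the square of that dual norm. Taking expectation over the remaining factors gives $V^{k-1}$. This is the main technical point: one factor must be kept in the weak variance $\nu$ rather than $V$, and this requires the duality scaling with $\norm{\cdot}_*$ to be handled carefully. The variance then satisfies
\begin{equation*}
\frac{1}{(k!)^2}\binom nk^{-1}=\frac{1}{k!\,(n)_k},
\end{equation*}
and $(a+b)^2\le 2a^2+2b^2$ applied to the two pieces of $A_k$, using Step 1's operator-norm bound, yields \eqref{thm:3.2:2}.

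\textbf{Step 3: the centered version for $s\ge1$.} Note that $\Dc^1f(\theta)[\bar U^{(1)}(\theta)]=\Dc^1f(\theta)[\bar W_{\Sc_1}-\theta]$. Subtracting it removes the $\Dc^1f(\theta)$ part of $A_1$ and leaves everything else unchanged. The mean bound follows from Step 1 after multiplying by $\sqrt n$. For the variance, multiplying by $n$ gives
\begin{equation*}
\frac{n}{k!\,(n)_k}=\frac{1}{k!\,(n-1)_{k-1}}.
\end{equation*}
The $k=1$ term now contains only the Lipschitz part, while the $\Dc^kf(\theta)$ terms appear only for $k\ge2$. This gives \eqref{thm:3.2:3}.
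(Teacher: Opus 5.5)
Your proposal is correct and follows essentially the same route as the paper. Both arguments decompose via Proposition~\ref{prop:1.1} into the $\Sc_1$-independent term $-\mathcal{J}^s\Delta^{(s)}[(\theta-\hat\theta_{\Sc_2})^{\otimes s}]$ plus conditionally degenerate $U$-statistics, bound that term and $\|\mathcal{J}^{s-k}\Delta^{(s)}[(\theta-\hat\theta_{\Sc_2})^{\otimes(s-k)},\cdot\,]\|_{\rm S}$ via the Beta integral, and then use orthogonality across orders, Hoeffding's formula, and $\Eb\, T[W_1-\theta,\ldots,W_k-\theta]^2\le\|T\|_{\rm S}^2\nu V^{k-1}$; the only difference is that you prove this last bound directly by conditioning and duality, whereas the paper cites it (Lemma~\ref{lem:varbound}, from Koltchinskii--Li).
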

\begin{proof}[proof of Theorem~\ref{thm:condmoment}]
     Denote $\tilde\Delta^{(s)}_k$ as the symmetric $k$-linear form such that $$\tilde\Delta^{(s)}_k[v_1, \dots, v_k] := \Jc^{s-k} \Delta^{(s)} [ (\theta - \hat\theta_{\Sc_2})^{\otimes (s-k)}, v_1, \dots, v_k ].$$ From Proposition~\ref{prop:1.1}, we have 
     \begin{equation*}
         \widehat{f}_s(\mathcal{S}_1, \mathcal{S}_2)-f(\theta)=\sum_{k=0}^s Q_k/k!,
     \end{equation*} where $Q_k = (\Dc^kf(\theta)-\tilde\Delta^{(s)}_k)[\bar U^{(k)}(\theta)]$ for $1\leq k\leq s$ and $Q_0 = -\Jc^s\Delta^{(s)}[(\theta-\tilde\theta)^{\otimes s}]$. We note that $Q_0$ is independent of $\Sc_1 = \{W_1,\ldots,W_n\}$, and moreover,
     \begin{equation*}
         Q_1 = \frac{1}{n}\sum_{i=1}^n\Dc^1f(\theta)[W_i-\theta].
     \end{equation*}For $k\ge1$ define the scalar kernel,
\begin{equation*}
    t_k(w_1,\dots,w_k)
  := (\Dc^kf(\theta)-\tilde\Delta^{(s)}_k)\bigl[(w_1-\theta)\otimes\cdots\otimes(w_k-\theta)\bigr],\qquad w_1,\dots,w_k\in\Bb.
\end{equation*} Then $\Eb_{W_r} t_k(W_1,\dots,W_k) = 0$ for every $k\ge1$ and every $1\le r\le k$ where $\Eb_{W_r}$ denotes expectation with respect to $W_r$ only. Moreover, for $k\geq 1$,
\begin{equation*}
    Q_k
  = \frac{1}{(n)_k}
    \sum_{\substack{1\le j_1,\dots,j_k\le n\\\text{all distinct}}}
      t_k(W_{j_1},\dots,W_{j_k}),
\end{equation*} is U-statistic of order $k$ with the degenerate kernel $t_k$, and thus $\Eb[Q_k]=0$ for $k\geq 1$. Moreover, we note that
\begin{equation*}
    \Eb_{\Sc_1}[\widehat{f}_s(\mathcal{S}_1, \mathcal{S}_2)-f(\theta)]=Q_0 =-\frac{1}{\Gamma(s)}\int_0^1\Delta^{(s)}(t)[(\theta-\hat\theta_{\Sc_2})^{\otimes s}](1-t)^{s-1}\, dt.
\end{equation*} Along the segment $\hat\theta_{\Sc_2}+t(\theta-\hat\theta_{\Sc_2})$, $t\in[0,1]$, we note that $\norm{\Delta^{(s)}(t)}_{\rm S}\leq \norm{\Dc^s f}_{{\rm Lip}_\rho(B_n)}t^\rho\hat r_n^\rho.$ Therefore, 
\begin{align*}
    |Q_0|\leq \frac{\norm{\Dc^s f}_{{\rm Lip}_\rho(B_n)}\hat r_n^{s+\rho}}{\Gamma(s)}
       \int_0^1 t^{\rho}(1-t)^{s-1}\,dt=\frac{\Gamma(\rho+1)\norm{\Dc^s f}_{{\rm Lip}_\rho(B_n)}\hat r_n^{s+\rho}}{\Gamma(s+\rho+1)}.
\end{align*}This gives \eqref{thm:3.2:1}.

In order to control the conditional variance of $\widehat{f}_s(\mathcal{S}_1, \mathcal{S}_2)-f(\theta)$, we first note that
\begin{equation*}
    {\rm Var}_{\Sc_1}\left(\widehat{f}_s(\mathcal{S}_1, \mathcal{S}_2)-f(\theta)\right)=\sum_{k=1}^s\frac{1}{(k!)^2}{\rm Var}_{\Sc_1}(Q_k) + 2\sum_{1\leq k<l\leq s}\frac{1}{k!l!}{\rm Cov}_{\Sc_1}(Q_k,Q_l).
\end{equation*} Here, we used that $Q_0$ is independent of $\Sc_1$. Since $Q_k$ is a canonical $U$-statistics of order $k$ with kernel $h_k$, it follows from Hoeffding's variance formula (specialized to canonical kernel) that
\begin{equation*}
    {\rm Var}_{\Sc_1}(Q_k)=\binom{n}{k}^{-1}\tau_k^2,\quad \tau_k^2 = \Eb_{\Sc_1}[t_k^2(W_1,\ldots,W_k)].
\end{equation*} Let $\Ic_k$ be the collection of $k$-tuples $I=(i_1,\ldots,i_k)$ of distinct indices from $\{1,\ldots,n\}$ and let $W_I = (W_{i_1},\ldots,W_{i_k})$. Then, for any $1\leq k<l\leq s$,
\begin{equation*}
    {\rm Cov}_{\Sc_1}(Q_k,Q_l)=\Eb_{\Sc_1}(Q_kQ_l)=\frac{1}{(n)_k(n)_l}\sum_{I\in\Ic_k}\sum_{J\in\Ic_l}\Eb_{\Sc_1}[t_k(W_I)t_l(W_J)].
\end{equation*} Since $|J|=l>|I|=k$, there exists an index $\iota\in J\setminus I$, and thus,
\begin{equation*}
    \Eb_{\Sc_1}[t_k(W_I)t_l(W_J)]=\Eb_{\Sc_1}\big[t_k(W_I)\Eb_{W_\iota}[t_l(W_J)]\big]=0.
\end{equation*} This implies that ${\rm Cov}_{\Sc_1}(Q_k,Q_l)=0$ for any $1\leq k<l\leq s$. Hence, we have
\begin{equation}\label{eq:pf.thm3.2:1}
    {\rm Var}_{\Sc_1}\left(\widehat{f}_s(\mathcal{S}_1, \mathcal{S}_2)-f(\theta)\right)=\sum_{k=1}^s\frac{\tau_k^2}{k!(n)_k}.
\end{equation}
Finally, since the operator $\tilde\Delta^{(s)}_k$ is independent of $\Sc_1$, we have from Lemma~\ref{lem:varbound} that
\begin{equation}\label{eq:pf.thm3.2:2}
    \tau_k^2\leq \norm{\Dc^kf(\theta)-\tilde\Delta^{(s)}_k}_{\rm S}^2\nu V^{k-1}.
\end{equation} From triangle inequality we get $\norm{\Dc^kf(\theta)-\tilde\Delta^{(s)}_k}_{\rm S}\leq \norm{\Dc^kf(\theta)}_{\rm S}+\norm{\tilde\Delta^{(s)}_k}_{\rm S}$. Moreover, for fixed $0\leq k\leq s-1$,
\begin{equation}\label{eq:pf.thm3.2:3}
    \norm{\tilde\Delta^{(s)}_k}_{\rm S}\leq \norm{\Dc^s f}_{{\rm Lip}_\rho(B_n)}\frac{\hat r_n^{s-k+\alpha}}{\Gamma(s-k)}\int_0^1 t^\alpha(1-t)^{s-k-1}\,dt=\norm{\Dc^s f}_{{\rm Lip}_\rho(B_n)}\frac{\Gamma(\alpha+1)\hat r_n^{s-k+\alpha}}{\Gamma(s-k+\alpha+1)}.
\end{equation} Combining \eqref{eq:pf.thm3.2:1}---\eqref{eq:pf.thm3.2:3} gives \eqref{thm:3.2:2}. The inequality \eqref{thm:3.2:3} follows similarly from
\begin{equation*}
    \widehat{f}_s(\mathcal{S}_1, \mathcal{S}_2)-f(\theta)-\frac{1}{n}\sum_{i=1}^n\Dc^1f(\theta)[W_i-\theta]=Q_0 -\tilde\Delta^{(s)}_1[\bar U^{(1)}(\theta)]+ \sum_{k=2}^s Q_k/k!,
\end{equation*} and the (conditional) uncorrelated-ness of $\tilde\Delta^{(s)}_1[\bar U^{(1)}(\theta)]$ and $Q_2,\ldots,Q_k$.
\end{proof}
\subsection{Proof of Theorem~\ref{thm:moment-msmooth}}
We shall establish the moment bound on the one-sided cross-fitted estimators. Then, it follows from Jensen's inequality that
\begin{equation*}
    \Eb^{1/2}\big[(\widehat f_s -f(\theta))^2\big]\leq \frac{1}{2}\bigg(\Eb^{1/2}\big[(\widehat{f}_s(\mathcal{S}_1, \mathcal{S}_2) -f(\theta))^2\big]+\Eb^{1/2}\big[(\widehat{f}_s(\mathcal{S}_2, \mathcal{S}_1) -f(\theta))^2\big]\bigg).
\end{equation*} And also, since
\begin{align*}
    \widehat f_s - f(\theta) -\Dc^1 f(\theta)[\bar W_N-\theta] = \frac{1}{2}\bigg(\widehat f_s(\Sc_1,\Sc_2) - f(\theta) -\Dc^1 f(\theta)[\bar W_{\Sc_1}-\theta]\bigg)\\
    +\frac{1}{2}\bigg(\widehat f_s(\Sc_2,\Sc_1) - f(\theta) -\Dc^1 f(\theta)[\bar W_{\Sc_2}-\theta]\bigg),
\end{align*} we get
\begin{align*}
    \Eb^{1/2}\big[N(\widehat f_s -f(\theta)-\Dc^1 f(\theta)[\bar W_N-\theta])^2\big]\leq \Eb^{1/2}\big[n(\widehat f_s(\Sc_1,\Sc_2) - f(\theta) -\Dc^1 f(\theta)[\bar W_{\Sc_1}-\theta])^2\big]\\
    +\Eb^{1/2}\big[n(\widehat f_s(\Sc_2,\Sc_1) - f(\theta) -\Dc^1 f(\theta)[\bar W_{\Sc_2}-\theta])^2\big].
\end{align*}

From the standard bias and variance decomposition and the tower property, we have
\begin{align}\label{eq:pf.thm2.1:1}
    &\Eb[(\widehat{f}_s(\mathcal{S}_1, \mathcal{S}_2)-f(\theta))^2] = \big(\Eb[\widehat{f}_s(\mathcal{S}_1, \mathcal{S}_2)-f(\theta)]\big)^2 + {\rm Var}(\widehat{f}_s(\mathcal{S}_1, \mathcal{S}_2))\nonumber\\
    &\quad=\big(\Eb\big[\Eb[\widehat{f}_s(\mathcal{S}_1, \mathcal{S}_2)-f(\theta)\mid\Sc_2]\big]\big)^2+ {\rm Var}\big[\Eb[\widehat{f}_s(\mathcal{S}_1, \mathcal{S}_2)\mid\Sc_2]\big] + \Eb\big[{\rm Var}(\widehat{f}_s(\mathcal{S}_1, \mathcal{S}_2)\mid\Sc_2)\big]\nonumber\\
    &\qquad\leq \big(\Eb\big[\Eb[\widehat{f}_s(\mathcal{S}_1, \mathcal{S}_2)-f(\theta)\mid\Sc_2]\big]\big)^2+ \Eb\big[(\Eb[\widehat{f}_s(\mathcal{S}_1, \mathcal{S}_2)-f(\theta)\mid\Sc_2])^2\big] + \Eb\big[{\rm Var}(\widehat{f}_s(\mathcal{S}_1, \mathcal{S}_2)\mid\Sc_2)\big]\nonumber\\
    &\qquad\quad\leq 2\Eb\big[(\Eb[\widehat{f}_s(\mathcal{S}_1, \mathcal{S}_2)-f(\theta)\mid\Sc_2])^2\big] + \Eb\big[{\rm Var}(\widehat{f}_s(\mathcal{S}_1, \mathcal{S}_2)\mid\Sc_2)\big].
\end{align}
From Theorem~\ref{thm:condmoment}, we can bound the quantities in the last display. Let $\hat r_n = \norm{\hat\theta_{\Sc_2}-\theta}$ and $B_n=B(\theta,\hat r_n)$. First, we note that
\begin{equation*}
    \Eb\big[\big(\Eb[\widehat{f}_s(\mathcal{S}_1, \mathcal{S}_2)-f(\theta)\mid\Sc_2]\big)^2\big]\leq \frac{\Gamma(\rho+1)^2\big[\Eb\norm{\Dc^s f}_{{\rm Lip}_\rho(B_n)}^2\hat r_n^{2(s+\rho)}\big]}{\Gamma(s+\rho+1)^2}\leq \norm{\Dc^s f}_{{\rm Lip}_\rho(\Theta)}^2\Eb[\hat r_n^{2m}].
\end{equation*} Moreover, using \eqref{thm:3.2:2} with $k!(n)_k\geq n^k$, we get
\begin{align*}
    {\rm Var}(\widehat{f}_s(\mathcal{S}_1, \mathcal{S}_2)\mid \Sc_2)\leq\frac{2\nu}{n}\sum_{k=1}^s\norm{\Dc^kf(\theta)}_{\rm S}^2\bigg(\frac{V}{n}\bigg)^{k-1} + \frac{2\nu\norm{\Dc^s f}^2_{{\rm Lip}_\rho(B_n)}}{n}\sum_{k=1}^s\frac{\hat r_n^{2m-2k}(V/n)^{k-1}}{(s-k)!^2}\\
    \leq \frac{2\nu}{n}\sum_{k=1}^s\norm{\Dc^kf(\theta)}_{\rm S}^2\bigg(\frac{V}{n}\bigg)^{k-1} + \frac{2e\nu\norm{\Dc^s f}^2_{{\rm Lip}_\rho(B_n)}}{n}\bigg(\hat r_n^2\vee \frac{V}{n}\bigg)^{m-1}.
\end{align*}The last inequality holds since $\sum_{k=1}^s1/(s-k)!^2\leq \sum_{k=0}^\infty 1/k!=e$. Therefore, 
\begin{equation*}
    \Eb[{\rm Var}(\widehat{f}_s(\mathcal{S}_1, \mathcal{S}_2)\mid \Sc_2)]\leq \frac{2\nu}{n}\sum_{k=1}^s\norm{\Dc^kf(\theta)}_{\rm S}^2\bigg(\frac{V}{n}\bigg)^{k-1} + \frac{2e\nu\norm{\Dc^s f}^2_{{\rm Lip}_\rho(\Theta)}}{n}\Eb\bigg[\bigg(r_n^2\vee \frac{V}{n}\bigg)^{m-1}\bigg].
\end{equation*} Similar bound applies to $\widehat{f}_s(\mathcal{S}_2, \mathcal{S}_1)$, and combining these proves \eqref{eq:thm:moment-msmooth:1}. The moment bound in \eqref{eq:thm:moment-msmooth:2} follows similarly from \eqref{thm:3.2:3}. Precisely, we have
\begin{align}\label{eq:pf.thm2.1:2}
    &\Eb[\sqrt{n}(\widehat{f}_s(\mathcal{S}_1, \mathcal{S}_2)-f(\theta)-\Dc^1f(\theta)[\bar W_{\Sc_1}-\theta])^2]\nonumber\\
    &\quad\leq 2\Eb\bigg[\Eb\bigg(\sqrt{n}\big(\widehat{f}_s(\mathcal{S}_1, \mathcal{S}_2)-f(\theta)-\Dc^1f(\theta)[\bar W_{\Sc_1}-\theta]\big)\mid\Sc_2\bigg)^2\bigg]\nonumber\\
    &\qquad\qquad+\,\Eb\bigg[{\rm Var}\bigg(\sqrt{n}\big(\widehat{f}_s(\mathcal{S}_1, \mathcal{S}_2)-f(\theta)-\Dc^1f(\theta)[\bar W_{\Sc_1}-\theta]\big)\mid\Sc_2\bigg)\bigg].
\end{align} The first inequality in \eqref{thm:3.2:3} reads,
\begin{equation*}
    \Eb\bigg[\Eb\bigg(\sqrt{n}\big(\widehat{f}_s(\mathcal{S}_1, \mathcal{S}_2)-f(\theta)-\Dc^1f(\theta)[\bar W_{\Sc_1}-\theta]\big)\mid\Sc_2\bigg)^2\bigg]\leq n\norm{\Dc^s f}_{{\rm Lip}_\rho(\Theta)}^2\Eb[\norm{\theta-\hat\theta_{\Sc_2}}^{2m}],
\end{equation*} and the second with elementary calculation above implies that
\begin{align}\label{eq:pf.thm2.2:1}
    {\rm Var}\bigg(\sqrt{n}\big(\widehat{f}_s(\mathcal{S}_1, \mathcal{S}_2)-f(\theta)-\Dc^1f(\theta)[\bar W_{\Sc_1}-\theta]\big)\mid\Sc_2\bigg)\nonumber\\
    \leq 2\nu\sum_{k=2}^s\norm{\Dc^kf(\theta)}_{\rm S}^2\bigg(\frac{V}{n}\bigg)^{k-1} + 2e\nu\norm{\Dc^s f}^2_{{\rm Lip}_\rho(B_n)}\bigg(\hat r_n^2\vee \frac{V}{n}\bigg)^{m-1}\\
    \leq 2\nu\sum_{k=2}^s\norm{\Dc^kf(\theta)}_{\rm S}^2\bigg(\frac{V}{n}\bigg)^{k-1} + 2e\nu\norm{\Dc^s f}^2_{{\rm Lip}_\rho(\Theta)}\bigg(\hat r_n^2\vee \frac{V}{n}\bigg)^{m-1}\nonumber.
\end{align} Again, similar bound applies to $\widehat{f}_s(\mathcal{S}_2, \mathcal{S}_1)$ and this proves \eqref{eq:thm:moment-msmooth:2}.

\subsection{Proof of Theorem~\ref{thm:normality-msmooth}}\label{sec:pf.thm:normality-msmooth} Theorem~\ref{thm:normality-msmooth} follows from the conditional moment bound in Theorem~\ref{thm:condmoment} and Berry--Esséen  bound under conditional perturbation in Lemma~\ref{lem:BElemma}. Define
\begin{align*}
    \Av_n = \frac{\sqrt{N}\big(\widehat{f}_s-f(\theta(P))\big)}{\sigma_f},~\Bv_n = \frac{\sqrt{N}~\Dc^1f(\theta(P))[\bar W_N-\theta(P)]}{\sigma_f},\\
    \Cv_n = \frac{\sqrt{N/4}\big(\widehat f_s(\Sc_1,\Sc_2) - f(\theta) -\Dc^1 f(\theta)[\bar W_{\Sc_1}-\theta]\big)}{\sigma_f},\\
    \Dv_n = \frac{\sqrt{N/4}\big(\widehat f_s(\Sc_2,\Sc_1) - f(\theta) -\Dc^1 f(\theta)[\bar W_{\Sc_2}-\theta]\big)}{\sigma_f}.
\end{align*} Conventional Berry--Esséen  bound \citep{Berry1941GaussianApproximation,Esseen1942Liapunoff} implies that $\sup_{t\in\Real}|\Pb_P(\Bv_n\leq t)-\Phi(t)|\leq C\frac{\mu_{3,f}}{\sigma_f^3\sqrt{n}}$ for a universal constant $C>0$. Let $\Fc_n = \sigma(\Sc_2)$, $\Fc_n' = \sigma(\Sc_1)$ be a sigma-algebra generated by $\Sc_2$ and $\Sc_1$, respectively, and define $\Fc_n$- and $\Fc_n'$- measurable, respectively, events as
\begin{equation*}
    \Ec_n = \Set{\norm{\hat\theta_{\Sc_1}-\theta}\leq r_{n,P}(\delta_{n,P})},~ \Ec_n' = \Set{\norm{\hat\theta_{\Sc_2}-\theta}\leq r_{n,P}(\delta_{n,P})}.
\end{equation*} Assumption~\ref{asmp:theta_tilde_general} implies that $\Pb_P(\Ec_n\cap\Ec_n')\geq 1-2\delta_{n,P}$. Also, since $r_{n,P}(\delta_{n,P})\leq r_n$, we get $\hat\theta_{\Sc_1},\hat\theta_{\Sc_2}\in U$ on the event $\Ec_n\cap\Ec_n'$. From Theorem~\ref{thm:condmoment}, we note
\begin{equation*}
    |\Eb[\Cv_n\mid\Fc_n]|\mathbbm{1}(\Ec_n)\leq \frac{\norm{\Dc^s f}_{{\rm Lip}_\rho(U)}\Gamma(\rho+1)}{\sqrt{2}\sigma_f\Gamma(s+\rho+1)}\sqrt{n} r_n^{m}\leq C_0 \sqrt{n}r_n^m.
\end{equation*} Moreover, from \eqref{eq:pf.thm2.2:1}, we have
\begin{align*}
    \sigma_f^2\,{\rm Var}(\Cv_n\mid\Fc_n)\mathbbm{1}(\Ec_n)\leq\nu\sum_{k=2}^s\norm{\Dc^kf(\theta(P))}_{\rm S}^2\bigg(\frac{V}{n}\bigg)^{k-1} + e\nu\norm{\Dc^s f}^2_{{\rm Lip}_\rho(U)}\bigg(r_n^2\vee \frac{V}{n}\bigg)^{m-1} \\
    \leq C_0^2\bigg[\nu\sum_{k=2}^s\bigg(\frac{V}{n}\bigg)^{k-1} + e\nu\bigg(r_n^2\vee \frac{V}{n}\bigg)^{m-1}\bigg]
    \leq eC_0^2\,s\nu\Bigg[\frac{V}{n}\vee\bigg(\frac{V}{n}\bigg)^{m-1}\vee r_n^{2m-2} \Bigg]\\
    \lesssim_{s,\nu}\frac{\dk}{n} + \bigg(\frac{\dk}{n}\bigg)^{m-1}+r_n^{2m-2}.
\end{align*} Here, we may assume $\dk\leq n$, otherwise, the results in Theorem~\ref{thm:normality-msmooth} holds by simply taking a sufficiently large constant. Therefore, we get
\begin{equation*}
    {\rm Var}(\Cv_n\mid\Fc_n)\mathbbm{1}(\Ec_n)\lesssim_{s,\bar \nu} \frac{1}{\sigma_f^2}\bigg[\bigg(\frac{\dk}{n}\bigg)^{1\wedge(m-1)}+r_n^{2m-2}\bigg].
\end{equation*} Similarly, we get
\begin{equation*}
    |\Eb[\Dv_n\mid\Fc_n']|\mathbbm{1}(\Ec_n')\lesssim \sqrt{n}r_n^m,~{\rm Var}(\Dv_n\mid\Fc_n')\mathbbm{1}(\Ec_n')\lesssim_{s,\bar \nu} \frac{1}{\sigma_f^2}\bigg[\bigg(\frac{\dk}{n}\bigg)^{1\wedge(m-1)}+r_n^{2m-2}\bigg].
\end{equation*} Then, Lemma~\ref{lem:BElemma} applies and yields the desired results.

\subsection{Proof of Theorem~\ref{thm:moment-gevrey}}
We begin by stating Corollary~\ref{cor:condmoment} of Theorem~\ref{thm:condmoment} for Gevrey class functionals.

\begin{corollary}\label{cor:condmoment}
    Denote $\hat r_n=\norm{\hat\theta_{\Sc_2}-\theta}$. Let $f\in\Gc^\alpha(U)$ for some $U\supseteq B(\theta,r_n)$ and $R>0$ with $\norm{f}_{\Gc^\alpha(U),R}<\infty$. Then for any $s\geq 0$,
    \begin{align*}
        &\big|\Eb\big[\widehat{f}_s(\Sc_1,\Sc_2)-f(\theta)\mid\Sc_2\big]\big|\mathbbm{1}(\hat r_n\leq r_n)\leq \norm{f}_{\Gc^\alpha(U),R}R^{s+1}((s+1)!)^{\alpha-1}r_n^{s+1},\\
        &{\rm Var}(\widehat{f}_s(\Sc_1,\Sc_2)\mid \Sc_2)\mathbbm{1}(\hat r_n\leq r_n)\leq2\norm{f}_{\Gc^\alpha(U),R}^2\sum_{k=1}^s\bigg(R^{2k}(k!)^{2\alpha} + R^{2(s+1)}((s+1)!)^{2\alpha}\frac{r_n^{2s-2k+2}}{((s-k+1)!)^2}\bigg)\frac{\nu V^{k-1}}{k!(n)_k}. 
    \end{align*} Moreover, for any $s\geq 1$,
    \begin{align*}\label{thm:3.2:3}
        &\Big|\Eb\left[\sqrt{n}\big(\widehat{f}_s(\Sc_1,\Sc_2)-f(\theta)-\Dc^1f(\theta)[\bar W_{\Sc_1}-\theta]\big)\mid\Sc_2\right]\Big|\mathbbm{1}(\hat r_n\leq r_n)\leq \norm{f}_{\Gc^\alpha(U),R}\sqrt{n}R^{s+1}((s+1)!)^{\alpha-1}r_n^{s+1},\nonumber\\
        &{\rm Var}\left(\sqrt{n}\big(\widehat{f}_s(\Sc_1,\Sc_2)-f(\theta)-\Dc^1f(\theta)[\bar W_{\Sc_1}-\theta]\big)\mid\Sc_2\right) \mathbbm{1}(\hat r_n\leq r_n)\nonumber\nonumber\\
        &\qquad\leq 2\norm{f}_{\Gc^\alpha(U),R}^2\sum_{k=2}^sR^{2k}(k!)^{2\alpha-1}\frac{\nu V^{k-1}}{(n-1)_{k-1}}\\
        &\qquad\qquad+2\norm{f}_{\Gc^\alpha(U),R}^2\sum_{k=1}^sR^{2(s+1)}((s+1)!)^{2\alpha}\frac{r_n^{2s-2k+2}}{((s-k+1)!)^2}\frac{\nu V^{k-1}}{k!(n-1)_{k-1}}. 
    \end{align*}.
\end{corollary}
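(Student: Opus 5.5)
The plan is to specialize Theorem~\ref{thm:condmoment} with smoothness $m=s+1$, i.e.\ $\rho=1$. A functional in $\Gc^\alpha(U)$ is $\Cc^\infty$, so for every $s\ge 0$ it is $(s+1)$-smooth with $\rho=1$. On the event $\{\hat r_n\le r_n\}$ the ball $B_n=B(\theta,\hat r_n)$ lies inside $B(\theta,r_n)\subseteq U$, which is convex. The mean value inequality then bounds the Lipschitz constant through the next derivative:
\begin{equation*}
\norm{\Dc^s f}_{{\rm Lip}_1(B_n)}\le \sup_{x\in U}\norm{\Dc^{s+1}f(x)}_{\rm S}\le \norm{f}_{\Gc^\alpha(U),R}R^{s+1}((s+1)!)^\alpha,
\end{equation*}
as already noted before Theorem~\ref{thm:moment-gevrey}. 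The pointwise derivatives are handled the same way: $\norm{\Dc^kf(\theta)}_{\rm S}\le \norm{f}_{\Gc^\alpha(U),R}R^k(k!)^\alpha$, since $\theta\in U$.

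Next I substitute these into \eqref{thm:3.2:1}. With $\rho=1$ the Gamma factors give $\Gamma(2)/\Gamma(s+2)=1/(s+1)!$. Together with $\hat r_n^{s+1}\le r_n^{s+1}$ on the event, one factor of $(s+1)!$ cancels, and this gives the bias bound with $((s+1)!)^{\alpha-1}$.

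For the variance, \eqref{thm:3.2:2} with $\rho=1$ has the factor $\Gamma(2)^2\hat r_n^{2s-2k+2}/\Gamma(s-k+2)^2=\hat r_n^{2(s-k+1)}/((s-k+1)!)^2$. Inserting the two derivative bounds term by term and pulling out $\norm{f}^2_{\Gc^\alpha(U),R}$ yields exactly the displayed sum. The step needing care is that the indicator multiplies a conditional quantity. Since $\hat r_n$ is $\Sc_2$-measurable, I can bound the conditional variance pointwise on the event, using $\hat r_n\le r_n$ throughout.

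The centered statements follow in the same way from \eqref{thm:3.2:3}. The bias bound picks up an extra factor $\sqrt n$. In the variance sums, the first sum uses $\norm{\Dc^kf(\theta)}_{\rm S}^2/k!\le \norm{f}^2_{\Gc^\alpha(U),R} R^{2k}(k!)^{2\alpha-1}$, which accounts for the exponent $2\alpha-1$. The second sum is identical to the one above, with $(n)_k$ replaced by $(n-1)_{k-1}$.

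I expect no real obstacle, since this is bookkeeping. The points to check are the applicability of the mean value bound, which needs $B_n$ to be convex and contained in $U$, and the degenerate case $s=0$, where the variance sum is empty and the bias bound reduces to $|f(\hat\theta)-f(\theta)|\le \sup\norm{\Dc^1f}\,\hat r_n$.
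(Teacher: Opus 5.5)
Your proposal is correct and follows the paper's proof. The paper bounds $\norm{\Dc^k f(\theta)}_{\rm S}$ and $\norm{\Dc^s f}_{{\rm Lip}_1(B_n)}$ through the Gevrey seminorm, the latter via the mean value inequality on $B_n\subseteq B(\theta,r_n)\subseteq U$, and then applies Theorem~\ref{thm:condmoment} with $\rho=1$; your explicit Gamma-function arithmetic (e.g.\ $\Gamma(2)/\Gamma(s+2)=1/(s+1)!$) and the $s=0$ check fill in details the paper leaves implicit.
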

\begin{proof}[proof of Corollary~\ref{cor:condmoment}.]
    First, it follows from the definition of Gevrey semi-norm that $\norm{\Dc^k f(\theta)}_{\rm S}\leq \norm{f}_{\Gc^\alpha(U),R} R^k(k!)^\alpha$ for all $k\geq1$. Also,
    \begin{equation*}
        \norm{\Dc^s f}_{{\rm Lip}_1(B_n)}\leq \sup_{z\in B(\theta,r_n)}\norm{\Dc^{s+1}f(z)}_{\rm S}\leq \norm{f}_{\Gc^\alpha(U),R} R^{s+1}((s+1)!)^\alpha.
    \end{equation*} An application of Theorem~\ref{thm:condmoment} with $\rho=1$ yields the results.
\end{proof}
We set $s = \lfloor\log(en)\rfloor$, and first analyze the expectation bound of Corollary~\ref{cor:condmoment}. Since $(s+1)!\leq 2s^{s}$, we have
\begin{equation*}
    \sqrt{n}R^{s+1}(s+1)!^{\alpha-1}r_n^s\leq 2(Rr_n)\sqrt{n}\big(Rr_ns^{\alpha-1}\big)^s.
\end{equation*} Since $Rr_n\log^{\alpha-1}(en)\leq e^{-1}$ and $\log(n)\leq s\leq \log(en)$, we get
\begin{align*}
    \sqrt{n}\big(Rr_ns^{\alpha-1}\big)^s=\exp\bigg(s\log(Rr_ns^{\alpha-1})+\frac{1}{2}\log(n)\bigg)\leq \exp\bigg(-\frac{1}{2}\log(n)\bigg)=\frac{1}{\sqrt{n}}.
\end{align*} That is,
\begin{align}
    &|\Eb[\widehat{f}_s(\Sc_1,\Sc_2)-f(\theta)\mid\Sc_2]|\mathbbm{1}(\hat r_n\leq r_n)\leq 2\norm{f}_{\Gc^\alpha(U),R}\frac{Rr_n}{n},\label{eq:pf.thm:3.1:1}\\
    &\Big|\Eb\left[\sqrt{n}\big(\widehat{f}_s(\Sc_1,\Sc_2)-f(\theta)-\Dc^1f(\theta)[\bar W_{\Sc_1}-\theta]\big)\mid\Sc_2\right]\Big|\mathbbm{1}(\hat r_n\leq r_n)\leq 2\norm{f}_{\Gc^\alpha(U),R}\frac{Rr_n}{\sqrt{n}}.\label{eq:pf.thm:3.1:2}
\end{align}
Next, we analyze the variance bound. Let
\begin{align*}
    a_k &= 2\norm{f}_{\Gc^\alpha(U),R}^2R^{2k}(k!)^{2\alpha-1}\frac{\nu V^{k-1}}{(n-1)_{k-1}},~k\geq1\\
    b_k &= 2\norm{f}_{\Gc^\alpha(U),R}^2R^{2(s+1)}((s+1)!)^{2\alpha}\frac{r_n^{2s-2k+2}}{((s-k+1)!)^2}\frac{\nu V^{k-1}}{k!(n-1)_{k-1}},~k\geq 1.
\end{align*} Note that
\begin{equation*}
    \frac{a_{k+1}}{a_k} = \frac{R^2(k+1)^{2\alpha-1}V}{n-k},~k\geq 1
\end{equation*} Therefore, as long $2R^2\log^{2\alpha-1}(en)V\leq n$, we have $2a_{k+1}\leq a_k$ for $1\leq k\leq s-1$, and
\begin{align}\label{eq:a_kbound}
    &\sum_{k=1}^s a_k\leq a_1\sum_{k=1}^\infty2^{-k}=2a_1=\norm{f}_{\Gc^\alpha(U),R}^2R^2\nu,\nonumber\\
    &\sum_{k=2}^s a_k\leq a_2\sum_{k=1}^\infty2^{-k}=2a_2=\norm{f}_{\Gc^\alpha(U),R}^2\frac{2^{2\alpha}R^4\nu V}{n-1}\leq 2^{2\alpha+1}\norm{f}_{\Gc^\alpha(U),R}^2R^4\nu\frac{V}{n}.
\end{align} Also, we note that $k!(n)_k\geq n^k$, and thus $k!(n-1)_{k-1}\geq n^{k-1}.$ Moreover, 
\begin{equation*}
    \frac{(s+1)!}{(s-k+1)!}=(s+1)\dots(s-k+2)\leq (2s)s^{k-1}=2s^k,\quad (s+1)!\leq 2s^{s}.
\end{equation*} Therefore, $b_k$ $(k\geq1)$ is bounded as
\begin{equation*}
    b_k\leq 2^{2\alpha+1}\norm{f}_{\Gc^\alpha(U),R}^2R^{2(s+1)} s^{(2\alpha-2)s}s^{2k}\frac{(r_n^2)^{s-k+1}\nu V^{k-1}}{n^{k-1}}.
\end{equation*} This implies that
\begin{align*}
    \sum_{k=1}^sb_k\leq2^{2\alpha+1}\norm{f}_{\Gc^\alpha(U),R}^2R^{2(s+1)} s^{(2\alpha-2)s}s^2r_n^2\nu\sum_{k=1}^s(r_n^2)^{s-k}\bigg(\frac{s^2V}{n}\bigg)^{k-1}\\
    \leq 2^{2\alpha+1}\norm{f}_{\Gc^\alpha(U),R}^2R^{2(s+1)} s^{(2\alpha-2)s+3}r_n^2\nu\bigg(r_n^2\vee\frac{s^2V}{n}\bigg)^{s-1}\\
    \leq 2^{2\alpha+1}\norm{f}_{\Gc^\alpha(U),R}^2R^4 \nu s^{(2\alpha-2)s+3}\bigg(R^2r_n^2\vee\frac{R^2s^2V}{n}\bigg)^{s}
\end{align*}Since $R^2r_n^{2}\log^{2\alpha-2}(en)\vee\frac{R^2\log^{2\alpha}(en)V}{n}\leq e^{-2}$ and $\log(n)\leq s\leq \log(en)$, we get
\begin{align*}
    ((2\alpha-2)s+3)\log(s) +s\log\bigg(R^2r_n^2\vee\frac{R^2s^2V}{n}\bigg)=s\log\bigg(R^2r_n^{2}s^{2\alpha-2}\vee\frac{R^2s^{2\alpha}V}{n}\bigg)+3\log(s)\\
    \leq 3\log\log(en)-2\log(en)+1\leq -\frac{3}{2}\log(en)+4,
\end{align*} where the last inequality follows from $\sup_{x\geq 1}3\log\log(ex)-\frac{1}{2}\log(ex)=3\log(6)-3<3$. Therefore,
\begin{equation}\label{eq:b_kbound}
    \sum_{k=1}^s b_k\leq e^42^{2\alpha+1}\norm{f}_{\Gc^\alpha(U),R}^2R^4 \frac{\nu}{n^{3/2}}.
\end{equation} Combining \eqref{eq:a_kbound} and \eqref{eq:b_kbound} with the conditional variance bound in Corollary~\ref{cor:condmoment} yields that
\begin{align}
    {\rm Var}\left(\widehat{f}_s(\Sc_1,\Sc_2)-f(\theta)\mid\Sc_2\right)\mathbbm{1}(\hat r_n\leq r_n)&\leq \norm{f}_{\Gc^\alpha(U),R}^2R^2\frac{\nu}{n}+e^42^{2\alpha+1}\norm{f}_{\Gc^\alpha(U),R}^2R^4 \frac{\nu}{n^{5/2}}\nonumber\\
    &\leq e^42^{2\alpha+2}\norm{f}_{\Gc^\alpha(U),R}^2(R^2\vee R^4)\frac{\nu}{n},\label{eq:pf.thm:3.1:3}\\
    {\rm Var}\left(\sqrt{n}\big(\widehat{f}_s(\Sc_1,\Sc_2)-f(\theta)-\Dc^1f(\theta)[\bar W_{\Sc_1}-\theta]\big)\mid\Sc_2\right)&\mathbbm{1}(\hat r_n\leq r_n)\nonumber\\
    &\leq2^{2\alpha+1}\norm{f}_{\Gc^\alpha(U),R}^2R^4\bigg(\frac{\nu V}{n}+\frac{\nu}{n^{3/2}}\bigg)\label{eq:pf.thm:3.1:4}.
\end{align}  Therefore, the bias variance decomposition in \eqref{eq:pf.thm2.1:1} combined with \eqref{eq:pf.thm:3.1:1} and \eqref{eq:pf.thm:3.1:3} implies the first result, and \eqref{eq:pf.thm2.1:2} with \eqref{eq:pf.thm:3.1:2} and \eqref{eq:pf.thm:3.1:4} yields the second.
\subsection{Proof of Theorem~\ref{thm:normality-gevrey}} As in the proof of Theorem~\ref{thm:normality-msmooth}, we use the conditional moment bound and Berry--Esséen  bound under conditional perturbation in Lemma~\ref{lem:BElemma}. Recall
\begin{align*}
    \Av_n = \frac{\sqrt{N}\big(\widehat{f}_s-f(\theta(P))\big)}{\sigma_f},~\Bv_n = \frac{\sqrt{N}~\Dc^1f(\theta(P))[\bar W_N-\theta(P)]}{\sigma_f},\\
    \Cv_n = \frac{\sqrt{N/4}\big(\widehat f_s(\Sc_1,\Sc_2) - f(\theta) -\Dc^1 f(\theta)[\bar W_{\Sc_1}-\theta]\big)}{\sigma_f},\\
    \Dv_n = \frac{\sqrt{N/4}\big(\widehat f_s(\Sc_2,\Sc_1) - f(\theta) -\Dc^1 f(\theta)[\bar W_{\Sc_2}-\theta]\big)}{\sigma_f}.
\end{align*} where we have $\sup_{t\in\Real}|\Pb_P(\Bv_n\leq t)-\Phi(t)|\leq C\frac{\mu_{3,f}}{\sigma_f^3\sqrt{n}}$ for a universal constant $C>0$. Let $\Fc_n = \sigma(\Sc_2)$ be a sigma-algebra generated by $\Sc_2$ and let $\Ec_n = \set{\norm{\hat\theta_{\Sc_2}-\theta}\leq r_n=r_{n,P}(\delta_{n,P})}$ be $\Fc_n$-measurable event. From \eqref{eq:pf.thm:3.1:2}, we have
\begin{equation*}
    |\Eb[\Cv_n\mid\Fc_n]|\mathbbm{1}(\Ec_n)\leq \sqrt{2}\norm{f}_{\Gc^\alpha(U),R}\frac{Rr_n}{\sigma_f\sqrt{n}},
\end{equation*} and from \eqref{eq:pf.thm:3.1:4}, we get
\begin{equation*}
    {\rm Var}(\Cv_n\mid\Fc_n)\mathbbm{1}(\Ec_n)\leq \frac{2^{2\alpha}\norm{f}_{\Gc^\alpha(U),R}^2R^4}{\sigma_f^2}\bigg(\frac{\nu V}{n}+\frac{\nu}{n^{3/2}}\bigg).
\end{equation*} Similarly, for a sigma-algebra $\Fc_n' = \sigma(\Sc_1)$ and $\Fc_n'$-measurable $\Ec_n' = \set{\norm{\hat\theta_{\Sc_1}-\theta}\leq r_n=r_{n,P}(\delta_{n,P})}$, we have
\begin{align*}
    |\Eb[\Dv_n\mid\Fc_n']|\mathbbm{1}(\Ec_n')\leq \sqrt{2}\norm{f}_{\Gc^\alpha(U),R}\frac{Rr_n}{\sigma_f\sqrt{n}},\\
    {\rm Var}(\Dv_n\mid\Fc_n')\mathbbm{1}(\Ec_n')\leq \frac{2^{2\alpha}\norm{f}_{\Gc^\alpha(U),R}^2R^4}{\sigma_f^2}\bigg(\frac{\nu V}{n}+\frac{\nu}{n^{3/2}}\bigg).
\end{align*} Lemma~\ref{lem:BElemma} then applies and yields the desired results.
\subsection{Proof of Proposition~\ref{prop:1.1}}
Let $h=\theta-\tilde\theta$. For $0\leq k\leq s$, consider the real-valued function
\begin{equation*}
    g_k(y) = \Dc^k f(\tilde\theta + yh)[\bar U^{(k)}(\theta)],\quad y\in[0,1],
\end{equation*} with the convention $\Dc^0f(\cdot)[\bar U^{(0)}]=f(\cdot)$. Then $g_k$ is $(s-k)$-times differentiable, and its $j$:th $(0\leq j\leq s-k)$ derivative is given by
\begin{equation*}
    g_k^{(j)}(y):=\frac{\mathrm{d}^j}{\mathrm{d}y^j}g_k(y) = \Dc^{k+j} f(\tilde\theta + yh)[h^{\otimes j},\bar U^{(k)}(\theta)],\quad y\in[0,1].
\end{equation*}The $(s-k)$:th order Taylor expansion around $y=0$ gives that, for $y\in[0,1]$,
\begin{equation*}
    g_k(y) = \sum_{j=0}^{s-k}\frac{g_k^{(j)}(0)}{j!}y^j+\int_0^y \frac{\{g_k^{(s-k)}(t)-g_k^{(s-k)}(0)\}(y-t)^{s-k-1}}{(s-k-1)!}\,dt.
\end{equation*} Setting $y=1$ yields
\begin{equation*}
    \Dc^k f(\theta)[\bar U^{(k)}(\theta)]=\sum_{j=0}^{s-k}\frac{\Dc^{k+j}f(\tilde\theta)[h^{\otimes j}, \bar U^{(k)}(\theta)]}{j!}+\Jc^{s-k}
    \Delta^{(s)} [h^{\otimes (s-k)}, \bar U^{(k)}(\theta)],
\end{equation*} Now, we sum the first term over $k=0,\ldots,s$ with weights $1/k!$ to get
\begin{equation}\label{eq:pf.prop1.1:1}
    \sum_{k=0}^s\frac{1}{k!}\sum_{j=0}^{s-k}\frac{\Dc^{k+j} f(\tilde\theta)[h^{\otimes j},\bar U^{(k)}(\theta)]}{j!}=\sum_{l=0}^s\frac{1}{l!}\sum_{j=0}^l\binom{l}{j}\Dc^{l} f(\tilde\theta)[h^{\otimes j},\bar U^{(l-j)}(\theta)].
\end{equation} By expanding $W_i-\tilde\theta = W_i-\theta + \theta-\tilde\theta=W_i-\theta+h$, the definition of $\bar U^{(l)}(\tilde\theta)$ implies the identity that
\begin{equation}\label{eq:pf.prop1.1:2}
    \bar U^{(l)}(\tilde\theta) = \sum_{j=0}^l\binom{l}{j}h^{\otimes j}\otimes_{\rm s} \bar U^{(l-j)}(\theta),
\end{equation} Combining \eqref{eq:pf.prop1.1:1} and \eqref{eq:pf.prop1.1:2}, we get
\begin{equation*}
    \sum_{k=0}^s\frac{1}{k!}\sum_{j=0}^{s-k}\frac{\Dc^{k+j} f(\tilde\theta)[h^{\otimes j},\bar U^{(k)}(\theta)]}{j!} = \sum_{l=0}^s\frac{\Dc^{(l)}f(\tilde\theta)[\bar U^{(l)}(\tilde\theta)]}{l!},
\end{equation*}and therefore,
\begin{equation*}
    \sum_{k=0}^s\frac{\Dc^k f(\theta)[\bar U^{(k)}(\theta)]}{k!} = \sum_{k=0}^s\frac{\Dc^k f(\tilde\theta)[\bar U^{(k)}](\tilde \theta)}{k!}+\sum_{k=0}^s\frac{\Jc^{s-k}
    \Delta^{(s)} [h^{\otimes (s-k)}, \bar U^{(k)}(\theta)]}{k!}.
\end{equation*}

\subsection{Proof of Proposition~\ref{prop:lowerbound}}
We begin by stating the standard results in Gaussian mean estimation problem.
\begin{lemma}\label{lem:Gaussianmean_lower}
    Fix $n\geq 1$ and $\sigma^2>0$. Suppose $Z_1,\ldots,Z_n\sim\Nc(\mu,\sigma^2)$ for unknown $\mu\in\Real$. Then
    \begin{equation*}
        \inf_{\hat \delta_n}\sup_{\mu\in\Real} \Eb[(\hat \delta_n-\mu)^2]=\frac{\sigma^2}{n},
    \end{equation*} where the infimum is taken over all measurable estimators $\hat\delta_n = \hat \delta_n(Z_1,\ldots,Z_n)$.
\end{lemma}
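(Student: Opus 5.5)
The claim is the classical minimax identity for the Gaussian location model, and I would prove it by matching an upper and a lower bound. The upper bound comes from the sample mean $\bar Z_n = n^{-1}\sum_{i=1}^n Z_i$. For every $\mu\in\Real$ its risk is exactly $\Eb[(\bar Z_n-\mu)^2]=\sigma^2/n$. Hence the infimum over estimators is at most $\sigma^2/n$.

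For the lower bound I would use a Bayes-risk argument with Gaussian priors. Put the prior $\mu\sim\Nc(0,\tau^2)$. By conjugacy, the posterior given $Z_1,\ldots,Z_n$ is Gaussian with variance
\begin{equation*}
    \Big(\frac{n}{\sigma^2}+\frac{1}{\tau^2}\Big)^{-1}=\frac{\sigma^2\tau^2}{\sigma^2+n\tau^2}.
\end{equation*}
Under squared loss the Bayes estimator is the posterior mean, and its Bayes risk equals this posterior variance, because the posterior variance does not depend on the data.

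For any measurable $\hat\delta_n$, the worst-case risk dominates the average risk under the prior, and the average risk dominates the Bayes risk:
\begin{equation*}
    \sup_{\mu}\Eb_\mu[(\hat\delta_n-\mu)^2]\ \geq\ \int \Eb_\mu[(\hat\delta_n-\mu)^2]\,\Nc(0,\tau^2)(d\mu)\ \geq\ \frac{\sigma^2\tau^2}{\sigma^2+n\tau^2}.
\end{equation*}
The left side does not depend on $\tau$. Letting $\tau\to\infty$ therefore gives $\sup_\mu \Eb_\mu[(\hat\delta_n-\mu)^2]\geq \sigma^2/n$. Taking the infimum over $\hat\delta_n$ and combining with the upper bound proves the identity.

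The main technical point is that the Bayes risk is well defined for an arbitrary estimator. If $\sup_\mu$ of the risk is infinite, the inequality holds trivially. Otherwise the integrand is finite for every $\mu$, and the exchange of integrals needed to write the Bayes risk as the posterior-averaged loss is justified by Tonelli's theorem, since the loss is nonnegative. The step that the posterior mean minimizes posterior expected squared loss is then the usual decomposition of conditional mean squared error into posterior variance plus a squared-bias term. Nothing else is delicate, so I expect no real obstacle beyond this measurability bookkeeping.
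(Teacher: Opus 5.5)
Your proof is correct. The paper gives no proof of this lemma; it simply quotes it as a standard fact before proving Proposition~\ref{prop:lowerbound}. Your argument is the classical limiting-Bayes proof that such a citation relies on: the sample mean has constant risk $\sigma^2/n$, and for any estimator the worst-case risk is at least the Bayes risk under a $\mathcal{N}(0,\tau^2)$ prior, namely $\sigma^2\tau^2/(\sigma^2+n\tau^2)$, which increases to $\sigma^2/n$ as $\tau\to\infty$.
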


Fix any $v\in\Bb$ with $\norm{v}=1$. By Hahn-Banach Theorem, there exists $u_0\in\Bb^*$ with $\norm{u_0}_*=1$ and $\ip{v,u_0}=1$. For a scalar $\mu\in\Real$, let a distribution $P_\mu$ on $\Bb$ be the law of $W$, defined as
\begin{equation*}
    W \overset{d}{=}Zv,\quad Z\sim\Nc(\mu,\nu_0).
\end{equation*} Then $\theta(P_\mu)=\Eb_{P_\mu}[W]=\mu v$. Since $\ip{W-\theta(P_\mu),u}=(Z-\mu)\ip{v,u}$, we have
\begin{equation*}
    \sup_{\norm{u}_*\leq 1}\Eb_{P_\mu}\abs{\ip{W-\theta(P_\mu),u}}^2=\Eb[(Z-\mu)^2]\sup_{\norm{u}_*\leq 1}\ip{v,u}^2\leq \nu_0\norm{v}^2\norm{u}_*^2=\nu_0.
\end{equation*} Therefore, $P_\mu\in\Pc_1(\nu_0)$. For fixed $n\geq 1$ and any measurable $T_n=T_n(W_1,\ldots,W_n)\in\Bb$, consider the real-valued estimator $\tilde\delta_n = \ip{T_n,u_0}$. Since $\ip{v,u_0}=1$, one has $\ip{T_n-\theta(P_t),u_0}=\tilde\delta_n-\ip{\mu v,u_0}=\tilde\delta_n-\mu$. This implies that
\begin{equation}\label{eq:prop:lowerbound:1}
    \sup_{P\in\Pc_1(\nu_0)}\sup_{\norm{u}_*\leq 1}n\,\Eb_P|\ip{T_n-\theta(P),u}|^2\geq \sup_{\mu\in\Real}n\,\Eb_{P_\mu}|\ip{T_n-\theta(P),u_0}|^2= \sup_{\mu\in\Real}n\,\Eb[(\tilde\delta_n-\mu)^2]\geq \nu_0,
\end{equation} where the last equality follows from Lemma~\ref{lem:Gaussianmean_lower} and that $\tilde\delta_n$ is a function of $Z_1,\ldots,Z_n$ where $W_i=Z_iv$ and $Z_i\sim\Nc(\mu,\nu_0)$. This proves \eqref{prop:lowerbound:1}.

To prove, \eqref{prop:lowerbound:2}, we define a distribution $Q_\mu$ as the law of $W$, defined by
\begin{equation*}
    W = Zv,\quad Z\sim\Nc(\mu,V_0).
\end{equation*} Then $\theta(Q_\mu)=\mu v$ and $\Eb_{Q_\mu}[\norm{W-\theta(Q_\mu)}^2]=\norm{v}^2\Eb[(Z-\mu)^2]=V_0$, and therefore $Q_\mu\in\Pc_2(V_0)$. Since $\norm{T_n-\theta(Q_\mu)}\geq \ip{T_n-\theta(Q_\mu),u_0}$, we get
\begin{equation*}
    \sup_{P\in\Pc_2(V_0)}n\,\Eb_P\norm{T_n-\theta(P)}^2\geq \sup_{\mu\in\Real}n\,\Eb_{Q_\mu}|\ip{T_n-\theta(Q_\mu),u_0}|^2\geq V_0,
\end{equation*} where the last inequality follows from \eqref{eq:prop:lowerbound:1}. This proves \eqref{prop:lowerbound:2}.
\section{Proofs of Results in Section~\ref{sec:appl}}
\subsection{Proof of Theorem~\ref{thm:PM1}}
We begin by noting that the inequality in \eqref{eq:thm:PM1:1} is invariant to the scalings $\norm{\eta_1}_{\Sigma^{-1}}$ and $\norm{\eta_2}_{\Sigma^{-1}}$. Therefore, we may assume that $\norm{\eta_1}_{\Sigma^{-1}}=\norm{\eta_2}_{\Sigma^{-1}}=1$, and thus $\sigma_\omega = \bar\sigma_\omega$. The decomposition in Proposition~\ref{prop:1.1} with $s=s_n$ implies that 
\begin{align*}
    \frac{\sqrt{N}(\eta_1^\top \tilde\Omega \eta_2-\eta_1^\top \Sigma^{-1} \eta_2)}{\sigma_\omega} = \Iv\Fv_n+ \frac{\sqrt{N/4}(\Rv_{1,n}+\Rv_{2,n})}{\sigma_\omega},
\end{align*} where $\Iv\Fv_n=\frac{1}{\sigma_\omega\sqrt{N}}\sum_{i=1}^N\eta_1^\top\Sigma^{-1}(X_iX_i^\top-\Sigma)\Sigma^{-1}\eta_2 ,$ and 
\begin{equation*}
    \Rv_{1,n}=\sum_{k=2}^{s_n}\frac{\Dc^k\omega(\Sigma)[\bar U^{(k)}(\Sigma)]}{k!}-\bigg(\Jc^s\Delta^{(s_n)}[(\Sigma-\tilde\Sigma_2)^{\otimes s_n}]+\sum_{k=1}^{s_n}\frac{\tilde\Delta^{(s_n)}_k[\bar U^{(k)}(\tilde\Sigma_2)]}{k!}\bigg),
\end{equation*} and $\Rv_{2,n}$ is analogously defined by interchanging the roles of $\Sc_1$ and $\Sc_2$. Under \ref{asmp:momentX}, we have $\Iv\Fv_n\xrightarrow{D}\Nc(0,1)$ as $n\to\infty$ by central limit theorem. Moreover, from the conventional Berry--Esséen  bound under $(2+\delta)$:moments (see, e.g., \citep{Bikelis1966RemainderCLT,Shevtsova2020LowerBoundsNonuniformCLT}), we have that
\begin{equation*}
    \sup_{t\in\Real}|\Pb(\Iv\Fv_n\geq t) - (1-\Phi(t))|\leq C_{\rm BE}\frac{\Eb[|\eta_1^\top\Sigma^{-1}(XX^\top-\Sigma)\Sigma^{-1}\eta_2|^{(q_x/2)\wedge 3}]}{\sigma_\omega^{(q_x/2)\wedge 3}N^{((q_x/2)\wedge 3-1)/2}}=:{\rm BE}_n,
\end{equation*} for some universal constant $C_{\rm BE}>0$. From Minkowski's inequality and \ref{asmp:momentX}, we have
\begin{align*}
    &\Eb^{1/q}[|\eta_1^\top\Sigma^{-1}(XX^\top-\Sigma)\Sigma^{-1}\eta_2|^q]\\
    &\quad \leq \Eb^{1/q}[|\eta_1^\top\Sigma^{-1}X|^q |\eta_1^\top\Sigma^{-1}X|^q ]+|\eta_1^\top\Sigma^{-1}\eta_2|\leq K_x^2 +1\leq 2K_x^2.
\end{align*} This implies that
\begin{equation*}
    {\rm BE}_n\leq 8C_{\rm BE}\frac{K_x^{q_x\wedge 6}}{\bar\sigma_\omega^{(q_x/2)\wedge 3}n^{((q_x/2)\wedge 3-1)/2}}.
\end{equation*} We now control the remainder $\Rv_{1,n}$ whose conditional expectation and variance can be bounded as
\begin{align}
    \Eb_{\Sc_1}[\Rv_{1,n}] &= -\Jc^s\Delta^{(s_n)}[(\Sigma-\tilde\Sigma)^{\otimes s_n}],\label{eq:thmPM1.ebound}\\
    {\rm Var}_{\Sc_1}(\Rv_{1,n})&={\rm Var}_{\Sc_1}\bigg(\sum_{k=2}^{s_n}\frac{\Dc^k\omega(\Sigma)[\bar U^{(k)}(\Sigma)]}{k!}-\sum_{k=1}^{s_n}\frac{\tilde\Delta^{(s_n)}_k[\bar U^{(k)}(\tilde\Sigma_2)]}{k!}\bigg)\nonumber\\
    &={\rm Var}_{\Sc_1}\bigg(\tilde\Delta^{(s_n)}_1[\bar U^{(1)}(\tilde\Sigma_2)]\bigg)+\sum_{k=2}^{s_n}{\rm Var}_{\Sc_1}\bigg(\frac{\Dc^k\omega(\Sigma)[\bar U^{(k)}(\Sigma)]}{k!}-\frac{\tilde\Delta^{(s_n)}_k[\bar U^{(k)}(\tilde\Sigma_2)]}{k!}\bigg)\nonumber\\
    &\leq 2\sum_{k=2}^{s_n}{\rm Var}\bigg(\frac{\Dc^k\omega(\Sigma)[\bar U^{(k)}(\Sigma)]}{k!}\bigg) + 2\sum_{k=1}^{s_n}{\rm Var}_{\Sc_1}\bigg(\frac{\tilde\Delta^{(s_n)}_k[\bar U^{(k)}(\tilde\Sigma_2)]}{k!}\bigg),\label{eq:thmPM1.vbound}
\end{align} where the decomposition of variance follows from the fact that $T_k[\bar U^{(k)}]$ and $T_{k'}[\bar U^{(k')}]$ are uncorrelated for non-stochastic symmetric linear form $T_k$ and $T_{k'}$ with $k\neq k'$. Now, Theorem~\ref{thm:B1} implies that for $1\leq k\leq s$, we have
\begin{equation*}
    {\rm Var}\left(\frac{\Dc^k\omega(\Sigma)[\bar U^{(k)}(\Sigma)]}{k!}\right)\leq k!(2K_x^2)^{2k}\frac{d^{k-1}}{(n)_k}\leq (4kK_x^4)^{k}\frac{d^{k-1}}{n^k},
\end{equation*} where the last inequality follows from $\binom{n}{k}=\frac{n(n-1)\cdots(n-k+1)}{k(k-1)\cdots1}\geq(\frac{n}{k})^k$ for $1\leq k\leq n$. Let $a_k = (4kK_x^4)^{k}\frac{d^{k-1}}{n^k}$ for $1\leq k\leq s_n-1$, then
\begin{equation*}
    \frac{a_{k+1}}{a_k}=\frac{4K_x^4d}{n}\frac{(k+1)^{k+1}}{k^k}=\frac{4K_x^4d}{n}(k+1)\big(1+\frac{1}{k}\big)^k\leq \frac{4eK_x^4s_nd}{n}.
\end{equation*} As long as $8eK_x^4s_nd\leq n$, which holds under the choice of $s_n=\lfloor\log(en)\rfloor$ and the assumptions of Theorem~\ref{thm:PM1}, we have
\begin{equation}\label{eq:varpart1}
    \sum_{k=2}^s{\rm Var}\left(\frac{\Dc^k\omega(\theta)[\bar U^{(k)}]}{k!}\right)\leq\sum_{k=2}^{s_n}a_k\leq a_2\sum_{k=0}^\infty\big(\frac{1}{2}\big)^k\leq 2a_2=\frac{128K_x^8d}{n^2}.
\end{equation} Meanwhile, an application of Theorem~\ref{thm:B2} yields that for $1\leq k\leq s_n$,
\begin{align*}
    {\rm Var}_{\Sc_1}\left(\frac{\tilde\Delta^{(s_n)}_k[\bar U^{(k)}(\tilde\Sigma_2)]}{k!}\right)\leq \frac{(4s_nK_x^2)^{2k}\norm{\Sigma^{-1/2}\tilde\Sigma_2\Sigma^{-1/2}-I_d}_{\rm op}^{2(s_n-k+1)}}{1\wedge\lambda_{\rm min}^{2s_n+4}(\Sigma^{-1/2}\tilde\Sigma_2\Sigma^{-1/2})}\frac{d^{k-1}}{k!(n)_k}\\
    \leq \frac{(4s_nK_x^2)^{2k}\norm{\Sigma^{-1/2}\tilde\Sigma_2\Sigma^{-1/2}-I_d}_{\rm op}^{2(s_n-k+1)}}{1\wedge\lambda_{\rm min}^{2s_n+4}(\Sigma^{-1/2}\tilde\Sigma_2\Sigma^{-1/2})}\frac{d^{k-1}}{n^k}.
\end{align*} The last inequality follows from
$\frac{n^k}{(n)_k}=\frac{n}{n}\cdot\frac{n}{n-1}\cdots\frac{n}{n-k+1}\leq k!$. Under Assumption~\ref{asmp:Sigmapilot}, we have
\begin{equation*}
    \Pb(\Ec_1)\geq 1-\delta_n,\quad \Ec_1 = \Set{\norm{\Sigma^{-1/2}\tilde\Sigma_2\Sigma^{-1/2}-I_d}_{\rm op}\leq r_n(\delta_n)}.
\end{equation*} On $\Ec_1$, we have $\lambda_{\rm min}(\Sigma^{-1/2}\tilde\Sigma_2\Sigma^{-1/2})\geq (1- \norm{\Sigma^{-1/2}\tilde\Sigma_2\Sigma^{-1/2}-I_d}_{\rm op})_+\geq (1-r_n(\delta_n))_+\geq 1/2$ as long as $r_n(\delta_n)\leq 1/2$. Therefore, we have
\begin{align}\label{eq:varpart2}
    &\sum_{k=1}^{s_n}{\rm Var}_{\Sc_1}\left(\frac{\tilde\Delta^{(s)}_k[\bar U^{(k)}]}{k!}\right)\mathbbm{1}(\Ec_1)\leq \sum_{k=1}^{s_n}\frac{(4s_nK_x^2)^{2k}r_n(\delta_n)^{2(s_n-k+1)}}{(1/2)^{2s_n+4}}\frac{d^{k-1}}{n^k}\nonumber\\
    &= \frac{2^{2s_n+4}(4s_nK_x^2)^2r_n(\delta_n)^2}{n}\sum_{k=1}^{s_n}r_n(\delta_n)^{2(s_n-k)}\bigg(\frac{16K_x^4s_n^2d}{n}\bigg)^{k-1}\nonumber\\
    & \leq \frac{2^{2s_n+4}(4s_nK_x^2)^2r_n(\delta_n)^2}{n}s_n\bigg(r_n(\delta_n)^2\vee \frac{16K_x^4s_n^2d}{n}\bigg)^{s_n-1}\nonumber\\
    &\leq \frac{256s_n^3K_x^4}{n}\bigg(4r_n(\delta_n)^2\vee\frac{64K_x^4s_n^2d}{n}\bigg)^{s_n}.
\end{align} Combining \eqref{eq:thmPM1.vbound}, \eqref{eq:varpart1}, and \eqref{eq:varpart2}, we get
\begin{equation*}
    {\rm Var}_{\Sc_1}(\sqrt{n}\Rv_{1,n}/\sigma_\omega)\mathbbm{1}(\Ec_1)\leq \frac{1}{\bar\sigma_\omega^2}\bigg[\frac{128K_x^8d}{n}+256s_n^3K_x^4\bigg(4r_n(\delta_n)^2\vee\frac{64K_x^4s_n^2d}{n}\bigg)^{s_n}\bigg],
\end{equation*} as long as
$8eK_x^4s_nd\leq n$. In order to control the conditional expectation of $\Rv_{1,n}$, we apply Theorem~\ref{thm:B2} with \eqref{eq:thmPM1.ebound} which yields
\begin{equation*}
    \big|\Eb_{\Sc_1}[\Rv_{1,n}]\big|\mathbbm{1}(\Ec_1)\leq \frac{\norm{I-\Sigma^{-1/2}\tilde\Sigma_2\Sigma^{-1/2}}_{\rm op}^{s_n+1}}{1\wedge\lambda_{\rm min}^{s_n+2}(\Sigma^{-1/2}\tilde\Sigma_2\Sigma^{-1/2})}\mathbbm{1}(\Ec_1)\leq 2(2r_n(\delta_n))^{s_n+1}.
\end{equation*}Analogous moment bound applies to $\Rv_{2,n}$. Precisely, for $\Ec_2 = \set{\norm{\Sigma^{-1/2}\tilde\Sigma_1\Sigma^{-1/2}-I_d}_{\rm op}\leq r_n(\delta_n)}$, we have
\begin{align*}
    &\big|\Eb_{\Sc_2}[\Rv_{2,n}]\big|\mathbbm{1}(\Ec_2)\leq 2(2r_n(\delta_n))^{s_n+1},\\
    &{\rm Var}_{\Sc_2}(\sqrt{n}\Rv_{2,n}/\sigma_\omega)\mathbbm{1}(\Ec_2)\leq \frac{1}{\bar\sigma_\omega^2}\bigg[\frac{128K_x^8d}{n}+256s_n^3K_x^4\bigg(4r_n(\delta_n)^2\vee\frac{64K_x^4s_n^2d}{n}\bigg)^{s_n}\bigg].
\end{align*}

Combining these with Lemma~\ref{lem:BElemma} implies that
\begin{align}\label{eq:thm:PM1:final1}
    &\sup_{t\in\Real}\bigg|\Pb\bigg(\frac{\sqrt{N}(\eta_1^\top\tilde\Omega\,\eta_2-\eta_2^\top\Sigma^{-1}\eta_2)}{\sigma_\omega}\geq t\bigg)-(1-\Phi(t))\bigg|\leq 8C_{\rm BE}\frac{K_x^{q_x\wedge 6}}{\bar\sigma_\omega^{(q_x/2)\wedge 3}N^{((q_x/2)\wedge 3-1)/2}}\nonumber\\
    &+\frac{\sqrt{n}}{\bar\sigma_\omega\sqrt{\pi/2}}(2r_n(\delta_n))^{s_n+1}\nonumber\\
    &+\frac{1}{\bar\sigma_\omega^{2/3}}\left[\bigg(\frac{16K_x^8}{\pi}\bigg)^{1/3}\bigg(\frac{d}{n}\bigg)^{1/3}+\bigg(\frac{64K_x^4}{\pi}\bigg)^{1/3}s_n\bigg(4r_n(\delta_n)^2\vee\frac{64K_x^4s_n^2d}{n}\bigg)^{s_n/3}\right]+2\delta_n.
\end{align} We let $s_n=\lfloor\log(en)\rfloor$, so that
\begin{equation}\label{eq:PM:s_n}
    s_n \geq \left\lceil\frac{\log(n)}{\log(1/\{2r_n(\delta_n)\})}\right\rceil\vee 1,
\end{equation} since $r_n(\delta_n)\leq 1/(2e)$. It follows from \eqref{eq:PM:s_n} that 
\begin{equation}\label{eq:thm:PM1:final2}
    \sqrt{n}(2r_n(\delta_n))^{s_n+1}=\sqrt{n}\exp\big(-(s_n+1)\log(1/\{2r_n(\delta_n)\})\big)\leq \sqrt{n}\exp\big(-\log(n)\big)=1/\sqrt{n}.
\end{equation} Since
\begin{equation*}
    4r_n(\delta_n)^2\vee\frac{64K_x^4\log^2(en)d}{n}\leq e^{-2},
\end{equation*} we have
\begin{equation}\label{eq:thm:PM1:final3}
s_n\bigg(4r_n(\delta_n)^2\vee\frac{64K_x^4s_n^2d}{n}\bigg)^{s_n/3}\leq s_n\exp\bigg(-\frac{2s_n}{3}\bigg)\leq \frac{\log(en)}{(en)^{2/3}}\leq \frac{2}{\sqrt{n}},
\end{equation} where the last inequality follows from $\sup_{x\geq 1}\frac{\log(ex)}{e^{2/3}x^{1/6}}=\frac{6}{e^{3/2}}<2$. Combining \eqref{eq:thm:PM1:final1}, \eqref{eq:thm:PM1:final2}, and \eqref{eq:thm:PM1:final3}, we have
\begin{align*}
    \sup_{t\in\Real}\bigg|\Pb\bigg(\frac{\sqrt{N}(\eta_1^\top\tilde\Omega\,\eta_2-\eta_2^\top\Sigma^{-1}\eta_2)}{\sigma_\omega}\geq t\bigg)-(1-\Phi(t))\bigg|\leq \\
    \Ck\bigg[\frac{K_x^{q_x\wedge 6}}{\bar\sigma_\omega^{(q_x/2)\wedge 3}N^{((q_x/2)\wedge 3-1)/2}}+\frac{1}{\bar\sigma_\omega\sqrt{n}}+\frac{1}{\bar\sigma_\omega^{2/3}}\bigg(K_x^{8/3}\bigg(\frac{d}{n}\bigg)^{1/3}+\frac{K_x^{4/3}}{\sqrt{n}}\bigg)\bigg]+2\delta_n.
\end{align*} Since $K_x\geq 1$ and $N=2n$, this implies the conclusion.
\subsection{Proof of Corollary~\ref{cor:PM1}} From the rotation invariance, the $L_4-L_2$ moment ratio of $X$ can be bounded as
\begin{align}\label{eq:momentratio}
    &\sup_{u\in\Sb^{d-1}}\frac{\Eb^{1/4}[|u^\top X|^4]}{\Eb^{1/2}[|u^\top X|^2]}=\sup_{u\in\Real^d\setminus\{0_d\}}\frac{\Eb^{1/4}[|u^\top X|^4]}{\Eb^{1/2}[|u^\top X|^2]}\nonumber\\
    &\quad =\sup_{u\in\Real^d\setminus\{0_d\}}\frac{\Eb^{1/4}[|u^\top\Sigma^{-1/2} X|^4]}{\Eb^{1/2}[|u^\top\Sigma^{-1/2} X|^2]}=\sup_{u\in\Sb^{d-1}}\frac{\Eb^{1/4}[|u^\top\Sigma^{-1/2} X|^4]}{\Eb^{1/2}[|u^\top\Sigma^{-1/2} X|^2]}\leq K_x.
\end{align} Therefore, Theorem~1.3 of \cite{OliveiraRico2024ImprovedCovariance} applies with $\eta=0$ and $\delta=1/n$ implies that there exists a measurable estimator $\widehat{\mathsf{E}}_{1}=\widehat{\mathsf{E}}_1(X_1,\ldots,X_n)\in\mathbb{R}^{d\times d}$ and a universal constant $c>0$ such that, whenever $n \ge c(r(\Sigma)+\log(en))$,
\begin{equation}\label{cond:robcov}
\mathbb{P}\!\left(
\bigl\|\widehat{\mathsf{E}}_1-\Sigma\bigr\|_{\rm op}
\le cK_x^2\,\|\Sigma\|_{\rm op}\sqrt{\frac{r(\Sigma)+\log(en)}{n}}
\right)\ge 1-\frac{1}{n},
\end{equation} Since $\norm{\Sigma^{-1/2}\widehat{\mathsf{E}}_1\Sigma^{-1/2}-I_d}_{\rm op}\leq \norm{\Sigma}_{\rm op}^{-1} \norm{\widehat{\mathsf{E}}_1-\Sigma}_{\rm op}$ and $r(\Sigma)\leq d$, $\widehat{\mathsf{E}}_1$ satisfies Assumption~\ref{asmp:Sigmapilot} with 
\begin{equation*}
    r_n^* = cK_x^2\sqrt{\frac{d+\log(en)}{n}}\quad \mbox{and}\quad \delta_n = \frac{1}{n},
\end{equation*} provided that $n\geq c(d+\log(en))$. Let $\hat\Omega^\dagger$ be the estimator in \eqref{eq:precisionmatrixest.} with $\tilde\Sigma_2 = \widehat{\mathsf{E}}_1$ and $s_n = \lfloor\log(en)\rfloor$, then the second part of Theorem~\ref{thm:PM1} applies and yields the conclusion.

\subsection{Proof of Corollary~\ref{cor:PM2}} 
\paragraph{Under \ref{asmp:sg}} We begin with the standard results, see e.g., Theorem~4.7.1. of \cite{Vershynin2018HighDimProb} or Theorem 1 of \cite{KoltchinskiiLounici2017Concentration}; for some universal constant $c>0$,
\begin{equation}\label{cond:samcov_sg}
\mathbb{P}\!\left(
\bigl\|\Sigma^{-1/2}\hat\Sigma_2\Sigma^{-1/2}-I_d\bigr\|_{\rm op}
\le cK_{\rm sg}^2\sqrt{\frac{d+\log(e/\delta)}{n}}
\right)\ge 1-\delta,\quad\forall\delta\in(0,1).
\end{equation} Taking $\delta = 1/n$ above, the sample gram matrix $\hat\Sigma_2$ satisfies the Assumption~\ref{asmp:Sigmapilot} with
\begin{equation*}
    r_n^* = cK_{\rm sg}^2\sqrt{\frac{d+\log(en)}{n}}\quad \mbox{and}\quad \delta_n = \frac{1}{n}.
\end{equation*} Assumption~\ref{asmp:sg} implies that Assumption~\ref{asmp:momentX} holds with $q_x=6$ an $K_x = K_{\rm sg}\sqrt{12e}$. Hence, an application of the second part of Theorem~\ref{thm:PM1} with $q_x=6$, where the rate $1/\sqrt{n}$ can be absorbed to $(d/n)^{1/3}$, yields the results.
\paragraph{Under \ref{asmp:momentX} with $q_x>4$.} 
An application of Proposition~\ref{prop:samcov}, where the assumptions therein are met by \eqref{eq:momentratio}, yields that there exists a constant $\Ck = \Ck(K_x,q_x)>0$ such that
if $n\geq \Ck d$,
\begin{equation*}
\mathbb{P}\!\left(
\bigl\|\Sigma^{-1/2}\hat\Sigma_2\Sigma^{-1/2}-I_d\bigr\|_{\rm op}
\le \Ck\bigg[\sqrt{\frac{d+\log(e/\delta)}{n}}+\frac{d}{n^{1-2/q_x}\delta^{2/q_x}}
\bigg]\right)\ge 1-\delta,\quad\forall\delta\in(0,1).
\end{equation*} Set
\begin{equation*}
    \delta = \bigg(\frac{d}{n^{1-2/q_x}}\bigg)^{\frac{q_x}{q_x+2}},
\end{equation*} then $\hat\Sigma_2$ satisfies Assumption~\ref{asmp:Sigmapilot} with 
\begin{equation}\label{cond:samcov}
    r_n^* = \Ck\bigg[\sqrt{\frac{d+\log(en)}{n}}+\bigg(\frac{d}{n^{1-2/q_x}}\bigg)^{\frac{q_x}{q_x+2}}\bigg]\quad\mbox{and}\quad\delta_n=\bigg(\frac{d}{n^{1-2/q_x}}\bigg)^{\frac{q_x}{q_x+2}}.
\end{equation} for a possibly different constant $\Ck = \Ck(K_x,q_x)>0$. Now, an application of Theorem~\ref{thm:PM1} implies that the asymptotic normality,
\begin{equation*}
        \frac{\sqrt{n}(\eta_1^\top\widehat\Omega\,\eta_2-\eta_1^\top\Sigma^{-1}\eta_2)}{\sigma_\omega}\xrightarrow{D}\Nc(0,1),\quad\mbox{as}\quad n\to\infty,
\end{equation*} hold for all $\eta_1,\eta_2\in\Real^d\setminus\{0_d\}$ if $d=o(n^{1-2/q_x})$. Also, as long as
\begin{equation}\label{eq:cor3.3.cond}
    (2r_n^*)^2\vee (64K_x^4s^2d/n)\leq e^{-2},
\end{equation} there exists a universal constant $\Ck'>0$ such that
\begin{align*}\label{eq:cor3.3.concl}
    {\rm BE}_n(\eta_1^\top\tilde\Omega\,\eta_2)\leq \frac{\Ck' K_x^{\bar q_x}}{(\bar \sigma_\omega\wedge 1)^{\bar q_x/2}}\bigg[\frac{1}{n^{\bar q_x/4-1}}+\bigg(\frac{d}{n}\bigg)^{1/3}\bigg]+\bigg(\frac{d}{n^{1-2/q_x}}\bigg)^{\frac{q_x}{q_x+2}}\\
    \leq \frac{\Ck' K_x^{\bar q_x}}{(\bar \sigma_\omega\wedge 1)^{\bar q_x/2}}\bigg[\frac{1}{n^{\bar q_x/4-1}}+\bigg(\frac{d}{n}\bigg)^{1/3}+\bigg(\frac{d}{n^{1-2/q_x}}\bigg)^{\frac{q_x}{q_x+2}}\bigg].
\end{align*} Otherwise, if  $(2r_n^*)^2\vee (64K_x^4s^2d/n)> e^{-2}$, then the conclusion of Corollary~\ref{cor:PM2} holds by simply taking a sufficiently large constant. This proves the corollary.
\subsection{Proof of Theorem~\ref{thm:PP1}}

Without loss of generality, we may assume that $\norm{\eta}_{\Sigma^{-1}}=1$, and thus $\sigma_\beta = \bar\sigma_\beta$. An application of Proposition~\ref{prop:1.1} yields that 
\begin{align*}
    &\frac{\sqrt{N}(\eta^\top \hat\beta-\eta^\top\beta)}{\sigma_\beta} =\Iv\Fv_n + \frac{\sqrt{N/4}(\Rv_{1,n}+\Rv_{2,n})}{\sigma_\beta},\quad \mbox{where}\\
    &\Iv\Fv_n =\frac{1}{\sigma_\beta\sqrt{N}}\sum_{i=1}^N \eta^\top\Sigma^{-1}X_i(Y_i-X_i^\top\beta),\\
    &\Rv_{1,n} = \sum_{k=2}^{s_n}\frac{\Dc^k\beta_\eta(\Sigma,\Gamma)[\bar U^{(k)}(\Sigma,\Gamma)]}{k!}-\bigg(\Jc^{s_n}\Delta^{(s_n)}[(\Sigma-\tilde\Sigma_2,\Gamma-\tilde\Gamma_2)^{\otimes s_n}]+\sum_{k=1}^{s_n}\frac{\tilde\Delta^{(s_n)}_k[\bar U^{(k)}(\tilde\Sigma_2,\tilde\Gamma_2)]}{k!}\bigg),
\end{align*} and $\Rv_{2,n}$ is defined analogously interchanging the roles of $\Sc_1$ and $\Sc_2$. The central limit theorem implies that $\Iv\Fv_n\xrightarrow{D}\Nc(0,1)$ as $n\to\infty$. Let $q_{x\varepsilon}=(1/q_x+1/q_\varepsilon)^{-1}$. From the quantitative CLT results under $(2+\delta)$:moments \citep{Bikelis1966RemainderCLT,Shevtsova2020LowerBoundsNonuniformCLT}, we have that
\begin{equation*}
    \sup_{t\in\Real}|\Pb(\Iv\Fv_n\leq t) - \Phi(t)|\leq C_{\rm BE}\frac{\Eb[|\eta^\top\Sigma^{-1}X(Y-X^\top\beta)|^{q_{x\varepsilon}\wedge 3}]}{\sigma_\beta^{q_{x\varepsilon}\wedge 3}N^{(q_{x\varepsilon}\wedge 3-1)/2}}=:{\rm BE}_n,
\end{equation*} for a universal constant $C_{\rm BE}>0$. Jensen's inequality and H\"older's inequality imply that for $1\leq q\leq q_{x\varepsilon}$,
\begin{align*}
    &\Eb^{1/q}[|\eta^\top\Sigma^{-1}X(Y-X^\top\beta)|^q]\leq \Eb^{1/q_{x\varepsilon}}[|\eta^\top\Sigma^{-1}X(Y-X^\top\beta)|^{q_{x\varepsilon}}]\\
    &\qquad\leq\Eb^{1/q_x}[|\eta^\top\Sigma^{-1}X|^{q_x}]\Eb^{1/q_{\varepsilon}}[|Y-X^\top\beta|^{q_{\varepsilon}}]\leq K_xK_\varepsilon.
\end{align*} This implies that
\begin{equation*}
    {\rm BE}_n\leq C_{\rm BE}\frac{(K_xK_\varepsilon)^{q_{x\varepsilon}\wedge 3}}{\bar\sigma_\beta^{q_{x\varepsilon}\wedge 3}N^{(q_{x\varepsilon}\wedge 3-1)/2}}.
\end{equation*} Next, following the derivation of \eqref{eq:thmPM1.ebound} and \eqref{eq:thmPM1.vbound}, we get
\begin{align}
    &\big|\Eb_{\Sc_1}[\Rv_{1,n}]\big|= \big|\Jc^{s_n}\Delta^{(s_n)}[(\Sigma-\tilde\Sigma_2,\Gamma-\tilde\Gamma_2)^{\otimes s_n}]\big|,\label{eq:thmPP1.ebound}\\
    &{\rm Var}_{\Sc_1}(\Rv_{1,n})\leq2\sum_{k=1}^{s_n}{\rm Var}\bigg(\frac{\Dc^k\beta_\eta(\Sigma,\Gamma)[\bar U^{(k)}(\Sigma,\Gamma)]}{k!}\bigg)+2\sum_{k=1}^{s_n}{\rm Var}_{\Sc_1}\bigg(\frac{\tilde\Delta_k^{(s_n)}[\bar U^{(k)}(\tilde\Sigma_2,\tilde\Gamma_2)]}{k!}\bigg).\label{eq:thmPP1.vbound}
\end{align}
Next, Theorem~\ref{thm:B3} implies that for $1\leq k\leq s_n$, we have
\begin{equation*}
    {\rm Var}\left(\frac{\Dc^k\beta_\eta(\Sigma,\Gamma)[\bar U^{(k)}(\Sigma,\Gamma)]}{k!}\right)\leq4^{k-1}k^kK_x^{4k-2}K_\varepsilon^2\frac{d^{k-1}}{n^k}=:b_k.
\end{equation*} Note that for $1\leq k\leq s_n-1$,
\begin{equation*}
    \frac{b_{k+1}}{b_k}=4K_x^4\frac{d}{n}\frac{(k+1)^{k+1}}{k^k}=4K_x^4\frac{d}{n}(k+1)\bigg(1+\frac{1}{k}\bigg)^k\leq \frac{4eK_x^4s_nd}{n}.
\end{equation*} Therefore, as long as $8eK_x^4s_nd\leq n$, we have
\begin{align}\label{eq:betavarpart1}
    &\sum_{k=2}^{s_n}{\rm Var}\left(\frac{\Dc^k\beta_\eta(\Sigma,\Gamma)[\bar U^{(k)}(\Sigma,\Gamma)]}{k!}\right)\leq\sum_{k=2}^{s_n}b_k\leq b_2\sum_{k=0}^\infty\bigg(\frac{1}{2}\bigg)^k\leq 2b_2=32K_x^6K_\varepsilon^2\frac{d}{n^2}.
\end{align} Meanwhile, an application of Theorem~\ref{thm:B4} yields that for $1\leq k\leq s_n$, we have
\begin{equation*}
    {\rm Var}_{\Sc_1}\left(\frac{\tilde\Delta^{(s)}_k[\bar U^{(k)}(\Sigma,\Gamma)]}{k!}\right)\leq 4s_n^2\frac{(K_x\vee K_\varepsilon)^2}{K_x^2}(2s_n^2K_x^4)^k\frac{\Dc_{\hat\Sigma_2}^{2s_n-2k}(\Dc_{\hat\Sigma_2}\vee\Rc_{\hat\Sigma_2,\hat\Gamma_2}\vee\Dc_{\hat\Sigma_2}\Rc_{\hat\Sigma_2,\hat\Gamma_2})^2}{1\wedge\underline\lambda_{\hat\Sigma_2}^{2s_n+4}}\frac{d^{k-1}}{n^k},
\end{equation*} where $\underline\lambda_{\hat\Sigma_2}=\lambda_{\rm min}(\Sigma^{-1/2}\hat\Sigma_2\Sigma^{-1/2})$, $\Dc_{\hat\Sigma_2} = \norm{\Sigma^{-1/2}\hat\Sigma_2\Sigma^{-1/2}-I_d}_{\rm op}$, and $\Rc_{\hat\Sigma_2,\hat\Gamma_2} = \norm{\hat\Gamma_2-\hat\Sigma_2\beta}_{\Sigma^{-1}}$. Note that $\underline\lambda_{\hat\Sigma_2}\geq (1-\norm{\Sigma^{-1/2}\hat\Sigma_2\Sigma^{-1/2}-I_d}_{\rm op})_+$ and denote the event
\begin{equation*}
    \Ec_1 :=\big\{\norm{\Sigma^{-1/2}\tilde\Sigma_2\Sigma^{-1/2}-I_d}_{\rm op}\leq r_{1,n}(\delta_{1,n})\big\}\cap\big\{\norm{\tilde\Gamma_2-\tilde\Sigma_2\beta}_{\Sigma^{-1}}\leq r_{2,n}(\delta_{2,n})\big\}.
\end{equation*}Under Assumption~\ref{asmp:SigmapilotGammapilot} and that $r_{1,n}(\delta_{1,n})\leq 1/2$, we have $\Pb(\Ec_1)\geq 1-\delta_{1,n}-\delta_{2,n}$ and
\begin{align*}
    {\rm Var}_{\Sc_1}\left(\frac{\tilde\Delta^{(s)}_k[\bar U^{(k)}(\tilde\Sigma_2,\tilde\Gamma_2)]}{k!}\right)\mathbbm{1}(\Ec_1)\leq 4s_n^2\frac{(K_x\vee K_\varepsilon)^2}{K_x^2}(2s_n^2K_x^4)^k\\
    \times\frac{r_{1,n}(\delta_{1,n})^{2s_n-2k}(r_{1,n}(\delta_{1,n})\vee r_{2,n}(\delta_{2,n}))^2}{(1/2)^{2s_n+4}}\frac{d^{k-1}}{n^k},
\end{align*} Write $r_{1,n}^*=r_{1,n}(\delta_{1,n})$ and $r_{2,n}^*=r_{2,n}(\delta_{2,n})$, and we note that
\begin{align*}
    &n\sum_{k=1}^{s_n}{\rm Var}_{\Sc_1}\left(\frac{\tilde\Delta^{(s)}_k[\bar U^{(k)}(\tilde\Sigma_2,\tilde\Gamma_2)]}{k!}\right)\mathbbm{1}(\Ec_1)\\
    &\leq 2^{2s_n+7}s_n^4(K_x\vee K_\varepsilon)^2K_x^2(r_{1,n}^*\vee r_{2,n}^*)^2\sum_{k=1}^{s_n}(r_{1,n}^*)^{2s_n-2k}\bigg(\frac{2s_n^2K_x^4d}{n}\bigg)^{k-1}\\
    &\leq 2^{2s_n+7}s_n^5(K_x\vee K_\varepsilon)^2K_x^2(r_{1,n}^*\vee r_{2,n}^*)^2\bigg((r_{1,n}^*)^2\vee\frac{2s_n^2K_x^4d}{n}\bigg)^{s_n-1}.
\end{align*} Combining this with \eqref{eq:thmPP1.vbound} and \eqref{eq:betavarpart1}, we get
\begin{align*}
    &{\rm Var}_{\Sc_1}(\sqrt{n}\Rv_{1,n}/\sigma_\beta)\mathbbm{1}(\Ec_1)\leq \frac{1}{\bar\sigma_\beta^2}\bigg[32K_x^6K_\varepsilon^2\frac{d}{n}\\
    &\qquad+2^{9}(K_x\vee K_\varepsilon)^2K_x^2(r_{1,n}^*\vee r_{2,n}^*)^2s_n^5\bigg((2r_{1,n}^*)^2\vee\frac{8s_n^2K_x^4d}{n}\bigg)^{s_n-1}\bigg].
\end{align*} To control the conditional expectation of $\Rv_{1,n}$, we apply Theorem~\ref{thm:B4} which shows that $|\Eb_{\Sc_1}[\Rv_{1,n}]|\mathbbm{1}(\Ec_1)\leq 2^{s_n+2}(r_{1,n}^*)^{s_n}r_{2,n}^*,$ where we used that $r_{1,n}^*\leq 1/2$. 
Similarly, for $\Ec_2 = \big\{\norm{\Sigma^{-1/2}\tilde\Sigma_1\Sigma^{-1/2}-I_d}_{\rm op}\leq r_{1,n}(\delta_{1,n})\big\}\cap\big\{\norm{\tilde\Gamma_1-\tilde\Sigma_1\beta}_{\Sigma^{-1}}\leq r_{2,n}(\delta_{2,n})\big\}$, we have
\begin{align*}
    &|\Eb_{\Sc_2}[\Rv_{2,n}]|\mathbbm{1}(\Ec_2)\leq 2^{s_n+2}(r_{1,n}^*)^{s_n}r_{2,n}^*,\\
    &{\rm Var}_{\Sc_2}(\sqrt{n}\Rv_{2,n}/\sigma_\beta)\mathbbm{1}(\Ec_2)\leq \frac{1}{\bar\sigma_\beta^2}\bigg[32K_x^6K_\varepsilon^2\frac{d}{n}\\
    &\qquad+2^{9}(K_x\vee K_\varepsilon)^2K_x^2(r_{1,n}^*\vee r_{2,n}^*)^2s_n^5\bigg((2r_{1,n}^*)^2\vee\frac{8s_n^2K_x^4d}{n}\bigg)^{s_n-1}\bigg].
\end{align*}Combining the conditional expectation and variance bounds for $\Rv_{1,n}$ and $\Rv_{2,n}$ with Lemma~\ref{lem:BElemma} implies
\begin{align*}
    &\sup_{t\in\Real}\bigg|\Pb\bigg(\frac{\sqrt{n}(\eta^\top\hat\beta-\eta^\top\beta)}{\sigma_\beta}\leq t\bigg)-\Phi(t)\bigg|\leq C_{\rm BE}\frac{(K_xK_\varepsilon)^{q_{x\varepsilon}\wedge 3}}{\bar\sigma_\beta^{q_{x\varepsilon}\wedge 3}n^{(q_{x\varepsilon}\wedge 3-1)/2}}\\
    &+\frac{\sqrt{n}}{\bar\sigma_\beta\sqrt{\pi/8}}(2r_{1,n}^*)^{s_n}r_{2,n}^*+\frac{1}{\bar\sigma_\beta^{2/3}}\bigg[\bigg(\frac{4K_x^6K_\varepsilon^2}{\pi}\bigg)^{1/3}\bigg(\frac{d}{n}\bigg)^{1/3}\\
    &+\bigg(\frac{2^6(K_x\vee K_\varepsilon)^2K_x^2}{\pi}\bigg)^{1/3}(r_{1,n}^*\vee r_{2,n}^*)^{2/3}s_n^{5/3}\bigg((2r_{1,n}^*)^2\vee\frac{8s_n^2K_x^4d}{n}\bigg)^{\frac{s_n-1}{3}}\bigg]\Bigg)\\
    &+\delta_{1,n}+\delta_{2,n}.
\end{align*} This readily proves the first part of the theorem. Next, we let $s_n:=\lfloor\log(en)\rfloor$. Since $2r_{1,n}^*\leq 1/e$ and $r_{2,n}\leq 1$, we have
\begin{equation}\label{eq:PP1:s_n}
    s_n\geq \left\lceil\frac{\log(nr_{2,n}^*)}{\log(1/(2r_{1,n}^*))}\right\rceil\vee 1.
\end{equation} From \eqref{eq:PP1:s_n}, we immediately get
\begin{equation*}
    \sqrt{n}(2r_{1,n}^*)^{s_n}r_{2,n}^*\leq \sqrt{n}r_{2,n}^*\exp\big(-s_n\log\big(1/(2r_{1,n}^*)\big)\big)\leq \sqrt{n}r_{2,n}^*\exp\big(-\log(nr_{2,n}^*)\big)\leq  1/\sqrt{n}.
\end{equation*} Moreover, since
\begin{equation*}
    (2r_{1,n}^*)^2\vee\frac{8K_x^4d\log^2(en)}{n}\leq e^{-2},
\end{equation*} we have
\begin{align*}
    (r_{1,n}^*\vee r_{2,n}^*)^{2/3}s_n^{5/3}\bigg((2r_{1,n}^*)^2\vee\frac{8s_n^2K_x^4d}{n}\bigg)^{\frac{s_n-1}{3}} \leq s_n^{5/3}\bigg((2r_{1,n}^*)^2\vee\frac{8s_n^2K_x^4d}{n}\bigg)^{\frac{s_n-1}{3}} \\
    \leq s_n^{5/3}\exp\big(-\frac{2}{3}(s_n-1)\big)\leq \frac{\log^{5/3}(en)}{n^{2/3}}\leq\frac{11}{\sqrt{n}}.
\end{align*}where the last inequality follows from the fact that $\sup_{x\geq 1}\frac{\log^{5/3}(ex)}{x^{1/6}}=\frac{10^{5/3}}{e^{3/2}}<11$. Combining all, there exists a universal constant $\Ck>0$ such that
\begin{align*}
    &\sup_{t\in\Real}\bigg|\Pb\bigg(\frac{\sqrt{n}(\eta^\top\hat\beta-\eta^\top\beta)}{\sigma_\beta}\leq t\bigg)-\Phi(t)\bigg|\leq \Ck\Bigg(\frac{(K_xK_\varepsilon)^{q_{x\varepsilon}\wedge 3}}{\bar\sigma_\beta^{q_{x\varepsilon}\wedge 3}n^{(q_{x\varepsilon}\wedge 3-1)/2}}\\
    &+\frac{1}{\bar\sigma_\beta\sqrt{n}}+\frac{1}{\bar\sigma_\beta^{2/3}}\bigg[\big(K_x^6K_\varepsilon^2\big)^{1/3}\bigg(\frac{d}{n}\bigg)^{1/3}+\big((K_x\vee K_\varepsilon)^2K_x^2\big)^{1/3}\frac{1}{\sqrt{n}}\bigg]\Bigg)+\delta_{1,n}+\delta_{2,n}.
\end{align*} Since $K_x,K_\varepsilon\geq1$, we get the desired result.

\subsection{Proof of Corollary~\ref{cor:PP1}}
\paragraph{Under \ref{asmp:momentX}}
Define $T_i = \Sigma^{-1/2} X_i(Y_i-X_i^\top \beta)$ for $i=n+1,\ldots,2n$. Then $\Eb T_i = 0_d$ and $\Eb[T_iT_i^\top]=\Sigma^{-1/2}H\Sigma^{-1/2}$. We apply Theorem~3.1 of \cite{EinmahlLi2008LILBanach} (with $s=q>2$, $\eta=1$, and $\delta=1$) to get that for every $t>0$,
\begin{equation}\label{eq:Sn_tail_from_EL}
\Pb\Bigg(\Norm{\sum_{i=n+1}^{2n}T_i}_2\geq 2 \Eb\Norm{\sum_{i=n+1}^{2n}T_i}_2 +t \Bigg)\le
\exp\!\left(-\frac{t^2}{3\,\Lambda_n}\right)
+
C_q\,\frac{n\, \mathbb{E}\|T_1\|_2^q}{t^q}.
\end{equation}
where
\begin{equation*}
\Lambda_n := \sup\left\{\sum_{i=n+1}^{2n} \mathbb{E}\big[f(T_i)^2\big] : f\in (\mathbb{R}^d,\|\cdot\|_2)_1^*\right\},
\end{equation*}and $(\mathbb{R}^d,\|\cdot\|_2)_1^*$ denotes the unit ball of the dual space of $(\mathbb{R}^d,\|\cdot\|_2)$. Every $f\in \Xc^*$ is of the form $f_u(x)=u^\top x$ for some $u\in\mathbb{R}^d$, and $\|f_u\|_{\Xc^*}=\|u\|_2$. Thus $\Xc_1^*=\{f_u:\|u\|_2\le 1\}$ and therefore
\[
\Lambda_n
=
\sup_{\|u\|_2\le 1}\sum_{i=n+1}^{2n} \mathbb{E}\,(u^\top T_i)^2
=
\sup_{\|u\|_2\le 1} n\,u^\top \Sigma^{-1/2}H\Sigma^{-1/2} u
=
n\,\|\Sigma^{-1/2}H\Sigma^{-1/2}\|_{\rm op}.
\] Using independence and $\Eb T_i=0$, we have
\begin{equation*}
    \Eb\Norm{\sum_{i=n+1}^{2n}T_i}_2\leq \Eb^{1/2}\Norm{\sum_{i=n+1}^{2n}T_i}_2^2=n~{\rm tr}(\Sigma^{-1/2}H\Sigma^{-1/2}).
\end{equation*}From Jensen's and H\"older's inequality that for any $1\leq q\leq q_{x\varepsilon}$,
\begin{align}\label{eq:momentTbound}
    &\sup_{u\in\Sb^{d-1}}\Eb^{1/q}[|u^\top T|^q]\leq \sup_{u\in\Sb^{d-1}}\Eb^{1/q_{x\varepsilon}}\big[|u^\top\Sigma^{-1/2}X(Y-X^\top\beta)|^{q_{x\varepsilon}}\big]\nonumber\\
    &\quad \leq \sup_{u\in\Sb^{d-1}}\Eb^{1/q_{x}}\big[|u^\top\Sigma^{-1/2}X|^{q_{x}}\big]\Eb^{1/q_{\varepsilon}}\big[|Y-X^\top\beta|^{q_{\varepsilon}}\big]\leq K_xK_\varepsilon.
\end{align} The last inequality follows from Assumption~\ref{asmp:momentX} and \ref{asmp:momente}. Therefore, $\norm{\Sigma^{-1/2}H\Sigma^{-1/2}}_{\rm op}\leq (K_xK_\varepsilon)^2$ and ${\rm tr}(\Sigma^{-1/2}H\Sigma^{-1/2})\leq (K_xK_\varepsilon)^2d$. Also, from Jensen's inequality and \eqref{eq:momentTbound}, we get
\begin{equation*}
    \Eb\norm{T}_2^q =d^{q/2}\Eb\bigg[\bigg(\frac{1}{d}\sum_{j=1}^d(\ev_j^\top T)^2\bigg)^{q/2}\bigg]\leq d^{q/2}\cdot\frac{1}{d}\sum_{j=1}^d\Eb|\ev_j^\top T|^q\leq (K_x^2K_\varepsilon^2~d)^{q/2}.
\end{equation*} Set $t=nx$ in \eqref{eq:Sn_tail_from_EL} to deduce
\begin{equation*}
    \Pb\bigg(\left\|\frac1n\sum_{i=n+1}^{2n} T_i\right\|_2\geq 2K_xK_\varepsilon\sqrt{\frac{d}{n}}+x\bigg)\leq \exp\!\left(-\frac{n x^2}{3(K_xK_\varepsilon)^2}\right)
+
C_q\,\frac{(K_x^2K_\varepsilon^2~d)^{q/2}}{n^{q-1}x^q}.
\end{equation*} Note that $\norm{n^{-1}\sum_{i=n+1}^{2n}T_i}_2= \norm{\hat\Gamma_2-\hat\Sigma_2\beta}_{\Sigma^{-1}}$, and taking
\begin{equation*}
    x=\max\Set{K_xK_\varepsilon\sqrt{\frac{3 \log(2/\delta)}{n}},\frac{C_q^{1/q}K_xK_\varepsilon \sqrt{d}}{n^{1-1/q}\delta^{1/q}}},
\end{equation*} implies that
\begin{equation*}
\mathbb{P}\!\left( \norm{\hat\Gamma_2-\hat\Sigma_2\beta}_{\Sigma^{-1}}
\ge K_xK_\varepsilon\bigg[\sqrt{\frac{4d+3\log(2/\delta)}{n}}+C_q\frac{\sqrt{d}}{\delta^{1/q}n^{1-1/q}}\bigg]
\right)\le \delta,
\end{equation*} for any $2< q\leq q_{x\varepsilon}.$ We choose $q = q_{x\varepsilon}$ and $\delta = n^{1-q_{x\varepsilon}/2}$ to get for some $\Ck = \Ck(q_{x\varepsilon})>0$,
\begin{equation}\label{cond:samgamma}
    \mathbb{P}\!\left( \norm{\hat\Gamma_2-\hat\Sigma_2\beta}_{\Sigma^{-1}}
\ge \Ck K_xK_\varepsilon\sqrt{\frac{d+\log(e n)}{n}}
\right)\le \frac{1}{n^{q_{x\varepsilon}/2-1}}.
\end{equation} Note that \eqref{cond:samgamma} and \eqref{cond:samcov} imply that the method of moment estimators $(\hat\Sigma_2,\hat\Gamma_2)$ satisfies Assumption~\ref{asmp:SigmapilotGammapilot} with
\begin{align*}
    &r_{1,n}^* = \Ck\bigg[\sqrt{\frac{d+\log(en)}{n}}+\bigg(\frac{d}{n^{1-2/q_x}}\bigg)^{\frac{q_x}{q_x+2}}\bigg],\quad\delta_{1,n}=\bigg(\frac{d}{n^{1-2/q_x}}\bigg)^{\frac{q_x}{q_x+2}},\\
    &r_{2,n}^* = \Ck\sqrt{\frac{d+\log(en)}{n}},\quad\mbox{and}\quad \delta_{2,n}=\frac{1}{n^{q_{x\varepsilon}/2-1}}.
\end{align*} for some constant $\Ck=\Ck(K_x,K_\varepsilon,q_x,q_\varepsilon)>0$. Here, we may assume $(2r_{1,n}^*)^2\vee (8K_x^4d\log^2(en))/n\leq e^{-2}$ and $r_{2,n}^*\leq 1$, otherwise the conclusion holds by simply taking a sufficiently large constant. Hence, the second part of Theorem~\ref{thm:PP1} applies and yields the results.

\paragraph{Under~\ref{asmp:sg}} For $0<\epsilon<q_\varepsilon-2$, let $q^\dagger=q^\dagger(\epsilon\wedge 1,q_\varepsilon)>0$ be such that
\begin{equation*}
    \bigg(\frac{1}{q^\dagger}+\frac{1}{q_\varepsilon}\bigg)^{-1}=2+(\epsilon\wedge 1).
\end{equation*} Under~\ref{asmp:sg}, \ref{asmp:momentX} holds with $q_x = q^\dagger$ and $K_x = K_{\rm sg}\sqrt{2eq^\dagger}.$ Combining \eqref{cond:samcov_sg} and \eqref{cond:samgamma} imply that $(\hat\Sigma_2,\hat\Gamma_2)$ satisfies Assumption~\ref{asmp:SigmapilotGammapilot} with 
\begin{align*}
    &r_{1,n}^* = cK_{\rm sg}^2\sqrt{\frac{d+\log(en)}{n}},\quad\delta_{1,n}=\frac{1}{n},\\
    &r_{2,n}^* = \Ck~K_{\rm sg}K_\varepsilon\sqrt{\frac{d+\log(en)}{n}},\quad\mbox{and}\quad \delta_{2,n}=\frac{1}{n^{(\epsilon\wedge 1)/2}},
\end{align*} for some constant $\Ck = \Ck(\epsilon\wedge1,q_\varepsilon)>0$. Again, we may assume $(2r_{1,n}^*)^2\leq e^{-2}$ and $r_{2,n}^*\leq 1$, otherwise the conclusion holds by simply taking a sufficiently large constant. Hence, the second part of Theorem~\ref{thm:PP1} applies and yields the results.

\subsection{Proof of Corollary~\ref{cor:PP2}}
We set $\tilde\Sigma_2 = \widehat{\mathsf{E}}_{1}$ and $\tilde\Gamma_2 = \widehat{\mathsf{E}}_{1}\hat\beta_2$. Recall the concentration of the robust Gram matrix $\widehat{\mathsf{E}}_{1}$ from \eqref{cond:robcov}. We note the inequality $\norm{\tilde\Gamma_2-\tilde\Sigma_2\beta}_{\Sigma^{-1}}\leq \norm{\Sigma^{-1/2}\widehat{\mathsf{E}}_1\Sigma^{-1/2}}_{\rm op}\norm{\hat\beta_2-\beta}_{\Sigma}$. Furthermore, since $\hat\beta_2 = \hat\Sigma_2^{-1}\hat\Gamma_2$, we have
\begin{align}\label{eq:cor4.6:1}
    \norm{\hat\beta_2-\beta}_{\Sigma}^2&=(\hat\beta_2-\beta)^\top\Sigma (\hat\beta_2-\beta)=(\hat\Gamma_2-\hat\Sigma_2\beta)^\top \hat\Sigma_2^{-1}\Sigma\hat\Sigma_2^{-1}(\hat\Gamma_2-\hat\Sigma_2\beta)\nonumber\\
    &\quad\leq \lambda_{\rm min}^{-2}(\Sigma^{-1/2}\hat\Sigma_2\Sigma^{-1/2})\norm{\hat\Gamma_2-\hat\Sigma_2\beta}_{\Sigma^{-1}}^2.
\end{align} Theorem~4.1 of \cite{Oliveira2016LowerTailQuadraticForms} applies and yields that for all $\delta>0$,
\begin{equation}\label{eq:oliveiralower}
    \Pb\bigg(\lambda_{\rm min}(\Sigma^{-1/2}\hat\Sigma_2\Sigma^{-1/2})\geq 1-9K_x\sqrt{\frac{d+2\log(2/\delta)}{n}}\bigg)\geq 1-\delta.
\end{equation} We take $\delta=1/n$ in \eqref{eq:oliveiralower}, and assume $10K_x\sqrt{\frac{d+2\log(2n)}{n}}\leq 1$ (otherwise, we simply take a large constant in the statement of Corollary), to get $\Pb(\lambda_{\rm min}(\Sigma^{-1/2}\hat\Sigma_2\Sigma^{-1/2})\geq 1/2)\leq 1/n$. Combining this with \eqref{cond:robcov}, \eqref{cond:samgamma}, and \eqref{eq:cor4.6:1} implies that with probability at least $1-2/n-1/n^{q_{x\varepsilon}/2-1}$,
\begin{align*}
    \norm{\tilde\Gamma_2-\tilde\Sigma_2\beta}_{\Sigma^{-1}}&\leq \bigg(1+cK_x^2\sqrt{\frac{d+\log(en)}{n}}\bigg)\frac{1}{10}\Ck K_xK_\varepsilon\sqrt{\frac{d+\log(en)}{n}}\\
    &\leq \Ck K_xK_\varepsilon\sqrt{\frac{d+\log(en)}{n}}.
\end{align*} for a universal $c>0$ and $\Ck = \Ck(q_{x\varepsilon})$, potentially different by lines. For the last inequality, we assume $cK_x^2\sqrt{\frac{d+\log(en)}{n}}\leq 1$ by the routine logic. Therefore, $(\tilde\Sigma_2,\tilde\Gamma_2)$ satisfies Assumption~\ref{asmp:SigmapilotGammapilot} with
\begin{align*}
    &r_{1,n}^* = cK_x^2\sqrt{\frac{d+\log(en)}{n}},\quad\delta_{1,n}=\frac{1}{n},\\
    &r_{2,n}^* = \Ck K_xK_\varepsilon\sqrt{\frac{d+\log(en)}{n}},\quad\mbox{and}\quad \delta_{2,n}=\frac{2}{n}+\frac{1}{n^{q_{x\varepsilon}/2-1}}.
\end{align*} for some constant $\Ck=\Ck(q_{x\varepsilon})>0$. Routinely, we may suppose $(2r_{1,n}^*)^2\leq e^{-2}$ and $r_{2,n}^*\leq 1$, otherwise we simply take a sufficiently large constant. Now, the second part of Theorem~\ref{thm:PP1} applies and yields the results.

\subsection{Proof of Theorem~\ref{thm:normality-msmooth-MC}}Theorem~\ref{thm:normality-msmooth-MC} follows from an application of Lemma~\ref{lem:BElemma} with Theorem~\ref{thm:normality-msmooth} and conditional moment bound below.

\begin{lemma}\label{lem:PRE}
    For $1\leq k\leq s$,
    \begin{align*}
        &\Eb[F_k(\pi)\mid \Sc_1,\Sc_2] = \frac{1}{k!}\Dc^k(\hat\theta_{\Sc_2})[\bar U^{(k)}(\hat\theta_{\Sc_2})],\\
        &{\rm Var}\big(F_k(\pi)\mid \pi,\Sc_2\big)\leq \norm{T_k(\hat\theta_{\Sc_2})}_{\rm S}^2\sum_{r=1}^k\binom{n}{r}\frac{\binom{n-r}{k-r}^2}{\binom{n}{k}^2}\norm{h}^{2(k-r)}\nu\,V^{r-1}.
    \end{align*}
\end{lemma}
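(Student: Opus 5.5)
The plan is to condition on the data for the mean identity and on $(\pi,\Sc_2)$ for the variance bound. The variance argument runs a Hoeffding-type decomposition of $F_k(\pi)$ after splitting $H_i=W_i-\hat\theta_{\Sc_2}=(W_i-\theta)+h$, where $h:=\theta-\hat\theta_{\Sc_2}$ is $\Sc_2$-measurable. The whole argument rests only on the multilinear representation \eqref{eq:f_product_structure}. The associative product structure is not used.

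\emph{Conditional mean.} Let $\pi\sim{\rm Unif}(\Sk_n)$ and fix indices $i_1<\cdots<i_k$. Then the ordered tuple $(\pi(i_1),\ldots,\pi(i_k))$ is uniformly distributed over the $(n)_k$ ordered $k$-tuples of distinct indices. Hence
\begin{equation*}
\Eb[F_k(\pi)\mid\Sc_1,\Sc_2]=\frac{1}{(n)_k}\sum_{\substack{j_1,\ldots,j_k\ \text{distinct}}}T_k(\hat\theta_{\Sc_2})[H_{j_1},\ldots,H_{j_k}].
\end{equation*}
Next I group the ordered tuples by their underlying $k$-set and sum over the $k!$ orderings. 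By \eqref{eq:f_product_structure} this sum equals $\Dc^kf(\hat\theta_{\Sc_2})$ evaluated on the set. Since $(n)_k=k!\binom nk$, the display becomes $\frac1{k!}\Dc^kf(\hat\theta_{\Sc_2})[\bar U^{(k)}(\hat\theta_{\Sc_2})]$.

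\emph{Conditional variance.} Fix $\pi$ and $\Sc_2$, so that $T:=T_k(\hat\theta_{\Sc_2})$ and $h$ are deterministic. Write $X_i=W_i-\theta$; these are i.i.d., centered, and independent of $(\pi,\Sc_2)$. Relabeling by $\pi$ only permutes the i.i.d.\ sample, so I may treat $F_k(\pi)=\binom nk^{-1}\sum_{K}T[H_{\pi(K)}]$ with a fixed ordering inside each $k$-set $K$. I expand each $T[X_{j_1}+h,\ldots,X_{j_k}+h]$ multilinearly and collect terms according to the set $J\subseteq[n]$ of sample indices that carry an $X$ rather than $h$. This gives
\begin{equation*}
F_k(\pi)-\Eb[F_k(\pi)\mid\pi,\Sc_2]=\sum_{r=1}^k\sum_{|J|=r}g_J(X_J),\qquad g_J=\binom nk^{-1}\sum_{K\supseteq J,\,|K|=k}L_{K,J}[X_J].
\end{equation*}
Each $L_{K,J}$ is an $r$-linear form obtained from $T$ by inserting $h$ in the $k-r$ slots indexed by $K\setminus J$, so $\norm{L_{K,J}}_{\rm S}\le\norm{T}_{\rm S}\norm h^{k-r}$. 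Each $g_J$ is multilinear in centered independent vectors, hence completely degenerate. As in the proof of Theorem~\ref{thm:condmoment}, terms with $J\neq J'$ are therefore uncorrelated: some index lies in one set but not the other, and integrating it out kills the cross moment. It follows that ${\rm Var}(F_k\mid\pi,\Sc_2)=\sum_{r}\sum_{|J|=r}\Eb g_J^2$.

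To bound each term, I apply Minkowski's inequality over the $\binom{n-r}{k-r}$ sets $K\supseteq J$, and then Lemma~\ref{lem:varbound} to each $L_{K,J}$, which gives $\Eb L_{K,J}[X_J]^2\le\norm{L_{K,J}}_{\rm S}^2\nu V^{r-1}$. Together these yield
\begin{equation*}
\Eb g_J^2\le\binom nk^{-2}\binom{n-r}{k-r}^2\norm T_{\rm S}^2\norm h^{2(k-r)}\nu V^{r-1}.
\end{equation*}
Summing over the $\binom nr$ choices of $J$ gives the claim.

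The main obstacle is that $L_{K,J}$ need not be symmetric, while Lemma~\ref{lem:varbound} was used for symmetric forms. I expect its proof to carry over verbatim: condition on all but one coordinate, so that the directional second moment produces $\nu$, and bound the remaining coordinates through $\norm{X}^2$, which produces $V$ for each. I would check this carefully, and also verify the degeneracy and orthogonality claims for these ordered, non-symmetric kernels.
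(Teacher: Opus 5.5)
Your proposal is correct and follows the paper's proof essentially step for step. Like you, the paper expands $H_{\pi(t)}=(W_{\pi(t)}-\theta)+h$ multilinearly and uses complete degeneracy to remove cross terms between distinct index sets. It then bounds each block by a Cauchy--Schwarz step (equivalent to your Minkowski step), Lemma~\ref{lem:varbound}, and the count $\binom{n-r}{k-r}$ of $k$-sets containing a given $r$-set. The only difference is bookkeeping: it tracks slot positions through counts $c_{J,A}$ rather than summing over $K\supseteq J$. Your worry about non-symmetric $L_{K,J}$ is unnecessary, since Lemma~\ref{lem:varbound} is already stated for arbitrary $T_k\in\Lc^k(\Bb)$.
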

\begin{proof} The expectation identity follows immediately. For a fixed $\pi\in\Sk_n$, relabel the sequence as $Z_t = W_{\pi(t)} -\theta$, $t\in[n]$ and write $h=\theta-\hat\theta_{\Sc_2}$. By multi-linearity, we get
    \begin{equation*}
        T_k(\hat\theta_{\Sc_2})[Z_{i_1}+h,\ldots,Z_{i_k}+h]=\sum_{S\subset [k]}T_k(\hat\theta_{\Sc_2})[y_1^{(S)},\ldots,y_k^{(S)}],\quad y_j^{(S)}=\begin{cases}
            Z_{i_{j}},&j\in S,\\
            h,&j\notin S;
        \end{cases}.
    \end{equation*} For each $A\subset[k]$, define
    \begin{equation}\label{eq:F_KA}
        F_{k,A}(\pi)=\binom{n}{k}^{-1}\sum_{1\le i_1<\cdots<i_k\le n}T_k(\hat\theta_{\Sc_2})[y_1^{(S)},\ldots,y_k^{(S)}],
    \end{equation} then $F_k(\pi)=\sum_{A\subseteq[k]}F_{k,A}(\pi)$. Now, fix $A = \{1\leq a_1<\ldots<a_r\leq k\}$ with $1\leq r\leq k$, and define the asymmetric order $r$ kernel as
    \begin{equation*}
        \psi_A(x_1,\ldots,x_r)=T_k(\hat\theta_{\Sc_2})[w_1,\ldots,w_k],\quad w_{a_m}=x_m,~(1\leq m\leq r),~w_j=h,~(j\notin A).
    \end{equation*} This kernel is $r$-linear and (conditional on $\Sc_2$) degenerate of order $r$. Moreover, we have $\norm{\psi_A}_{\rm S}\leq \norm{h}^{k-r}\norm{T_k(\hat\theta_{\Sc_2})}_{\rm S}$. For $J\subset[n]$ with $|J|=r$, let $c_{J,A}$ be the number of $k$ subsets $\{i_1<\ldots<i_k\}$ such that $i_{a_m}=j_m$ for $1\leq m\leq r$. Now, we regroup the sum over all $k$-subsets in \eqref{eq:F_KA} by their induced $J$, yielding the exact identity that for $|A|=r$,
    \begin{equation*}
        F_{k,A} = \binom{n}{k}^{-1}\sum_{J\subset \Ic_{n,r}} c_{J,A}\psi_A(Z_J).
    \end{equation*} Here, $\Ic_{n,r}$ denotes a collection of $r$ subset of $[n]$. Let $A,A'\subset[k]$ with $|A|=r$ and $|A'|=r'$. If $r\neq r'$, then ${\rm Cov}(F_{k,A}(\pi),F_{k,A'}(\pi)\mid\Sc_2)=0$ because every summand of $F_{k,A}$ is a multilinear form of degree $r$ in $(Z_1,\ldots,Z_n)$, and every summand of $F_{k, A'}$ has degree $r'$; in the product, some $Z_j$ must appear exactly once unless $r=r'$ and the same index set is used. This also implies that for $|A|=|A'|=r$ but $J\neq J'$, ${\rm Cov}(\psi_A(Z_J),\psi_{A'}(Z_{J'})\mid\Sc_2)=0.$ Therefore,
    \begin{align*}
        {\rm Var}\big(F_k(\pi)\mid \pi,\Sc_2\big)=\sum_{r=1}^k\sum_{\substack{A,A'\subset[k]\\|A|=|A'|=r}}\binom{n}{k}^{-2}\sum_{J\in \Ic_{n,r}} c_{J,A}c_{J,A'}\Eb[\psi_A(Z_J)\psi_{A'}(Z_J)\mid\Sc_2]\\
        \leq \sum_{r=1}^k\sum_{J\in\Ic_{n,r}}\Bigg(\binom{n}{k}^{-1}\sum_{\substack{A\subset[k]\\|A|=r}}c_{J,A}\Eb^{1/2}[\psi_A(Z_J)^2\mid\Sc_2]\Bigg)^2.
    \end{align*} The last ineqaulity is from Cauchy Scahwrz inequality. Apply Lemma~\ref{lem:varbound} with $p=2$ to the independent $r$ inputs $Z_J$ to get
    \begin{equation*}
        \Eb^{1/2}[\psi_A(Z_J)^2\mid\Sc_2]\leq \norm{T_k(\hat\theta_{\Sc_2})}_{\rm S}\norm{h}^{k-|A|}\sqrt{\nu} (\sqrt{V})^{r-1}.
    \end{equation*}Furthermore, we note that the sum $\sum_{A \subset [k], |A|=|J|} c_{J,A}=\binom{n-|J|}{k-|J|}$ because it counts every $k$-subset of $[n]$ that contains the fixed $|J|$-subset $J$ exactly once, which is equivalent to choosing the remaining $k-|J|$ elements from the available values in $[n] \setminus J$. Therefore,
    \begin{align*}
        {\rm Var}\big(F_k(\pi)\mid \pi,\Sc_2\big)\leq \norm{T_k(\hat\theta_{\Sc_2})}_{\rm S}^2\sum_{r=1}^k\binom{n}{r}\frac{\binom{n-r}{k-r}^2}{\binom{n}{k}^2}\norm{h}^{2(k-r)}\nu\,V^{r-1}.
    \end{align*}
\end{proof}

Denote
\begin{align*}
    \Av_n &= \frac{\sqrt{N}(\widehat{f}_s^{(b)}-f(\theta))}{\sigma_f},~&&\Bv_n = \frac{\sqrt{N}(\widehat{f}_s-f(\theta))}{\sigma_f},\\
    \Cv_n &= \frac{\sqrt{N/4}(\widehat{f}_s^{(b)}(\Sc_1,\Sc_2)-\widehat{f}_s(\Sc_1,\Sc_2))}{\sigma_f},~&&\Dv_n = \frac{\sqrt{N/4}(\widehat{f}_s^{(b)}(\Sc_2,\Sc_1)-\widehat{f}_s(\Sc_2,\Sc_1))}{\sigma_f},
\end{align*} In order to apply Lemma~\ref{lem:BElemma}, we control the conditional expectation and variance of $\Cv_n$ and $\Dv_n$. Since $\frac{1}{k!}\Dc^k(\hat\theta_{\Sc_2})[\bar U^{(k)}(\hat\theta_{\Sc_2})]=\Eb[F_k(\pi)|\Sc_1,\Sc_2]$, we write
\begin{align*}
    \sigma_f\,\Cv_n = \sqrt{n/2}\sum_{k=2}^s\frac{1}{b}\sum_{r=1}^b\Delta_k(\pi_{kr}),\quad \Delta_k(\pi_{kr})=F_k(\pi_{kr})-\Eb[F_k(\pi)|\Sc_1,\Sc_2].
\end{align*} In order to apply Lemma~\ref{lem:BElemma}, we set $\Fc_n = \sigma(\Sc_2)$ and $\Ec_n = \set{\norm{\hat\theta_{\Sc_2}-\theta}\leq r_n}$. From Lemma~\ref{lem:PRE}, we have $\Eb[\Delta_k(\pi_{kr})\mid\Sc_1,\Sc_2]=0$, and thus $\Eb[\Delta_k(\pi_{kr})\mid\Sc_2]=0$, i.e. $\Eb[\Cv_n\mid\Fc_n]=0$. Moreover, from the (conditional) independence, we get
\begin{equation*}
    {\rm Var}\bigg(\sum_{k=2}^s\frac{1}{b}\sum_{r=1}^b\Delta_k(\pi_{kr})\mid\Sc_2\bigg) = \frac{1}{b}\sum_{k=2}^s{\rm Var}\big(\Delta_k(\pi)\mid\Sc_2\big).
\end{equation*} From the tower law, we have 
\begin{align*}
    {\rm Var}\big(\Delta_k(\pi)\mid\Sc_2\big) = \Eb[{\rm Var}\big(\Delta_k(\pi)\mid\pi,\Sc_2\big)\mid\Sc_2]+{\rm Var}\big(\Eb[\Delta_k(\pi)\mid\pi,\Sc_2]\mid\Sc_2\big).
\end{align*} We note from the definition~\eqref{eq:Fk_pi_general} that $\Eb[F_k(\pi)\mid\pi,\Sc_2] = T_k(\hat\theta_{\Sc_2})[(\theta-\hat\theta_{\Sc_2})^{\otimes k}]$ is independent of $\pi$, that is,
\begin{align*}
    \Eb[\Delta_k(\pi)\mid\pi,\Sc_2] &= \Eb[F_k(\pi)\mid\pi,\Sc_2] - \Eb[\Eb[F_k(\pi)\mid\Sc_1,\Sc_2]\mid\pi,\Sc_2]\\
    &=\Eb[F_k(\pi)\mid\Sc_2] - \Eb[F_k(\pi)\mid\Sc_2] = 0.
\end{align*} We next analyze the variance bound in Lemma~\ref{lem:PRE}. Since
\begin{equation*}
    \frac{\binom{n-r}{k-r}}{\binom{n}{k}}=\frac{k(k-1)\cdots(k-r+1)}{n(n-1)\cdots(n-r+1)}\leq \bigg(\frac{k}{n}\bigg)^r,
\end{equation*} we have
\begin{equation*}
    {\rm Var}\big(F_k(\pi)\mid \pi,\Sc_2\big)\leq\norm{T_k(\hat\theta_{\Sc_2})}_{\rm S}^2\sum_{r=1}^k\frac{n^r}{r!}\bigg(\frac{k}{n}\bigg)^{2r}\norm{h}^{2(k-r)}\nu\,V^{r-1}.
\end{equation*} Note that the right-hand side is independent of $\pi$. Therefore,
\begin{align*}
    {\rm Var}\big(F_k(\pi)\mid \Sc_2\big)\mathbbm{1}(\Ec_n)\leq \sup_{x\in U}\norm{T_k(x)}_{\rm S}^2\sum_{r=1}^k\frac{1}{r!}\bigg(\frac{k^2}{n}\bigg)^{r}\norm{h}^{2(k-r)}\nu\,V^{r-1}\\
    =\sup_{x\in U}\norm{T_k(x)}_{\rm S}^2\frac{\nu k^2}{n}\sum_{r=1}^k\frac{1}{r!}\bigg(\frac{k^2V}{n}\bigg)^{r-1}r_n^{2(k-r)}\\
    \leq\sup_{x\in U}\norm{T_k(x)}_{\rm S}^2\frac{\nu k^2}{n}\bigg(\frac{k^2V}{n}\vee r_n^2\bigg)^{k-1}\sum_{r=1}^k\frac{1}{r!}\\
    \leq \sup_{x\in U}\norm{T_k(x)}_{\rm S}^2\frac{(e-1)\nu k^2}{n}\bigg(\frac{k^2V}{n}\vee r_n^2\bigg)^{k-1},
\end{align*} where the last inequality follows from $\sum_{r\geq 1}1/r!=e-1.$ Consequently,
\begin{align*}
    b{\rm Var}(\sigma_f\Cv_n\mid\Sc_2)\mathbbm{1}(\Ec_n)\leq (e-1)\sup_{x\in U}\norm{T_k(x)}_{\rm S}^2\nu \sum_{k=2}^sk^2\bigg(\frac{k^2V}{n}\vee r_n^2\bigg)^{k-1}\lesssim_{\bar \nu, s}\bigg(\frac{V}{n}\vee r_n^2\bigg)\vee \bigg(\frac{V}{n}\vee r_n^2\bigg)^{s-1}.
\end{align*} Here, we may assume $\frac{V}{n}\vee r_n^2\leq 1$, otherwise we take a sufficiently large constant in the statement of Theorem~\ref{thm:normality-msmooth-MC}. Therefore,
\begin{equation*}
    {\rm Var}(\sigma_f\Cv_n)\lesssim_{\bar \nu, s}\frac{1}{b\sigma_f^2}\bigg(\frac{V}{n}\vee r_n^2\bigg).
\end{equation*} Similar bound applies to $\Dv_n$ with $\Fc'_n = \sigma(\Sc_1)$ and $\Ec_n' = \set{\norm{\hat\theta_{\Sc_1}-\theta}\leq r_n}$. Now an application of Lemma~\ref{lem:BElemma} with the Berry--Esséen  bound for $\Bv_n$ in Theorem~\ref{thm:normality-msmooth} concludes the proof.
    
\subsection{Proof of Theorem~\ref{thm:normality-gevrey-MC}}
We endorse the proof of Theorem~\ref{thm:normality-msmooth-MC}. In particular, with $\sup_{x\in U}\norm{T_k(x)}_{\rm S}\leq\norm{T_{\geq 0}}_{L_\infty(U),R} R^k$, we have
\begin{align*}
    {\rm Var}\big(F_k(\pi)\mid \Sc_2\big)\mathbbm{1}(\Ec_n)\leq \norm{T_{\geq 0}}_{L_\infty(U),R}^2\frac{(e-1)\nu R^{2k}k^2}{n}\bigg(\frac{k^2V}{n}\vee r_n^2\bigg)^{k-1}.
\end{align*} Consequently,
\begin{align*}
    b{\rm Var}(\sigma_f\Cv_n)\mathbbm{1}(\Ec_n)\leq (e-1)\norm{T_{\geq 0}}_{L_\infty(U),R}^2\nu R^2\sum_{k=2}^sk^2\bigg(\frac{R^2k^2V}{n}\vee R^2r_n^2\bigg)^{k-1}.
\end{align*} Let $c_k = k^2(\frac{R^2k^2V}{n}\vee R^2r_n^2)^{k-1}$ for $2\leq k\leq s$, then for $2\leq k\leq s-1$,
\begin{equation*}
    \frac{c_{k+1}}{c_k}\leq \frac{(k+1)^2}{k^2}\bigg(\frac{R^2(k+1)^2V}{n}\vee R^2r_n^2\bigg)\leq\frac{9}{4}\bigg(\frac{R^2s^2V}{n}\vee R^2r_n^2\bigg)\leq\frac{9}{4e^2}<1.
\end{equation*} This implies that
\begin{align*}
    &b{\rm Var}(\sigma_f\Cv_n)\mathbbm{1}(\Ec_n)\leq (e-1)\norm{T_{\geq 0}}_{L_\infty(U),R}^2\nu R^2\sum_{k=2}^s c_k\\
    &\leq \frac{4(e-1)}{1-(9/4e^2)}\norm{T_{\geq 0}}_{L_\infty(U),R}^2\nu\bigg(\frac{4R^4V}{n}\vee R^4r_n^2\bigg)
    \leq 10\norm{T_{\geq 0}}_{L_\infty(U),R}^2\nu\bigg(\frac{4R^4V}{n}\vee R^4r_n^2\bigg).
\end{align*} That is,
\begin{equation*}
    {\rm Var}(\sigma_f\Cv_n)\lesssim\frac{\norm{T_{\geq 0}}_{L_\infty(U),R}^2\nu}{b\sigma_f^2}\bigg(\frac{R^4V}{n}\vee R^4r_n^2\bigg),
\end{equation*} and analogously for $\Dv_n$ with $\Fc'_n = \sigma(\Sc_1)$ and $\Ec_n' = \set{\norm{\hat\theta_{\Sc_1}-\theta}\leq r_n}$. Since $f\in\Gc^1(U)$ and $\norm{f}_{\Gc^1(U),R}\leq \norm{T_{\geq 0}}_{L_\infty(U),R}<\infty$, we have from Theorem~\ref{thm:normality-gevrey} that
\begin{align*}
    &\sup_{t\in\Real}\abs{\Pb_P(\Bv_n\leq t)-\Phi(t)}\leq C\bigg[ \frac{\mu_{3,f}}{\sigma_f^3\sqrt{n}}+\frac{\norm{T_{\geq 0}}_{L_\infty(U),R}Rr_n}{\sigma_f\sqrt{n}}\\
    &\quad +\bigg(\frac{\norm{T_{\geq 0}}_{L_\infty(U),R}R^{2}}{\sigma_f}\bigg)^{2/3}\bigg\{\bigg(\frac{\nu(P) V(P)}{n}\bigg)^{1/3} + \frac{\nu(P)^{1/3}}{\sqrt{n}}\bigg\}\bigg]+2\delta_{n,P}.
\end{align*} Now, an application of Lemma~\ref{lem:BElemma} concludes the proof.

\begin{proposition}\label{prop:recursion}
    Define $Y_j^{(t)}$ for $t=0,1,\dots,n$ and $j=0,1,\dots,k$ by the recursion:
\begin{equation}\label{eq:dp_recursion}
Y_0^{(t)} := v_0,\qquad
Y_j^{(0)} := 0\ (j\ge 1),\qquad
Y_j^{(t)} := Y_j^{(t-1)} + L_t\big(Y_{j-1}^{(t-1)}\big)\ (t\ge 1, j\ge 1).
\end{equation} Then, for all $0\leq j\leq k$,
\begin{equation}\label{eq:inductionhyp}
        Y_j^{(t)}=\sum_{1\le t_1<\cdots<t_j\le t} L_{t_j}\circ \cdots \circ L_{t_1}\, v_0,\quad\forall~ 0\leq j\leq k,
    \end{equation}
\end{proposition}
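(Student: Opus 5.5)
The plan is induction on $t$, with $j$ fixed, reading \eqref{eq:inductionhyp} with the usual conventions. For $j=0$ the sum is over the empty tuple, so it contains only the empty composition; this gives $Y_0^{(t)}=v_0$ for every $t$. For $j>t$ the index set $\{1\le t_1<\cdots<t_j\le t\}$ is empty, so the sum is $0$. Each $L_t$ is a linear map on $\Bb$; in Algorithm~\ref{alg:ro_dp_general} this is $L_t(y)=y\,H_{\pi(t)}\,G_{k,j}(\hat\theta_{\Sc_2})$. Since $L_t$ is allowed to depend on $j$, we write $L_{t}^{(j)}$ when needed. The only property we use is linearity, so that $L_t$ can be pushed through a finite sum.

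Base case $t=0$: we have $Y_0^{(0)}=v_0$, which is the empty composition. For $j\ge1$ we have $Y_j^{(0)}=0$, which matches the empty sum over $1\le t_1<\cdots<t_j\le 0$.

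Inductive step: assume \eqref{eq:inductionhyp} holds at time $t-1$ for all $0\le j\le k$. The case $j=0$ is immediate because $Y_0^{(t)}=v_0$. For $j\ge1$, split the index set $\{1\le t_1<\cdots<t_j\le t\}$ according to whether $t_j<t$ or $t_j=t$.

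The tuples with $t_j\le t-1$ give exactly the sum defining $Y_j^{(t-1)}$, by the inductive hypothesis.

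The tuples with $t_j=t$ correspond bijectively to the $(j-1)$-tuples $1\le t_1<\cdots<t_{j-1}\le t-1$. By linearity of $L_t$, their contribution is
\[
\sum_{1\le t_1<\cdots<t_{j-1}\le t-1}L_t\circ L_{t_{j-1}}\circ\cdots\circ L_{t_1}v_0=L_t\Big(\sum_{1\le t_1<\cdots<t_{j-1}\le t-1}L_{t_{j-1}}\circ\cdots\circ L_{t_1}v_0\Big)=L_t\big(Y_{j-1}^{(t-1)}\big).
\]
Adding the two pieces gives $Y_j^{(t-1)}+L_t(Y_{j-1}^{(t-1)})$, which is the recursion \eqref{eq:dp_recursion}. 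This closes the induction.

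The only point needing care is the in-place update in Algorithm~\ref{alg:ro_dp_general}, where $j$ runs downward from $\min\{t,k\}$ to $1$. Because of the downward order, when $Y_j$ is updated the stored $Y_{j-1}$ still holds its time-$(t-1)$ value. So the in-place loop implements \eqref{eq:dp_recursion} exactly, and we will state this explicitly.

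As a consequence, with $L_t(y)=y\,H_{\pi(t)}G_{k,j}$ and $v_0=G_{k,0}$, we get $Y_k^{(n)}=\sum_{t_1<\cdots<t_k}G_{k,0}H_{\pi(t_1)}G_{k,1}\cdots H_{\pi(t_k)}G_{k,k}$. Applying $\Lambda_k$ and dividing by $\binom nk$ gives $F_k(\pi)$. The cost is $O(nk)$ algebra operations per $k$, hence $O(bns^2)$ in total. There is no substantive obstacle; the main thing is bookkeeping of the empty-sum conventions and the downward update order.
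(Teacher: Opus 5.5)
Your proposal is correct and matches the paper's proof: induction on $t$, with the index set split according to whether $t_j\le t-1$ or $t_j=t$, and linearity of $L_t$ used to identify the second piece with $L_t\big(Y_{j-1}^{(t-1)}\big)$. Your additional remarks go beyond the statement but are accurate and consistent with Algorithm~\ref{alg:ro_dp_general}: that the downward in-place loop realizes the recursion, and that with $j$-dependent maps $L_t(y)=y\,H_{\pi(t)}G_{k,j}$ the same induction yields $G_{k,0}H_{\pi(t_1)}G_{k,1}\cdots H_{\pi(t_k)}G_{k,k}$.
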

\begin{proof}
    We shall prove \eqref{eq:inductionhyp} holds for all $0\leq t\leq n$ by induction on $t$. The case $t=0$ follows from the initialization. For $t\geq 1$ and $j\geq 1$,
    \begin{align*}
        &\sum_{1\le t_1<\cdots<t_j\le t} L_{t_j}\circ \cdots \circ L_{t_1}\, v_0 = \sum_{1\le t_1<\cdots<t_j\le t-1} L_{t_j}\circ \cdots \circ L_{t_1}\, v_0 + \sum_{1\le t_1<\cdots<t_j= t} L_{t_j}\circ \cdots \circ L_{t_1}\, v_0\\
        &\qquad =Y_j^{(t-1)} + L_t\circ\sum_{1\le t_1<\cdots<t_{j-1}\leq t-1} L_{t_{j-1}}\circ \cdots \circ L_{t_1}\, v_0=Y_j^{(t-1)} + L_t\big(Y_{j-1}^{(t-1)}\big)=Y_j^{(t)}.
    \end{align*} Finally, $j=0$ case follows from the initialization.
\end{proof}
For a fixed $\pi \in \Sk_n$, $x\in\Bb$, and Algorithm~\ref{alg:ro_dp_general} defines the sequence $Y_j^{(t)}$ for $t = 0, \dots, n$ and $j = 0, \dots, k$ defined through: $Y_j^{(0)} = 0$ for $j \ge 1$, $Y_0^{(t)} := Y_0^{(t-1)} G_{k,t}(x)$ and $Y_j^{(t)} := Y_j^{(t-1)} G_{k,t}(x) + Y_{j-1}^{(t-1)} H_{\pi(t)} G_{k,t}(x)$ for $t\geq 1,j\geq 1$. Therefore, the terminal state satisfies:
\begin{equation*}
\Lambda_k(x)(Y_k^{(n)}) = \sum_{1\le i_1<\cdots<i_k\le n} \Lambda_{k}(x)\!\big(
G_{k,0}(x)\,H_{\pi(1)}\,G_{k,1}(x)\cdots H_{\pi(k)}\,G_{k,k}(x)\big)
\end{equation*}

\section{Auxiliary Results for Section~\ref{sec:appl}}
\begin{theorem}\label{thm:B1}
    Consider a functional $\omega(\Sigma) = \eta_1^\top \Sigma^{-1}\eta_2$ and recall $k$:th Fréchet derivative $\Dc^k\omega$ of $\omega$ in \eqref{eq:precisionmatrix_Fréchetderivatives} for $1\leq k\leq s$. Suppose that Assumption~\ref{asmp:momentX} holds. Let
    \begin{equation*}
        \bar h^{(k)}=\sum_{\substack{1\leq j_1,\ldots,j_k\leq n\\\mbox{\tiny distinct}}}\frac{h_{j_1}\otimes\cdots\otimes h_{j_k}}{n(n-1)\cdots (n-k+1)},
    \end{equation*}
    with $h_i = X_iX_i^\top-\Sigma$. Then $\Eb[\Dc^k \omega(\Sigma)[\bar h^{(k)}]]=0$ for $1\leq k\leq s$ and
    \begin{equation}\label{eq:B1:var}
        {\rm Var}\!\left(\Dc^k \omega(\Sigma)[\bar h^{(k)}]\right)\leq \norm{\eta_1}_{\Sigma^{-1}}^2\norm{\eta_2}_{\Sigma^{-1}}^2 (k!)^3 2^k K_x^{4k}\frac{d^{k-1}}{(n)_k},\quad 1\leq k\leq s.
    \end{equation}
\end{theorem}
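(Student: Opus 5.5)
Write $Z_i=\Sigma^{-1/2}X_i$ and $G_i=Z_iZ_i^\top-I_d$, so that $h_i=\Sigma^{1/2}G_i\Sigma^{1/2}$. Put $a=\Sigma^{-1/2}\eta_1$ and $b=\Sigma^{-1/2}\eta_2$. By \eqref{eq:precisionmatrix_Fréchetderivatives},
\[
\Dc^k\omega(\Sigma)[h_{j_1},\ldots,h_{j_k}]=(-1)^k\sum_{\pi\in\Sk_k}a^\top G_{j_{\pi(1)}}\cdots G_{j_{\pi(k)}}b .
\]
Each summand is multilinear in independent mean-zero matrices $G_{j}$ with distinct indices. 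Integrating out any single index therefore gives zero, which yields both $\Eb[\Dc^k\omega(\Sigma)[\bar h^{(k)}]]=0$ and complete degeneracy of the kernel. Hoeffding's variance formula for canonical $U$-statistics, used as in the proof of Theorem~\ref{thm:condmoment} with distinct ordered tuples, then gives
\[
{\rm Var}\big(\Dc^k\omega(\Sigma)[\bar h^{(k)}]\big)\le \frac{k!}{(n)_k}\,\Eb\big[t_k(Z_1,\ldots,Z_k)^2\big],
\]
where $t_k$ is the symmetrized kernel. By Cauchy--Schwarz over the $k!$ permutations, $\Eb t_k^2\le k!\sum_\pi \Eb|a^\top G_{\pi(1)}\cdots G_{\pi(k)}b|^2\le (k!)^2\max_\pi \Eb|a^\top G_{1}\cdots G_{k}b|^2$, since relabelling independent copies does not change the distribution. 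It therefore suffices to show
\[
\Eb|a^\top G_1\cdots G_k b|^2\le 2^kK_x^{4k}d^{k-1}\|a\|^2\|b\|^2,
\]
and then multiply by $(k!)^3$.

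For this chain bound I would peel off one factor at a time by conditioning. Set $v_k=b$ and $v_{j-1}=G_jv_j$, so the quantity is $\Eb(a^\top G_1v_1)^2$ with $v_1$ independent of $G_1$. Conditioning on $G_2,\ldots,G_k$ gives
\[
\Eb[(a^\top G_1 v)^2\mid v]\le 2\big(\Eb[(a^\top Z)^2(Z^\top v)^2]+(a^\top v)^2\big)\le 4K_x^4\|a\|^2\|v\|^2
\]
by Cauchy--Schwarz and \ref{asmp:momentX}. This reduces the problem to bounding $\Eb\|v_1\|^2=\Eb\|G_2\cdots G_k b\|^2$. For the inner factors I use
\[
\Eb[\|G v\|^2\mid v]=\Eb[v^\top G^2 v]\le \Eb[\|Z\|^2(Z^\top v)^2]+\|v\|^2,
\]
and $\Eb[\|Z\|^2(Z^\top v)^2]=\sum_{i=1}^d\Eb[(e_i^\top Z)^2(Z^\top v)^2]\le dK_x^4\|v\|^2$. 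Each of the $k-1$ inner factors thus contributes at most $2K_x^4 d$, and the outer one contributes $2K_x^4$ up to a constant.

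The main obstacle is the constant. The naive outer factor is $4K_x^4$, not $2K_x^4$. I would recover the stated $2^k$ either by using $\Eb[(a^\top G v)^2]={\rm Var}((a^\top Z)(Z^\top v))\le \Eb[(a^\top Z)^2(Z^\top v)^2]$, which drops the cross term, or by absorbing the extra factor through the inner steps, where $\Eb\,v^\top G^2v=\Eb[\|Z\|^2(Z^\top v)^2]-\|v\|^2\le dK_x^4\|v\|^2$ is exact. With these sharpenings each factor is bounded by $K_x^4$, times $d$ for inner ones, which comfortably gives $2^kK_x^{4k}d^{k-1}$. Finally, rescaling by $\|a\|=\|\eta_1\|_{\Sigma^{-1}}$ and $\|b\|=\|\eta_2\|_{\Sigma^{-1}}$ yields \eqref{eq:B1:var}.
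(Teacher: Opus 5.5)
Your proof is correct, and it reaches the key second-moment bound by a genuinely different route from the paper. The two proofs open the same way. The canonical $U$-statistic variance formula reduces everything to $\binom{n}{k}^{-1}\Eb[\varphi_\omega^2]$, and the $k!$ orderings cost a factor $(k!)^2$: you get it from Cauchy--Schwarz, the paper from the triangle inequality over the $(k!)^2$ cross terms.

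From there the paper works with cross moments of two differently ordered chains. It replaces one chain by its uncentered version, since terms missing an index vanish against the other, fully centered chain. It expands the other chain into $2^k$ pieces indexed by $J\subset[k]$. Each resulting product of two uncentered chains is then bounded by the graph-based monomial moment bounds of the appendix, proved by induction on the number of distinct indices, which gives $K_x^{2k+2|J|}d^{k-1}$ per piece. Summing over $J$ is exactly where the factor $2^kK_x^{4k}$ comes from.

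You never need cross moments of different chains. You only bound $\Eb(a^\top G_1\cdots G_k b)^2$ for a single chain with distinct indices, peeling off one independent factor at a time. All you use is $\Eb[(u^\top Z)^2(Z^\top v)^2]\le K_x^4\norm{u}^2\norm{v}^2$ together with the exact identities $\Eb(a^\top G v)^2=\Eb[(a^\top Z)^2(Z^\top v)^2]-(a^\top v)^2$ and $\Eb\, v^\top G^2 v=\Eb[\norm{Z}^2(Z^\top v)^2]-\norm{v}^2$. This is much more elementary and needs the same $q_x\ge 4$. It also yields $K_x^{4k}d^{k-1}$, so the $2^k$ in the statement is slack. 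The paper's machinery buys generality here, not a sharper bound: it handles expectations of products of chains with repeated indices, and it is reused in Theorems~\ref{thm:B2}--\ref{thm:B4}.

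One small correction to your discussion of the constant. Your second option on its own (exact inner identity with the naive outer bound $4K_x^4$) gives $4K_x^{4k}d^{k-1}$, which exceeds $2^kK_x^{4k}d^{k-1}$ at $k=1$. It is the exact outer identity that makes the constant work for every $k$. Since your final argument uses both sharpenings, the proof goes through.
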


\begin{proof}
    Since $\Eb[h_i]=0$ and $\Dc^k\omega(\Sigma)$ is $k$-linear, $\Eb[\Dc^k \omega(\Sigma)[\bar h^{(k)}]]=0$ follows immediately.

    Since $\Dc^k\omega(\Sigma)$ is symmetric,
    \begin{equation*}\label{eq:B1:ustat}
        \Dc^k \omega(\Sigma)[\bar h^{(k)}]
        = \binom{n}{k}^{-1}\sum_{1\leq j_1<\ldots<j_k\leq n}\varphi_\omega(h_{j_1},\ldots,h_{j_k}),
    \end{equation*}
    where $\varphi_\omega(h_1,\ldots,h_k)=\ip{\Dc^k \omega(\Sigma), h_1,\ldots, h_k}.$ By multilinearity and $\Eb[h_i]=0$, $\varphi_\omega$ is canonical, i.e., $\Eb\big[\varphi_\omega(h_1,\ldots,h_k)\mid h_1,\ldots,h_{k-1}\big]=0.$ Therefore, the variance formula for canonical $U$-statistics gives
    \begin{equation}\label{eq:B1:var-reduce}
        {\rm Var}\!\left(\Dc^k \omega(\Sigma)[\bar h^{(k)}]\right)
        =\binom{n}{k}^{-1}{\rm Var}\!\left(\varphi_\omega(h_1,\ldots,h_k)\right)
        \le \binom{n}{k}^{-1}\Eb\!\left[\varphi_\omega(h_1,\ldots,h_k)^2\right].
    \end{equation}

    Recall from \eqref{eq:precisionmatrix_Fréchetderivatives} that, writing $\tilde\eta_1=\Sigma^{-1/2}\eta_1$ and $\tilde\eta_2=\Sigma^{-1/2}\eta_2$,
    \begin{equation*}\label{eq:B1:kernel-expand}
        \varphi_\omega(h_1,\ldots,h_k)
        =(-1)^k\sum_{\pi\in\mathfrak S_k}\tilde\eta_1^\top A_{\pi(1)}\cdots A_{\pi(k)}\tilde\eta_2,
        \qquad
        A_i=Z_iZ_i^\top-I_d,
    \end{equation*} where $Z_i = \Sigma^{-1/2}X_i$ for $i\in[n]$ and $\mathfrak S_k$ denotes a collection of permutations of $[k]$. Expanding each centered factor, 
    \begin{equation*}\label{eq:B1:center-expand}
        \prod_{r=1}^k (Z_{\pi'(r)}Z_{\pi'(r)}^\top-I_d)
        =\sum_{J\subset[k]}(-1)^{k-|J|}\prod_{r\in J}{}^\uparrow Z_{\pi'(r)}Z_{\pi'(r)}^\top,
    \end{equation*} where $\prod{}^\uparrow$ means multiplying in increasing order. Using $\Eb[Z_iZ_i^\top]=I_d$, we obtain
    \begin{align}\label{eq:B1:second-moment}
        &\Eb\!\left[\varphi_\omega(h_1,\ldots,h_k)^2\right]
        \nonumber\\
        &\le\sum_{\pi,\pi'\in\mathfrak S_k}\Big|\Eb\big[\tilde\eta_1^\top \prod_{r=1}^k (Z_{\pi(r)}Z_{\pi(r)}^\top-I_d)\tilde\eta_2\times\tilde\eta_1^\top \prod_{r=1}^k (Z_{\pi'(r)}Z_{\pi'(r)}^\top-I_d)\tilde\eta_2\big]\Big|\nonumber\\
        &=\sum_{\pi,\pi'\in\mathfrak S_k}\Big|\Eb\big[\tilde\eta_1^\top \prod_{r=1}^k (Z_{\pi(r)}Z_{\pi(r)}^\top)\tilde\eta_2\times\tilde\eta_1^\top \prod_{r=1}^k (Z_{\pi'(r)}Z_{\pi'(r)}^\top-I_d)\tilde\eta_2\big]\Big|\nonumber\\
        &\leq  \sum_{\pi,\pi'\in\mathfrak S_k}\sum_{J\subset[k]}\Eb\big[\tilde\eta_1^\top \prod_{r=1}^k (Z_{\pi(r)}Z_{\pi(r)}^\top)\tilde\eta_2\times\tilde\eta_1^\top \prod_{r\in J} {}^\uparrow(Z_{\pi'(r)}Z_{\pi'(r)}^\top)\tilde\eta_2\big]\Big|
    \end{align}
    Applying Theorem~\ref{thm:C2} with $\sigma=1$ and $\tau=d$ yields
    \begin{equation*}
        \Big|\Eb[\mbox{Summand in \eqref{eq:B1:second-moment}}]\Big|
        \le \|\tilde\eta_1\|_2^2\|\tilde\eta_2\|_2^2\,K_x^{2k+2|J|}\,d^{k-1},
    \end{equation*}
    and hence
    \begin{align}\label{eq:B1:second-moment-final}
        \Eb\!\left[\varphi_\omega(h_1,\ldots,h_k)^2\right]
        \le (k!)^2 \,\|\tilde\eta_1\|_2^2\|\tilde\eta_2\|_2^2\,d^{k-1}\sum_{J\subset[k]}K_x^{2k+2|J|}\nonumber\\
        \leq (k!)^2 \,\|\tilde\eta_1\|_2^2\|\tilde\eta_2\|_2^2\,K_x^{2k}(1+K_x^2)^{2k}\,d^{k-1}
    \end{align}
    Combining \eqref{eq:B1:var-reduce} and \eqref{eq:B1:second-moment-final}, we obtain
    \begin{equation*}
        {\rm Var}\!\left(\Dc^k \omega(\Sigma)[\bar h^{(k)}]\right)
        \le \|\eta_1\|_{\Sigma^{-1}}^2\|\eta_2\|_{\Sigma^{-1}}^2\,(k!)^3\,2^k\,K_x^{4k}\,\frac{d^{k-1}}{(n)_k}.
    \end{equation*}
    This proves \eqref{eq:B1:var}.
\end{proof}

\begin{theorem}\label{thm:B2}
    Consider a functional $\omega(\Sigma) = \eta_1^\top \Sigma^{-1}\eta_2$ and recall $k$:th Fréchet derivative $\Dc^kw$ of $\omega$ in \eqref{eq:precisionmatrix_Fréchetderivatives} for $1\leq k\leq s$. Suppose that Assumption~\ref{asmp:momentX} holds. Let
    \begin{equation*}
        \bar h^{(k)}=\sum_{\substack{1\leq j_1,\ldots,j_k\leq n\\\mbox{\tiny distinct}}}\frac{h_{j_1}\otimes\cdots\otimes h_{j_k}}{n(n-1)\cdots (n-k+1)},
    \end{equation*} with $h_i = X_iX_i^\top-\Sigma$. For any invertible $\tilde\Sigma$, independent of $X_1,\ldots,X_n$, let $\Delta^{(s)}(t) = \Dc^s\omega(\tilde\Sigma + t(\Sigma-\tilde\Sigma))-\Dc^s\omega(\tilde\Sigma)$ and let $\tilde \Delta^{(s)}_k[v_1,\ldots,v_k] = \Jc^{s-k}\Delta^{(s)}[v_1,\ldots,v_k,(\Sigma-\tilde\Sigma)^{\otimes s-k}]$ for $1\leq k\leq s$. Define
    \begin{equation*}
    \underline\lambda_{\tilde\Sigma} = \lambda_{\min}(\Sigma^{-1/2}\tilde\Sigma\Sigma^{-1/2})\quad\mbox{and}\quad \Dc_{\tilde\Sigma} = \norm{I-\Sigma^{-1/2}\tilde\Sigma\Sigma^{-1/2}}_{\rm op}.
\end{equation*}Then $\Eb[\tilde \Delta^{(s)}_k[\bar h^{(k)}]]=0$ for $1\leq k\leq s$ and $$\abs{\tilde \Delta^{(s)}_0}\leq \norm{\eta_1}_{\Sigma^{-1}}\norm{\eta_2}_{\Sigma^{-1}}\Dc_{\tilde\Sigma}^{s+1}/(1\wedge\underline\lambda_{\tilde\Sigma}^{s+2}).$$Also, for all $1\leq k\leq s$,
    \begin{equation*}
        {\rm Var}\left(\tilde \Delta^{(s)}_k[\bar h^{(k)}]\right)\leq \norm{\eta_1}_{\Sigma^{-1}}^2\norm{\eta_2}_{\Sigma^{-1}}^2\frac{k!(4sK_x^2)^{2k}\Dc_{\tilde\Sigma}^{2(s-k+1)}}{1\wedge\underline\lambda_{\tilde\Sigma}^{2s+4}}\frac{d^{k-1}}{(n)_k}.
    \end{equation*}
\end{theorem}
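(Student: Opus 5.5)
The plan is to reduce everything to $\Sigma$-whitened coordinates and then to bound the two objects (the order-$0$ remainder and the variance of each degenerate $U$-statistic) using the explicit product form \eqref{eq:precisionmatrix_Fréchetderivatives}.

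\emph{Reduction.} Write $\tilde\eta_i=\Sigma^{-1/2}\eta_i$, $Z_i=\Sigma^{-1/2}X_i$, $A_i=Z_iZ_i^\top-I_d$, $M=\Sigma^{-1/2}\tilde\Sigma\Sigma^{-1/2}$ and $E=I-M$, so that $\norm{E}_{\rm op}=\Dc_{\tilde\Sigma}$. Along the segment $\tilde\Sigma+t(\Sigma-\tilde\Sigma)$, the whitened matrix is $M_t=M+tE$. Hence $\lambda_{\min}(M_t)\ge (1-t)\underline\lambda_{\tilde\Sigma}+t\ge 1\wedge\underline\lambda_{\tilde\Sigma}$ and $\norm{M_t^{-1}}_{\rm op}\le 1/(1\wedge\underline\lambda_{\tilde\Sigma})$. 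Every derivative evaluation becomes $\tilde\eta_1^\top M_t^{-1}a_1M_t^{-1}\cdots a_kM_t^{-1}\tilde\eta_2$ in whitened increments. The mean-zero claim is immediate: $\tilde\Delta^{(s)}_k$ is a deterministic bounded $k$-linear form (since $\tilde\Sigma$ is independent of the $X_i$) and $\Eb h_i=0$.

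\emph{Bias term.} We have $\tilde\Delta^{(s)}_0=\Jc^s\Delta^{(s)}[(\Sigma-\tilde\Sigma)^{\otimes s}]$, which by the proof of Proposition~\ref{prop:1.1} equals the exact Taylor remainder $\omega(\Sigma)-\sum_{j\le s}\Dc^j\omega(\tilde\Sigma)[(\Sigma-\tilde\Sigma)^{\otimes j}]/j!$. For the inverse, the Neumann identity gives this in closed form: $\Sigma^{-1}=\Sigma^{-1/2}(M+E)^{-1}\Sigma^{-1/2}$ with $(M+E)^{-1}=\sum_{j\le s}(-M^{-1}E)^jM^{-1}+(-M^{-1}E)^{s+1}(M+E)^{-1}$, and $M+E=I$. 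The remainder is therefore $\tilde\eta_1^\top(-M^{-1}E)^{s+1}\tilde\eta_2$, bounded by $\norm{\tilde\eta_1}\norm{\tilde\eta_2}\Dc_{\tilde\Sigma}^{s+1}/(1\wedge\underline\lambda_{\tilde\Sigma})^{s+1}$. This is even slightly better than the stated power $s+2$.

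\emph{Variance.} By symmetry and degeneracy, as in Theorem~\ref{thm:B1}, ${\rm Var}(\tilde\Delta^{(s)}_k[\bar h^{(k)}])\le\binom nk^{-1}\Eb[\tilde\Delta^{(s)}_k[h_1,\dots,h_k]^2]$. Write $\Delta^{(s)}(t)=\Dc^s\omega(\Sigma_t)-\Dc^s\omega(\tilde\Sigma)$ as $\int_0^t\Dc^{s+1}\omega(\Sigma_u)[\Sigma-\tilde\Sigma,\cdot]\,du$. Then $\tilde\Delta^{(s)}_k[a_1,\dots,a_k]$ becomes an average over $t,u$ of $\Dc^{s+1}\omega(\Sigma_u)$ applied to $(s-k+1)$ copies of $E$ and to $A_1,\dots,A_k$. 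Expanding by \eqref{eq:precisionmatrix_Fréchetderivatives} gives $(s+1)!$ ordered products. Grouping by the interleaving pattern, each is a word $\tilde\eta_1^\top B_0A_{\pi(1)}B_1\cdots A_{\pi(k)}B_k\tilde\eta_2$, where the $B_j$ are deterministic products of $M_u^{-1}$ and $E$ whose norms multiply to at most $\Dc_{\tilde\Sigma}^{s-k+1}/(1\wedge\underline\lambda_{\tilde\Sigma})^{s+2}$. The number of such words, weighted by the Riemann--Liouville factor $1/(s-k)!$ and the factor $(s+1)!$, is at most $k!\binom{s+1}{k}(s-k+1)!/(s-k)!\lesssim k!(2s)^k$ up to constants.

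\emph{Main obstacle.} The hard part is the second moment of a single word with general deterministic interleaving matrices. I would invoke the moment lemma used for Theorem~\ref{thm:B1} (Theorem~\ref{thm:C2}), stated for products $Z_{i}Z_{i}^\top$ separated by fixed matrices of bounded operator norm. It yields $\Eb[\text{word}\cdot\text{word}']\le\norm{\tilde\eta_1}^2\norm{\tilde\eta_2}^2\prod\norm{B_j}\prod\norm{B'_j}K_x^{4k}2^{2k}d^{k-1}$ after the centering expansion over $J\subset[k]$, exactly as in \eqref{eq:B1:second-moment}. If Theorem~\ref{thm:C2} only covers identity separators, I would redo its inductive trace argument: peel off one $Z_iZ_i^\top$ at a time, using $\Eb(u^\top Z)^2(v^\top Z)^2\le K_x^4$ for unit $u,v$ and $\Eb\norm{Z}^2=d$ for the single trace that produces $d$ (hence $d^{k-1}$). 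Squaring the sum of at most $k!(2s)^k$ words via Cauchy--Schwarz then gives $(k!)^2(2s)^{2k}2^{2k}K_x^{4k}$. Dividing by $\binom nk=(n)_k/k!$ yields the claimed bound $k!(4sK_x^2)^{2k}\Dc_{\tilde\Sigma}^{2(s-k+1)}d^{k-1}/\{(1\wedge\underline\lambda_{\tilde\Sigma}^{2s+4})(n)_k\}$. I expect the careful combinatorial count, which needs $s^k$ rather than $s^{s}$, to be the delicate bookkeeping step.
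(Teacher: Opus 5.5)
Your overall strategy works, and it departs from the paper in two places.

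\textbf{Bias term.} You use the exact Neumann remainder $\tilde\eta_1^\top(-M^{-1}E)^{s+1}\tilde\eta_2$. The paper instead telescopes $\Delta^{(s)}(t)[(\Sigma-\tilde\Sigma)^{\otimes s}]$ in $\Bv_t-\Bv_0$ and integrates using $\int_0^1 t(1-t)^{s-1}\,dt/\Gamma(s)=1/(s+1)!$. Your argument is shorter and gives the sharper power $\underline\lambda_{\tilde\Sigma}^{-(s+1)}$.

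\textbf{Variance.} The paper proceeds in three steps:
\begin{itemize}
\item it first uses ${\rm Var}(\int_0^1 wS_t\,dt)\le\int_0^1 w^2{\rm Var}(S_t)\,dt$;
\item it then applies the canonical $U$-statistic identity at each fixed $t$;
\item it expands $\Delta^{(s)}(t)$ by a telescoping identity.
\end{itemize}
You instead apply the $U$-statistic identity directly to the integrated symmetric kernel $\tilde\Delta^{(s)}_k$ and expand via $\Dc^{s+1}\omega$. The moment input is the same, but the right reference is Theorem~\ref{thm:C3}, not Theorem~\ref{thm:C2}. Theorem~\ref{thm:C3} covers a product of two chains with arbitrary separators in $\Ac$, possibly different in the two chains, so no new induction is needed. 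Only one chain's centering has to be expanded, because the non-full terms of the other vanish by degeneracy. The resulting factor is $\sum_{J\subset[k]}K_x^{2k+2|J|}\le 2^kK_x^{4k}$.

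\textbf{The step that fails as written is the word count.} Your weight is $k!\binom{s+1}{k}(s-k+1)!/(s-k)!=(s+1)!/(s-k)!$. For $k=1$ this equals $s(s+1)$, so it is not $\lesssim k!(2s)^k$ uniformly in $s$. Even granting the count $k!(2s)^k$, squaring and multiplying by $\binom nk^{-1}=k!/(n)_k$ gives $(k!)^3(4sK_x^2)^{2k}$, not $k!(4sK_x^2)^{2k}$.

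The fix is to keep the factor $t$ that comes from $\int_0^t du$ inside the Riemann--Liouville integral:
\[
(s+1)!\int_0^1 t\,\frac{(1-t)^{s-k-1}}{(s-k-1)!}\,dt=\frac{(s+1)!}{(s-k+1)!}=(s+1)s\cdots(s-k+2)\le (s+1)^k .
\]
For $k=s$ the weight is $(s+1)!\int_0^1du$, which is the same formula. This weight cancels the multiplicity $(s-k+1)!$ exactly and leaves no extra $k!$.

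The two-chain bound is uniform in $(u,u')$ and in the interleavings. Set $\Pi=\norm{\eta_1}_{\Sigma^{-1}}\norm{\eta_2}_{\Sigma^{-1}}\Dc_{\tilde\Sigma}^{s-k+1}(1\wedge\underline\lambda_{\tilde\Sigma})^{-(s+2)}$. Then
\[
\Eb\big[\tilde\Delta^{(s)}_k[h_1,\ldots,h_k]^2\big]\le (s+1)^{2k}\,2^kK_x^{4k}\,\Pi^2 d^{k-1}.
\]
Dividing by $\binom nk$ gives $k!\,(2(s+1)^2K_x^4)^k\le k!\,(4sK_x^2)^{2k}$ times the remaining factors, as claimed.

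With this correction your direct route is actually tighter than the paper's. The weighted Cauchy--Schwarz step in $t$ loses a factor of order $s-k$. In addition, the paper's bound $(s+1)!/(s-k)!\le(s+1)^k$ drops one of the $k+1$ factors in $(s+1)s\cdots(s-k+1)$.
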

\begin{proof}
Denote $\Eb_{\Sc_1}$ and ${\rm Var}_{\Sc_1}$ as the conditional expectation and conditional variance taken over
$\{X_1,\ldots,X_n\}$ only, respectively. Recall $h_i=X_iX_i^\top-\Sigma$ and $\Eb_{\Sc_1}[h_i]=0$.
For $t\in[0,1]$ and $1\le k\le s$, the mapping
\[
(v_1,\ldots,v_k)\mapsto \Delta^{(s)}(t)\big[v_1,\ldots,v_k,(\Sigma-\tilde\Sigma)^{\otimes(s-k)}\big]
\]
is $k$-linear and (conditionally on $\tilde\Sigma$) deterministic, hence
\begin{equation}\label{eq:B2:mean0}
\Eb_{\Sc_1}\Big[\Delta^{(s)}(t)\big[\bar h^{(k)},(\Sigma-\tilde\Sigma)^{\otimes(s-k)}\big]\Big]=0,
\qquad t\in[0,1].
\end{equation}
Since $\tilde\Delta_k^{(s)}=\Jc^{s-k}\Delta^{(s)}$ is obtained by integrating $\Delta^{(s)}(t)$ against a deterministic weight,
\eqref{eq:B2:mean0} implies $\Eb[\tilde\Delta_k^{(s)}[\bar h^{(k)}]]=0$ for all $1\le k\le s$.

We next bound the variance. For any square-integrable centered process $\{S_t:t\in[0,1]\}$ and any deterministic weight $w\in L_2[0,1]$,
Fubini and Cauchy--Schwarz yield
\begin{align}\label{eq:B2:var-weight}
&{\rm Var}\Big(\int_0^1 w(t)S_t\,dt\Big)
=\int_0^1\int_0^1 w(t)w(t')\,{\rm Cov}(S_t,S_{t'})\,dt\,dt' \nonumber\\
&\quad \le \int_0^1\int_0^1 |w(t)w(t')|\sqrt{{\rm Var}(S_t){\rm Var}(S_{t'})}\,dt\,dt'
\le \int_0^1 w(t)^2{\rm Var}(S_t)\,dt .
\end{align}
Fix $1\le k\le s$ and set
\[
S_t:=\Delta^{(s)}(t)\big[\bar h^{(k)},(\Sigma-\tilde\Sigma)^{\otimes(s-k)}\big],
\qquad
w(t):=\begin{cases}
(1-t)^{s-k-1}/\Gamma(s-k), & k\le s-1,\\
1, & k=s.
\end{cases}
\]
Then $\tilde\Delta_k^{(s)}[\bar h^{(k)}]=\int_0^1 w(t)S_t\,dt$ (with the convention $\Jc^0 g=g(1)$), and hence by \eqref{eq:B2:var-weight},
\begin{equation}\label{eq:B2:var-tildeD-step}
{\rm Var}_{\Sc_1}\big(\tilde\Delta_k^{(s)}[\bar h^{(k)}]\big)\le \int_0^1 w(t)^2\,{\rm Var}_{\Sc_1}(S_t)\,dt .
\end{equation}

We now bound ${\rm Var}_{\Sc_1}(S_t)$. For fixed $t$, define the symmetric kernel
\[
\psi_t(v_1,\ldots,v_k)
:=\Delta^{(s)}(t)\big[v_1,\ldots,v_k,(\Sigma-\tilde\Sigma)^{\otimes(s-k)}\big].
\]
By multilinearity and $\Eb_{\Sc_1}[h_i]=0$, $\psi_t$ is canonical (conditionally on $\tilde\Sigma$), and the variance formula for
canonical U-statistics gives
\begin{equation}\label{eq:B2:Uvar}
{\rm Var}_{\Sc_1}(S_t)=\binom{n}{k}^{-1}{\rm Var}_{\Sc_1}\big(\psi_t(h_1,\ldots,h_k)\big)
\le \binom{n}{k}^{-1}\Eb_{\Sc_1}\big[\psi_t(h_1,\ldots,h_k)^2\big].
\end{equation}

It remains to bound $\Eb_{\Sc_1}[\psi_t(h_1,\ldots,h_k)^2]$. Let $Z_i:=\Sigma^{-1/2}X_i$, so $\Eb[Z_iZ_i^\top]=I_d$, and set
\[
A_i:=\Sigma^{-1/2}h_i\Sigma^{-1/2}=Z_iZ_i^\top-I_d,\qquad 1\le i\le k,
\qquad
A_{k+1}=\cdots=A_s:=\Sigma^{-1/2}(\Sigma-\tilde\Sigma)\Sigma^{-1/2}.
\]
Also write $\tilde\eta_j:=\Sigma^{-1/2}\eta_j$ ($j=1,2$) and define for $t\in[0,1]$,
\begin{equation}\label{eq:B2:Bt}
\Bv_t:=\Sigma^{1/2}\big(\tilde\Sigma+t(\Sigma-\tilde\Sigma)\big)^{-1}\Sigma^{1/2}
=\Big((1-t)\Sigma^{-1/2}\tilde\Sigma\Sigma^{-1/2}+tI_d\Big)^{-1}.
\end{equation}
By the explicit formula for $\Dc^s\omega$,
\[
\Dc^s\omega(H)[u_1\otimes\cdots\otimes u_s]
=(-1)^s\sum_{\pi\in\mathfrak S_s}\eta_1^\top H^{-1}u_{\pi(1)}H^{-1}\cdots H^{-1}u_{\pi(s)}H^{-1}\eta_2,
\]
we may write, with the convention $h_{k+1}=\cdots=h_s=\Sigma-\tilde\Sigma$,
\begin{align*}
\psi_t(h_1,\ldots,h_k)
&=\Delta^{(s)}(t)\big[h_1\otimes\cdots\otimes h_s\big] \\
&=(-1)^s\sum_{\pi\in\mathfrak S_s}\Big\{
\tilde\eta_1^\top \Bv_t A_{\pi(1)}\Bv_t\cdots \Bv_t A_{\pi(s)}\Bv_t\tilde\eta_2
-\tilde\eta_1^\top \Bv_0 A_{\pi(1)}\Bv_0\cdots \Bv_0 A_{\pi(s)}\Bv_0\tilde\eta_2
\Big\}.
\end{align*}
For each fixed $\pi$, apply the telescoping product identity:
\begin{align}\label{eq:B2:telescoping}
&\tilde\eta_1^\top \Bv_t A_{\pi(1)}\Bv_t\cdots \Bv_t A_{\pi(s)}\Bv_t\tilde\eta_2
-\tilde\eta_1^\top \Bv_0 A_{\pi(1)}\Bv_0\cdots \Bv_0 A_{\pi(s)}\Bv_0\tilde\eta_2 \nonumber\\
&\quad=\sum_{j=0}^{s}\tilde\eta_1^\top
\Big(\Bv_0A_{\pi(1)}\Bv_0\cdots \Bv_0A_{\pi(j)}\Bv_0\Big)
(\Bv_t-\Bv_0)
\Big(\Bv_tA_{\pi(j+1)}\Bv_t\cdots \Bv_tA_{\pi(s)}\Bv_t\Big)\tilde\eta_2,
\end{align}
where the empty product equals $I_d$. Hence $\psi_t(h_1,\ldots,h_k)$ is a sum of at most $(s+1)s!=(s+1)!$ scalar terms.
Therefore, $\psi_t(h_1,\ldots,h_k)^2$ expands into at most $\{(s+1)!\}^2$ products.

To bound each product, note that every summand in \eqref{eq:B2:telescoping} has the schematic form
\begin{equation}\label{eq:B2:chain}
    c_1^\top A_{\tau(1)}C_1A_{\tau(2)}\cdots C_{k-1}A_{\tau(k)}c_2,
\end{equation}where $\tau$ is a permutation of $[k]$ and where the vectors/matrices $c_1,c_2,C_1,\ldots,C_{k-1}$ are $\sigma(\tilde\Sigma)$-measurable
functions of $\tilde\eta_1,\tilde\eta_2,\Bv_0,\Bv_t,\Bv_t-\Bv_0$, and $A_{k+1}=\cdots=A_s$.
Moreover, there exists a deterministic upper bound
\begin{equation}\label{eq:B2:Pit}
\|c_1\|_2\|c_2\|_2\prod_{r=1}^{k-1}\|C_r\|_{\op}\le \Pi_t:=\|\tilde\eta_1\|_2\|\tilde\eta_2\|_2\,
\|\Bv_t-\Bv_0\|_{\op}\,(\|\Bv_t\|_{\op}\vee\|\Bv_0\|_{\op})^{s}\,
\|A_{k+1}\|_{\op}^{\,s-k}.
\end{equation}

Now fix one product of two such chains of the form in \eqref{eq:B2:chain}. Writing $A_i=Z_iZ_i^\top-I_d$ for $i\le k$ and expanding only the second chain,
\[
\prod_{r=1}^{k}\tilde C_r(Z_{\tau'(r)}Z_{\tau'(r)}^\top-I_d)
=\sum_{J\subset[k]}(-1)^{k-|J|}\prod_{r\in J}{}^\uparrow \tilde C_rZ_{\tau'(r)}Z_{\tau'(r)}^\top\prod_{r\in [k]\setminus J}{}^\uparrow \tilde C_r,
\]
we represent the product as a sum of at most $2^k$ terms, each of which is a product of two non-centered chains involving only factors
$Z_iZ_i^\top$. A generic term can be written as
\begin{align}
&c_1^\top Z_{\tau(1)}Z_{\tau(1)}^\top C_1Z_{\tau(2)}Z_{\tau(2)}^\top\cdots C_{k-1}Z_{\tau(k)}Z_{\tau(k)}^\top c_2 \nonumber\\
&\qquad\times {d'_1}^\top Z_{i_1}Z_{i_1}^\top D'_1\cdots D'_{h-1}Z_{i_h}Z_{i_h}^\top d'_2,\label{eq:generic-term}
\end{align}
for some $0\le h\le k$, where
\[
\|d'_1\|_2\|d'_2\|_2\prod_{r=1}^{h-1}\|D'_r\|_{\op}\le \Pi_t.
\]
Since $Z_i$ is isotropic and obeys Assumption (M)(X) with constant $K_x$, we apply Theorem~C.3 (inequality (49)) in Section~C
with $\sigma=1$, $\tau=d$, $m_1=k-1$, $m_2=h-1$, and $\kappa_z=K_x$. This yields the explicit bound
\[
\Big|\Eb_{\Sc_1}[\mbox{\eqref{eq:generic-term}}]\Big|
\le K_x^{2(m_1+m_2)+4}\,d^{(m_1+m_2)+1-r}
\cdot \|c_1\|_2\|c_2\|_2\prod_{r=1}^{k-1}\|C_r\|_{\op}\cdot
\|d'_1\|_2\|d'_2\|_2\prod_{r=1}^{h-1}\|D'_r\|_{\op},
\]
where $r$ is the number of distinct indices in the two-chain product. Since the first chain uses $k$ distinct indices, $r\ge k$,
hence $(m_1+m_2)+1-r=(k+h-1)-r\le h-1\le k-1$, and since $h\le k$ we have $K_x^{2k+2h}\le K_x^{4k}$. Therefore,
\begin{equation}\label{eq:B2:chain-bound}
\Big|\Eb_{\Sc_1}[\mbox{\eqref{eq:generic-term}}]\Big|
\le \Pi_t^2\,K_x^{4k}\,d^{k-1}.
\end{equation}
Combining \eqref{eq:B2:chain-bound} with the factor $2^k$ from the expansion and the factor $\{(s+1)!\}^2$ from the square expansion,
we obtain
\begin{equation}\label{eq:B2:psibound}
\Eb_{\Sc_1}\big[\psi_t(h_1,\ldots,h_k)^2\big]
\le \{(s+1)!\}^2\,2^k\,K_x^{4k}\,\Pi_t^2\,d^{k-1}.
\end{equation}

It remains to bound $\Pi_t$. Recall $A_{k+1}=I_d-\Sigma^{-1/2}\tilde\Sigma\Sigma^{-1/2}$ so $\|A_{k+1}\|_{\op}=\Dc_{\tilde\Sigma}$.
Moreover, from \eqref{eq:B2:Bt},
\begin{equation}\label{eq:B2:Bt-op}
\|\Bv_t\|_{\op}\le \big((1-t)\underline\lambda_{\tilde\Sigma}+t\big)^{-1}\le (1\wedge \underline\lambda_{\tilde\Sigma})^{-1},
\qquad
\|\Bv_0\|_{\op}=\underline\lambda_{\tilde\Sigma}^{-1}.
\end{equation}
Using $\Bv_t-\Bv_0=\Bv_0(\Bv_0^{-1}-\Bv_t^{-1})\Bv_t$ and $\Bv_0^{-1}-\Bv_t^{-1}=t(I_d-\Sigma^{-1/2}\tilde\Sigma\Sigma^{-1/2})$,
we obtain
\begin{equation}\label{eq:B2:Bt-diff}
\|\Bv_t-\Bv_0\|_{\op}
\le t\,\|\Bv_0\|_{\op}\,\Dc_{\tilde\Sigma}\,\|\Bv_t\|_{\op}
\le t\,\Dc_{\tilde\Sigma}\,(1\wedge \underline\lambda_{\tilde\Sigma})^{-2}.
\end{equation}
Plugging \eqref{eq:B2:Bt-op}--\eqref{eq:B2:Bt-diff} into \eqref{eq:B2:Pit} and using $\|\tilde\eta_j\|_2=\|\eta_j\|_{\Sigma^{-1}}$ gives
\begin{equation}\label{eq:B2:Pit-bound}
\Pi_t^2
\le \|\eta_1\|_{\Sigma^{-1}}^2\|\eta_2\|_{\Sigma^{-1}}^2\,
\frac{t^2\,\Dc_{\tilde\Sigma}^{2(s-k+1)}}{1\wedge \underline\lambda_{\tilde\Sigma}^{2s+4}}.
\end{equation}
Combining \eqref{eq:B2:psibound} and \eqref{eq:B2:Pit-bound} with \eqref{eq:B2:Uvar} yields, for all $t\in[0,1]$,
\begin{equation}\label{eq:B2:var-St}
{\rm Var}_{\Sc_1}(S_t)
\le \binom{n}{k}^{-1}\,
\frac{\{(s+1)!\}^2\,2^k\,K_x^{4k}\,\|\eta_1\|_{\Sigma^{-1}}^2\|\eta_2\|_{\Sigma^{-1}}^2\,
\Dc_{\tilde\Sigma}^{2(s-k+1)}}{1\wedge \underline\lambda_{\tilde\Sigma}^{2s+4}}\,
t^2\,d^{k-1}.
\end{equation}

We now insert \eqref{eq:B2:var-St} into \eqref{eq:B2:var-tildeD-step}. If $k=s$, then $w(t)\equiv 1$ and
$\tilde\Delta_s^{(s)}[\bar h^{(s)}]=S_1$, so the variance bound follows directly from \eqref{eq:B2:var-St} at $t=1$ and
$\binom{n}{s}^{-1}\le s!/n^s$.

Assume $1\le k\le s-1$ and write $m:=s-k\ge 1$. Then $w(t)=(1-t)^{m-1}/\Gamma(m)$, and \eqref{eq:B2:var-tildeD-step} and \eqref{eq:B2:var-St} give
\begin{align*}
{\rm Var}_{\Sc_1}\big(\tilde\Delta_k^{(s)}[\bar h^{(k)}]\big)
&\le \binom{n}{k}^{-1}\,
\frac{\{(s+1)!\}^2\,2^k\,K_x^{4k}\,\|\eta_1\|_{\Sigma^{-1}}^2\|\eta_2\|_{\Sigma^{-1}}^2\,
\Dc_{\tilde\Sigma}^{2(m+1)}}{1\wedge \underline\lambda_{\tilde\Sigma}^{2s+4}}\,
d^{k-1}\int_0^1 \frac{t^2(1-t)^{2m-2}}{\Gamma(m)^2}\,dt .
\end{align*}
Since $m\in\mathbb N$,
\[
\int_0^1 t^2(1-t)^{2m-2}\,dt
=\frac{\Gamma(3)\Gamma(2m-1)}{\Gamma(2m+2)}
=\frac{2}{(2m+1)(2m)(2m-1)}
\le \frac{1}{m^2},
\]
and $\Gamma(m)=(m-1)!$, we obtain
\[
\int_0^1 \frac{t^2(1-t)^{2m-2}}{\Gamma(m)^2}\,dt
\le \frac{1}{m^2 (m-1)!^2}
=\frac{1}{(m!)^2}.
\]
Therefore,
\[
{\rm Var}_{\Sc_1}\big(\tilde\Delta_k^{(s)}[\bar h^{(k)}]\big)
\le \binom{n}{k}^{-1}\,
\frac{\{(s+1)!\}^2}{(s-k)!^2}\,
\frac{2^k K_x^{4k}\,\|\eta_1\|_{\Sigma^{-1}}^2\|\eta_2\|_{\Sigma^{-1}}^2\,
\Dc_{\tilde\Sigma}^{2(s-k+1)}}{1\wedge \underline\lambda_{\tilde\Sigma}^{2s+4}}\,
d^{k-1}.
\]
Since $\frac{(s+1)!}{(s-k)!}=(s+1)s\cdots(s-k+1)\le (s+1)^k,
$, we conclude
\begin{equation}\label{eq:B2:var-k-pre}
{\rm Var}\big(\tilde\Delta_k^{(s)}[\bar h^{(k)}]\big)
\le \|\eta_1\|_{\Sigma^{-1}}^2\|\eta_2\|_{\Sigma^{-1}}^2\,
\frac{k!\,(s+1)^{2k}\,2^k\,K_x^{4k}\,\Dc_{\tilde\Sigma}^{2(s-k+1)}}{1\wedge \underline\lambda_{\tilde\Sigma}^{2s+4}}\,
\frac{d^{k-1}}{(n)_k}.
\end{equation}
Finally, for all $s\ge 1$ we have $2(s+1)^2\le 8s^2\le (4s)^2$, hence
\[
(s+1)^{2k}2^kK_x^{4k}=\big(2(s+1)^2K_x^4\big)^k\le \big((4s)^2K_x^4\big)^k=(4sK_x^2)^{2k}.
\]
Thus \eqref{eq:B2:var-k-pre} implies the stated variance bound.

It remains to bound $\tilde\Delta_0^{(s)}$. Using \eqref{eq:B2:telescoping} with all inputs equal to $(\Sigma-\tilde\Sigma)$ yields, for $t\in[0,1]$,
\begin{align*}
\big|\Delta^{(s)}(t)[(\Sigma-\tilde\Sigma)^{\otimes s}]\big|
&\le (s+1)!\,\|\tilde\eta_1\|_2\|\tilde\eta_2\|_2\,
\|\Bv_t-\Bv_0\|_{\op}\,(\|\Bv_t\|_{\op}\vee\|\Bv_0\|_{\op})^{s}\,\|A_{k+1}\|_{\op}^{\,s} \\
&=(s+1)!\,\|\eta_1\|_{\Sigma^{-1}}\|\eta_2\|_{\Sigma^{-1}}\,
\|\Bv_t-\Bv_0\|_{\op}\,(\|\Bv_t\|_{\op}\vee\|\Bv_0\|_{\op})^{s}\,\Dc_{\tilde\Sigma}^{s}.
\end{align*}
Applying \eqref{eq:B2:Bt-op} and \eqref{eq:B2:Bt-diff} gives
\[
\big|\Delta^{(s)}(t)[(\Sigma-\tilde\Sigma)^{\otimes s}]\big|
\le t(s+1)!\,\|\eta_1\|_{\Sigma^{-1}}\|\eta_2\|_{\Sigma^{-1}}\,
\frac{\Dc_{\tilde\Sigma}^{s+1}}{1\wedge \underline\lambda_{\tilde\Sigma}^{s+2}}.
\]
Since $\tilde\Delta_0^{(s)}=\Jc^{s}\Delta^{(s)}$ and $\int_0^1 t(1-t)^{s-1}\,dt/\Gamma(s)=1/(s+1)!$, we obtain
\begin{equation*}
|\tilde\Delta_0^{(s)}|
\le \|\eta_1\|_{\Sigma^{-1}}\|\eta_2\|_{\Sigma^{-1}}\,
\frac{\Dc_{\tilde\Sigma}^{s+1}}{1\wedge \underline\lambda_{\tilde\Sigma}^{s+2}}.
\end{equation*}
This completes the proof.
\end{proof}

\begin{theorem}\label{thm:B3}
    Consider a functional $\beta_\eta(\Sigma,\Gamma) = \eta^\top \Sigma^{-1}\Gamma$ and recall $k$:th Fréchet derivative $\Dc^k\beta_\eta$ in \eqref{eq:regressionfunctional_derivative} for $1\leq k\leq s$. Suppose that Assumption~\ref{asmp:momentX} and Assumption~\ref{asmp:momente} hold with $q_x\geq 4$ and $(1/q_x+1/q_\varepsilon)^{-1}\geq 1/2$. Let
    \begin{equation*}
        \bar h^{(k)}=\sum_{\substack{1\leq j_1,\ldots,j_k\leq n\\\mbox{\tiny distinct}}}\frac{h_{j_1}\otimes\cdots\otimes h_{j_k}}{n(n-1)\cdots (n-k+1)},
    \end{equation*}
    with $h_i = (X_iX_i^\top-\Sigma,X_iY_i-\Gamma)$. Then $\Eb[\Dc^k \beta_\eta(\Sigma,\Gamma)[\bar h^{(k)}]]=0$ and
    \begin{equation}\label{eq:B3:var}
        {\rm Var}\!\left(\Dc^k \beta_\eta(\Sigma,\Gamma)[\bar h^{(k)}]\right)\leq \|\eta\|_{\Sigma^{-1}}^2 (k!)^3 2^{k-1} K_x^{4k-2}K_\varepsilon^2\frac{d^{k-1}}{(n)_k},\quad 1\leq k\leq s.
    \end{equation}
\end{theorem}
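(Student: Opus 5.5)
The proof follows the template of Theorem~\ref{thm:B1}. The mean-zero claim is immediate. $\Dc^k\beta_\eta(\Sigma,\Gamma)$ is a deterministic $k$-linear form and $\Eb[h_i]=0$, so linearity in each slot gives $\Eb[\Dc^k\beta_\eta(\Sigma,\Gamma)[\bar h^{(k)}]]=0$. By symmetry of $\Dc^k\beta_\eta$, the statistic is a $U$-statistic with kernel $\varphi_\beta(h_1,\ldots,h_k)=\Dc^k\beta_\eta(\Sigma,\Gamma)[h_1,\ldots,h_k]$. This kernel is canonical, so Hoeffding's formula gives
\[
{\rm Var}\big(\Dc^k\beta_\eta(\Sigma,\Gamma)[\bar h^{(k)}]\big)\le \binom{n}{k}^{-1}\Eb[\varphi_\beta(h_1,\ldots,h_k)^2]=\frac{k!}{(n)_k}\Eb[\varphi_\beta^2].
\]
It therefore suffices to show $\Eb[\varphi_\beta^2]\le \|\eta\|_{\Sigma^{-1}}^2(k!)^2 2^{k-1}K_x^{4k-2}K_\varepsilon^2 d^{k-1}$.

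Next we simplify the kernel using \eqref{eq:regressionfunctional_derivative} at $(A,B)=(\Sigma,\Gamma)$. The last factor is $a_{\pi(k)}\Sigma^{-1}\Gamma-b_{\pi(k)}=a_{\pi(k)}\beta-b_{\pi(k)}$. With $a_i=X_iX_i^\top-\Sigma$ and $b_i=X_iY_i-\Gamma$, this equals $-(X_i\varepsilon_i-\Eb[X\varepsilon])=-X_i\varepsilon_i$, where $\varepsilon_i=Y_i-X_i^\top\beta$ and $\Eb[X\varepsilon]=0$ by the normal equations. Whitening with $Z_i=\Sigma^{-1/2}X_i$, $A_i=Z_iZ_i^\top-I_d$ and $\tilde\eta=\Sigma^{-1/2}\eta$ gives
\[
\varphi_\beta=(-1)^{k+1}\sum_{\pi\in\Sk_k}\tilde\eta^\top A_{\pi(1)}\cdots A_{\pi(k-1)}Z_{\pi(k)}\varepsilon_{\pi(k)}.
\]
Each summand is a chain of $k-1$ centered rank-one factors terminated by the vector $Z\varepsilon$. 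Squaring gives at most $(k!)^2$ cross terms indexed by pairs $(\pi,\pi')$. For each pair, we expand only the centered factors $Z_iZ_i^\top-I_d$ of the second chain over subsets $J\subset[k-1]$. The $-I_d$ pieces of the first chain can be dropped exactly as in the second line of \eqref{eq:B1:second-moment}: if an index of the first chain appears only through $-I_d$, it appears linearly elsewhere, and the term is killed by centering. This gives $2^{k-1}$ non-centered two-chain products.

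The main step is bounding each non-centered two-chain product. Each is the expectation of a product of two chains in the $Z_iZ_i^\top$, with the terminal vectors $Z_{\pi(k)}\varepsilon_{\pi(k)}$ and $Z_{\pi'(k)}\varepsilon_{\pi'(k)}$. I would invoke Theorem~\ref{thm:C2} (the chain-moment bound from Section~C, used with $\sigma=1$, $\tau=d$) after handling the residuals. Because $\varepsilon$ is not independent of $Z$, I would condition on the other indices and apply H\"older in the pair $(Z_j,\varepsilon_j)$ for the index carrying the residuals. This is where $q_{x\varepsilon}\ge 2$ and \ref{asmp:momente} enter, producing a factor $K_\varepsilon^2$ in place of $K_x^2$ for that index's directional moment.

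I expect this step to be the \emph{main obstacle}. If $\pi(k)\neq\pi'(k)$, both residuals appear, on different indices. Each such index then appears once as a terminal vector in one chain and once as $Z_jZ_j^\top$ in the other, so its moment mixes $\varepsilon$ with a fourth-order directional moment of $Z$. The bound must be arranged so that this costs only $K_x^{2}K_\varepsilon$ per residual, and the total power comes out as $K_x^{4k-2}K_\varepsilon^2$. I would try applying Cauchy--Schwarz to pairs of directional factors at each index together with $\Eb^{1/q_{x\varepsilon}}|u^\top Z\varepsilon|^{q_{x\varepsilon}}\le K_xK_\varepsilon$.

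The dimension factor $d^{k-1}$ then comes from tracing out the $k-1$ inner rank-one factors, as in Theorem~\ref{thm:C2}. Summing the bounds over $(\pi,\pi')$ and $J$ yields $(k!)^22^{k-1}K_x^{4k-2}K_\varepsilon^2\|\tilde\eta\|_2^2d^{k-1}$. Using $\|\tilde\eta\|_2=\|\eta\|_{\Sigma^{-1}}$ and multiplying by $k!/(n)_k$ gives \eqref{eq:B3:var}.
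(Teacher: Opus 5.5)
Your architecture is the paper's: canonical variance formula, removal of the $-I_d$ pieces of the first chain by centering (which also uses $\Eb[Z\varepsilon]=0$), expansion of the second chain over $J\subset[k-1]$, and a per-term bound $\|\tilde\eta\|_2^2K_x^{4k-2}K_\varepsilon^2d^{k-1}$ for the resulting $(k!)^22^{k-1}$ non-centered two-chain products. The paper gets that per-term bound in one line from Theorem~\ref{thm:C4}. That result is the residual analogue of Theorems~\ref{thm:C2}--\ref{thm:C3}: their graph induction rerun with Lemma~\ref{lem:C5} in place of Lemma~\ref{lem:C1}. This is exactly the step you leave open, and the route you sketch does not close it.

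If $\pi(k)=\pi'(k)=j$, integrating out $(Z_j,\varepsilon_j)$ gives the deterministic matrix $M=\Eb[\varepsilon^2ZZ^\top]$ with $\|M\|_{\rm op}\le K_x^2K_\varepsilon^2$ (H\"older, using $q_{x\varepsilon}\ge 2$). The two chains then concatenate into one, and Theorem~\ref{thm:C2} applies. If $\pi(k)\neq\pi'(k)$, however, $j=\pi(k)$ enters as $Z_j\varepsilon_j$ at the end of one chain and, in general, as an interior factor $Z_jZ_j^\top$ of the other. Integrating it out then produces the $3$-tensor $\Eb[\varepsilon Z^{\otimes 3}]$ contracted against three random vectors. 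What remains is not a matrix chain, so Theorem~\ref{thm:C2} cannot be invoked. Per-index H\"older or Cauchy--Schwarz only controls the powers of $K_x,K_\varepsilon$; it does not produce the factor $d^{k-1}$, which is the real content. Your bookkeeping is also off: the $K_x$-exponent is just the total $Z$-degree $2k+2|J|\le 4k-2$, and such an index has $Z$-degree three, so it costs $K_x^3K_\varepsilon$, not $K_x^2K_\varepsilon$.

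The gap is easy to fill without Section~C. Write $C_1,C_2$ for the two non-centered chains and apply Cauchy--Schwarz across them: $|\Eb[C_1C_2]|\le \Eb^{1/2}[C_1^2]\,\Eb^{1/2}[C_2^2]$. In each $C_i^2$ the residual sits on a single index appearing twice, which integrates to $M$. The remaining palindromic chain has distinct indices and can be peeled from the inside out using, for $N\succeq 0$,
\[
\big\|\Eb[ZZ^\top N ZZ^\top]\big\|_{\rm op}\le \|N\|_{\rm op}\sup_{\|u\|_2=1}\Eb\big[\|Z\|_2^2(u^\top Z)^2\big]\le K_x^4d\,\|N\|_{\rm op}.
\]
The last step is Cauchy--Schwarz together with $\Eb\|Z\|_2^4\le K_x^4d^2$. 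This gives
\[
|\Eb[C_1C_2]|\le \|\tilde\eta\|_2^2K_x^{2k+2|J|}K_\varepsilon^2d^{(k-1+|J|)/2},
\]
which suffices because $|J|\le k-1$; the target $d^{k-1}$ is not sharp in $|J|$.

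More directly still, you can skip the expansion. Apply $(\sum_\pi x_\pi)^2\le k!\sum_\pi x_\pi^2$ to $\varphi_\beta$, then peel each centered chain using
\[
\Eb[(ZZ^\top-I_d)N(ZZ^\top-I_d)]=\Eb[ZZ^\top NZZ^\top]-N\preceq \Eb[ZZ^\top NZZ^\top].
\]
This yields $\Eb[\varphi_\beta^2]\le (k!)^2\|\tilde\eta\|_2^2K_x^{4k-2}K_\varepsilon^2d^{k-1}$, i.e.\ the theorem even without the factor $2^{k-1}$.
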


\begin{proof}
    Since $\Eb[h_i]=0$ and $\Dc^k\beta_\eta(\Sigma,\Gamma)$ is $k$-linear, $\Eb[\Dc^k \beta_\eta(\Sigma,\Gamma)[\bar h^{(k)}]]=0$.

    Recall from \eqref{eq:regressionfunctional_derivative} that
    \begin{equation*}
        \Dc^k \beta_\eta(\Sigma,\Gamma)[\bar h^{(k)}]
        =\binom{n}{k}^{-1}\sum_{1\leq j_1<\ldots<j_k\leq n}\varphi_\beta(h_{j_1},\ldots,h_{j_k}),
    \end{equation*}
    where
    \begin{equation}\label{eq:B3:kernel}
        \varphi_\beta(h_1,\ldots,h_k)
        =(-1)^{k-1}\sum_{\pi\in\mathfrak S_k}\tilde\eta^\top A_{\pi(1)}\cdots A_{\pi(k-1)} Z_{\pi(k)}\varepsilon_{\pi(k)}
    \end{equation} for $\tilde\eta=\Sigma^{-1/2}\eta$ and $A_i=Z_iZ_i^\top-I_d.$ By multilinearity in $(h_1,\ldots,h_k)$ and $\Eb[h_i]=0$, $\varphi_\beta$ is canonical: $\Eb\big[\varphi_\beta(h_1,\ldots,h_k)\mid h_1,\ldots,h_{k-1}\big]=0.$ Hence the canonical $U$-statistic variance identity yields
    \begin{equation}\label{eq:B3:var-reduce}
        {\rm Var}\!\left(\Dc^k \beta_\eta(\Sigma,\Gamma)[\bar h^{(k)}]\right)
        =\binom{n}{k}^{-1}{\rm Var}\!\left(\varphi_\beta(h_1,\ldots,h_k)\right)
        \le \binom{n}{k}^{-1}\Eb\!\left[\varphi_\beta(h_1,\ldots,h_k)^2\right].
    \end{equation} Expand the centered factor as
    \begin{equation}\label{eq:B3:last-expand}
         \prod_{r=1}^{k-1} (Z_{\pi'(r)}Z_{\pi'(r)}^\top-I_d)
        =\sum_{J\subset[k-1]}(-1)^{k-1-|J|}\prod_{r\in J} {}^\uparrow Z_{\pi'(r)}Z_{\pi'(r)}^\top.
    \end{equation}
    Then, expanding the centered factors as in \eqref{eq:B1:center-expand}, $\Eb[\varphi_\beta(h_1,\ldots,h_k)^2]$ is bounded by
    a sum of at most $k!^2 2^{k-1}$ terms, each of which is of the form
    \begin{align}\label{eq:B3:term}
        \Big|\Eb\Big[\tilde\eta^\top \prod_{r=1}^{k-1} (Z_{\pi'(r)}Z_{\pi'(r)}) ~Z_{\pi(k)}\varepsilon_{\pi(k)}\times \tilde\eta^\top \prod_{r\in J}{}^\uparrow (Z_{\pi'(r)}Z_{\pi'(r)}^\top) ~Z_{\pi'(k)}\varepsilon_{\pi'(k)}\Big]\Big|.
    \end{align} Applying Theorem~\ref{thm:C4} yields $|\Eb[\mbox{term in \eqref{eq:B3:term}}]|
        \le \|\tilde\eta\|_2^2\,K_x^{4k-2}K_\varepsilon^2\,d^{k-1}.$ Therefore,
    \begin{equation}\label{eq:B3:second-moment}
        \Eb\!\left[\varphi_\beta(h_1,\ldots,h_k)^2\right]
        \le (k!)^2 2^{k-1}\cdot \|\tilde\eta\|_2^2\,K_x^{4k-2}K_\varepsilon^2\,d^{k-1}.
    \end{equation}
    Combining \eqref{eq:B3:var-reduce} and \eqref{eq:B3:second-moment}, we get
    \begin{equation*}
        {\rm Var}\!\left(\Dc^k \beta_\eta(\Sigma,\Gamma)[\bar h^{(k)}]\right)
        \le \|\eta\|_{\Sigma^{-1}}^2 (k!)^3 2^{k-1} K_x^{4k-2}K_\varepsilon^2\frac{d^{k-1}}{(n)_k}.
    \end{equation*}
\end{proof}

\begin{theorem}\label{thm:B4}
    Consider a functional $\beta_\eta(\Sigma,\Gamma) = \eta^\top \Sigma^{-1}\Gamma$ and recall $k$:th Fr\'echet derivative $\Dc^k\beta_\eta$ of $\beta_\eta$ in \eqref{eq:regressionfunctional_derivative} for $1\leq k\leq s$. Suppose that Assumption~\ref{asmp:momentX} and \ref{asmp:momente} holds with $q_x\geq 4$ and $(1/q_x+1/q_y)^{-1}\geq 1/2$. Recall
    \begin{equation*}
        \bar h^{(k)}=\sum_{\substack{1\leq j_1,\ldots,j_k\leq n\\\mbox{\tiny distinct}}}\frac{h_{j_1}\otimes\cdots\otimes h_{j_k}}{n(n-1)\cdots (n-k+1)},
    \end{equation*}
    with $h_i = (X_iX_i^\top-\Sigma,X_iY_i-\Gamma)$. For $\tilde\Sigma\in\Sb\Pb^d$ and $\tilde\Gamma\in\Real^d$, independent of $X_1,\ldots,X_n$, let $\tilde\Sigma_t = \tilde\Sigma + t(\Sigma-\tilde\Sigma)$ and $\tilde\Gamma_t = \tilde\Gamma + t(\Gamma-\tilde\Gamma)$. Define
    \begin{equation*}
        \Delta^{(s)}(t) = \Dc^s\beta_\eta(\tilde\Sigma_t, \tilde\Gamma_t)-\Dc^s\beta_\eta(\tilde\Sigma,\tilde\Gamma).
    \end{equation*}
    For $0\leq k\leq s$ and $v_1,\ldots,v_k\in\Sb\Pb^{d}\oplus\Real^d$, let
    \begin{equation*}
        \tilde\Delta^{(s)}_k[v_1,\ldots,v_k]
        = \Jc^{s-k}\Delta^{(s)}[v_1,\ldots,v_k, (\Sigma-\tilde\Sigma,\Gamma-\tilde\Gamma)^{\otimes (s-k)}].
    \end{equation*}
    Define
    \begin{equation*}
        \underline\lambda_{\tilde\Sigma} = \lambda_{\min}(\Sigma^{-1/2}\tilde\Sigma\Sigma^{-1/2}),\quad
        \Dc_{\tilde\Sigma} = \norm{I-\Sigma^{-1/2}\tilde\Sigma\Sigma^{-1/2}}_{\rm op},\quad
        \Rc_{\tilde\Sigma,\tilde\Gamma}=\norm{\tilde\Gamma-\tilde\Sigma\beta}_{\Sigma^{-1}}.
    \end{equation*}
    Then $\Eb_{\Sc_1}[\tilde \Delta^{(s)}_k[\bar h^{(k)}]]=0$ for $1\leq k\leq s$ and
    \begin{equation*}
        \abs{\tilde \Delta^{(s)}_0}\leq \norm{\eta}_{\Sigma^{-1}}(1\vee\underline\lambda_{\tilde\Sigma}^{-s-2})\Dc_{\tilde\Sigma}^s\Rc_{\tilde\Sigma,\tilde\Gamma}.
    \end{equation*}
    Also, for all $1\leq k\leq s$,
    \begin{equation*}
        {\rm Var}_{\Sc_1}\big(\tilde \Delta^{(s)}_k[\bar h^{(k)}]\big)
        \leq \norm{\eta}_{\Sigma^{-1}}^2k!s^{2k+2}2^{k+2} K_x^{4k-2}(K_x\vee K_\varepsilon)^2
        \frac{\Dc_{\tilde\Sigma}^{2s-2k}(\Dc_{\tilde\Sigma}\vee\Rc_{\tilde\Sigma,\tilde\Gamma}\vee\Dc_{\tilde\Sigma}\Rc_{\tilde\Sigma,\tilde\Gamma})^2}{1\wedge\underline\lambda_{\tilde\Sigma}^{2s+4}}
        \frac{d^{k-1}}{(n)_k}.
    \end{equation*}
\end{theorem}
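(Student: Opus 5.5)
The proof follows the template of Theorem~\ref{thm:B2}, with the regression structure handled through a whitening of both coordinates. Set $Z_i=\Sigma^{-1/2}X_i$ and $A_i=Z_iZ_i^\top-I_d$, and let $\varepsilon_i=Y_i-X_i^\top\beta$. For an increment $(a,b)$, the natural whitened vector variable is $\Sigma^{-1/2}(b-a\beta)$. For the data increment $h_i=(X_iX_i^\top-\Sigma,X_iY_i-\Gamma)$ this vector equals $Z_i\varepsilon_i$, using $\Gamma=\Sigma\beta$. For the pilot increment $(\Sigma-\tilde\Sigma,\Gamma-\tilde\Gamma)$ the matrix part is $A_\ast:=I-\Sigma^{-1/2}\tilde\Sigma\Sigma^{-1/2}$, with operator norm $\Dc_{\tilde\Sigma}$. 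Its vector part is $\Sigma^{-1/2}(\tilde\Sigma\beta-\tilde\Gamma)$, with norm $\Rc_{\tilde\Sigma,\tilde\Gamma}$.

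I would rewrite \eqref{eq:regressionfunctional_derivative} at a base point $(\tilde\Sigma_t,\tilde\Gamma_t)$ so that the chain reads
\[
\tilde\eta^\top\Bv_t\,\bar a_{\pi(1)}\Bv_t\cdots\bar a_{\pi(k-1)}\Bv_t\big(\bar a_{\pi(k)}\Bv_t\gamma_t-\bar b_{\pi(k)}\big).
\]
Here $\Bv_t$ is as in \eqref{eq:B2:Bt}, the barred quantities are the whitened increments, and $\gamma_t=\Sigma^{1/2}\tilde\Sigma_t^{-1}\tilde\Gamma_t$. The last factor is rewritten as $\bar a\Bv_t\gamma_t-\bar b=\bar a(\Bv_t\gamma_t-\Sigma^{1/2}\beta)-(\bar b-\bar a\Sigma^{1/2}\beta)$. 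The quantity $\Bv_t\gamma_t-\Sigma^{1/2}\beta$ is the whitened error $\tilde\beta_t-\beta$ of the interpolated regression coefficient, and its norm is $\lesssim t\,\underline\lambda^{-1}\Rc$ up to the $(1\wedge\underline\lambda)$ factors. So every chain ends either in a whitened residual vector ($Z_i\varepsilon_i$ or the pilot residual $\Rc$) or in a matrix times the small vector $\tilde\beta_t-\beta$.

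The mean-zero claim is immediate: $\tilde\Delta^{(s)}_k$ is an $\Sc_1$-independent bounded $k$-linear form and $\Eb h_i=0$, exactly as in \eqref{eq:B2:mean0}.

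For $\tilde\Delta^{(s)}_0$, I would not telescope. Instead I would note that $\Jc^s\Delta^{(s)}[u^{\otimes s}]$ with $u=(\Sigma-\tilde\Sigma,\Gamma-\tilde\Gamma)$ is exactly the Taylor remainder of $y\mapsto\beta_\eta(\tilde\Sigma+yu)$. This map is a rational function of $y$: in whitened coordinates $\beta_\eta(\tilde\Sigma_y,\tilde\Gamma_y)=\tilde\eta^\top\Bv_y\gamma_y$ with $\Bv_y=(I-(1-y)A_\ast)^{-1}$ and $\tilde\Gamma_y$ affine in $y$. Expanding the geometric series in $(1-y)A_\ast$ gives the remainder in closed form, $\pm\tilde\eta^\top(\Bv_0A_\ast)^{s}\Bv_0\,\Sigma^{-1/2}(\tilde\Gamma-\tilde\Sigma\beta)$ up to $\Bv$ factors. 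Its modulus is at most $\norm{\eta}_{\Sigma^{-1}}\underline\lambda^{-s-1}\Dc^s\Rc$, which gives the claimed $\Dc^s\Rc$ scaling. This is the source of the improvement over Theorem~\ref{thm:B2}: only $s$ powers of $\Dc$ appear, because the linear dependence on $\Gamma$ kills one order.

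For the variance I would follow Theorem~\ref{thm:B2} step by step. First, use the weighted-integral variance inequality \eqref{eq:B2:var-weight}. Second, reduce to the canonical $U$-statistic second moment $\binom nk^{-1}\Eb\psi_t^2$. Third, telescope $\Delta^{(s)}(t)$ in $\Bv_t-\Bv_0$ and in $\gamma_t-\gamma_0$; this gives at most $(s+1)!$ chain terms times the $\binom{s}{k}$-type placements of the pilot increments. Fourth, bound each squared chain by the moment lemma Theorem~\ref{thm:C4} (the analogue of \eqref{eq:B3:term}), giving $K_x^{4k-2}(K_x\vee K_\varepsilon)^2d^{k-1}$. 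The residual slot carries either a data factor $Z\varepsilon$, which contributes $K_\varepsilon$, or a deterministic factor, which contributes no $K_\varepsilon$; this explains $(K_x\vee K_\varepsilon)^2$. Fifth, bound the deterministic prefactor: $s-k$ pilot slots contribute $\Dc^{s-k}$, and the one extra small factor is $\Dc$, $\Rc$, or $\Dc\Rc$. The extra factor comes from $\Bv_t-\Bv_0$, from the pilot residual, or from $\tilde\beta_t-\beta$ times $A_\ast$, which gives the squared factor $(\Dc\vee\Rc\vee\Dc\Rc)^2$. Finally, integrate against $w(t)^2$ as before and absorb $((s+1)!/(s-k)!)^2\le s^{2k+2}\cdot$const, which yields $k!\,s^{2k+2}2^{k+2}$.

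\textbf{Main obstacle.} The hardest step is the bookkeeping in the telescoping: I must check that every term carries at least one small factor and exactly $s-k$ pilot slots. The subtle case is a term where the residual slot is occupied by a data increment and no $\Bv_t-\Bv_0$ factor appears. That term would be missing its small factor, and it must cancel in $\Delta^{(s)}(t)=\Dc^s\beta_\eta(t)-\Dc^s\beta_\eta(0)$. I would verify this cancellation by writing $\Dc^s\beta_\eta(H)$ as $\Dc^s\omega$-type chains applied to $\tilde\beta(H)$ and differencing in both arguments.
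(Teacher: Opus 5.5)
Your mean-zero argument and your variance bound follow the paper's proof. That is: the weighted-integral inequality \eqref{eq:B2:var-weight}; reduction to $\binom nk^{-1}$ times a kernel second moment; telescoping the $s$ factors $\Bv_t$ together with the terminal vector $\Sigma^{1/2}\tilde\beta_t$; Theorem~\ref{thm:C4} for the two-chain moments; and integration against $w(t)^2$.

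The genuine difference is $\tilde\Delta^{(s)}_0$. The paper bounds it with the same telescoping, getting $(s+1)!$ terms each $\lesssim t\,\Dc_{\tilde\Sigma}^{s}\Rc_{\tilde\Sigma,\tilde\Gamma}$, and cancels the $(s+1)!$ against $\int_0^1t(1-t)^{s-1}\,dt/\Gamma(s)=1/(s+1)!$. Your observation that $\tilde\Delta^{(s)}_0$ is exactly the order-$s$ Taylor remainder at $y=1$ of $g(y)=\beta_\eta(\tilde\Sigma_y,\tilde\Gamma_y)$ works and gives a closed form:
\begin{itemize}
\item Since $\tilde\Gamma_y-\tilde\Sigma_y\beta=(1-y)(\tilde\Gamma-\tilde\Sigma\beta)$, one has $g(y)=\eta^\top\beta+(1-y)\,\tilde\eta^\top\Bv_y r$, with $r=\Sigma^{-1/2}(\tilde\Gamma-\tilde\Sigma\beta)$ and $\Bv_y=(\Bv_0^{-1}+yA_\ast)^{-1}$.
\item The Taylor coefficients of $\tilde\eta^\top\Bv_yr$ at $y=0$ are $(-1)^jc_j$, where $c_j=\tilde\eta^\top(\Bv_0A_\ast)^j\Bv_0r$.
\item The degree-$s$ Taylor polynomial of $(1-y)\tilde\eta^\top\Bv_yr$, evaluated at $y=1$, telescopes to $(-1)^sc_s$.
\item Since $g(1)=\eta^\top\beta$, this gives exactly $\tilde\Delta^{(s)}_0=(-1)^{s+1}\tilde\eta^\top(\Bv_0A_\ast)^s\Bv_0r$, hence $|\tilde\Delta^{(s)}_0|\le\norm{\eta}_{\Sigma^{-1}}\underline\lambda_{\tilde\Sigma}^{-s-1}\Dc_{\tilde\Sigma}^s\Rc_{\tilde\Sigma,\tilde\Gamma}$.
\end{itemize}
The relevant expansion is the Neumann series in $y\Bv_0A_\ast$ about $y=0$, not a geometric series in $(1-y)A_\ast$. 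Equivalently, use the resolvent derivatives at $0$, so no convergence condition is needed.

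Your route gives an exact identity, the sharper factor $\underline\lambda_{\tilde\Sigma}^{-s-1}$ in place of $1\vee\underline\lambda_{\tilde\Sigma}^{-s-2}$, and a transparent reason for the $\Dc^s\Rc$ scaling. The paper's route buys uniformity: the telescoping is needed anyway for $k\ge1$, where the inputs are random and no such closed form is available.

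Some slips to fix when you write it up:
\begin{itemize}
\item $\Sigma^{1/2}(\tilde\beta_t-\beta)=(1-t)\Bv_tr$ carries a factor $1-t$, not $t$. The factor $t$ that produces $1/((s-k)!)^2$ after integration comes from $\norm{\Bv_t-\Bv_0}_{\rm op}\le t(1\vee\underline\lambda_{\tilde\Sigma}^{-2})\Dc_{\tilde\Sigma}$ and from $\norm{\tilde\beta_t-\tilde\beta_0}_\Sigma\le t(1\vee\underline\lambda_{\tilde\Sigma}^{-2})\Rc_{\tilde\Sigma,\tilde\Gamma}$.
\item With your definition $\gamma_t=\Sigma^{1/2}\tilde\Sigma_t^{-1}\tilde\Gamma_t$, the terminal factor is $\bar a_{\pi(s)}\gamma_t-\bar b_{\pi(s)}$, without the extra $\Bv_t$. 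You must telescope in $\gamma_t=\Sigma^{1/2}\tilde\beta_t$ as a unit. Splitting it as $\Bv_t\cdot\Sigma^{-1/2}\tilde\Gamma_t$ produces $\Sigma^{-1/2}(\Gamma-\tilde\Gamma)=A_\ast\Sigma^{1/2}\beta-r$, and hence a spurious $\norm{\beta}_\Sigma$.
\item Your ``main obstacle'' resolves itself. For each permutation, the only telescoping term without a $\Bv_t-\Bv_0$ factor is $\tilde\eta^\top\Bv_0\bar a_{\pi(1)}\Bv_0\cdots\Bv_0\bar a_{\pi(s)}\Sigma^{1/2}(\tilde\beta_t-\tilde\beta_0)$. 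In it, $\bar b_{\pi(s)}$ (hence any $Z_\iota\varepsilon_\iota$) has already cancelled, because it does not depend on $t$.
\item Do not multiply by an additional $\binom sk$ for placements of the pilot increments. The sum over $\Sk_s$ already enumerates them, and the extra factor would spoil the constant $k!\,s^{2k+2}2^{k+2}$.
\end{itemize}
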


\begin{proof}
    Denote $\Eb_{\Sc_1}$ and ${\rm Var}_{\Sc_1}$ as the conditional expectation and conditional variance taken over $\{(X_i,Y_i):i\leq n\}$ only. Since each $h_i$ is centered and $(\tilde\Sigma,\tilde\Gamma)$ is independent of $\Sc_1$, it follows that for every $t\in[0,1]$ and $1\leq k\leq s$,
    \begin{equation}\label{eq:B4:mean0}
        \Eb_{\Sc_1}\Big[\Delta^{(s)}(t)\big[\bar h^{(k)},(\Sigma-\tilde\Sigma,\Gamma-\tilde\Gamma)^{\otimes(s-k)}\big]\Big]=0,
    \end{equation}
    and hence $\Eb_{\Sc_1}[\tilde\Delta^{(s)}_k[\bar h^{(k)}]]=0$ for $1\leq k\leq s$.

    For $1\leq k\leq s-1$, by \eqref{eq:B2:var-weight} and the definition of $\Jc^{s-k}$,
    \begin{align}\label{eq:B4:var-int}
        {\rm Var}_{\Sc_1}\big(\tilde\Delta^{(s)}_k[\bar h^{(k)}]\big)
        &\leq \int_0^1 {\rm Var}_{\Sc_1}\Big(\Delta^{(s)}(t)\big[\bar h^{(k)},(\Sigma-\tilde\Sigma,\Gamma-\tilde\Gamma)^{\otimes(s-k)}\big]\Big)
        \frac{(1-t)^{2s-2k-2}}{\Gamma(s-k)^2}\,dt .
    \end{align}
    For $k=s$, $\tilde\Delta^{(s)}_s[\bar h^{(s)}]=\Delta^{(s)}(1)[\bar h^{(s)}]$. Moreover, the variance formula for degenerate $U$-statistics of order $k$ gives
    \begin{align}\label{eq:B4:ustatvar}
        {\rm Var}_{\Sc_1}\Big(\Delta^{(s)}(t)\big[\bar h^{(k)},(\Sigma-\tilde\Sigma,\Gamma-\tilde\Gamma)^{\otimes(s-k)}\big]\Big)\nonumber\\
        = \binom{n}{k}^{-1}\,
        {\rm Var}_{\Sc_1}\Big(\Delta^{(s)}(t)\big[h_1,\ldots, h_k,(\Sigma-\tilde\Sigma,\Gamma-\tilde\Gamma)^{\otimes(s-k)}\big]\Big).
    \end{align}
    Write $M_i=X_iX_i^\top-\Sigma$ and $\gamma_i=X_iY_i-\Gamma$ for $1\leq i\leq k$, and set $M_i=\Sigma-\tilde\Sigma$ and $\gamma_i=\Gamma-\tilde\Gamma$ for $k+1\leq i\leq s$. Let $\tv_i=(M_i,\gamma_i)$. Then
    \begin{equation*}
        \Delta^{(s)}(t)\big[h_1,\ldots, h_k,(\Sigma-\tilde\Sigma,\Gamma-\tilde\Gamma)^{\otimes(s-k)}\big]
        =\Delta^{(s)}(t)[\tv_1\otimes\cdots\otimes\tv_s].
    \end{equation*}

    Let $\tilde\eta=\Sigma^{-1/2}\eta$, $\Bv_t=\Sigma^{1/2}\tilde\Sigma_t^{-1}\Sigma^{1/2}$, and $\tilde\beta_t=\tilde\Sigma_t^{-1}\tilde\Gamma_t$. Using \eqref{eq:regressionfunctional_derivative},
    \begin{align}\label{eq:B4:chain-diff}
        \Delta^{(s)}(t)[\tv_1\otimes\cdots\otimes\tv_s]
        &=(-1)^s\sum_{\pi:[s]\to[s]}\Big\{
        \tilde\eta^\top\Bv_t(\Sigma^{-1/2}M_{\pi(1)}\Sigma^{-1/2})\cdots \Bv_t\Sigma^{-1/2}(M_{\pi(s)}\tilde\beta_t-\gamma_{\pi(s)}) \nonumber\\
        &\qquad\quad-\tilde\eta^\top\Bv_0(\Sigma^{-1/2}M_{\pi(1)}\Sigma^{-1/2})\cdots \Bv_0\Sigma^{-1/2}(M_{\pi(s)}\tilde\beta_0-\gamma_{\pi(s)})\Big\}.
    \end{align}
    Applying the telescoping identity to the difference in braces yields
    \begin{equation}\label{eq:B4:gvpi}
        \Delta^{(s)}(t)[\tv_1\otimes\cdots\otimes\tv_s]
        =(-1)^s\sum_{\pi:[s]\to[s]}\gv_\pi(t),
    \end{equation}
    where $\gv_\pi(t)$ is the sum of $(s+1)$ terms:
    \begin{align}\label{eq:B4:gvpi-expand}
        \gv_\pi(t)
        &=\sum_{r=0}^{s-1}\tilde\eta^\top
        \Big(\Bv_0(\Sigma^{-1/2}M_{\pi(1)}\Sigma^{-1/2})\cdots \Bv_0(\Sigma^{-1/2}M_{\pi(r)}\Sigma^{-1/2})\Bv_0\Big)
        (\Bv_t-\Bv_0) \nonumber\\
        &\qquad\qquad\qquad\quad\times
        \Big((\Sigma^{-1/2}M_{\pi(r+1)}\Sigma^{-1/2})\Bv_t\cdots \Bv_t\Sigma^{-1/2}(M_{\pi(s)}\tilde\beta_t-\gamma_{\pi(s)})\Big) \\
        &\quad+\tilde\eta^\top\Bv_0(\Sigma^{-1/2}M_{\pi(1)}\Sigma^{-1/2})\cdots \Bv_0(\Sigma^{-1/2}M_{\pi(s)}\Sigma^{-1/2})\Sigma^{1/2}(\tilde\beta_t-\tilde\beta_0). \nonumber
    \end{align}

    We first consider $k=0$, so that $M_i=\Sigma-\tilde\Sigma$ and $\gamma_i=\Gamma-\tilde\Gamma$ for all $i\leq s$. From \eqref{eq:B4:gvpi-expand},
    \begin{align}\label{eq:mean}
        \big|\Delta^{(s)}(t)[(\Sigma-\tilde\Sigma,\Gamma-\tilde\Gamma)^{\otimes s}]\big|
        &\le (s+1)!\,\|\tilde\eta\|_2\,(\|\Bv_0\|_{\rm op}\vee\|\Bv_t\|_{\rm op})^{s-1}\Dc_{\tilde\Sigma}^{s-1} \nonumber\\
        &\qquad\times \max\Big\{\|\Bv_t-\Bv_0\|_{\rm op}\,\|\tilde\beta_t-\beta\|_{\Sigma},
        \ \|\Bv_0\|_{\rm op}\Dc_{\tilde\Sigma}\,\|\tilde\beta_t-\tilde\beta_0\|_{\Sigma}\Big\}.
    \end{align}

    Partition permutations into
    \begin{equation*}
        \Pi_1=\{\pi:[s]\to[s]: 1\le \pi(s)\le k\},\qquad
        \Pi_2=\{\pi:[s]\to[s]: k+1\le \pi(s)\le s\}.
    \end{equation*}
    Since $\Eb_{\Sc_1}[\gv_\pi(t)]=0$ for all $\pi$, we have
    \begin{align}\label{eq:varofDelta_s}
        {\rm Var}_{\Sc_1}\Big(\Delta^{(s)}(t)[\tv_1\otimes\cdots\otimes\tv_s]\Big)
        &=\sum_{\pi,\pi'\in\Pi_1}\Eb_{\Sc_1}\big[\gv_\pi(t)\gv_{\pi'}(t)\big]
        +2\sum_{\pi\in\Pi_1,\tau\in\Pi_2}\Eb_{\Sc_1}\big[\gv_\pi(t)\gv_\tau(t)\big] \\
        &\quad+\sum_{\tau,\tau'\in\Pi_2}\Eb_{\Sc_1}\big[\gv_\tau(t)\gv_{\tau'}(t)\big]. \nonumber
    \end{align}

    For $\tau\in\Pi_2$, $M_{\tau(s)}=\Sigma-\tilde\Sigma$ and $\gamma_{\tau(s)}=\Gamma-\tilde\Gamma$, so each summand of $\gv_\tau(t)$ is a chain of the form
    \begin{equation*}
        c_1^\top (Z_{j_1}Z_{j_1}^\top-I_d)C_1\cdots C_{k-1}(Z_{j_k}Z_{j_k}^\top-I_d)c_2,
        \qquad \{j_1,\ldots,j_k\}=[k],
    \end{equation*}
    with
    \begin{align}\label{eq:N1}
        \norm{c_1}_2\norm{c_2}_2\prod_{r=1}^{k-1}\norm{C_r}_{\rm op}
        &\leq \max\bigg\{\norm{\tilde\eta}_2\norm{\Bv_t-\Bv_0}_{\rm op}(\norm{\Bv_t}_{\rm op}\vee\norm{\Bv_0}_{\rm op})^{s-1}\norm{\tilde\beta_t-\beta}_{\Sigma}\,
        \Dc_{\tilde\Sigma}^{\,s-k-1}, \nonumber\\
        &\hspace{2.2cm}\norm{\tilde\eta}_2\norm{\Bv_0}_{\rm op}^s\norm{\tilde\beta_t-\tilde\beta_0}_{\Sigma}\,\Dc_{\tilde\Sigma}^{\,s-k}\bigg\}
        =:\Nv_1(t).
    \end{align}

        For $\pi\in\Pi_1$, write $\pi(s)=\iota\in[k]$. Using $\gamma_\iota=M_\iota\beta+X_\iota\varepsilon_\iota$, we decompose the last chain of $\gv_\pi$ as
    \begin{equation*}
        \Sigma^{-1/2}(M_\iota\tilde\beta_t-\gamma_\iota)
        =(Z_\iota Z_\iota^\top-I_d)\Sigma^{1/2}(\tilde\beta_t-\beta)-Z_\iota\varepsilon_\iota.
    \end{equation*}
    Decompose $\gv_\pi(t)=\hv_\pi(t)+\lv_\pi(t)$ accordingly. Then $\hv_\pi(t)$ is a sum of $(s+1)$ terms of the form
    \begin{equation*}
        d_1^\top (Z_{j_1}Z_{j_1}^\top-I_d)D_1\cdots D_{k-1}(Z_{j_k}Z_{j_k}^\top-I_d)d_2,
        \qquad \{j_1,\ldots,j_k\}=[k],
    \end{equation*}
    and $\lv_\pi(t)$ is a sum of $s$ terms of the form
    \begin{equation*}
        e_1^\top (Z_{j_1}Z_{j_1}^\top-I_d)E_1\cdots (Z_{j_{k-1}}Z_{j_{k-1}}^\top-I_d)E_{k-1}(Z_{j_k}\varepsilon_{j_k}),
        \qquad \{j_1,\ldots,j_k\}=[k].
    \end{equation*}
    Moreover, the corresponding deterministic envelopes satisfy
    \begin{align}\label{eq:N2N3}
        &\norm{d_1}_2\norm{d_2}_2\prod_{r=1}^{k-1}\norm{D_r}_{\rm op}\leq \max\bigg\{
        \norm{\tilde\eta}_2\norm{\Bv_t-\Bv_0}_{\rm op}(\norm{\Bv_t}_{\rm op}\vee\norm{\Bv_0}_{\rm op})^{s-1}\norm{\tilde\beta_t-\beta}_{\Sigma}\Dc_{\tilde\Sigma}^{\,s-k},\nonumber\\
        &\hspace{3.35cm}
        \norm{\tilde\eta}_2\norm{\Bv_0}_{\rm op}^{s}\norm{\tilde\beta_t-\tilde\beta_0}_{\Sigma}\Dc_{\tilde\Sigma}^{\,s-k}\bigg\}
        =:\Nv_2(t),\\
        &\norm{e_1}_2\prod_{r=1}^{k-1}\norm{E_r}_{\rm op}
        \leq \norm{\tilde\eta}_2\norm{\Bv_t-\Bv_0}_{\rm op}(\norm{\Bv_t}_{\rm op}\vee\norm{\Bv_0}_{\rm op})^{s-1}\Dc_{\tilde\Sigma}^{\,s-k}
        =:\Nv_3(t).\nonumber
    \end{align} Applying the moment bound in Theorem~\ref{thm:C4} and expanding centered factors in one chain yields
    \begin{align*}
        \Eb_{\Sc_1}\big[\gv_\pi(t)\gv_{\pi'}(t)\big]
        &\le (s+1)^22^k\Big(\Nv_2(t)^2 K_x^{4k} + 2\Nv_2(t)\Nv_3(t)K_x^{4k-1}K_\varepsilon + \Nv_3(t)^2K_x^{4k-2}K_\varepsilon^2\Big)d^{k-1} \\
        &\le s^22^{k+2}K_x^{4k-2}\big(\Nv_2(t)K_x+\Nv_3(t)K_\varepsilon\big)^2 d^{k-1}, \qquad \pi,\pi'\in\Pi_1,\\[0.2cm]
        \Eb_{\Sc_1}\big[\gv_\pi(t)\gv_{\tau}(t)\big]
        &\le (s+1)s2^{k}\Big(\Nv_1(t)\Nv_2(t)K_x^{4k}+\Nv_1(t)\Nv_3(t)K_x^{4k-1}K_\varepsilon\Big)d^{k-1} \\
        &\le s^22^{k+1}K_x^{4k-1}\Nv_1(t)\big(\Nv_2(t)K_x+\Nv_3(t)K_\varepsilon\big)d^{k-1},\qquad \pi\in\Pi_1,\ \tau\in\Pi_2,\\[0.2cm]
        \Eb_{\Sc_1}\big[\gv_\tau(t)\gv_{\tau'}(t)\big]
        &\le s^22^kK_x^{4k}\Nv_1(t)^2 d^{k-1},\qquad \tau,\tau'\in\Pi_2.
    \end{align*}
    Combining these bounds with \eqref{eq:varofDelta_s} yields
    \begin{align}\label{eq:varintermeidate}
        {\rm Var}_{\Sc_1}\Big(\Delta^{(s)}(t)[\tv_1\otimes\cdots\otimes\tv_s]\Big)
        &\le s^2 (s!)^2 2^{k+2}K_x^{4k-2}(K_x\vee K_\varepsilon)^2\big(\Nv_1(t)\vee\Nv_2(t)\vee\Nv_3(t)\big)^2 d^{k-1}.
    \end{align}

    Using $\sup_{t\in[0,1]}\|\Bv_t\|_{\rm op}\le 1\vee \underline\lambda_{\tilde\Sigma}^{-1}$ and $\|\Bv_t-\Bv_0\|_{\rm op}\le t(1\vee\underline\lambda_{\tilde\Sigma}^{-2})\Dc_{\tilde\Sigma}$, and
    \begin{align*}
    \tilde\beta_t - \beta = \tilde\Sigma^{-1}_t\tilde\Gamma_t-\beta=\tilde\Sigma^{-1}_t (\tilde\Gamma_t-\tilde\Sigma_t\beta)=(1-t)\tilde\Sigma_t^{-1}(\tilde\Gamma_0-\tilde\Sigma_0\beta),\\
    \tilde\beta_t - \tilde\beta_0 = (\tilde\beta_t - \beta) - (\tilde\beta_0 - \beta)=\big[(1-t)\tilde\Sigma_t^{-1}-\tilde\Sigma_0^{-1}\big](\tilde\Gamma_0-\tilde\Sigma_0\beta)\\
    =\tilde\Sigma_0^{-1}\big[(1-t)\tilde\Sigma_0-\tilde\Sigma_t\big]\tilde\Sigma_t^{-1}(\tilde\Gamma_0-\tilde\Sigma_0\beta)=-t\tilde\Sigma_0^{-1}\Sigma\tilde\Sigma_t^{-1}(\tilde\Gamma_0-\tilde\Sigma_0\beta).
\end{align*}
    we obtain
    \begin{equation*}
        \|\tilde\beta_t-\beta\|_{\Sigma}\le (1-t)(1\vee\underline\lambda_{\tilde\Sigma}^{-1})\Rc_{\tilde\Sigma,\tilde\Gamma},\qquad
        \|\tilde\beta_t-\tilde\beta_0\|_{\Sigma}\le t(1\vee\underline\lambda_{\tilde\Sigma}^{-2})\Rc_{\tilde\Sigma,\tilde\Gamma}.
    \end{equation*}
    Plugging these bounds into the definitions of $\Nv_1(t),\Nv_2(t),\Nv_3(t)$ yields
    \begin{equation}\label{eq:n1n2n3}
        \Nv_1(t)\vee\Nv_2(t)\vee\Nv_3(t)
        \le \norm{\eta}_{\Sigma^{-1}} (1\vee\underline\lambda_{\tilde\Sigma}^{-s-2})\Dc_{\tilde\Sigma}^{s-k}
        \big(\Dc_{\tilde\Sigma}\vee\Rc_{\tilde\Sigma,\tilde\Gamma}\vee\Dc_{\tilde\Sigma}\Rc_{\tilde\Sigma,\tilde\Gamma}\big)t.
    \end{equation}
    Combining \eqref{eq:B4:ustatvar}--\eqref{eq:n1n2n3}, we obtain
    \begin{align}\label{eq:55}
        &{\rm Var}_{\Sc_1}\Big(\Delta^{(s)}(t)\big[\bar h^{(k)},(\Sigma-\tilde\Sigma,\Gamma-\tilde\Gamma)^{\otimes(s-k)}\big]\Big)\nonumber\\
        &\leq \norm{\eta}_{\Sigma^{-1}}^2 s^2 k!(s!)^2 2^{k+2} K_x^{4k-2}(K_x\vee K_\varepsilon)^2
        \frac{\Dc_{\tilde\Sigma}^{2s-2k}\big(\Dc_{\tilde\Sigma}\vee\Rc_{\tilde\Sigma,\tilde\Gamma}\vee\Dc_{\tilde\Sigma}\Rc_{\tilde\Sigma,\tilde\Gamma}\big)^2}{1\wedge\underline\lambda_{\tilde\Sigma}^{2s+4}}
        \frac{d^{k-1}}{(n)_k}\,t^2.
    \end{align}

    If $1\le k\le s-1$, combining \eqref{eq:B4:var-int} with \eqref{eq:55} and using $(s!)^2/\{(s-k)!\}^2\le s^{2k}$ yields the desired variance bound. If $k=s$, the same bound follows directly from \eqref{eq:55} with $t=1$. Finally, the bound for $\tilde\Delta_0^{(s)}$ follows from \eqref{eq:mean} and the identity $\int_0^1 t(1-t)^{s-1}\,dt/\Gamma(s)=1/(s+1)!$.
\end{proof}

\section{Moments bounds for Monomials of Random Vectors}
This section studies moment bounds for the polynomial of $d$-dimensional vectors. Consider a random vector $Z\in\Real^d$ with $\Eb[ZZ^\top]=\Sigma$.

\begin{enumerate}[label=\textbf{(M)(Z)},leftmargin=2cm]
    \item\label{asmp:momentZ} Suppose there exists some $q_z\geq2$ and a constant $\kappa_z\geq 1$ satisfying that
    \begin{equation*}
        \sup_{u\in\Sb^{d-1}}\frac{\left(\Eb[|u^\top Z|^{q_z}]\right)^{1/q_z}}{(u^\top \Sigma u)^{1/2}}\leq \kappa_z.
    \end{equation*}
\end{enumerate}

We consider the following matrix class,
\begin{equation*}
    \Ac = \set{A\in\Real^{d\times d}:\norm{A}_{\rm op}\leq 1}.
\end{equation*} Denote
\begin{equation*}
    \sigma = \norm{\Sigma}_{\rm op}\quad\mbox{and}\quad \tau = {\rm tr}(\Sigma).
\end{equation*}

\begin{lemma}\label{lem:C1}
    Under Assumption~\ref{asmp:momentZ}, it holds that
    \begin{equation*}
        \sup_{a_1,\ldots,a_M\in\Sb^{d-1}}\sup_{A_1,\ldots,A_N\in\Ac}\Eb\left[\prod_{i=1}^M|a_i^\top Z|^{r_i}\prod_{j=1}^N|Z^\top A_j Z|^{s_j}\right]
        \leq \kappa_z^{\sum_{i=1}^M r_i + 2\sum_{j=1}^Ns_j}\, \sigma^{\frac12\sum_{i=1}^M r_i}\, \tau^{\sum_{j=1}^Ns_j},
    \end{equation*}
    for any $r_i,s_j\geq 0$ and $M,N\in\Nb$ as long as $\sum_{i=1}^M r_i + 2\sum_{j=1}^Ns_j\leq q_z$.
\end{lemma}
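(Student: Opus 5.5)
Write $p:=\sum_i r_i+2\sum_j s_j\le q_z$. The plan is to apply the generalized Hölder inequality with exponents matched to the homogeneity of each factor, and then bound each resulting single-factor moment using Assumption~\ref{asmp:momentZ}. Concretely, the factor $|a_i^\top Z|^{r_i}$ gets exponent $p/r_i$ and the factor $|Z^\top A_jZ|^{s_j}$ gets exponent $p/(2s_j)$. These reciprocals sum to one, so
\begin{equation*}
\Eb\Big[\prod_i|a_i^\top Z|^{r_i}\prod_j|Z^\top A_jZ|^{s_j}\Big]\le \prod_i \Eb^{r_i/p}\big[|a_i^\top Z|^{p}\big]\prod_j\Eb^{2s_j/p}\big[|Z^\top A_jZ|^{p/2}\big].
\end{equation*}
Factors with a zero exponent are dropped, and the case $p=0$ is trivial.

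For the linear factors, the $L_p$ norm is monotone in $p$ and $p\le q_z$, so Assumption~\ref{asmp:momentZ} gives $\Eb^{1/p}|a^\top Z|^p\le \kappa_z(a^\top\Sigma a)^{1/2}\le\kappa_z\sigma^{1/2}$. This produces the factor $\kappa_z^{r_i}\sigma^{r_i/2}$.

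For the quadratic factors, I need $\Eb^{2/p}|Z^\top AZ|^{p/2}\le \kappa_z^2\tau$ whenever $\norm{A}_{\rm op}\le1$. First, $|Z^\top AZ|\le \norm{A}_{\rm op}\norm{Z}_2^2\le\norm{Z}_2^2$, so it suffices to bound $\Eb^{2/p}\norm{Z}_2^{p}=\norm{\,\norm{Z}_2^2\,}_{L_{p/2}}$.

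\emph{Case $p\ge2$.} Diagonalize $\Sigma=\sum_k\lambda_k e_ke_k^\top$, so $\norm{Z}_2^2=\sum_k (e_k^\top Z)^2$. By Minkowski's inequality in $L_{p/2}$,
\begin{equation*}
\norm{\,\norm{Z}_2^2\,}_{L_{p/2}}\le\sum_k\norm{e_k^\top Z}_{L_p}^2\le\kappa_z^2\sum_k\lambda_k=\kappa_z^2\tau.
\end{equation*}

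\emph{Case $p<2$.} This only arises when every $s_j=0$, unless $p=2s_j$ with a single $s_j<1$. In that situation, Jensen's inequality gives $\Eb^{2/p}\norm{Z}^p_2\le\Eb\norm{Z}_2^2=\tau\le\kappa_z^2\tau$, since $\kappa_z\ge1$.

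Taking the product over all factors then yields exactly $\kappa_z^{p}\sigma^{\frac12\sum r_i}\tau^{\sum s_j}$.

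The main obstacle I expect is the quadratic-form factor, specifically the need to avoid a spurious factor $d$. The point is to reduce it to $\norm{Z}_2^2$ and then apply Minkowski's inequality coordinatewise in the eigenbasis of $\Sigma$, which gives $\tau$ rather than $d\sigma$. The remaining care is bookkeeping: checking that the Hölder exponents are valid (each is at least $1$ and they sum correctly), and handling exponents below one through monotonicity of $L_p$ norms.
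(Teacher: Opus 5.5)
Your proof is correct and is essentially the paper's argument: Hölder with exponents matched to homogeneity, then $|Z^\top A Z|\le\norm{Z}_2^2$, then a coordinatewise bound in an orthonormal basis that yields $\tau$. Where the paper applies Jensen to the convex combination with weights $\lambda_k/\tau$, you use Minkowski in $L_{p/2}$, which gives the same bound. The paper also avoids your $p<2$ case by first raising every factor to $L_{q_z}$ with $q_z\ge 2$. Incidentally, your description of when $p<2$ occurs is inaccurate (e.g.\ $r_1=1/2$, $s_1=1/4$ gives $p=1$). This does no harm, since your Jensen bound $\Eb^{2/p}\norm{Z}_2^p\le\Eb\norm{Z}_2^2=\tau$ holds for every $p<2$.
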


\begin{proof}[proof of Lemma~\ref{lem:C1}.]
Let $\bar q = \sum_{i=1}^M r_i + 2\sum_{j=1}^Ns_j\leq q_z$. From H\"older's inequality, Jensen's inequality and monotonicity of $L_p$-norms, we get
\begin{align*}
    &\Eb\left[\prod_{i=1}^M|a_i^\top Z|^{r_i}\prod_{j=1}^N|Z^\top A_j Z|^{s_j}\right]\leq \prod_{i=1}^M\left(\Eb[|a_i^\top Z|^{\bar q}]\right)^{r_i/\bar q}\prod_{j=1}^N\left(\Eb[|Z^\top A_j Z|^{\bar q/2}]\right)^{2s_j/\bar q}\\
    &\leq \prod_{i=1}^M\left(\Eb[|a_i^\top Z|^{q_z}]\right)^{r_i/q_z}\prod_{j=1}^N\left(\Eb[|Z^\top A_j Z|^{q_z/2}]\right)^{2s_j/q_z}.
\end{align*}
By Assumption~\ref{asmp:momentZ} and the bound $a_i^\top\Sigma a_i\le \|\Sigma\|_{\rm op}$,
\begin{equation*}
    \left(\Eb[|a_i^\top Z|^{q_z}]\right)^{1/q_z}\le \kappa_z (a_i^\top\Sigma a_i)^{1/2}\le \kappa_z \|\Sigma\|_{\rm op}^{1/2}.
\end{equation*}
Moreover, since $\|A_j\|_{\rm op}\le 1$, we have $|Z^\top A_j Z|\le \|Z\|_2^2$, so
\begin{equation*}
    \left(\Eb[|Z^\top A_j Z|^{q_z/2}]\right)^{2/q_z}\le \left(\Eb[\|Z\|_2^{q_z}]\right)^{2/q_z}.
\end{equation*}
It remains to bound $\Eb[\|Z\|_2^{q_z}]$. Let $\Sigma=U\Lambda U^\top$ with $\Lambda=\diag(\lambda_1,\ldots,\lambda_d)$ and set $Y=U^\top Z$. Then
\begin{equation*}
    \|Z\|_2^2=\sum_{k=1}^d Y_k^2=\tr(\Sigma)\sum_{k=1}^d w_k\frac{Y_k^2}{\lambda_k},\qquad w_k:=\lambda_k/\tr(\Sigma),\ \sum_{k=1}^d w_k=1.
\end{equation*}
By Jensen's inequality (since $q_z/2\ge 1$),
\begin{align*}
    \Eb[\|Z\|_2^{q_z}]
    &=\tr(\Sigma)^{q_z/2}\Eb\left[\Big(\sum_{k=1}^d w_k\frac{Y_k^2}{\lambda_k}\Big)^{q_z/2}\right]
    \le \tr(\Sigma)^{q_z/2}\sum_{k=1}^d w_k\,\Eb\left[\left|\frac{Y_k}{\sqrt{\lambda_k}}\right|^{q_z}\right].
\end{align*}
Applying Assumption~\ref{asmp:momentZ} to $u=Ue_k\in\Sb^{d-1}$ yields
$\big(\Eb[|Y_k|^{q_z}]\big)^{1/q_z}\le \kappa_z \lambda_k^{1/2}$, hence
$\Eb\big[|Y_k/\sqrt{\lambda_k}|^{q_z}\big]\le \kappa_z^{q_z}$. Therefore
\begin{equation*}
    \Eb[\|Z\|_2^{q_z}]\le \kappa_z^{q_z}\tr(\Sigma)^{q_z/2},
    \qquad\text{so}\qquad
    \left(\Eb[\|Z\|_2^{q_z}]\right)^{2/q_z}\le \kappa_z^{2}\tr(\Sigma).
\end{equation*}
Combining the above displays gives the desired bound.
\end{proof}
\begin{theorem}\label{thm:C2}
    Consider a collection of indices $(i_1,\ldots,i_{m+1})~(m\geq 0)$, which need not be distinct. Let each $Z_i$ obey Assumption~\ref{asmp:momentZ}, and  let $Z_i$ and $Z_{i'}$ be independent if $i\neq i'$. Let  $r=|\set{i_1,\ldots,i_{m+1}}|$, $s_j$ be a non-negative integer, and $S = \sum_{j=1}^{m+1}s_j$. If $\max_{k\in\{i_1,\ldots,i_{m+1}\}}\sum_{j:i_j=k}(2+s_j) \leq q_z$, then
\begin{equation}\label{eq:thm:C2}
    \sup_{\substack{u,v,\theta_1,\ldots,\theta_m\in\Sb^{d-1}\\
    A_1,\ldots,A_m\in\Ac
    }}\Eb\left[u^\top Z_{i_1}Z_{i_1}^\top A_1 Z_{i_2}Z_{i_2}^\top\cdots A_{m}Z_{i_{m+1}}Z_{i_{m+1}}^\top v\prod_{j=1}^{m+1}(\theta_j^\top Z_{i_j})^{s_j}\right]
    \leq \kappa_z^{2m+2+S}\,\sigma^{1+S/2}\,\tau^{m+1-r}.
\end{equation}
\end{theorem}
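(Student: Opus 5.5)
The plan is to expand the matrix chain into a product of scalar inner products. This makes the expectation factorize over the distinct indices, and each index-block is then bounded by Lemma~\ref{lem:C1}.

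\textbf{Step 1: expansion.} Write the chain as
\begin{equation*}
(u^\top Z_{i_1})\Big(\prod_{j=1}^{m} Z_{i_j}^\top A_j Z_{i_{j+1}}\Big)(Z_{i_{m+1}}^\top v)\prod_{j=1}^{m+1}(\theta_j^\top Z_{i_j})^{s_j}.
\end{equation*}
Each $Z_{i_j}$ appears in at most two bilinear links, either $Z^\top A Z'$ with $Z'$ a different copy, or a quadratic form $Z^\top A Z$ when consecutive indices coincide.

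\textbf{Step 2: the naive bound and why it fails.} Suppose we bound $|Z^\top A Z'|\le \norm{Z}_2\norm{Z'}_2$ and group the factors by distinct index $k$. Independence factorizes the expectation, and Lemma~\ref{lem:C1} with $\Eb\norm{Z}_2^{2p}\le \kappa_z^{2p}\tau^p$ bounds each block. This only gives $\tau^{m}$ rather than $\tau^{m+1-r}$. So the plan is instead to integrate out the distinct copies one at a time.

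\textbf{Step 3: integrating out a singleton.} Take an index $k$ that appears exactly once in the chain, say at position $j$ (interior). Conditionally on the other $Z$'s, its contribution is
\begin{equation*}
\Eb[(x^\top Z_k)(Z_k^\top y)(\theta_j^\top Z_k)^{s_j}].
\end{equation*}
When $s_j=0$ this equals $x^\top\Sigma y$, so $Z_kZ_k^\top$ is replaced by $\Sigma$, which costs $\sigma$ and no trace factor.

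\textbf{Step 4: repeated indices and the induction.} An index appearing $p_k\ge 2$ times is handled by the quadratic-form bound of Lemma~\ref{lem:C1}, which produces trace factors. Since $\sum_k p_k=m+1$ and there are $r$ distinct indices, a careful count gives $\sum_k(p_k-1)=m+1-r$ factors of $\tau$. Concretely, I would induct on $m$. At each step I peel off the last index $i_{m+1}$.
\begin{itemize}
\item If $i_{m+1}$ is new (its first occurrence), replace $Z_{i_{m+1}}Z_{i_{m+1}}^\top v$ by $\Sigma v$ up to the $\theta$ weights. Hölder together with (M)(Z) gives $\Eb|x^\top Z|\,|v^\top Z|\,|\theta^\top Z|^{s}\le \kappa_z^{2+s}\sigma^{1+s/2}$ in norm. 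The normalized vector $\Sigma v/\sigma$ then serves as the new $v$, and the chain length drops by one with no $\tau$.
\item If $i_{m+1}$ is a repeat, merge $A_m Z Z^\top v$ via $\norm{Z}_2^2$, which costs one $\kappa_z^2\tau$.
\end{itemize}
The moment condition $\sum_{j:i_j=k}(2+s_j)\le q_z$ is exactly what is needed to apply Hölder within each block.

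\textbf{Main obstacle.} When $s_j>0$ the $\theta$ weights break the clean replacement by $\Sigma$. In that case I would bound the conditional expectation by a supremum over unit vectors, using Lemma~\ref{lem:C1} with $M=3$, and then continue the induction with new unit vectors $u,v$. The delicate part is keeping the powers of $\kappa_z$ and $\sigma$ exact, namely $\kappa_z^{2m+2+S}\sigma^{1+S/2}$. The concern is that $\sigma$ might otherwise accumulate as $\sigma^{m+1}$. I would handle this by normalizing $\Sigma$, or, if needed, by accepting the factor $\sigma^{m+1}$, which is harmless when $\sigma=1$ as in the applications.
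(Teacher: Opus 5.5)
\textbf{There is a genuine gap: the ``repeat'' branch of your induction on $m$ does not work.} If $i_{m+1}=k$ also occurs earlier in the chain, $Z_k$ is not independent of the remaining factors. So $A_mZ_kZ_k^\top v$ cannot be replaced by a deterministic vector. Bounding it by $\norm{Z_k}_2^2$ leaves a random weight and a random direction, both depending on the rest of the chain. Your inductive hypothesis, a supremum over \emph{deterministic} unit vectors of a \emph{signed} expectation, then no longer applies to what remains. Passing to absolute values everywhere does not help either. It destroys the exact replacement $\Eb[ZZ^\top]=\Sigma$ that your singleton step relies on. For Gaussian $Z$ with $\Sigma=I_d$ and three distinct indices (target $\tau^0$), the expectation of the absolute value of the chain is already of order $d$.

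Your count $\sum_k(p_k-1)=m+1-r$ is the right target, but your mechanism realizes it only for an index whose occurrences form a single consecutive run. In that case the whole block $\Eb[A_0Z(Z^\top A_1Z)\cdots(Z^\top A_lZ)Z^\top A_{l+1}]$ is a deterministic matrix of operator norm at most $\kappa_z^{2l+2}\sigma\tau^{l}$ by Lemma~\ref{lem:C1}. Integrating it out removes that index and reconnects its two neighbours; this is the correct version of your ``merge''. For chains such as $(1,2,1,2)$, every index recurs in separated runs, nothing can be integrated out exactly, and your proposal offers no bound.

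\textbf{The missing ingredient is the edge-counting estimate for absolute-value products, Theorem~\ref{thm:C.1.4}.} A cross link $|Z^\top AZ'|$ costs only $\tau^{1/2}$, not $\norm{Z}_2\norm{Z'}_2$. The reason is that, conditionally on $Z'$, it is a one-dimensional projection of $Z$, costing $\sigma^{1/2}\norm{AZ'}_2$. An interior index visited $p_k$ times in $g_k$ separate runs then costs $\tau^{p_k-g_k/2}$. This is at most $\tau^{p_k-1}$ precisely when $g_k\ge 2$ (given $\tau\ge 1$).

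The paper's proof is an induction on $r$ that combines the two moves. If some node is visited in a single run (exactly two edge-ends), it is integrated out as above and $r$ drops. Otherwise $\frac14\sum_j e_j\ge r-1$, and Theorem~\ref{thm:C.1.4} finishes. The endpoint configurations ($e_1=1$, or $e_1=e_2=1$) are treated separately.

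The $\theta$-weights you flag as the main obstacle are harmless, since $\norm{\Eb[ZZ^\top(\theta^\top Z)^s]}_{\rm op}\le\kappa_z^{2+s}\sigma^{1+s/2}$ absorbs them. Your concern about $\sigma$ is justified, however. Under $Z\mapsto cZ$ the homogeneous exponent is $\sigma^{r+S/2}$, not $\sigma^{1+S/2}$; for indices $(1,2)$ the left side equals $\sigma^2$. The paper's argument also collects a factor $\sigma$ at each reduction. So the statement should be read with $\sigma=1$ and $\tau=d$, as in all its applications.
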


\begin{proof}[proof of Theorem~\ref{thm:C2}.]
We prove \eqref{eq:thm:C2} using the induction on $r$. Without loss of generality, we let $\set{i_1,\ldots,i_{m+1}}=\set{1,\ldots,r}$. We construct a graph with $r$ nodes where $j$th node corresponds to index set $N_j:=\set{k:i_k=j}$ for $j=1,\ldots,r$. We connect two nodes $N_j$ and $N_{j'}$ whenever $k\in N_j$ and $k+1\in N_{j'}$ for some $1\leq k \leq m$, i.e. $Z_j^\top A_* Z_{j'}$ appears in the product. A total number of connections is $m$ and there are two types of connections:
\begin{itemize}
    \item If $j=j'$, we call this line a \emph{self loop}, corresponding to the term $Z_j^\top A Z_j$ in the product.
    \item If $j\neq j'$, we call this line an \emph{edge}, corresponding to the term $Z_j^\top A Z_{j'}$ in the product.
\end{itemize}
We let $l_j$ be the number of self loops at $N_j$ and let $e_j$ be the number of edges whose one end is $N_j$. Since there are exactly $m$ connections, the following identity holds:
\begin{equation*}
    m = \sum_{j=1}^r l_j + \frac{1}{2}\sum_{j=1}^re_j.
\end{equation*}
Denote this graph by $\Gc$ and define
\begin{align*}
    f(\Gc) &= (u^\top Z_{i_1})(Z_{i_1}^\top A_1 Z_{i_2})\cdots(Z_{i_m}^\top A_m Z_{i_{m+1}})(Z_{i_{m+1}}^\top v)\prod_{j=1}^{m+1}(\theta_j^\top Z_{i_j})^{s_j},\\
    \norm{\Gc}_f^\dagger &= \sup_{\substack{u,v,\theta_1,\ldots,\theta_m\in\Sb^{d-1}\\
    A_1,\ldots,A_m\in\Ac
    }}\Eb f(\Gc),\quad
    \norm{\Gc}_f = \sup_{\substack{u,v,\theta_1,\ldots,\theta_m\in\Sb^{d-1}\\
    A_1,\ldots,A_m\in\Ac
    }} \Eb |f(\Gc)|.
\end{align*}
It is straightforward that $\norm{\Gc}_f^\dagger\leq \norm{\Gc}_f$, and it suffices to show
\begin{equation*}
    \norm{\Gc}_f^\dagger\leq \kappa_z^{2m+2+S}\,\sigma^{1+S/2}\,\tau^{m+1-r}.
\end{equation*}

\medskip
First of all, if $r=1$, then the result follows immediately from Lemma~\ref{lem:C1} (with $R=2+S$ and $S=m$ there). Let $r=2$, Theorem~\ref{thm:C.1.4} applies and yields
\begin{equation*}
    \norm{\Gc}_f\leq \kappa_z^{2m+2+S}\,\sigma^{1+S/2}\,\tau^{m-\frac{1}{2}e_1}.
\end{equation*}
If $e_1\geq 2$, then we are done. If $e_1=1$, then $f(\Gc)$ is the form of
\begin{align*}
f(\Gc)=(u^\top Z_{i_1})(Z_1^\top A_1 Z_1)\cdots (Z_1^\top A_{l_1} Z_1)\times (Z_1^\top A_{l_1+1}Z_2)\\
\times ~(Z_2^\top A_{l_1+2} Z_2)\cdots (Z_2^\top A_{l_1+l_2+1} Z_2)(Z_{i_{m+1}}^\top v)\times \prod_{j=1}^{m+1}(\theta_j^\top Z_{i_j})^{s_j},
\end{align*}
where $l_1+l_2+1=m$. Let
\begin{align*}
    c_1 &= \Eb(u^\top Z_{i_1})^{\mathbbm{1}(i_1=1)}(v^\top Z_{i_{m+1}})^{\mathbbm{1}(i_{m+1}=1)}(Z_1^\top A_1 Z_1)\cdots (Z_1^\top A_{l_1} Z_1)Z_1\prod_{j:i_j=1}(\theta_j^\top Z_{i_j})^{s_j},\\
    c_2 &= \Eb (u^\top Z_{i_1})^{\mathbbm{1}(i_1=2)}(v^\top Z_{i_{m+1}})^{\mathbbm{1}(i_{m+1}=2)}Z_2(Z_2^\top A_{l_1+2} Z_2)\cdots (Z_2^\top A_{l_1+l_2+1} Z_2)\prod_{j:i_j=2}(\theta_j^\top Z_{i_j})^{s_j}.
\end{align*}
Then, $|\Eb f(\Gc)|=|c_1^\top A_{l_1+1}c_2|\leq \norm{c_1}_2\norm{c_2}_2$, and an application of Lemma~\ref{lem:C1} yields
\begin{equation*}
\Abs{\Eb f(\Gc)}\leq \kappa_z^{2l_1+2l_2+4+S}\,\sigma^{1+S/2}\,\tau^{l_1+l_2}
=\kappa_z^{2m+2+S}\,\sigma^{1+S/2}\,\tau^{m-1}.
\end{equation*}

In the following, we assume $r\geq 3$. We consider two cases (1) $i_{m+1}=i_1$ and (2) $i_{m+1}\neq i_1$.

\textbf{(1) $i_{m+1}=i_1$}
We prove
\begin{equation}\label{eq:thmA1:1}
    \Eb(u^\top Z_{i_1})(Z_{i_1}^\top A_1 Z_{i_2})\cdots(Z_{i_m}^\top A_m Z_{i_1})(Z_{i_1}^\top v)\prod_{j=1}^{m+1}(\theta_j^\top Z_{i_j})^{s_j}
    \leq \kappa_z^{2m+2+S}\,\sigma^{1+S/2}\,\tau^{m+1-r}.
\end{equation}
It follows from the construction that all $e_j$'s are even and $e_j\geq2$. If $e_j\geq4$ for all $j$, then an application of Theorem~\ref{thm:C.1.4} yields
\begin{equation*}
\norm{\Gc}_f\leq \kappa_z^{2m+2+S}\,\sigma^{1+S/2}\,\tau^{m-\frac{1}{4}\sum_{j=1}^re_j}\leq \kappa_z^{2m+2+S}\,\sigma^{1+S/2}\,\tau^{m-r}.
\end{equation*}
Hence, we suppose $e_1 = 2$ without loss of generality. If $i_1=1$, then $f(\Gc)$ is the form of
\begin{equation*}
    f(\Gc)=(u^\top Z_1)(Z_1^\top A_1 Z_1)\cdots(Z_1^\top A_{l_1} Z_1)(Z_1^\top MZ_1)(Z_1^\top v)\prod_{j:i_j=1}(\theta_j^\top Z_{i_j})^{s_j},
\end{equation*}
where $M = A_* Z_{i}(\cdots)Z_{i'}^\top A_*\prod_{j:i_j\neq 1}(\theta_j^\top Z_{i_j})^{s_j}$ and $i\neq 1$ and $i'\neq 1$. Meanwhile, let $\Gc^{-N_1}$ denote the graph after removing node $N_1$ from $\Gc$ (removing all self loops and edges involving node $N_1$). Then, $\Gc^{-N_1}$ has $r-1$ nodes and from the induction hypothesis, we get
\begin{align*}
    &\sup_{u,v\in\Sb^{d-1}}u^\top \Eb[M] v
    \leq \norm{\Gc^{-N_1}}_f^\dagger\\
    &\quad\leq \kappa_z^{2(m-2-l_1)+2+\sum_{j:i_j\neq 1}s_j}\,\sigma^{1+\frac12\sum_{j:i_j\neq 1}s_j}\,
    \tau^{(m-2-l_1)-(r-1)+1}\\
    &\qquad=\kappa_z^{2m-2-2l_1+\sum_{j:i_j\neq 1}s_j}\,\sigma^{1+\frac12\sum_{j:i_j\neq 1}s_j}\,\tau^{m-r-l_1}.
\end{align*}
Moreover, it follows from the law of iterated expectation and Lemma~\ref{lem:C1} that
\begin{align*}
    \norm{\Gc}_f^\dagger
    \leq \kappa_z^{2l_1+4+\sum_{j:i_j=1}s_j}\,\sigma^{1+\frac12\sum_{j:i_j=1}s_j}\,\tau^{l_1+1}
    \times
    \kappa_z^{2m-2-2l_1+\sum_{j:i_j\neq 1}s_j}\,\sigma^{1+\frac12\sum_{j:i_j\neq 1}s_j}\,\tau^{m-r-l_1}\\
    = \kappa_z^{2m+2+S}\,\sigma^{1+S/2}\,\tau^{m-r+1}.
\end{align*}
Now suppose that $i_1\neq 1$, and $N_1$ is connected to exactly two nodes $N_{k_1}$ and $N_{k_2}$ where $k_1$ and $k_2$ are not necessarily different. Let $\Gc^{-N_1}$ denote the graph after removing the node $N_1$ from $\Gc$. Then $f(\Gc)$ is of the form:
\begin{align*}
    f(\Gc)&= (Z_{k_1}^\top A_{0} Z_1)(Z_1^\top A_{1} Z_1)\cdots(Z_1^\top A_{{l_1}}Z_1)(Z_1^\top A_{{l_1+1}} Z_{k_2})\prod_{j:i_j=1}(\theta_j^\top Z_{i_j})^{s_j} f(\Gc^{-N_1})\\
    &\quad =(Z_{k_1}^\top M Z_{k_2})f(\Gc^{-N_1}),
\end{align*}
where $M = (A_{0} Z_1)(Z_1^\top A_{1} Z_1)\cdots(Z_1^\top A_{{l_1}}Z_1)(Z_1^\top A_{{l_1+1}} )\prod_{j:i_j=1}(\theta_j^\top Z_{i_j})^{s_j}.$
From Lemma~\ref{lem:C1}, we have
\begin{equation*}
 \sup_{u,v\in\Sb^{d-1}}u^\top \Eb[M] v \leq \kappa_z^{2l_1+2+\sum_{j:i_j=1}s_j}\,\sigma^{1+\frac12\sum_{j:i_j=1}s_j}\,\tau^{l_1}.
\end{equation*}
Consequently, the law of total expectation implies that
\begin{equation}\label{eq:thmA1:2}
 |\Eb f(\Gc)|\leq \kappa_z^{2l_1+2+\sum_{j:i_j=1}s_j}\,\sigma^{1+\frac12\sum_{j:i_j=1}s_j}\,\tau^{l_1} \left|\Eb\left[\left (Z_{k_1}^\top\frac{\Eb M}{\norm{\Eb M}_{\rm op}}Z_{k_2}\right)f(\Gc^{-N_1})\right]\right|.
\end{equation}
We consider a graph $\tilde\Gc$ by adding an edge connecting $Z_{k_1}$ and $Z_{k_2}$ on $\Gc^{-N_1}$. It is easy to verify that $\tilde\Gc$ has $r-1$ nodes and from the induction hypothesis, we get
\begin{equation*}
 \norm{\tilde \Gc}^\dagger \leq \kappa_z^{2(m-1-l_1)+2+\sum_{j:i_j\neq 1}s_j}\,\sigma^{1+\frac12\sum_{j:i_j\neq 1}s_j}\,
 \tau^{(m-1-l_1)-(r-1)+1}.
\end{equation*}
Combining this with \eqref{eq:thmA1:2} proves \eqref{eq:thmA1:1}.

\textbf{(2) $i_{m+1}\neq i_1$}
Since $i_1\neq i_{m+1}$, there exist exactly two nodes in $\Gc$ that have an odd number of edges. In particular, two nodes correspond to $i_1$ and $i_{m+1}$, respectively. Without loss of generality, let $N_1$ and $N_2$ be such nodes. If $e_1+e_2\geq 4$ and $e_j\geq4$ for all $j\neq 1,2$, then we have $\sum_{j=1}^r e_j\geq 4+4(r-2)=4r-4$. An application of Theorem~\ref{thm:C.1.4}  results in
\begin{equation*}
\norm{\Gc}_f\leq \kappa_z^{2m+2+S}\,\sigma^{1+S/2}\,\tau^{m-\frac{1}{4}\sum_{j=1}^re_j}\leq \kappa_z^{2m+2+S}\,\sigma^{1+S/2}\,\tau^{m-r+1}.
\end{equation*}
Therefore, we only need to complement two scenarios; (i) there exists $j\neq 1,2$ such that $e_j=2$ or (ii) $e_1=e_2=1$. Regarding (i), the proof is in a similar vein to that of the case $i_1=i_{m+1}$. Suppose that $e_3 = 2$ without loss of generality, and let $N_{k_1}$ and $N_{k_2}$ be two nodes connected with $N_3$. The law of conditional expectation leads to
\begin{equation}\label{eq:thmA1:3}
 \Eb f(\Gc) =\Eb\left[(Z_{k_1}^\top \Eb[M'] Z_{k_2})f(\Gc^{-N_3})\right],
\end{equation}
where $\Gc^{-N_3}$ denotes the graph after removing the node $N_3$ from $\Gc$, and
\begin{equation*}
M' = (A_{\iota_0} Z_3)(Z_3^\top A_{\iota_1} Z_3)\cdots(Z_3^\top A_{\iota_{l_3}}Z_3)(Z_3^\top A_{\iota_{l_3+1}} )\prod_{j:i_j=3}(\theta_j^\top Z_3)^{s_j}.
\end{equation*}
From Lemma~\ref{lem:C1}, we have
\begin{equation*}
\|\Eb M'\|_{\rm op}\leq \kappa_z^{2l_3+2+\sum_{j:i_j=3}s_j}\,\sigma^{1+\frac12\sum_{j:i_j=3}s_j}\,\tau^{l_3}.
\end{equation*}
Then, we construct a graph $\tilde\Gc$ by adding an edge connecting $Z_{k_1}$ and $Z_{k_2}$ on $\Gc^{-N_3}$. Then, $\tilde\Gc$ has $r-1$ nodes and the induction hypothesis applies and yields
\begin{equation*}
 \Eb f(\tilde \Gc) \leq \kappa_z^{2(m-1-l_3)+2+\sum_{j:i_j\neq 3}s_j}\,\sigma^{1+\frac12\sum_{j:i_j\neq 3}s_j}\,
 d^{(m-1-l_3)-(r-1)+1}.
\end{equation*}
Combining this with \eqref{eq:thmA1:3} completes case (i).

\medskip
In case (ii) $e_1=e_2=1$, $f(\Gc)$ is a form of
\begin{align*}
    f(\Gc)=(u^\top Z_1)(Z_1^\top A_1 Z_1)\cdots (Z_1^\top A_{l_1} Z_1)\times(Z_1^\top M'' Z_2)\\
    \times (Z_2^\top A_{l_1+3} Z_2)\cdots (Z_2^\top A_{l_1+l_2+2} Z_2) (Z_2^\top v)\times \prod_{j:i_j\in\set{1,2}}(\theta_j^\top Z_{i_j})^{s_j},
\end{align*}
where $M'' = A_{l_1+1}Z_{k_1} (Z_{k_1}^\top A_*Z_{k_1}) \cdots (Z_{k_2}^\top A_* Z_{k_2}) Z_{k_2}^\top A_{l_1+2}\prod_{j:i_j\notin\set{1,2}}(\theta_j^\top Z_{i_j})^{s_j}$. Let $\Gc^{-{N_1,N_2}}$ denotes the graph after removing $N_1$ and $N_2$ from $\Gc$. Then, $u^\top M'' v$ is the form of $f(\Gc^{-{N_1,N_2}})$. The graph $\Gc^{-{N_1,N_2}}$ has $r-2$ nodes and the induction hypothesis implies that
\begin{equation}\label{eq:thmA1:4}
    \sup_{u,v\in\Sb^{d-1}}u^\top \Eb[M''] v\leq \kappa_z^{2(m-l_1-l_2-2)+2+\sum_{j:i_j\notin\set{1,2}}s_j}\,\sigma^{1+\frac12\sum_{j:i_j\notin\set{1,2}}s_j}\,
    \tau^{m-l_1-l_2-2-(r-2)+1}.
\end{equation}
The law of total expectation implies that
\begin{align*}
    \Abs{\Eb[f(\Gc)]}&= \norm{\Eb M''}_{\rm op}\bigg|\Eb\bigg[(u^\top Z_1)(Z_1^\top A_1 Z_1)\cdots (Z_1^\top A_{l_1} Z_1)\times(Z_1^\top \frac{\Eb M''}{\norm{\Eb M''}_{\rm op}} Z_2)\\
    &\qquad\qquad\times (Z_2^\top A_{l_1+3} Z_2)\cdots (Z_2^\top A_{l_1+l_2+2} Z_2) (Z_2^\top v)\prod_{j:i_j\in\set{1,2}}(\theta_j^\top Z_{i_j})^{s_j}\bigg]\bigg|\\
    &\leq \kappa_z^{2m+2+S}\,\sigma^{1+S/2}\,\tau^{m-r+1},
\end{align*}
where the last inequality follows from \eqref{eq:thmA1:4} and the induction hypothesis of $r=2$. This completes the proof.
\end{proof}
\begin{theorem}\label{thm:C3}
    Consider two collections of indices $(i_1,\ldots,i_{m_1+1})$ and $(j_1,\ldots,j_{m_2+1})~(m_1,m_2\geq 0)$, such that $r = |\set{i_1,\ldots,i_{m_1+1},j_1,\ldots,j_{m_2+1}}|$ and $|\set{i_1,\ldots,i_{m_1+1}}\cap \set{j_1,\ldots,j_{m_2+1}}|\geq 1$. Let each $Z_i$ obey Assumption~\ref{asmp:momentZ}, and  let $Z_i$ and $Z_{i'}$ be independent if $i\neq i'$. If
\begin{equation*}
\max_{k\in\{i_1,\ldots,i_{m_1+1},j_1,\ldots,j_{m_2+1}\}}|\{j:i_j=k\}| \leq q_z/2,
\end{equation*}
then
\begin{align}\label{eq:thm:C3}
    &\sup_{\substack{u_1,v_1,u_2,v_2\in\Sb^{d-1}\\
    A_1,\ldots,A_{m_1},B_1,\ldots,B_{m_2}\in\Ac
    }}\Eb\bigg[u_1^\top Z_{i_1}Z_{i_1}^\top A_1 Z_{i_2}Z_{i_2}^\top\cdots A_{m_1}Z_{i_{m_1+1}}Z_{i_{m_1+1}}^\top u_2\nonumber\\
    &\quad\times~ v_1^\top Z_{j_1}Z_{j_1}^\top B_1 Z_{j_2}Z_{j_2}^\top\cdots B_{m_2}Z_{j_{m_2+1}}Z_{j_{m_2+1}}^\top v_2 \bigg]\leq \kappa_z^{2(m_1+m_2)+4}\,\sigma^2\,\tau^{(m_1+m_2)+1-r}.
\end{align}
\end{theorem}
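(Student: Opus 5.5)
The plan is to reduce Theorem~\ref{thm:C3} to Theorem~\ref{thm:C2} by viewing the product of the two chains as a single longer chain. Since both factors are scalars, I rewrite the second chain in transposed form as $v_2^\top Z_{j_{m_2+1}}Z_{j_{m_2+1}}^\top B_{m_2}^\top\cdots B_1^\top Z_{j_1}Z_{j_1}^\top v_1$. I then pick a common index $k$ in the intersection, say $k=i_a=j_b$, which exists by assumption. The idea is to split the first chain at position $a$ and splice the second chain in through the node $Z_k$. Concretely, $Z_k^\top(\cdot)$ and the bilinear factor give
\begin{equation*}
(Z_k^\top x)(y^\top Z_k)=Z_k^\top (x y^\top) Z_k ,
\end{equation*}
so wherever a rank-one matrix built from the remainder appears, the combined expression becomes a single chain of length $m_1+m_2+1$ connections. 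This chain runs over the index sequence $(i_1,\ldots,i_a,\ j_{b+1},\ldots,j_{m_2+1},\ \ldots)$.

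The first step is to find the right concatenation. Going along the first chain, I reach $Z_{i_a}$ and exit through $Z_{i_a}^\top$. Instead of continuing, I insert the second chain as a closed loop based at $Z_k$: I go from $Z_k$ to $Z_{j_{b+1}},\ldots,Z_{j_{m_2+1}}$, pass through $v_2v_1^\top$ (an operator-norm-one matrix in $\Ac$), then through $Z_{j_1},\ldots,Z_{j_{b-1}}$, and back to $Z_k$. After that I resume the first chain with $A_a$. The factor $Z_kZ_k^\top$ at position $b$ of the second chain is split between its two ends using the identity above, so the total number of $ZZ^\top$ blocks becomes $m_1+m_2+2-1$. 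This is exactly the right count, because the chain with $m=m_1+m_2+1$ has $m+1=m_1+m_2+2$ positions, with $Z_k$ appearing twice. Each matrix in between is an $A_i$, a $B_j^\top$, or $v_2v_1^\top$, all in $\Ac$. The endpoints are $u_1,u_2$, and no $\theta$ factors are needed ($s_j=0$).

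The second step is to check the hypotheses of Theorem~\ref{thm:C2}. The distinct index count is $r$. The multiplicity condition $\sum_{j:i_j=k}2\le q_z$ follows from the assumed bound $|\{j:i_j=k\}|\le q_z/2$, read over the combined sequence. Theorem~\ref{thm:C2} then gives
\begin{equation*}
\kappa_z^{2m+2}\sigma\,\tau^{m+1-r}=\kappa_z^{2(m_1+m_2)+4}\sigma\,\tau^{m_1+m_2+2-r}.
\end{equation*}
This is off by $\tau/\sigma$ from the target, because the splice consumed a $v_2v_1^\top$ slot as an internal connection instead of keeping two endpoint vectors. The reason is that Theorem~\ref{thm:C2}'s chain has only one pair of free endpoints.

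The main obstacle is therefore to recover the extra factor $\sigma/\tau$. I would avoid the wasteful splice above and instead reprove via the graph argument of Theorem~\ref{thm:C2} directly. I would build the multigraph on the $r$ nodes with $m_1+m_2$ connections and four "endpoint stubs" ($u_1,u_2,v_1,v_2$). Then I would run the same induction on $r$: peel off nodes of even degree $2$ by conditioning (Lemma~\ref{lem:C1}, which gains one $\tau$ per loop), and handle high-degree nodes with the $\tau^{m-\frac14\sum e_j}$ bound of Theorem~\ref{thm:C.1.4}. The base cases are $r=1$, which is Lemma~\ref{lem:C1} with four linear factors giving $\sigma^2\tau^{m_1+m_2}$, and $r=2$, which uses the connectivity through the common index. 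The four stubs contribute $\sigma^{1/2}$ each, giving $\sigma^2$. The connectedness assumption (nonempty intersection) ensures the graph is connected, which gives exactly the exponent $m_1+m_2+1-r$ via the edge-counting $\sum e_j\ge 4r-4$ in the generic case. The delicate subcase is where both odd-degree node pairs have $e_j=1$. There I would condition out the middle segments as in case (ii) of Theorem~\ref{thm:C2} and apply the $r=2$ base case.
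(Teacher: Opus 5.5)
Your first route fails for the reason you give. However, the diagnosis that Theorem~\ref{thm:C2} ``has only one pair of free endpoints'' is exactly where the missing idea lies. Theorem~\ref{thm:C2} also carries the linear factors $\prod_j(\theta_j^\top Z_{i_j})^{s_j}$, and these are extra endpoint stubs. Each costs only $\sigma^{1/2}$, whereas converting two stubs into an internal connection $v_2v_1^\top$ costs a full $\tau$.

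The paper uses this to reduce to a single chain without loss whenever such a reduction is possible:
\begin{itemize}
\item \emph{Shared endpoint}, say $i_{m_1+1}=j_1=k$. Keep $(Z_k^\top u_2)(v_1^\top Z_k)$ as two $\theta$-factors at the junction. This is one chain with $m=m_1+m_2$ connections and $S=2$, and Theorem~\ref{thm:C2} gives $\kappa_z^{2m+4}\sigma^{2}\tau^{m+1-r}$, which is exactly the target.
\item \emph{Closed chain}, say $i_1=i_{m_1+1}$. Split both chains at a common index and traverse chain 2a, reversed 1a, reversed 1b, chain 2b. The stubs $u_1,u_2$ become $\theta$-factors at $i_1$, again with $m=m_1+m_2$ and $S=2$.
\end{itemize}
Your interior splice of an \emph{open} second chain cannot be repaired this way, because $Z_{j_{m_2+1}}$ and $Z_{j_1}$ are different nodes. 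With four distinct endpoints there are four odd-degree vertices, so no single-chain representation exists without adding an edge.

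Only that last case needs the graph induction, and there your fallback agrees with the paper. It uses Theorem~\ref{thm:C.1.4} when $e_1,\dots,e_4\ge3$ and $e_j\ge4$ otherwise (so $\sum_je_j\ge4r-4$), and peeling otherwise. As sketched, however, your induction does not close, for two reasons.

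First, the edge count and the ``four odd nodes'' picture presuppose distinct endpoint nodes. Coincident stubs occur both initially and after peeling: removing a leaf can make the new start of chain~1 equal to $i_{m_1+1}$, $j_1$ or $j_{m_2+1}$. So the induction hypothesis must be the full statement, and the coincident configurations must be settled separately. The paper does this with the $S=2$ reductions above. These also give all base cases $r\le3$, since then two of the four endpoints coincide by pigeonhole.

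Second, the delicate subcase is not only ``both $e_j=1$''. A single endpoint leaf with $e_1=1$ already spoils the generic count, giving exponent $m_1+m_2+\tfrac32-r$. It is handled by conditioning $N_1$ out into a new endpoint vector $c_1$ with $\|c_1\|_2\le\kappa_z^{2l_1+2}\sigma\tau^{l_1}$ (Lemma~\ref{lem:C1}) and applying the induction hypothesis on $r-1$ nodes. Interior nodes with $e_j=2$ are likewise conditioned out into a new connection.
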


\begin{proof}
    If $|\set{i_1,i_{m_1+1}}\cap\set{j_1,j_{m_2+1}}|\geq 1$, then two `chain products';
    \begin{align*}
        \mbox{Chain 1}&: (Z_{i_1}^\top A_1 Z_{i_2})\cdots (Z_{i_{m_1}}^\top A_{m_1}Z_{i_{m_1+1}}),\\
        \mbox{Chain 2}&: (Z_{j_1}^\top B_1 Z_{j_2})\cdots (Z_{j_{m_2}}^\top B_{m_2}Z_{j_{m_2+1}}),
    \end{align*}
    can be attached by their ends to one long chain product of $(m_1+m_2)$ factors of $(ZAZ')$. Hence, Theorem~\ref{thm:C2} applies and yields the result.
    Let $k\in \set{i_1,\ldots,i_{m_1+1}}\cap\set{j_1,\ldots,j_{m_2+1}}$ be a common index. If $i_1=i_{m_1+1}$,  then two chains again can be connected into a long chain. To see this, let the sub-chains be as follows.
    \begin{align*}
        \mbox{Chain 1}: \overbrace{(Z_{i_1}^\top A_1 Z_{i_2})\cdots (Z'A_* Z_k)}^{\mbox{Chain 1a}}&\overbrace{(Z_kA_*Z'')\cdots(Z_{i_{m_1}}A_{m_1}Z_{i_1})}^{\mbox{Chain 1b}}\\
        \mbox{Chain 2}: \underbrace{(Z_{j_1}^\top B_1 Z_{j_2})\cdots (Z'''B_* Z_k)}_{\mbox{Chain 2a}}&\underbrace{(Z_kB_*Z'''')\cdots(Z_{j_{m_2}}B_{m_1}Z_{j_{m_2+1}})}_{\mbox{Chain 2b}}.
    \end{align*}
    Then the products of chains can be expressed as
    \begin{align*}
    \mbox{Chain 1}\times \mbox{Chain 2}:\overbrace{(Z_{j_1}^\top B_1 Z_{j_2})\cdots (Z'''^\top B_* Z_k)}^{\mbox{Chain 2a}}&\overbrace{(Z_k^\top A_* Z')\cdots(Z_{i_2}^\top A_1 Z_{i_1}) }^{\mbox{Reversed Chain 1a}}\\
    \times \underbrace{(Z_{i_1}^\top A_{m_1}Z_{i_{m_1}})\cdots(Z''^\top A_*Z_k)}_{\mbox{Reversed Chain 1b}}&\underbrace{(Z_k^\top B_*Z'''')\cdots(Z_{j_{m_2}}^\top B_{m_1}Z_{j_{m_2+1}})}_{\mbox{Chain 2b}}.
    \end{align*}
    Therefore, Theorem~\ref{thm:C2} applies and yields the desired results. We can make a similar argument for $j_1= j_{m_2+1}$. Therefore, we may only consider the case where $\set{i_1,i_{m_1+1},j_1,j_{m_2+1}}$ are all distinct.

    \medskip
    We prove \eqref{eq:thm:C3} via induction on $r$. The logic above proves the case where $1\leq r\leq 3$. Suppose $r\geq 4$ and, without loss of generality, let $\set{i_1,\ldots,i_{m_1+1},j_1,\ldots,j_{m_2+1}}=\set{1,\ldots,r}$, and let  $i_1=1,i_{m_1+1}=2,j_1=3$, and $j_{m_2+1}=4$. We adopt the notations in the proof of Theorem~\ref{thm:C2}, and let a graph $\Gc$ correspond to the product of two chains. From the construction, $e_j$'s are odd for $j=1,2,3,4$, and even otherwise. Define
    \begin{align*}
        &\quad\quad g(\Gc) = u_1^\top Z_{i_1}Z_{i_1}^\top A_1 Z_{i_2}Z_{i_2}^\top\cdots A_{m_1}Z_{i_{m_1+1}}Z_{i_{m_1+1}}^\top u_2\\
        &\quad\quad \quad\quad\quad\quad \times v_1^\top Z_{j_1}Z_{j_1}^\top B_1 Z_{j_2}Z_{j_2}^\top\cdots B_{m_2}Z_{j_{m_2+1}}Z_{j_{m_2+1}}^\top v_2,\\
        &\norm{\Gc}_g^\dagger = \sup_{\substack{u_1,v_1,u_2,v_2\in\Sb^{d-1}\\A_1,\ldots,A_{m_1},B_1,\ldots,B_{m_2}\in\Ac
    }}\Eb g(\Gc),\quad \norm{\Gc}_g = \sup_{\substack{u_1,v_1,u_2,v_2\in\Sb^{d-1}\\A_1,\ldots,A_{m_1},B_1,\ldots,B_{m_2}\in\Ac
    }} \Eb |g(\Gc)|.
    \end{align*}
    Theorem~\ref{thm:C.1.4} implies that
    \begin{equation}\label{eq:thmA2:1}
        \norm{\Gc}_g \leq \kappa_z^{2(m_1+m_2)+4}\,\sigma^2\,\tau^{(m_1+m_2)-\frac{1}{4}\sum_{i=1}^r e_i}.
    \end{equation}
    If $e_1,e_2,e_3,e_4\geq 3$ and $e_j\geq 4$ for $j\neq 1,2,3,4$, then $\frac{1}{4}\sum_{j=1}^r e_j\geq 4r-4$, and therefore \eqref{eq:thmA2:1} proves \eqref{eq:thm:C3}.
    We now consider two cases (i) $e_j=1$ for some $j=1,2,3,4$ and (ii) $e_j=2$ for some $j\neq 1,2,3,4$.

    For (i), without loss of generality, let $e_1=1$. From the law of iterated expectation, $\Eb g(\Gc)$ is the form of
    \begin{align*}
        \Eb g(\Gc) &= \Eb[\cv_1^\top Z_iZ_i^\top\cdots A_{m_1}Z_{i_{m_1+1}}Z_{i_{m_1+1}}^\top u_2\\
        &\quad\quad\quad\quad \times v_1^\top Z_{j_1}Z_{j_1}^\top B_1 Z_{j_2}Z_{j_2}^\top\cdots B_{m_2}Z_{j_{m_2+1}}Z_{j_{m_2+1}}^\top v_2]\\
        &=\norm{\cv_1}_2\Eb g(\Gc^{-N_1}),
    \end{align*}
    for some $i\neq 1$ and  $\cv_1^\top = \Eb[u_1^\top Z_1(Z_1^\top A_{1}Z_1)\cdots (Z_1^\top A_{l_1}Z_1)Z_1^\top A_{l_1+1}]$. Lemma~\ref{lem:C1} applies and yields $\norm{\cv_1}_2\leq \kappa_z^{2l_1+2}\,\sigma\,\tau^{l_1}$, and the induction hypothesis for $\Gc^{-N_1}$ with $r-1$ nodes gives
    \begin{equation*}
        \norm{\Gc}_g^\dagger\leq \kappa_z^{2l_1+2}\,\sigma\,\tau^{l_1} \norm{\Gc^{-N_1}}_g^\dagger\leq \kappa_z^{2(m_1+m_2)+4}\,\sigma^2\,\tau^{m_1+m_2+1-r}.
    \end{equation*}

    Now, suppose $e_j=2$ for some $j\neq 1,2,3,4$, and, without loss of generality, let $e_5=2$ and let the factor $Z_5Z_5^\top$ belong to Chain 1. Let $N_{k_1},N_{k_2}$ be two nodes connected to $N_5$, then $\Eb g(\Gc)$ is the form of
    \begin{align*}
        \Eb g(\Gc) &= \Eb[u_1^\top Z_1Z_1^\top\cdots Z_{k_1}^\top MZ_{k_2}\cdots A_{m_1}Z_{i_{m_1+1}}Z_{i_{m_1+1}}^\top u_2\\
        &\quad\quad\quad\quad \times v_1^\top Z_{j_1}Z_{j_1}^\top B_1 Z_{j_2}Z_{j_2}^\top\cdots B_{m_2}Z_{j_{m_2+1}}Z_{j_{m_2+1}}^\top v_2]\\
        &=\norm{M}_{\rm op}\Eb g(\tilde \Gc^{-N_5}),
    \end{align*}
    where $\tilde \Gc^{-N_5}$ is a graph constructed by removing node $N_5$ and corresponding edges and self loops, and adding a new edge (if $k_1\neq k_2$) or a self loop (if $k_1=k_2$) connecting $N_{k_1}$ and $N_{k_2}$ (corresponding to $Z_{k_1}^\top MZ_{k_2}/\norm{M}_{\rm op}$), and
    \begin{equation*}
        M = \Eb[A_{\iota_0}Z_5 (Z_5^\top A_{\iota_1}Z_5)\cdots(Z_5^\top A_{\iota_{l_5}}Z_5)Z_5^\top A_{\iota_{l_5+1}}].
    \end{equation*}
    Lemma~\ref{lem:C1} gives $\norm{M}_{\rm op}\leq \kappa_z^{2l_5+2}\,\tau^{l_5}$, and the induction hypothesis applies to $\tilde \Gc^{-N_5}$, who has $r-1$ nodes, and gives
    \begin{equation*}
        \norm{\Gc}_g^\dagger\leq \norm{M}_{\rm op}\norm{\tilde \Gc^{-N_5}}_g^\dagger\leq \kappa_z^{2(m_1+m_2)+4}\,\sigma^2\,\tau^{m_1+m_2+1-r}.
    \end{equation*}
    This proves the Theorem.
\end{proof}

We consider the following assumption.
\begin{enumerate}[label=\textbf{(M)($\varepsilon$)},leftmargin=2cm]
    \item\label{asmp:momentvare} Suppose there exists some $q_\varepsilon\geq2$ and a constant $K_\varepsilon\geq 1$ satisfying that
    \begin{equation*}
        \left(\Eb[|\varepsilon|^{q_\varepsilon}]\right)^{1/q_\varepsilon}\leq K_\varepsilon.
    \end{equation*}
\end{enumerate}

The following Theorem~\ref{thm:C4} mirrors Theorems~\ref{thm:C2} and \ref{thm:C3}, providing moment bounds for monomials in $(Z,\varepsilon)$. In all places where the proofs of Theorems~\ref{thm:C2}, \ref{thm:C3}, and \ref{thm:C.1.4} invoke moment bounds for univariate monomials of the standardized covariates via Lemma~\ref{lem:C1}, it suffices instead to apply Lemma~\ref{lem:C5} below. The graph-based arguments themselves are unchanged; only the moment conditions depend on the H\"older constraint in Lemma~\ref{lem:C5}. For this reason, we only prove Lemma~\ref{lem:C5} and omit the proof of Theorem~\ref{thm:C4} for brevity.

\begin{theorem}\label{thm:C4}
    Let each $Z_i$ and $\varepsilon_i$ obey Assumption~\ref{asmp:momentZ} and Assumption~\ref{asmp:momentvare}, respectively.
    Consider a collection of indices $(i_1,\ldots,i_{m+1})~(m\geq 0)$, which need not be distinct.  Let $(Z_i,\varepsilon_i)$ and $(Z_{i'},\varepsilon_{i'})$ be independent if $i\neq i'$. Let  $r=|\set{i_1,\ldots,i_{m+1}}|$, $s_j$ be a non-negative integer, and $S = \sum_{j=1}^{m+1}s_j$. Without loss of generality, let $\set{i_1,\ldots,i_{m+1}}=\{1,\ldots,r\}$ and $i_{m+1}=r$. If
    \begin{equation*}
        \max_{k=1,\ldots,r-1}\sum_{j:i_j=k}(2+s_j)/q_z\leq 1\quad\mbox{and}\quad\sum_{\substack{j:i_j=r\\ j\neq m+1}}(2+s_j)/q_z +(1+s_{m+1})/q_z+1/q_\varepsilon\leq 1,
    \end{equation*}
    then
\begin{align}\label{eq:thm:B4a}
    \sup_{\substack{u,\theta_1,\ldots,\theta_m\in\Sb^{d-1}\\
    A_1,\ldots,A_m\in\Ac
    }}\Eb\left[u^\top Z_{i_1}Z_{i_1}^\top A_1 Z_{i_2}Z_{i_2}^\top\cdots A_{m}Z_{i_{m+1}}\varepsilon_{i_{m+1}}\prod_{j=1}^{m+1}(\theta_j^\top Z_{i_j})^{s_j}\right]\nonumber\\
    \leq \kappa_z^{2m+1+S}K_\varepsilon\,\sigma^{(1+S)/2}\,\tau^{m+1-r},
\end{align}
where $\sigma=\|\Sigma\|_{\rm op}$ and $\tau={\rm tr}(\Sigma)$ are associated with $\Sigma=\Eb[ZZ^\top]$ in Assumption~\ref{asmp:momentZ}.

Moreover, consider two collections of indices $(i_1,\ldots,i_{m_1+1})$ and $(j_1,\ldots,j_{m_2+1})~(m_1,m_2\geq 0)$, such that $r = |\set{i_1,\ldots,i_{m_1+1},j_1,\ldots,j_{m_2+1}}|$ and $|\set{i_1,\ldots,i_{m_1+1}}\cap \set{j_1,\ldots,j_{m_2+1}}|\geq 1$. Then
\begin{align}\label{eq:thm:B4b}
    &\sup_{\substack{u,v\in\Sb^{d-1}\\
    A_1,\ldots,A_{m_1},B_1,\ldots,B_{m_2}\in\Ac
    }}\Eb\bigg[u^\top Z_{i_1}Z_{i_1}^\top A_1 Z_{i_2}Z_{i_2}^\top\cdots A_{m_1}Z_{i_{m_1+1}}\varepsilon_{i_{m_1+1}}\nonumber\\
    &\quad\times~ v^\top Z_{j_1}Z_{j_1}^\top B_1 Z_{j_2}Z_{j_2}^\top\cdots B_{m_2}Z_{j_{m_2+1}}\varepsilon_{j_{m_2+1}} \bigg]\leq \kappa_z^{2(m_1+m_2)+2}K_\varepsilon^{2}\,\sigma\,\tau^{(m_1+m_2)+1-r}.
\end{align}
\end{theorem}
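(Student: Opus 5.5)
The statement to prove is Theorem~\ref{thm:C4}. The paper itself says its proof reduces to Theorems~\ref{thm:C2}, \ref{thm:C3} and the graph lemma Theorem~\ref{thm:C.1.4}, with Lemma~\ref{lem:C1} replaced everywhere by a mixed moment lemma (Lemma~\ref{lem:C5}). So the plan has two parts. First, state and prove that lemma. Second, rerun the induction on the number $r$ of distinct indices, now allowing one distinguished node to carry the factor $\varepsilon$.

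\textbf{Step 1: the mixed moment lemma.} For a single pair $(Z,\varepsilon)$ I would bound
\[
\Eb\Big[\prod_i|a_i^\top Z|^{r_i}\prod_j|Z^\top A_jZ|^{s_j}\,|\varepsilon|^{p}\Big]\le \kappa_z^{\sum r_i+2\sum s_j}\sigma^{\frac12\sum r_i}\tau^{\sum s_j}K_\varepsilon^{p},\qquad p\in\{0,1,2\}.
\]
This holds under the H\"older constraint $\sum r_i/q_z+2\sum s_j/q_z+p/q_\varepsilon\le1$. The proof is exactly that of Lemma~\ref{lem:C1}:
\begin{itemize}
\item apply generalized H\"older with exponents $q_z/r_i$, $q_z/(2s_j)$ and $q_\varepsilon/p$ (the leftover mass goes to the constant $1$);
\item control $\Eb\|Z\|_2^{q_z}\le\kappa_z^{q_z}\tau^{q_z/2}$ by the same Jensen argument as before;
\item use $(\Eb|\varepsilon|^{q_\varepsilon})^{1/q_\varepsilon}\le K_\varepsilon$ for the $\varepsilon$ factor.
\end{itemize}
The hypothesis on the node $r$ in Theorem~\ref{thm:C4} is precisely this constraint with $p=1$. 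For \eqref{eq:thm:B4b}, if the two $\varepsilon$-carrying endpoints coincide, the constraint must be checked with $p=2$.

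\textbf{Step 2: proof of \eqref{eq:thm:B4a}.} I follow the proof of Theorem~\ref{thm:C2} verbatim, with the graph $\Gc$ on nodes $N_1,\dots,N_r$. The only change is that the terminal factor $Z_{i_{m+1}}^\top v$ becomes $Z_r\varepsilon_r$. The monomial now has one fewer power of $Z$ at node $r$, which explains the exponent $2m+1+S$ on $\kappa_z$ and $\sigma^{(1+S)/2}$.
\begin{itemize}
\item \emph{Base cases $r=1,2$.} For $r=1$, apply the mixed lemma directly. For $r=2$ with a single edge, factor the expectation as $c_1^\top A c_2$. Here $c_2$ is the vector $\Eb[Z_2(\cdots)\varepsilon_2]$, and its norm is bounded by the mixed lemma with $p=1$.
\item \emph{Inductive step, $r\ge3$.} Use Theorem~\ref{thm:C.1.4} (with the mixed lemma inside) when every degree is large enough. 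Otherwise, pick a node of edge degree $2$ other than $N_r$, integrate it out into a deterministic matrix $\Eb M$, and reconnect its neighbours by a single edge. Or, in the case $e_1=e_r=1$, peel off the two endpoint nodes as in case (ii) of Theorem~\ref{thm:C2}.
\item \emph{Why $N_r$ is safe.} Node $N_r$ is never removed in the reduction unless it is an endpoint, so $\varepsilon_r$ always remains attached to an endpoint. The power counting $\tau^{m+1-r}$ is unchanged because $\varepsilon$ contributes no trace factor.
\end{itemize}

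\textbf{Step 3: proof of \eqref{eq:thm:B4b}.} I mimic Theorem~\ref{thm:C3}. If the chains share an endpoint, or one chain is closed, splice them into one long chain. That chain then carries $\varepsilon$ at both ends, possibly at the same node. Apply the analogue of Step 2 with two $\varepsilon$-ends; the single-node bound now uses $K_\varepsilon^2$ and $\sigma^{1}$ in total. Otherwise, run the induction on $r$ with the two reductions of Theorem~\ref{thm:C3}:
\begin{itemize}
\item integrate out an endpoint node with $e_j=1$, giving a vector bound that includes $K_\varepsilon$ if that node carries $\varepsilon$;
\item integrate out an interior node of degree $2$, which never carries $\varepsilon$.
\end{itemize}

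\textbf{Main obstacle.} The delicate point is Theorem~\ref{thm:C.1.4}, whose statement is not visible here but is used as the ``all degrees large'' bound. I must verify that its H\"older splitting still closes when one node has an $\varepsilon$ factor. I would first try to reprove it by applying H\"older across nodes using the moment budget in the hypotheses, treating $\varepsilon_r$ as part of node $r$'s budget. I expect this to go through, with care needed only in the exponent bookkeeping.
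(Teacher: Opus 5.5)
Your route is the paper's route. The paper proves only the mixed H\"older bound (Lemma~\ref{lem:C5}, your Step~1) and asserts that the graph arguments of Theorems~\ref{thm:C2}, \ref{thm:C3} and \ref{thm:C.1.4} carry over verbatim with Lemma~\ref{lem:C5} in place of Lemma~\ref{lem:C1}. Theorem~\ref{thm:C.1.4} is stated in the paper, and its proof peels off one node at a time by conditioning. So $\varepsilon_r$ is simply part of node $r$'s monomial, and the H\"older issue you anticipated there is not the real difficulty.

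There is, however, a genuine gap in the $\sigma$-bookkeeping you accept in Step~2, and the paper's argument shares it. Replacing every $Z_i$ by $cZ_i$ multiplies the left side of \eqref{eq:thm:B4a} by $c^{2m+1+S}$, but the right side only by $c^{2m+1+S-2(r-1)}$; the same deficit occurs in \eqref{eq:thm:B4b}. So for $r\ge 2$ the inequality cannot hold uniformly in $\sigma$. Your own base case shows where the power is lost. Write $S_k=\sum_{j:i_j=k}s_j$ and take $r=2$, $e_1=1$. Bounding $\|c_1\|_2=\sup_{w\in\Sb^{d-1}}\Eb[(u^\top Z_1)(w^\top Z_1)\cdots]$ costs $\sigma^{1+S_1/2}$, and bounding $\|c_2\|_2$ costs $K_\varepsilon\sigma^{(1+S_2)/2}$. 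You therefore get $\kappa_z^{2m+1+S}K_\varepsilon\sigma^{(3+S)/2}\tau^{m-1}$, not $\sigma^{(1+S)/2}\tau^{m-1}$.

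For a concrete counterexample, take $d=1$, $Z=\sqrt{\sigma}R$ and $\varepsilon=R$ with $R$ Rademacher, so $\kappa_z=K_\varepsilon=1$ and all moment conditions hold. Take $m=1$, $(i_1,i_2)=(1,2)$, $S=0$, $u=A_1=1$. The left side is $\Eb[Z_1^2]\,\Eb[Z_2\varepsilon_2]=\sigma^{3/2}$, while the right side is $\sigma^{1/2}$. The same slip is in the paper's proof of Theorem~\ref{thm:C2}: in case (1), the product $\sigma^{1+\frac12\sum_{j:i_j=1}s_j}\cdot\sigma^{1+\frac12\sum_{j:i_j\neq1}s_j}$ is recorded as $\sigma^{1+S/2}$. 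It is harmless there only because these bounds are applied to $Z_i=\Sigma^{-1/2}X_i$ with $\sigma=1$ and $\tau=d$.

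You can repair this in one of two ways.
\begin{itemize}
\item Add the normalization $\sigma=1$ to the statement. Then each node you integrate out, via $\|c_j\|_2$ or $\|\Eb M\|_{\rm op}$, costs exactly the $\kappa_z$, $K_\varepsilon$ and $\tau$ powers you account for. The ``all degrees large'' step $\tau^{m-E/2}\le\tau^{m+1-r}$ is also legitimate, because $\tau\ge\sigma=1$. With this, Steps~1--3 go through as you describe.
\item Keep general $\Sigma$ and track $\sigma$ honestly. This adds a factor $\sigma^{r-1}$ to both right-hand sides, and a factor $\sigma^{E_w/2}$ to Theorem~\ref{thm:C.1.4}.
\end{itemize}

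A smaller point concerns Step~3. Splicing does not always leave the $\varepsilon$'s at the ends of the long chain. If $i_{m_1+1}=j_{m_2+1}=k$, reversing the second chain places $\varepsilon_k^2$ at an interior node. So the Step~2 argument should be run with $\varepsilon$-factors allowed at arbitrary nodes. This costs nothing: Lemma~\ref{lem:C5} absorbs $|\varepsilon_k|^t$ into whichever node is being integrated out, provided that node's H\"older budget holds (with $t=2$ in this case).
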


\begin{lemma}\label{lem:C5}
Let $Z$ and $\varepsilon$ be not necessarily independent and obey Assumption~\ref{asmp:momentZ} and \ref{asmp:momentvare}, respectively. Then we have
\begin{align*}
    &\sup_{a_1,\ldots,a_M\in\Sb^{d-1}}\sup_{A_1,\ldots,A_N\in\Ac}\Eb\left[|\varepsilon|^t\prod_{i=1}^M|a_i^\top Z|^{r_i}\prod_{j=1}^N|Z^\top A_j Z|^{s_j}\right]\\
    &\qquad\leq K_\varepsilon^t\,\kappa_z^{\sum_{i=1}^M r_i + 2\sum_{j=1}^Ns_j}\,\sigma^{\frac12\sum_{i=1}^M r_i}\,\tau^{\sum_{j=1}^Ns_j},
\end{align*}
for any $t,r_i,s_j\geq 0$ and $M,N\in\Nb$ as long as $(\sum_{i=1}^M r_i + 2\sum_{j=1}^Ns_j)/q_z+t/q_\varepsilon\leq 1$.
\end{lemma}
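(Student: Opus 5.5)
The plan is to mirror the proof of Lemma~\ref{lem:C1}: a single application of the generalized H\"older inequality, with the noise factor $|\varepsilon|^t$ given its own exponent. Set $\bar q_z := \sum_{i=1}^M r_i + 2\sum_{j=1}^N s_j$, and assume first that $\bar q_z>0$ and $t>0$ (the degenerate cases are handled at the end). Define the H\"older exponents
\begin{equation*}
p_i := q_z/r_i,\qquad p'_j := q_z/(2s_j),\qquad p_\varepsilon := q_\varepsilon/t,
\end{equation*}
dropping any factor with a zero exponent. The reciprocals sum to $\bar q_z/q_z + t/q_\varepsilon\le 1$, which is exactly the stated hypothesis. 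If this sum is strictly less than one, pad with the constant function $1$, whose $L_p$ norm is $1$ for every $p$. H\"older then gives
\begin{equation*}
\Eb\Big[|\varepsilon|^t\prod_i|a_i^\top Z|^{r_i}\prod_j|Z^\top A_jZ|^{s_j}\Big]\le \Eb^{t/q_\varepsilon}[|\varepsilon|^{q_\varepsilon}]\prod_i \Eb^{r_i/q_z}[|a_i^\top Z|^{q_z}]\prod_j \Eb^{2s_j/q_z}[|Z^\top A_jZ|^{q_z/2}].
\end{equation*}
This step is valid without any independence between $Z$ and $\varepsilon$, since H\"older applies to an arbitrary joint law.

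Each factor is then bounded separately. Assumption~\ref{asmp:momentvare} bounds the noise factor by $K_\varepsilon^t$. Assumption~\ref{asmp:momentZ}, together with $a_i^\top\Sigma a_i\le\sigma$, bounds each linear factor by $(\kappa_z\sigma^{1/2})^{r_i}$. For the quadratic factors, use $|Z^\top A_jZ|\le\|Z\|_2^2$. The eigen-decomposition and Jensen argument already given in the proof of Lemma~\ref{lem:C1} yields $\Eb^{2/q_z}\|Z\|_2^{q_z}\le\kappa_z^2\tau$, and this step needs $q_z\ge2$, which holds. Each quadratic factor is therefore at most $(\kappa_z^2\tau)^{s_j}$. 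Multiplying the three bounds gives exactly the claimed $K_\varepsilon^t\kappa_z^{\bar q_z}\sigma^{\frac12\sum_i r_i}\tau^{\sum_j s_j}$.

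The degenerate cases reduce to results already in hand. If $t=0$, the statement is Lemma~\ref{lem:C1}. If $\bar q_z=0$, it follows from Lyapunov's inequality, $\Eb|\varepsilon|^t\le K_\varepsilon^t$ for $t\le q_\varepsilon$. No step is a real obstacle; the only care needed is choosing the H\"older exponents so that the constraint $\bar q_z/q_z+t/q_\varepsilon\le1$ is used exactly, and noting that H\"older needs no independence. Taking suprema over $a_i$ and $A_j$ completes the proof.
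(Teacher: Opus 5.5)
Your proposal is correct and follows the paper's proof: one generalized H\"older inequality, then the same factor-wise bounds as in Lemma~\ref{lem:C1}. The only cosmetic difference is in the H\"older step: the paper rescales the exponents by $\bar q:=(\sum_i r_i/q_z+2\sum_j s_j/q_z+t/q_\varepsilon)^{-1}\ge 1$ so that they sum to one, then invokes monotonicity of $L_p$ norms, which is equivalent to your padding with the constant $1$.
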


\begin{proof}[proof of Lemma~\ref{lem:C5}.]
Let $R = \sum_{i=1}^M r_i$ and $S = \sum_{j=1}^Ns_j$. Then
\begin{equation*}
    \bar q: =\bigg(\frac{R}{q_z} + \frac{S}{q_z/2}+\frac{t}{q_\varepsilon}\bigg)^{-1}\geq 1.
\end{equation*}
Therefore, it follows from H\"older's inequality that
\begin{align*}
    &\Eb\left[|\varepsilon|^t\prod_{i=1}^M|a_i^\top Z|^{r_i}\prod_{j=1}^N|Z^\top A_j Z|^{s_j}\right]\\
    &\quad \leq \bigg(\Eb\big[|\varepsilon|^{q_\varepsilon/\bar q}\big]\bigg)^{t\bar q/q_\varepsilon}\prod_{i=1}^M\bigg(\Eb\big[|a_i^\top Z|^{q_z/\bar q}\big]\bigg)^{r_i\bar q/q_z}\prod_{j=1}^N\bigg(\Eb\big[|Z^\top A_j Z|^{q_z/(2\bar q)}\big]\bigg)^{2s_j\bar q/q_z}\\
    &\quad \leq \bigg(\Eb\big[|\varepsilon|^{q_\varepsilon}\big]\bigg)^{t/q_\varepsilon}\prod_{i=1}^M\bigg(\Eb\big[|a_i^\top Z|^{q_z}\big]\bigg)^{r_i/q_z}\prod_{j=1}^N\bigg(\Eb\big[|Z^\top A_j Z|^{q_z/2}\big]\bigg)^{2s_j/q_z}.
\end{align*}
By Assumptions~\ref{asmp:momentZ}--\ref{asmp:momentvare} and $a_i^\top\Sigma a_i\le \sigma$, we have
\begin{equation*}
    \big(\Eb|\varepsilon|^{q_\varepsilon}\big)^{1/q_\varepsilon}\le K_\varepsilon,\qquad
    \big(\Eb|a_i^\top Z|^{q_z}\big)^{1/q_z}\le \kappa_z(a_i^\top\Sigma a_i)^{1/2}\le \kappa_z\sigma^{1/2}.
\end{equation*}
Moreover, since $\|A_j\|_{\rm op}\le 1$, we have $|Z^\top A_j Z|\le \|Z\|_2^2$, and hence
\begin{equation*}
    \left(\Eb\big[|Z^\top A_j Z|^{q_z/2}\big]\right)^{2/q_z}\le \left(\Eb\big[\|Z\|_2^{q_z}\big]\right)^{2/q_z}\le \kappa_z^2\,\tau,
\end{equation*}
where the last inequality follows from the same Jensen argument as in the proof of Lemma~\ref{lem:C1}. Combining the above displays yields the claim.
\end{proof}

\subsection{Auxiliary Results}
Suppose that each $Z_i$ obeys Assumption~\ref{asmp:momentZ} and let $Z_i$ and $Z_{i'}$ be independent if $i\neq i'$. For a collection of not necessarily distinct indices $(i_1,\ldots,i_{2m})~(m\geq 1)$ such that $|\{i_1,\ldots,i_{2m}\}|=r\geq 1$, consider the product
\begin{equation*}
    |Z_{i_1}^\top A_1Z_{i_2}|^{w_1}|Z_{i_3}^\top A_2Z_{i_4}|^{w_2}\cdots|Z_{i_{2m-1}}^\top A_m Z_{i_{2m}}|^{w_m},
\end{equation*}
for some non-negative reals $w_1,\ldots,w_m$. We construct the \emph{weighted graph} with $r$ nodes where $j$th node corresponds to index set $N_j:=\set{k:i_k=j}$ for $j\in \{i_1,\ldots,i_{2m}\}$. We connect two nodes $N_j$ and $N_{j'}$ whenever $k\in N_j$ and $k+1\in N_{j'}$ for some $1\leq k \leq m$, i.e. $|Z_j^\top A_* Z_{j'}|^{w_*}$ appears in the product. Precisely, we define the followings.
\begin{itemize}
    \item If $j=j'$, we call this connection a \emph{self loop} of weight $w$, corresponding to the term $|Z_j^\top A_* Z_j|^w$ in the product.
    \item If $j\neq j'$, we call this connection an \emph{edge} of weight $w$, corresponding to the term $|Z_j^\top A_* Z_{j'}|^w$ in the product.
\end{itemize}
We let $l_j$ and $(l_w)_j$ be the number of self loops at $N_j$ and the sum of weights of these self loops, respectively. Let $e_j$ be the number of edges whose one end is $N_j$ and let $(e_w)_j$ be the sum of weights of these edges. Moreover, for $j\neq j'$, we denote $e_{jj'}$ as the number of edges connecting $N_j$ and $N_{j'}$, and $(e_w)_{jj'}$ be the sum of weights of these edges. Then it follows from the construction that
\begin{equation*}
    m=\sum_{j=1}^r l_j + \frac{1}{2}\sum_{j=1}^r e_j
    \quad\mbox{and}\quad
    \sum_{i=1}^{m} w_i = \sum_{j=1}^r (l_w)_j + \frac{1}{2}\sum_{j=1}^r (e_w)_j.
\end{equation*}
For some $\theta_i\in\Sb^{d-1}$ and non-negative $s_i$ $(1\leq i\leq 2m)$, define
\begin{equation*}
    h(\Gc) = |Z_{i_1}^\top A_1Z_{i_2}|^{w_1}|Z_{i_3}^\top A_2Z_{i_4}|^{w_2}\cdots|Z_{i_{2m-1}}^\top A_m Z_{i_{2m}}|^{w_m}\prod_{r=1}^{2m}|\theta_i^\top Z_{i_r}|^{s_r}.
\end{equation*}

\begin{theorem}\label{thm:C.1.4}
Let $W = \sum_{i=1}^m w_i$, $S = \sum_{r=1}^{2m}s_r$ and $E_w = \frac{1}{2}\sum_{j=1}^r (e_w)_j$. If $\Eb|h(\Gc)|<\infty$, then
\begin{equation*}
    \sup_{\substack{\theta_1,\ldots,\theta_{2m}\in\Sb^{d-1}\\A_1,\ldots,A_m\in\Ac}}\Eb h(\Gc)\leq \kappa_z^{2W+S}\,\sigma^{S/2}\,\tau^{W-E_w/2}.
\end{equation*}
Therefore, if $w_1=\ldots=w_m=1$, we have
\begin{equation*}
\sup_{\substack{\theta_1,\ldots,\theta_{2m}\in\Sb^{d-1}\\A_1,\ldots,A_m\in\Ac}}\Eb h(\Gc)\leq \kappa_z^{2m+S}\,\sigma^{S/2}\,\tau^{m-E/2},
\end{equation*}
where $E = \frac{1}{2}\sum_{j=1}^r e_j$ is the number of edges in $\Gc$.
\end{theorem}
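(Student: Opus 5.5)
The plan is a generalized Hölder inequality that handles one node of the graph $\Gc$ at a time, with the edge terms split symmetrically between their two endpoints. Fix $\theta_r$ and $A_i$. Since $\Ac$ is the operator-norm unit ball, each factor obeys two deterministic bounds: for a self loop at $N_j$, $|Z_j^\top A Z_j|^{w}\le \|Z_j\|_2^{2w}$; for an edge between $N_j$ and $N_{j'}$, $|Z_j^\top A Z_{j'}|^{w}\le \|Z_j\|_2^{w}\|Z_{j'}\|_2^{w}$. The catch is that the plain Cauchy--Schwarz bound for an edge gives $\tau^{w}$ overall, and the claimed exponent $W-E_w/2$ needs each edge to save a factor $\tau^{w/2}$. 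So for edges I will not bound both norms crudely. I will use the mixed estimate
\[
|Z_j^\top A Z_{j'}|^{w}=|Z_j^\top A Z_{j'}|^{w/2}\,|Z_j^\top A Z_{j'}|^{w/2},
\]
and integrate first over one endpoint while holding the other fixed. For fixed $Z_{j'}$, the map $u\mapsto |Z_j^\top u|$ with $u=AZ_{j'}/\|AZ_{j'}\|_2$ has $L_{q_z}$ norm at most $\kappa_z\sigma^{1/2}\|Z_{j'}\|_2$ by \ref{asmp:momentZ}. In this way each edge contributes a factor $\sigma^{w/2}\tau^{w/2}$ in place of $\tau^{w}$. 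Since $\sigma\le\tau$, I expect the statement's bound, which carries no $\sigma$ from edges, to follow after bounding $\sigma^{(\cdot)}\le\tau^{(\cdot)}$ for the excess. More precisely, the target exponent $W-E_w/2$ on $\tau$ corresponds to each edge of weight $w$ costing $\tau^{3w/4}$, which sits between $\tau^{w/2}$ and $\tau^{w}$. So a symmetric split that gives each endpoint half the edge weight with "half-directional" control should suffice.

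The concrete steps are as follows. \emph{Step 1:} Conditioning on all nodes but one is awkward when edges form cycles, so I will not condition. Instead I apply generalized Hölder across the $r$ independent nodes after factorizing. Write each edge term as
\[
|Z_j^\top A Z_{j'}|^{w}\le |Z_j^\top A Z_{j'}|^{w/2}\cdot|Z_j^\top A Z_{j'}|^{w/2},
\]
and bound one copy by $\|Z_j\|_2^{w/2}\,|v_{j'}^\top Z_{j'}|^{w/2}$-type quantities. This part needs care, because $v_{j'}$ depends on $Z_j$. I therefore prefer the following route: use independence to take $\Eb_{Z_{j'}}$ of $|Z_j^\top A Z_{j'}|^{w}$ given $Z_j$, which by \ref{asmp:momentZ} is at most $\kappa_z^{w}\sigma^{w/2}\|A^\top Z_j\|_2^{w}$. \emph{Step 2:} Make this rigorous by ordering the nodes and integrating them out one at a time (Fubini), applying Hölder within each node between its self loops, its $\theta$-factors and its incident edges, and using Lemma~\ref{lem:C1} for the single-node moments. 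The moment budget is supplied by the hypothesis $\Eb|h(\Gc)|<\infty$ together with the implicit $q_z$ constraint, as in Theorem~\ref{thm:C2}. \emph{Step 3:} Collect the exponents: self loops give $\kappa_z^{2(l_w)_j}\tau^{(l_w)_j}$, the $\theta$-factors give $\kappa_z^{s}\sigma^{s/2}$, and edges give $\kappa_z^{2w}\sigma^{w/2}\tau^{w/2}$ at most. Finally bound $\sigma^{w/2}\le\tau^{w/4}\sigma^{w/4}\le\ldots$, or directly use $\sigma\le\tau$, to land on $\tau^{W-E_w/2}$.

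The main obstacle is Step 2 when the edge graph has cycles. Sequential integration produces, at later nodes, norms of vectors such as $A^\top Z_j$, which are already integrated or still random. To keep the bounds consistent I will symmetrize: the Cauchy--Schwarz/Hölder split uses exponent $w/2$ toward each endpoint, so every node sees each incident edge as a directional factor of weight $w/2$ and a norm factor of weight $w/2$. This is what yields the exponent $E_w/2$, i.e. the loss of a quarter per edge-end. The specialization to unit weights is then immediate with $E_w=E$.
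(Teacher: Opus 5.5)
Your mechanism is essentially the paper's induction on $r$. You integrate out one node and control each incident edge by the directional moment bound at that node. At the neighbour you keep a norm $\norm{A^\top Z_{j'}}_2^{w}\le\norm{Z_{j'}}_2^{w}$, i.e.\ a self loop of weight $w/2$, which is exactly the paper's graph $\Gc'$.

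Cycles are no obstacle: integrating out a node removes all of its incident edges at once, whatever the cycle structure. The ``generalized H\"older across nodes'' of Step~1 does not apply, because edge factors do not factor over nodes. The symmetric $w/2$ split buys nothing. Both can be dropped.

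Your exponent bookkeeping is off. Since $W-E_w/2=\sum_j(l_w)_j+E_w/2$, each edge of weight $w$ is allotted $\tau^{w/2}$, not $\tau^{3w/4}$. That is exactly the power of $\tau$ your argument produces.

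The step that fails is the last one. Your argument gives $\kappa_z^{2w}\sigma^{w/2}\tau^{w/2}$ per edge, and $\sigma\le\tau$ cannot remove the surplus $\sigma^{w/2}$; it only gives $\tau^{w}$, the crude Cauchy--Schwarz bound.

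No argument can remove it, because the stated inequality is not homogeneous. Under $Z_i\mapsto cZ_i$ the left side scales as $c^{2W+S}$ and the right side as $c^{2W+S-E_w}$. Concretely, take $m=1$, $(i_1,i_2)=(1,2)$, $w_1=2$, $s_1=s_2=0$, $q_z=4$, $Z_1,Z_2$ i.i.d.\ $N(0,cI_d)$ and $A_1=I_d$. Then $\kappa_z^4=3$, and the claim reads $c^2d\le 3cd$, which is false for $c>3$.

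What your argument actually proves is
\[
\sup\,\Eb h(\Gc)\le\kappa_z^{2W+S}\,\sigma^{(S+E_w)/2}\,\tau^{W-E_w/2}.
\]
This needs the per-node budget $2(l_w)_j+(e_w)_j+\sum_{r:i_r=j}s_r\le q_z$ required by Lemma~\ref{lem:C1}; the hypothesis $\Eb|h(\Gc)|<\infty$ alone does not supply it. The bound gives the stated one precisely when $\sigma\le 1$. That includes the isotropic case $\sigma=1$, $\tau=d$, which is the only way it is used downstream (Theorems~\ref{thm:B1}--\ref{thm:B4}).

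The paper reaches its form only because, in \eqref{eq:recursive_formula2}, the node-$1$ factor $\norm{Z_1}_2^{(e_w)_1}$ of the incident edges silently disappears. Handled by the directional bound, as you do, it contributes $\kappa_z^{(e_w)_1}\sigma^{(e_w)_1/2}$. So the right repair is to add $\sigma\le 1$ (or the factor $\sigma^{E_w/2}$) to the statement, not to search for a sharper estimate.
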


\begin{proof}
Proof proceeds via induction on $r$. If $r=1$, the result is immediate from Lemma~\ref{lem:C1}. Now consider the case $r\geq2$ and we let $\set{i_1,\ldots,i_{2m}}=\set{1,\ldots,r}$, without loss of generality. From the law of conditional expectation, we have
\begin{equation*}
\Eb h(\Gc)=\Eb\bigg[\Eb\big[|Z_{i_1}^\top A_1Z_{i_2}|^{w_1}|Z_{i_3}^\top A_2Z_{i_4}|^{w_2}\cdots|Z_{i_{2m-1}}^\top A_m Z_{i_{2m}}|^{w_m}\prod_{r=1}^{2m}|\theta_i^\top Z_{i_r}|^{s_r}\mid Z_2,\ldots,Z_r\big]\bigg].
\end{equation*}
If $e_1=0$, i.e., $N_1$ has no edges, then
\begin{equation}
    \Eb h(\Gc) = \Eb\big[|Z_1^\top A_{\iota_1}Z_1|^{w_{\iota_1}}\cdots |Z_1^\top A_{{\iota_{l_1}}}Z_1|^{w_{\iota_{l_1}}}\prod_{j:i_j=1}|\theta_j^\top Z_1|^{s_j}\big]\Eb h(\Gc^{-N_1}),
\end{equation}
where $\Gc^{-N_1}$ denotes the weighted graph after removing the node $N_1$ from $\Gc$. Note that the sum of weights of edges in $\Gc^{-N_1}$ is still $(e_w)$. From Lemma~\ref{lem:C1} and the induction hypothesis, we have
\begin{align*}
    \Eb\big|h(\Gc)\big|
    &\leq \kappa_z^{2(l_w)_1+\sum_{j:i_j=1}s_j}\,\sigma^{\frac12\sum_{j:i_j=1}s_j}\,\tau^{(l_w)_1}\times \kappa_z^{2(W-(l_w)_1)+\sum_{r:i_r\neq 1}s_r}\,\sigma^{\frac12\sum_{r:i_r\neq 1}s_r}\,\tau^{(W-(l_w)_1)-(e_w)/2}\\
    &=\kappa_z^{2W+S}\,\sigma^{S/2}\,\tau^{W-(e_w)/2}.
\end{align*}
Suppose $e_1\geq 1$ and let $N_1$ be connected to $N_2,\ldots,N_{q}$, without loss of generality. Then $e_{12}+\ldots+e_{1q}=e_1$ and
\begin{align}\label{eq:recursive_formula1}
\Eb h(\Gc) = \Eb\bigg[\Eb\big[|Z_1^\top A_{\iota_1}Z_1|^{w_{\iota_1}}\cdots |Z_1^\top A_{{\iota_{l1}}}Z_1|^{w_{\iota_{l1}}}\prod_{j:i_j=1}|\theta_j^\top Z_1|^{s_j}\nonumber\\
\times\prod_{j=2}^q\prod_{k=1}^{e_{1j}}|Z_1^\top A_{1j_{k}} Z_j|^{w_{1j_k}}~\big|~ Z_2,\ldots,Z_q\big]\Eb h(\Gc^{-N_1})\bigg].
\end{align}
Here, $\Gc^{-N_1}$ denotes the graph after removing node $N_1$ from $\Gc$ (removing all self loops and edges involving node $N_1$). Since $|ZA_*Z'|\leq \norm{Z}_2\norm{Z'}_2$ for $A_*\in\Ac$, we have
\begin{align}\label{eq:recursive_formula2}
&\Eb\bigg[|Z_1^\top A_{\iota_1}Z_1|^{w_{\iota_1}}\cdots |Z_1^\top A_{{\iota_{l_1}}}Z_1|^{w_{\iota_{l_1}}}\prod_{j:i_j=1}|\theta_j^\top Z_1|^{s_j}\nonumber\\
&\qquad\qquad\times \prod_{j=2}^q\prod_{k=1}^{e_{1j}}|Z_1^\top A_{1j_{k}} Z_j|^{w_{1j_k}}~\big|~ Z_2,\ldots,Z_q\bigg]\nonumber\\
&\leq \norm{Z_2}_2^{(e_w)_{12}}\cdots \norm{Z_q}_2^{(e_w)_{1q}}\Eb\big[\norm{Z_1}_2^{2(l_w)_1}\prod_{j:i_j=1}|\theta_j^\top Z_1|^{s_j}\big]\nonumber\\
&\leq \norm{Z_2}_2^{(e_w)_{12}}\cdots \norm{Z_q}_2^{(e_w)_{1q}}\kappa_z^{2(l_w)_1+\sum_{j:i_j=1}s_j}\,\sigma^{\frac12\sum_{j:i_j=1}s_j}\,\tau^{(l_w)_1}.
\end{align}
We consider a new weighted graph $\Gc'$ from $\Gc^{-N_1}$ by adding a self loop of weight $(e_w)_{1j}/2$ to $N_j$ for each $j=2,\ldots,q$. From \eqref{eq:recursive_formula1} and \eqref{eq:recursive_formula2}, we get
\begin{equation}\label{eq:recursive_formula3}
\Eb h(\Gc)\leq \kappa_z^{2(l_w)_1+\sum_{j:i_j=1}s_j}\,\sigma^{\frac12\sum_{j:i_j=1}s_j}\,\tau^{(l_w)_1} \Eb h(\Gc').
\end{equation}
Furthermore, it follows from the construction of $\Gc'$ that $\Gc'$ has $(r-1)$ nodes $N_2,\ldots,N_r$, and
\begin{align*}
    &\sum_{j=2}^r(l_w')_j=\frac{1}{2}\sum_{j=2}^q(e_w)_{1j}+\sum_{j=2}^r(l_w)_j=\frac{(e_w)_1}{2}+\sum_{j=2}^r(l_w)_j,\\
    &\sum_{j=2}^r(e_w')_j=\Big(\sum_{j=2}^q(e_w)_j-(e_w)_{1j}\Big)+\sum_{j=q+1}^{r}(e_w)_j=-(e_w)_1+\sum_{j=2}^r(e_w)_j.
\end{align*}
Therefore, the induction hypothesis applies to $\Gc'$, and combining this with \eqref{eq:recursive_formula3} gives
\begin{equation*}
\Eb h(\Gc)\leq \kappa_z^{2\sum_{j=1}^r(l_w)_j+\sum_{j=1}^r(e_w)_j+S}\,\sigma^{S/2}\,\tau^{\sum_{j=1}^r(l_w)_j+\frac{1}{4}\sum_{j=1}^r(e_w)_j}
=\kappa_z^{2W+S}\,\sigma^{S/2}\,\tau^{W-E_w/2}.
\end{equation*}
This completes the proof.
\end{proof}

\section{Useful Lemmas and Propositions}

\begin{lemma}[Moment Bounds for Bounded Multi-linear Form]\label{lem:varbound} For any $k\geq 1$ and $T_k\in\Lc^k(\Bb)$, let $\xv_1,\ldots,\xv_k$ be independent random variables in $\Bb$. For $p\geq 1$,
\begin{equation*}
    \Eb^{1/p}\big| T_k[\xv_1,\ldots,\xv_k]\big|^p\leq \norm{T_k}_{\rm S}\sup_{\norm{u}_*\leq 1}\Eb^{1/p}[|\ip{\xv_r,u}|^p]\prod_{j\neq r}\Eb^{1/p}[\norm{\xv_j}^p].
\end{equation*} Therefore, ${\rm Var}\left( T_k[W_1-\theta,\ldots,W_k-\theta]\right)\leq \norm{T_k}_{\rm S}^2\nu V^{k-1}.$
\end{lemma}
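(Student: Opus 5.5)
The plan is to prove the $L_p$ bound by induction on $k$, conditioning on all but one argument at a time, and then to read off the variance statement with $p=2$. The central observation is that once every argument except one is frozen, $T_k$ becomes a bounded linear functional of the remaining argument. That gives two ways to bound it. If we take a sup over the dual ball, we get the weak moment $\sup_{\norm{u}_*\le1}\Eb^{1/p}|\ip{\xv_r,u}|^p$. If we use the crude operator-norm bound, we get a strong moment $\Eb^{1/p}\norm{\xv_j}^p$ instead. We will use the weak bound exactly once, on the distinguished index $r$, and the strong bound on every other index.

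Fix $r$; by symmetry of the argument in the indices, it suffices to relabel so that $r=1$. Condition on $\xv_2,\ldots,\xv_k$. The map $x\mapsto T_k[x,\xv_2,\ldots,\xv_k]$ is then a bounded linear functional $u^*\in\Bb^*$, and $\norm{u^*}_*\le \norm{T_k}_{\rm S}\prod_{j\ge2}\norm{\xv_j}$. Since $\xv_1$ is independent of the others, the conditional bound is
\[
\Eb\big[|\ip{\xv_1,u^*}|^p\mid \xv_2,\ldots,\xv_k\big]\le \norm{u^*}_*^p\sup_{\norm{u}_*\le1}\Eb|\ip{\xv_1,u}|^p .
\]
Here we scale $u^*/\norm{u^*}_*$ into the unit ball, and the zero case is trivial. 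Next we take the outer expectation, apply the bound on $\norm{u^*}_*$, and use independence to factor $\Eb\prod_{j\ge2}\norm{\xv_j}^p=\prod_{j\ge2}\Eb\norm{\xv_j}^p$. Taking $p$-th roots then gives the claimed inequality. With this route no induction is needed after all, because a single conditioning step does the whole job.

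One technical point deserves care. The conditioned functional $u^*$ is random, and the weak-moment supremum must apply to it. The independence of $\xv_1$ from $(\xv_2,\ldots,\xv_k)$ justifies this through Fubini, as the conditional law of $\xv_1$ is just its marginal. Measurability of $(x_2,\ldots,x_k)\mapsto T_k[\cdot,x_2,\ldots,x_k]$ holds because $T_k$ is continuous and $\Bb$ is separable. I expect this step to be the only delicate one, and even it is mild.

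For the variance consequence, take $p=2$ and $\xv_j=W_j-\theta$. Then ${\rm Var}(T_k[\cdot])\le \Eb|T_k[\cdot]|^2\le\norm{T_k}_{\rm S}^2\,\nu\,V^{k-1}$, using the definitions of $\nu$ and $V$.
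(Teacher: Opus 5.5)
Your argument is correct: freezing every argument except the $r$-th turns $T_k$ into a functional in $\Bb^*$ of norm at most $\norm{T_k}_{\rm S}\prod_{j\neq r}\norm{\xv_j}$. Independence plus Tonelli then gives the moment bound, and taking $p=2$ gives the variance consequence. The paper gives no proof of its own and defers to Lemma~6.1 of Koltchinskii and Li, which rests on this same one-step conditioning argument, so nothing further is needed.
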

\begin{proof}[proof of Lemma~\ref{lem:varbound}] See Lemma~6.1. of \cite{KoltchinskiiLi2026FunctionalEstimation}.
    
\end{proof}

\begin{lemma}[Distribution Approximation under Conditional Perturbation]\label{lem:BElemma}
    Consider random sequences $\Av_n,\Bv_n,\Cv_n$ such that $\Av_n = \Bv_n +\Cv_n+\Dv_n$ where $\Bv_n$ admits the distribution approximation, that is, for some c.d.f $F:(-\infty,\infty)\to[0,1]$,
    \begin{equation*}
        \sup_{t\in\Real}\big|\Pb(\Bv_n\leq t)-F(t)\big|\leq {\rm BE}_n.
    \end{equation*} Define the anti-concentration constant of $F$ as
    \begin{equation*}
        \Xi_{F}:=\sup_{t\in\Real}\sup_{\varepsilon>0}\frac{|F(t)-F(t-\varepsilon)|}{\varepsilon}
    \end{equation*}Furthermore, suppose that there exist sigma algebras $\Fc_n,\Fc_n'$, $\Fc_n$-measurable event $\Ec_n$, and $\Fc_n'$-measurable event $\Ec_n'$ such that
    \begin{align*}
        |\Eb[\Cv_n|\Fc_n]\mathbbm{1}(\Ec_n)|\leq E_n,\quad |{\rm Var}(\Cv_n|\Fc_n)\mathbbm{1}(\Ec_n)|\leq V_n,\\
        |\Eb[\Dv_n|\Fc_n']\mathbbm{1}(\Ec_n')|\leq E_n',\quad |{\rm Var}(\Dv_n|\Fc_n')\mathbbm{1}(\Ec_n')|\leq V_n',
    \end{align*} for some non-stochastic sequence $E_n,E_n',V_n,V_n'>0$. Then
    \begin{equation*}
        \sup_{t\in\Real}|\Pb(\Av_n\leq t)-F(t)|\leq {\rm BE}_n + \sqrt[3]{\frac{\Xi_F^2V_n}{4}}+\Xi_FE_n+\Pb(\Ec_n^\complement) + \sqrt[3]{\frac{\Xi_F^2V_n'}{4}}+\Xi_FE_n'+\Pb((\Ec_n')^\complement).
    \end{equation*} Therefore, if $F$ is a cdf of standard normal distribution where $\Xi_F \leq 1/\sqrt{2\pi}$, then we have
    \begin{equation*}
        \sup_{t\in\Real}\big|\Pb(\Av_n\leq t)-\Phi(t)\big|\leq {\rm BE}_n + \sqrt[3]{\frac{V_n}{8\pi}}+\frac{E_n}{\sqrt{2\pi}} + \sqrt[3]{\frac{V_n'}{8\pi}}+\frac{E_n'}{\sqrt{2\pi}}+\Pb(\Ec_n^\complement)+\Pb((\Ec_n')^\complement).
    \end{equation*}
\end{lemma}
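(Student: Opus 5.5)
The plan is a smoothing argument. We compare $\Pb(\Av_n\le t)$ with $\Pb(\Bv_n\le t\pm\varepsilon)$ and control the gap by probabilities of $|\Cv_n|$ and $|\Dv_n|$ exceeding thresholds. Fix $t\in\Real$ and $\varepsilon,\varepsilon'>0$. Since $\Av_n=\Bv_n+\Cv_n+\Dv_n$, we have the inclusion
\begin{equation*}
\{\Av_n\le t\}\subseteq\{\Bv_n\le t+\varepsilon+\varepsilon'\}\cup\{\Cv_n<-\varepsilon\}\cup\{\Dv_n<-\varepsilon'\},
\end{equation*}
and symmetrically $\{\Bv_n\le t-\varepsilon-\varepsilon'\}\subseteq\{\Av_n\le t\}\cup\{\Cv_n>\varepsilon\}\cup\{\Dv_n>\varepsilon'\}$. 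The first term is controlled by ${\rm BE}_n$ together with $|F(t\pm(\varepsilon+\varepsilon'))-F(t)|\le\Xi_F(\varepsilon+\varepsilon')$. This gives
\begin{equation*}
|\Pb(\Av_n\le t)-F(t)|\le{\rm BE}_n+\Xi_F(\varepsilon+\varepsilon')+\Pb(|\Cv_n|>\varepsilon)+\Pb(|\Dv_n|>\varepsilon').
\end{equation*}
No independence between $\Bv_n$ and $\Cv_n,\Dv_n$ is needed; only the union bound is used.

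Next we bound $\Pb(|\Cv_n|>\varepsilon)$ by splitting on $\Ec_n$: it is at most $\Pb(\Ec_n^\complement)+\Pb(|\Cv_n|>\varepsilon,\Ec_n)$. Because $\Ec_n$ is $\Fc_n$-measurable, we condition on $\Fc_n$ and apply conditional Chebyshev around the conditional mean. On $\Ec_n$, $|\Eb[\Cv_n|\Fc_n]|\le E_n$, so for $\varepsilon>E_n$,
\begin{equation*}
\Pb(|\Cv_n|>\varepsilon\mid\Fc_n)\mathbbm 1(\Ec_n)\le\frac{V_n}{(\varepsilon-E_n)^2}.
\end{equation*}
Writing $\varepsilon=E_n+\eta$ turns the $\Cv_n$ contribution into $\Xi_FE_n+\Xi_F\eta+V_n/\eta^2+\Pb(\Ec_n^\complement)$. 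The $\Dv_n$ term is handled the same way with $\Fc_n'$, $\Ec_n'$ and $\eta'$.

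It remains to optimize $\Xi_F\eta+V_n/\eta^2$ over $\eta>0$. The minimizer is $\eta=(2V_n/\Xi_F)^{1/3}$. This yields a constant multiple of $(\Xi_F^2V_n)^{1/3}$, and with exact arithmetic one gets $3\cdot 2^{-2/3}(\Xi_F^2V_n)^{1/3}$. Matching the stated constant $\sqrt[3]{\Xi_F^2V_n/4}$ is the main point of care. A cruder Chebyshev gives a worse constant, so I would try the one-sided Cantelli inequality on each side separately, which may give the smaller constant. Failing that, I would accept a universal multiple, which is all the downstream uses (Theorems~\ref{thm:normality-msmooth}, \ref{thm:normality-gevrey}) need since they absorb constants.

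For the normal case, substitute $\Xi_\Phi=\sup\phi=1/\sqrt{2\pi}$, by the mean value theorem. The only real subtlety is handling the $\varepsilon\le E_n$ regime; there the bound is trivial once $\Xi_FE_n$ is included.
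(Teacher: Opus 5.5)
Your argument is the paper's argument: the same union-bound sandwich between $\{\Bv_n\le t\pm(\varepsilon+\varepsilon')\}$ and $\{\Av_n\le t\}$, conditional Chebyshev on $\Ec_n$ and $\Ec_n'$, and minimization of $\Xi_F\eta+V_n/\eta^2$. Your arithmetic is the correct one. The minimum is $3\cdot2^{-2/3}(\Xi_F^2V_n)^{1/3}=3\sqrt[3]{\Xi_F^2V_n/4}$. The paper's AM--GM step, which writes $\Xi_F\delta=\Xi_F\delta/2+\Xi_F\delta/2$, also yields $3\sqrt[3]{\Xi_F^2V_n/4}$; the factor $3$ is simply dropped in its final equality.

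The one step of your plan that would fail is the hope that Cantelli recovers $\sqrt[3]{\Xi_F^2V_n/4}$. Cantelli's bound $V_n/(V_n+\eta^2)$ equals $(1+o(1))V_n/\eta^2$ at the relevant scale $\eta\asymp V_n^{1/3}\gg V_n^{1/2}$, so it cannot change the leading constant. More decisively, the stated inequality is false, so no argument can deliver it. Take $\Bv_n\sim{\rm Unif}(0,1)$ with $F$ its cdf, so ${\rm BE}_n=0$ and $\Xi_F=1$. Take $\Dv_n=0$, trivial $\Fc_n,\Fc_n'$, and $\Ec_n=\Ec_n'=\Omega$. For $0<\delta<1/4$ set $\Cv_n=\tfrac12-\Bv_n$ on $\{\tfrac12<\Bv_n\le\tfrac12+\delta\}$, $\Cv_n=\delta^2/(1-2\delta)$ on $\{\Bv_n>\tfrac12+\delta\}$, and $\Cv_n=0$ otherwise. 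Then $\Eb\Cv_n=0$, ${\rm Var}(\Cv_n)=\delta^3/3+\delta^4/(2(1-2\delta))=:V_n$, and $\Pb(\Av_n\le\tfrac12)-F(\tfrac12)=\delta$. Meanwhile the claimed right-hand side tends, as $E_n,E_n',V_n'\downarrow0$, to $(V_n/4)^{1/3}=\delta\,(1/12+O(\delta))^{1/3}\approx0.44\,\delta$. The same construction placed at the mode of $\Phi$ violates the normal-case display by a factor tending to $12^{1/3}$ as $\delta\to0$.

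So the correct lemma carries $3\sqrt[3]{\Xi_F^2V_n/4}$ and $3\sqrt[3]{\Xi_F^2V_n'/4}$, i.e.\ $3\sqrt[3]{V_n/(8\pi)}$ and $3\sqrt[3]{V_n'/(8\pi)}$ for $\Phi$. That is exactly what your proof establishes. Theorems~\ref{thm:normality-msmooth} and~\ref{thm:normality-gevrey} and the applications use the lemma only up to universal constants, so your fallback of accepting a universal multiple is the right resolution.
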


\begin{proof}
    Since $\Av_n=\Bv_n+\Cv_n+\Dv_n$, we have for any $t\in\Real$ and $\varepsilon,\varepsilon'>0$ that
\begin{align}\label{eq:ImplyingBE:1}
    \Pb(\Av_n> t)&\leq \Pb(\Bv_n> t-\varepsilon-\varepsilon')+\Pb(\Cv_n> \varepsilon)+\Pb(\Dv_n> \varepsilon')\nonumber
    \\
    &\leq 1-F(t-\varepsilon-\varepsilon') + {\rm BE}_n + \Pb(\Cv_n> \varepsilon)+\Pb(\Dv_n> \varepsilon')\nonumber\\
    &\leq 1-F(t) + \Xi_F(\varepsilon+\varepsilon')+ {\rm BE}_n+  \Pb(\Cv_n> \varepsilon)+\Pb(\Dv_n> \varepsilon').
\end{align} Similarly, we can deduce that
\begin{align*}
    \Pb(\Av_n\leq t)&\leq \Pb(\Bv_n\leq t+\varepsilon+\varepsilon')+\Pb(\Cv_n\leq-\varepsilon) + \Pb(\Dv_n\leq-\varepsilon') \\
    &\leq F(t+\varepsilon+\varepsilon') + {\rm BE}_n + \Pb(\Cv_n\leq-\varepsilon)+ \Pb(\Dv_n\leq-\varepsilon')\\
    &\leq F(t) +\Xi_F(\varepsilon +\varepsilon' )+ {\rm BE}_n + \Pb(\Cv_n\leq-\varepsilon)+ \Pb(\Dv_n\leq-\varepsilon').
\end{align*} Combining this with \eqref{eq:ImplyingBE:1} yields
\begin{equation}\label{eq:ImplyingBE:2}
    \sup_{t\in\Real}|\Pb(\Av_n\leq t)-F(t)|\leq {\rm BE}_n + \Set{\Pb(|\Cv_n|\geq \varepsilon)+\Xi_F\varepsilon}+\Set{\Pb(|\Dv_n|\geq \varepsilon')+\Xi_F\varepsilon'},\quad\forall\varepsilon,\varepsilon'>0.
\end{equation} Meanwhile, from the law of total probability and the conditional Chebyshev inequality, we get that for any $\varepsilon>0$,
\begin{align*}
    &\Pb(|\Cv_n|\geq \varepsilon)=\Eb[\Pb(|\Cv_n|\geq \varepsilon\mid\Ec_n)]\\
    &\quad=\Eb[\Pb(|\Cv_n|\geq \varepsilon\mid\Ec_n)\mathbbm{1}(\Ec_n)]+\Eb[\Pb(|\Cv_n|\geq \varepsilon\mid\Ec_n)\mathbbm{1}(\Ec_n^\complement)]\\
    &\qquad\leq \Eb\left[\frac{{\rm Var}(\Cv_n\mid\Ec_n)}{(\varepsilon-|\Eb[\Cv_n\mid\Ec_n]|)_+^2}\mathbbm{1}(\Ec_n)\right] + \Pb(\Ec_n^\complement)\leq \frac{V_n}{(\varepsilon-E_n)_+^2}+\Pb(\Ec_n^\complement).
\end{align*} Similarly,
\begin{equation*}
    \Pb(|\Dv_n|\geq \varepsilon')\leq \frac{V_n'}{(\varepsilon'-E_n')_+^2}+\Pb((\Ec_n')^\complement)
\end{equation*}
Combining this with \eqref{eq:ImplyingBE:2} implies that
\begin{equation*}
    \sup_{t\in\Real}|\Pb(\Av_n\leq t)-F(t)|\leq {\rm BE}_n + \frac{V_n}{(\varepsilon-E_n)_+^2}+\Xi_F\varepsilon+\Pb(\Ec_n^\complement)+\frac{V_n'}{(\varepsilon'-E_n')_+^2}+\Xi_F\varepsilon'+\Pb((\Ec_n')^\complement)\quad\forall\varepsilon,\varepsilon'>0.
\end{equation*}Note that
\begin{align*}
    \inf_{\varepsilon>0}\Set{\frac{V_n}{(\varepsilon-E_n)_+^2}+\Xi_F\varepsilon}=\inf_{\delta>0}\Set{\frac{V_n}{\delta^2}+\Xi_F\delta+\Xi_FE_n}\\
    =\inf_{\delta>0}\Set{\frac{V_n}{\delta^2}+\frac{\Xi_F\delta}{2}+\frac{\Xi_F\delta}{2}+\Xi_FE_n}= \sqrt[3]{\frac{\Xi_F^2V_n}{4}}+\Xi_FE_n.
\end{align*} The last equality follows from AM-GM inequality where the equality can be attained at $\delta = (2V_n/\Xi_F)^{1/3}$, i.e., $\varepsilon = E_n + (2V_n/\Xi_F)^{1/3}$. Also, taking $\varepsilon' = E_n'+(2V_n'/\Xi_F)^{1/3}$ leads to the desired results.
\end{proof}

\begin{proposition}[Concentration Inequality for Sample Gram matrix]\label{prop:samcov}
    Consider i.i.d random vectors $Z_1,\ldots,Z_n\in\Real^d$ such that $\Eb[Z_1Z_1^\top]=\Sigma_Z$ and
    \begin{equation*}
        \kappa_p:=\sup_{v\in\Real^d}\frac{\Eb^{1/p}[|v^\top Z_1|^p]}{\Eb^{1/2}[|v^\top Z_1|^2]}<\infty,\quad p\in[2,q],
    \end{equation*} for some $q> 4.$ Define $\hat\Sigma_Z = n^{-1}\sum_{i=1}^nZ_iZ_i^\top$, then there exist a constant $c>0$ and a constant $\Ck = \Ck(\kappa_q,q)>0$ such that if $r(\Sigma_Z)/n\leq c$ then
    \begin{equation*}
        \Pb\bigg(\norm{\hat\Sigma_Z-\Sigma_Z}_{\rm op}\geq \Ck\norm{\Sigma_Z}_{\rm op}\bigg[\sqrt{\frac{r(\Sigma_Z)+\log(2/\delta)}{n}}+\frac{r(\Sigma_Z)}{n^{1-2/q}\delta^{2/q}}\bigg]\bigg)\leq\delta,
    \end{equation*} for all $\delta>0$, where $r(\Sigma_Z) = {\rm tr}(\Sigma_Z)/\norm{\Sigma_Z}_{\rm op}$ denotes the effective rank of $\Sigma_Z$.
\end{proposition}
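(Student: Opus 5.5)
The plan is to decompose $\hat\Sigma_Z-\Sigma_Z$ by truncation: a bounded part handled by matrix Bernstein, and a heavy-tail part bounded crudely via Markov. After whitening-free normalization by $\norm{\Sigma_Z}_{\rm op}$, fix a level $M>0$ and write
\[
Z_iZ_i^\top = Z_iZ_i^\top\mathbbm{1}(\norm{Z_i}_2^2\le M)+Z_iZ_i^\top\mathbbm{1}(\norm{Z_i}_2^2> M).
\]
Then bound
\[
\norm{\hat\Sigma_Z-\Sigma_Z}_{\rm op}\le \Big\|\tfrac1n\textstyle\sum_i (Y_i-\Eb Y_i)\Big\|_{\rm op}+\tfrac1n\textstyle\sum_i\norm{Z_i}_2^2\mathbbm{1}(\norm{Z_i}_2^2>M)+\Eb\norm{Z}_2^2\mathbbm{1}(\norm{Z}_2^2>M),
\]
where $Y_i$ denotes the truncated summand.

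\textbf{Step 1: tail of $\norm{Z}_2$.} By the Jensen argument of Lemma~\ref{lem:C1}, $\Eb\norm{Z}_2^q\le\kappa_q^q{\rm tr}(\Sigma_Z)^{q/2}$.

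\textbf{Step 2: truncated part.} For $Y_i$, $\norm{Y_i}_{\rm op}\le M$. The variance proxy $\norm{\Eb Y_i^2}_{\rm op}\le \sup_u\Eb[(u^\top Z)^2\norm{Z}_2^2]$ is bounded by $\kappa_4^2\cdots\lesssim \kappa_4^2\norm{\Sigma_Z}_{\rm op}{\rm tr}(\Sigma_Z)$ via Cauchy–Schwarz, using the $L_4$ bounds (available since $q>4$). Apply matrix Bernstein in its effective-rank (intrinsic dimension) form. The crude Bernstein variance term $\sqrt{\norm{\Sigma}{\rm tr}\Sigma\log/n}$ is too weak, so I would instead use the sharper Koltchinskii–Lounici-type route: a chaining / PAC-Bayes argument over the sphere for $\sup_u|\frac1n\sum (u^\top Z_i)^2\mathbbm{1}(\cdot)-\Eb|$. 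This gives a deviation of order $\norm{\Sigma_Z}_{\rm op}\big(\sqrt{(r+\log(2/\delta))/n}+M\,(r+\log(2/\delta))/(n\norm{\Sigma}_{\rm op})\big)$, with $r$ entering through the entropy of the covariance-weighted sphere. I regard this as the main obstacle: obtaining $\sqrt{r/n}$ rather than $\sqrt{r\log d/n}$ under only polynomial moments. If chaining becomes messy, my fallback is to invoke the known result of the Oliveira / Abdalla–Zhivotovskiy type, which gives the lower tail for free and the upper tail for truncated bounded vectors.

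\textbf{Step 3: heavy part.} The expectation term is bounded by $\Eb\norm{Z}^q/M^{q/2-1}$. For the empirical sum, Markov with exponent $q/2$ (or Rosenthal for nonnegative variables) gives, with probability $1-\delta/2$, a bound $\lesssim \big(n\Eb\norm{Z}^q\big)^{2/q}\delta^{-2/q}/n$. This is of order $\kappa_q^2\,{\rm tr}(\Sigma_Z)\,n^{2/q-1}\delta^{-2/q}$, which is exactly the term $\norm{\Sigma_Z}_{\rm op}\, r/(n^{1-2/q}\delta^{2/q})$. Concretely, I would bound $\max_i\norm{Z_i}_2^2$ by a union bound: $\Pb(\max_i\norm{Z_i}^2>t)\le n\kappa_q^q{\rm tr}^{q/2}/t^{q/2}$. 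Choosing $M=t=\kappa_q^2{\rm tr}(\Sigma_Z)(n/\delta)^{2/q}$ makes the heavy empirical part vanish with probability $1-\delta/2$, and the expectation remainder becomes $\lesssim{\rm tr}(\Sigma_Z)(\delta/n)^{1-2/q}$, which is negligible.

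\textbf{Step 4: combine.} With this $M$, the Bernstein $L$-term from Step 2 is $M(r+\log)/n\approx {\rm tr}\,(n/\delta)^{2/q}(r+\log)/n$. This carries an extra factor of $r$, so I would refine it by applying Bernstein directly to the scalar quadratic forms inside the chaining, where only $\sup_u (u^\top Z)^2\le M_u$ matters. Alternatively, choose a lower truncation at $\norm{\Sigma}(n/(r+\log))$ for the bounded part, and handle the intermediate range via the Markov/Rosenthal bound $\frac1n\sum\norm{Z_i}^2\mathbbm{1}(\norm{Z_i}^2>M)$ with tail $\le{\rm tr}\cdot(\text{stuff})\,n^{2/q-1}\delta^{-2/q}$. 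The condition $r/n\le c$ ensures the $\sqrt{\cdot}$ term dominates the linear Bernstein terms. Finally, constants are collected into $\Ck(\kappa_q,q)$.
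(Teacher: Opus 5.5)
The gap is in your Steps 2 and 4, which are the entire difficulty and which you leave open. Steps 1 and 3 are sound: the union bound with $M\asymp\kappa_q^2\,{\rm tr}(\Sigma_Z)(n/\delta)^{2/q}$ makes the truncation inactive with probability $1-\delta/2$, and the bias $\Eb\norm{Z}_2^2\mathbbm{1}(\norm{Z}_2^2>M)\lesssim\kappa_q^2\,{\rm tr}(\Sigma_Z)(\delta/n)^{1-2/q}$ is dominated by the target. The lower tail is also not the issue.

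What you need is an upper-tail bound for the truncated Gram matrix that has no logarithms and in which $M$ is not multiplied by the complexity $r+\log(2/\delta)$. None of your routes provides it, because $M\ge{\rm tr}(\Sigma_Z)=r\norm{\Sigma_Z}_{\rm op}$ is forced:
\begin{itemize}
\item Matrix Bernstein in intrinsic-dimension form, using $\Eb Y_i^2\preceq\kappa_4^4\,{\rm tr}(\Sigma_Z)\Sigma_Z$, gives $\norm{\Sigma_Z}_{\rm op}\sqrt{r\log(r/\delta)/n}+M\log(r/\delta)/n$. This is correct only up to logarithmic factors.
\item Chaining with Bernstein-type increments, or a PAC-Bayes argument, pays the complexity against the uniform bound. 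It produces $M(r+\log(2/\delta))/n\ge r^2\norm{\Sigma_Z}_{\rm op}/n$. That is a factor $r$ larger than the target term $\norm{\Sigma_Z}_{\rm op}r/(n^{1-2/q}\delta^{2/q})$, and larger than $\norm{\Sigma_Z}_{\rm op}\sqrt{r/n}$ once $r\gg n^{1/3}$. Using scalar quadratic forms inside the chain does not help, since the sup-norm part is still governed by $\sup_u(u^\top Z)^2=\norm{Z}_2^2$.
\item The two-level alternative fails too. With $M_0=\norm{\Sigma_Z}_{\rm op}n/(r+\log(2/\delta))$, the linear term $M_0(r+\log(2/\delta))/n$ equals $\norm{\Sigma_Z}_{\rm op}$. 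Bounding the intermediate range by $\frac1n\sum_i\norm{Z_i}_2^2\mathbbm{1}(\norm{Z_i}_2^2>M_0)$ controls an operator norm by a trace. This equals $r\norm{\Sigma_Z}_{\rm op}$ if, e.g., $\norm{Z}_2^2\equiv{\rm tr}(\Sigma_Z)$ and $n<r(r+\log(2/\delta))$.
\item Abdalla--Zhivotovskiy-type bounds concern robust estimators, not the sample Gram matrix. The latter's upper tail genuinely contains a $\max_i\norm{Z_i}_2^2/n$ term (Proposition~\ref{prop:sampcov.opt}).
\end{itemize}

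The missing idea is to decouple concentration from complexity. The paper writes $\lambda_{\max}(\hat\Sigma_Z-\Sigma_Z)=\sup_{u\in\Sb^{d-1}}\sum_i g_u(Z_i)$ with $g_u(z)=u^\top(zz^\top-\Sigma_Z)u/n$. It then applies a Fuk--Nagaev inequality for suprema of empirical processes (Theorem~2.3 of \cite{JirakMinskerShenWahl2025HeavyTailedMatrices}). Besides $\Eb\lambda_{\max}(\hat\Sigma_Z-\Sigma_Z)$, only two inputs appear there: the scalar weak variance $\sigma_*^2\le\kappa_4^4\norm{\Sigma_Z}_{\rm op}^2/n$, and moments of the envelope $\max_i\sup_u|g_u(Z_i)|\le(\max_i\norm{Z_i}_2^2+\norm{\Sigma_Z}_{\rm op})/n$. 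The envelope's $L_{q/2}$-norm is $\lesssim\kappa_q^2\,{\rm tr}(\Sigma_Z)/n^{1-2/q}$, which yields the $\sqrt{\log(2/\delta)/n}$ and $r/(n^{1-2/q}\delta^{2/q})$ terms with no extra factor of $r$.

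The dimension enters only through the mean. Theorem~3.1 of the same reference gives $\Eb\lambda_{\max}(\hat\Sigma_Z-\Sigma_Z)\lesssim\norm{\Sigma_Z}_{\rm op}\sqrt{r/n}+\Eb^{2/q}[\max_i\norm{Z_i}_2^q]/n$. This log-free expectation bound is exactly the step your proposal lacks. It is also where the hypothesis $r(\Sigma_Z)/n\le c$ is used, not to make the square-root term dominate Bernstein's linear term. The lower tail is handled the same way with $-g_u$.
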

\begin{proof} Note that $\norm{\hat\Sigma_Z-\Sigma_Z}_{\rm op} = \max\{\lambda_{\max}(\hat\Sigma_Z-\Sigma_Z), \lambda_{\max}(\Sigma_Z-\hat\Sigma_Z)\}$. First, we control $\lambda_{\max}(\hat\Sigma_Z-\Sigma_Z)$ using Theorem~2.3 of \cite{JirakMinskerShenWahl2025HeavyTailedMatrices}. In order to apply their results, we first introduce some notation. Define $g_u(z) = u^\top (zz^\top-\Sigma_Z)u/n$, then
    \begin{equation*}
        \lambda_{\rm max}(\hat\Sigma_Z-\Sigma_Z)=\sup_{u\in\Sb^{d-1}}\sum_{i=1}^n g_u(Z_i). 
    \end{equation*} Let $M = \max_{i\in[n]} \sup_{u\in\Sb^{d-1}}|g_u(z)|$, then we have
    \begin{equation*}
        M= \frac{1}{n}\max_{i\in[n]}\sup_{u\in\Sb^{d-1}}|u^\top(Z_iZ_i^\top-\Sigma_Z)u|\leq \frac{1}{n}\big(\max_{i\in[n]}\norm{Z_i}^2_2+\norm{\Sigma_Z}_{\rm op}\big).
    \end{equation*} This implies that $\Eb^{1/p} M^p \leq \{\Eb^{1/p}[\max_{i\in[n]}\norm{Z_i}_2^{2p}]+\norm{\Sigma_Z}_{\rm op}\}/n$ for $p\in[1,q/2]$, and we further note from Minkowski's inequality that
    \begin{align*}
        \Eb^{1/p}\norm{Z_1}_2^{2p}\leq \sum_{j=1}^d \Eb^{1/p}|\ev_j^\top Z_1|^{2p}\leq \kappa_{2p}^2 \sum_{j=1}^d\Eb[|\ev_j^\top Z_1|^2]=\kappa_{2p}^2{\rm tr}(\Sigma_Z).
    \end{align*} This implies that
    \begin{equation*}
        \Eb^{1/p} M^p \leq \{n^{1/p}\Eb^{1/p}[\norm{Z_1}_2^{2p}]+\norm{\Sigma_Z}_{\rm op}\}/n\leq \frac{2\kappa_{2p}^2{\rm tr}(\Sigma_Z)}{n^{1-1/p}},
    \end{equation*} for $p\in[1,q/2]$. Also, denote $\sigma_*^2=n\sup_{u\in\Sb^{d-1}}\Eb[g_u(Z_1)^2]$, then
    \begin{align*}
        \sigma_*^2=n^{-1}\sup_{u\in\Sb^{d-1}}{\rm Var}((u^\top Z_1)^2)\leq n^{-1}\sup_{u\in\Sb^{d-1}}\Eb[(u^\top Z_1)^4]\\
        \leq n^{-1}\kappa_4^4\sup_{u\in\Sb^{d-1}}\big(\Eb[(u^\top Z_1)^2]\big)^2=n^{-1}\kappa_4^4\norm{\Sigma_Z}_{\rm op}^2.
    \end{align*} Now, an application of Theorem 2.3 of \cite{JirakMinskerShenWahl2025HeavyTailedMatrices} yields that there exists a universal constant $\Ck>0$ such that
    \begin{align*}
        \Pb\bigg(\lambda_{\rm max}(\hat\Sigma_Z-\Sigma_Z)\geq 24 \Eb\lambda_{\rm max}(\hat\Sigma_Z-\Sigma_Z) + 24t\bigg)\leq \\\Ck\Bigg(\exp\bigg(-\frac{t^2}{2\sigma_*^2+64(\Eb M)t}\bigg)+\Pb(M\geq t)+\bigg(\frac{q/2}{\log(eq/2)}\bigg)^q\frac{(\Eb M^{q/2})^2}{t^q}\Bigg),
    \end{align*} for any $t\geq \sigma_*\sqrt{2}$. For any $\delta\in(0,1)$, we set
    \begin{equation*}
        t_\delta = 64\Eb M \log(2/\delta) + \sqrt{4\sigma_*^2\log(2/\delta)}+2^{1/q}\left(\frac{q/2}{\log(eq/2)}\right)\frac{(\Eb M^{q/2})^{2/q}}{\delta^{2/q}}.
    \end{equation*} This choice implies $\Pb(\lambda_{\rm max}(\hat\Sigma_Z-\Sigma_Z)\geq 24 \Eb\lambda_{\rm max}(\hat\Sigma_Z-\Sigma_Z) + 24t_\delta)\leq\delta$, and we further note that 
    \begin{align*}
        t_\delta &\leq \frac{128 \kappa_q^2{\rm tr}(\Sigma_Z)\log(2/\delta)}{n^{1-2/q}}+2\kappa_4^2\norm{\Sigma_Z}_{\rm op}\sqrt{\frac{\log(2/\delta)}{n}} + 2^{1/q}\left(\frac{q/2}{\log(eq/2)}\right)\frac{2\kappa_q^2{\rm tr}(\Sigma_Z)}{n^{1-2/q}\delta^{2/q}}\\
        &\leq 2\kappa_4^2\norm{\Sigma_Z}_{\rm op}\sqrt{\frac{\log(2/\delta)}{n}} + c_q\kappa_q^2\frac{{\rm tr}(\Sigma_Z)}{n^{1-2/q}\delta^{2/q}}.
    \end{align*} By noting that $\log(2/\delta)\leq q 2^{2/q}/(2e\delta^{2/q})$ for all $\delta\in(0,1)$, a constant $c_q$ can be taken as
    \begin{equation*}
        c_q = 128\times \frac{q}{2e}2^{2/q}+ \frac{q}{\log(eq/2)}2^{1/q}< 50q,
    \end{equation*} Next, in order to control $\Eb\lambda_{\rm max}(\hat\Sigma_Z-\Sigma_Z)$, we use Thereom~3.1 of \cite{JirakMinskerShenWahl2025HeavyTailedMatrices} which shows that as long as $r(\Sigma)/n\leq c$ for some universal constant $c>0$,
    \begin{align*}
        \Eb\lambda_{\rm max}(\hat\Sigma_Z-\Sigma_Z)\leq \Ck_{\kappa_q,q} \bigg(\norm{\Sigma_Z}_{\rm op}\sqrt{\frac{r(\Sigma_Z)}{n}} + \frac{\Eb^{2/q}\max_{i\in[n]}\norm{Z_i}_2^q}{n}\bigg)\\
        \leq\Ck'_{\kappa_q,q} \bigg(\norm{\Sigma_Z}_{\rm op}\sqrt{\frac{r(\Sigma_Z)}{n}} + \frac{{\rm tr}(\Sigma_Z)}{n^{1-q/2}}\bigg)=\Ck'_{\kappa_q,q} \norm{\Sigma_Z}_{\rm op}\bigg(\sqrt{\frac{r(\Sigma_Z)}{n}} + \frac{r(\Sigma_Z)}{n^{1-q/2}}\bigg).
    \end{align*} This proves the probabilistic bound for $\lambda_{\max}(\hat\Sigma_Z-\Sigma_Z)$. The bound for $\lambda_{\max}(\Sigma_Z-\hat\Sigma_Z)$ similarly follows by setting $h_u(z) =-g_u(z)=-u^\top (zz^\top-\Sigma_Z)u/n$, so that
    $$
    \lambda_{\rm max}(\Sigma_Z -\hat\Sigma_Z) = \sup_{u\in\Sb^{d-1}}\sum_{i=1}^n h_u(Z_i).
    $$ We omit the further details.
\end{proof}
\begin{proposition}[Lower Bound on the Concentration of Sample Gram matrix]\label{prop:sampcov.opt}
Fix $n\ge2$, $d\ge2$, $q\ge 4$, and a matrix $0\preceq\Sigma\in\Real^{d\times d}$.
Let $r(\Sigma):=\tr(\Sigma)/\|\Sigma\|_{\rm op}$.
If $\frac{r(\Sigma)}{n^{1-2/q}} \ \ge\ \frac{10}{\sqrt n},$ then there exists a distribution $P=P_{n,d,q,\Sigma}$ on $\mathbb R^d$ such that for $X\sim P$:
\begin{enumerate}
\item $\mathbb E[XX^\top]=\Sigma$.
\item There exists $K(q)\in(0,\infty)$ depending only on $q$ such that
\[
\sup_{u\in\mathbb S^{d-1}}\ \frac{\Eb^{1/q}[|u^\top X|^q]}{\Eb^{1/2}[|u^\top X|^2]}\ \le\ K(q).
\]
\item If $X_1,\dots,X_n\overset{iid}{\sim}P$ and $\hat\Sigma=n^{-1}\sum_{i=1}^n X_iX_i^\top$, then
\[
\mathbb P\!\left(\|\hat\Sigma-\Sigma\|_{\rm op}\ge \frac{1}{10}\,\|\Sigma\|_{\rm op}\,\frac{r(\Sigma)}{n^{1-2/q}}\right)\ \ge\ \frac{1}{12}.
\]
\end{enumerate}
\end{proposition}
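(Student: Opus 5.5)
We need a lower-bound construction for the sample Gram matrix showing the $r(\Sigma)/n^{1-2/q}$ term is unavoidable: a heavy-tailed "spike" distribution in which, with constant probability, one observation has squared norm of order $\operatorname{tr}(\Sigma)\, n^{2/q}$. That single outlier then contributes $\|X_i\|^2/n \gtrsim \operatorname{tr}(\Sigma)/n^{1-2/q}$ to $\hat\Sigma$ in its own direction, which already pushes $\|\hat\Sigma-\Sigma\|_{\rm op}$ past the claimed threshold.

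\emph{Step 1 (construction).} Diagonalize $\Sigma=\sum_j\lambda_j e_je_j^\top$. Set
\[
X=\xi\,\sqrt{\operatorname{tr}(\Sigma)}\,\varepsilon\, e_J ,
\]
where $J$ is drawn with $\Pb(J=j)=\lambda_j/\operatorname{tr}(\Sigma)$, $\varepsilon$ is a Rademacher sign, and $\xi\ge0$ is an independent scalar with $\Eb\xi^2=1$. Then $\Eb XX^\top=\sum_j \operatorname{tr}(\Sigma)\,(\lambda_j/\operatorname{tr}(\Sigma))\, e_je_j^\top=\Sigma$, which gives item 1.

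\emph{Step 2 (choice of $\xi$).} Take $\xi$ to be a mixture: with probability $1-p$ it equals $\xi_0$ near $1$, and with probability $p=1/n$ it equals $\xi_1 = c\,n^{1/q}$ for a small constant $c$. Choose $\xi_0$ so that $\Eb\xi^2=1$; this requires $c^2 n^{2/q}/n \le 1/2$, which holds since $q\ge4$.

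\emph{Step 3 (moment ratio, item 2).} For a unit vector $u$,
\[
\Eb|u^\top X|^q=\operatorname{tr}(\Sigma)^{q/2}\,\Eb\xi^q\,\sum_j \frac{\lambda_j}{\operatorname{tr}(\Sigma)}\,|u_j|^q .
\]
Here $\Eb\xi^q\lesssim 1+c^q$ is bounded because $p\,\xi_1^q=c^q$. The obstacle is that the ratio $\Eb^{1/q}|u^\top X|^q/\Eb^{1/2}|u^\top X|^2$ is not bounded when $u$ is aligned with a small eigenvalue, since $\operatorname{tr}(\Sigma)^{1/2}\lambda_j^{1/q}$ is much larger than $\lambda_j^{1/2}$. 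So the plain coordinate design fails item 2. I would replace $e_J$ by a spread direction: set
\[
X=\xi\,\Sigma^{1/2} G ,
\]
with $G$ a standard Gaussian vector independent of $\xi$. Then $u^\top X=\xi\,\|\Sigma^{1/2}u\|\, g$ with $g\sim\Nc(0,1)$, so the $L_q/L_2$ ratio is $\le \|\xi\|_q\|g\|_q\lesssim \sqrt q\,(1+c)$, and item 2 holds.

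\emph{Step 4 (lower bound, item 3).} With the Gaussian-mixture design, with probability $1-(1-1/n)^n\ge 1-e^{-1/2}$... more precisely at least $1/2$ for $n\ge2$, some index $i$ has $\xi_i=\xi_1$. On that event, by a Hanson–Wright / $\chi^2$ lower tail, $\|\Sigma^{1/2}G_i\|^2\ge \operatorname{tr}(\Sigma)/2$ with probability at least $1/2$ whenever $r(\Sigma)$ exceeds a constant. (If $r(\Sigma)$ is small, $\|G\|$ large-deviation handles it: take $v$ the top eigenvector and use $|v^\top G|\ge$ const with constant probability.) Let $w$ be the unit vector along $X_i$. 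Then
\[
w^\top\hat\Sigma w\ \ge\ \frac{\|X_i\|^2}{n}\ \ge\ \frac{c^2\,n^{2/q}\operatorname{tr}(\Sigma)}{2n},
\]
while $w^\top\Sigma w\le\|\Sigma\|_{\rm op}$. The hypothesis $r(\Sigma)/n^{1-2/q}\ge 10/\sqrt n\ge$ const is used here to absorb $\|\Sigma\|_{\rm op}$, though tuning $c$ and the constant $1/10$ requires care. Multiplying the two probabilities gives at least $1/4\ge1/12$, where independence of $\xi_i$ and $G_i$ is used.

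The delicate part is calibrating $c$: it must keep $K(q)$ bounded, while $c^2/2$ must still dominate $1/10$ plus the subtracted $\|\Sigma\|_{\rm op}$ term. I would simply take $c=1$, and use $r(\Sigma)\,n^{2/q-1}\ge 10/\sqrt n$ together with $n\ge2$ to check that the margin suffices.
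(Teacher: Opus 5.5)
Your construction does give items 1 and 2, and you are right that the coordinate design violates item 2. Step 4, however, has a genuine gap.

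\textbf{The gap: you discard the other $n-1$ observations.} The bound $w^\top\hat\Sigma w\ge \|X_i\|^2/n$ throws them away, so you must subtract all of $w^\top\Sigma w$. That quantity is of order $\|\Sigma\|_{\rm op}$; for $\Sigma=I_d$ it equals $\|\Sigma\|_{\rm op}$. Absorbing it needs $(\tfrac{c^2}{2}-\tfrac1{10})\,r(\Sigma)/n^{1-2/q}\ge 1$, i.e.\ $r(\Sigma)/n^{1-2/q}\ge 5/2$ at $c=1$. The hypothesis only gives $r(\Sigma)/n^{1-2/q}\ge 10/\sqrt n$, which tends to $0$. So the step ``$10/\sqrt n\ge$ const'' is false, and the hypothesis guarantees your margin only for $n\le 16$.

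A concrete failure: take $q=4$, $\Sigma=I_{10}$ (exactly at the boundary) and $n$ large. The spike contributes about $10\|\Sigma\|_{\rm op}/\sqrt n$, the target is $\|\Sigma\|_{\rm op}/\sqrt n$, and your lower bound is negative.

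\textbf{The missing idea.} The bulk term $\frac1n\sum_{i'\ne i}(w^\top X_{i'})^2$ cancels $w^\top\Sigma w$ up to $O(\|\Sigma\|_{\rm op}/\sqrt n)$. The paper proves this as follows.
\begin{enumerate}
\item Work on the disjoint events $E_k$ that the first spike occurs at index $k$, and set $w=X_k/\|X_k\|$.
\item Given $(E_k,w)$, the samples $i'>k$ are still i.i.d.\ and independent of $w$, and those with $i'<k$ are non-spikes.
\item Hence $\sum_{i'\ne k}(w^\top X_{i'})^2$ has conditional mean at least $(n-1)A^2\,w^\top\Sigma w$ and, by fourth moments, conditional variance at most $15n\,(w^\top\Sigma w)^2$.
\item Chebyshev then gives $w^\top(\hat\Sigma-\Sigma)w\ge \|X_k\|^2/n-9\|\Sigma\|_{\rm op}/\sqrt n$ with conditional probability at least $3/4$.
\end{enumerate}
The $9\ge 1+\tfrac14+2\sqrt{15}$ collects the $1/n$ term, the bias $1-A^2\le 1/(4\sqrt n)$, and the Chebyshev deviation. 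The constant $10$ in the hypothesis is calibrated against this $9$.

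\textbf{The Gaussian design makes the constants harder.} Because the margin is that tight, the random norm hurts you. $\|X_i\|^2=\xi_1^2\|\Sigma^{1/2}G_i\|^2$ is random, and on your event it is only $\ge\tfrac12 n^{2/q}\tr(\Sigma)$. At the boundary $r(\Sigma)/n^{1-2/q}=10/\sqrt n$ the spike then supplies $5\|\Sigma\|_{\rm op}/\sqrt n$, against a bulk error of the same order. You also lose a further constant factor in probability from the norm event. The stated constants $10$, $\tfrac1{10}$, $\tfrac1{12}$ would have to be re-derived with very little slack.

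Your small-$r(\Sigma)$ side case is moot: the hypothesis forces $r(\Sigma)\ge 10\,n^{1/2-2/q}\ge 10$.

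\textbf{How the paper avoids this.} It uses $Z=U\Lambda^{1/2}s$ with $s$ Rademacher. Hoeffding gives item 2 just as Gaussianity does. Moreover $\|Z\|_2^2=s^\top\Lambda s=\tr(\Sigma)$ deterministically, so the spike $X=n^{1/q}Z$ contributes exactly $n^{2/q}\tr(\Sigma)/n$ along $w$. The paper also takes $p=1/(8n)$ rather than $1/n$. This keeps $1-A^2\le 1/(4\sqrt n)$ and $\Eb R^4\le 5$, at the cost of a spike probability of at least $1/9$. Combined with the conditional probability $3/4$, this yields the $1/12$.
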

\begin{remark} For $X\sim\Nc(0,\Sigma)$, \cite{KoltchinskiiLounici2017Concentration} shows that
\begin{equation*}
    \Eb\norm{\hat\Sigma-\Sigma}_{\rm op}\asymp \norm{\Sigma}_{\rm op}\bigg(\sqrt{\frac{r(\Sigma)}{n}}+\frac{r(\Sigma)}{n}\bigg).
\end{equation*} Therefore, the rate $\norm{\Sigma}_{\rm op}\sqrt{r(\Sigma)/n}$ is also genuinely unavoidable as long as $n\gtrsim r(\Sigma)$, and thus the rate in Proposition~\eqref{prop:samcov} is exact up to constants and cannot be significantly improved.
\end{remark}
\begin{proof}
Write the spectral decomposition $\Sigma=U\Lambda U^\top$ where
$\Lambda=\mathrm{diag}(\lambda_1,\dots,\lambda_d)$ and $\lambda_1=\|\Sigma\|_{\rm op}$.
Let $s=(s_1,\dots,s_d)^\top$ have i.i.d.\ Rademacher coordinates, $\mathbb P(s_j=\pm1)=1/2$.
Define
\[
Z:=U\Lambda^{1/2}s.
\]
Then $\mathbb E[ZZ^\top]=U\Lambda^{1/2}\mathbb E[ss^\top]\Lambda^{1/2}U^\top=\Sigma$.
Moreover, we have deterministic identity
\begin{equation}\label{eq:1}
    \|Z\|_2^2 = s^\top\Lambda s = \sum_{j=1}^d \lambda_j s_j^2 = \sum_{j=1}^d \lambda_j = \tr(\Sigma)
\end{equation} Define
\[
p:=\frac{1}{8n},\qquad B:=n^{1/q},\qquad A^2:=\frac{1-pB^2}{1-p},
\]
and let $R$ be independent of $Z$ with $\mathbb P(R=B)=p$ and $\mathbb P(R=A)=1-p$.
Define $X:=RZ$ and let $P$ be the law of $X$.
Since $q\ge4$, $B^2=n^{2/q}\le n^{1/2}$ and thus $pB^2\le (8n)^{-1}n^{1/2}\le 1/8$,
so $A^2>0$ is well-defined.

\smallskip
\noindent\textbf{Covariance}
Because $R\perp\!\!\!\perp Z$, we have $\mathbb E[XX^\top]=\mathbb E[R^2]\ \mathbb E[ZZ^\top]=\mathbb E[R^2]\ \Sigma.$ From the definition of $A^2$,
\[
\mathbb E[R^2]=pB^2+(1-p)A^2=pB^2+(1-p)\frac{1-pB^2}{1-p}=1,
\]
hence $\mathbb E[XX^\top]=\Sigma$.

\smallskip
\noindent\textbf{Moment equivalence}
Fix $u\in\mathbb S^{d-1}$ and set $a:=\Lambda^{1/2}U^\top u\in\mathbb R^d$.
Then $u^\top Z = a^\top s$ and $\|a\|_2^2=u^\top\Sigma u$.
Hoeffding's inequality gives $\mathbb P(|a^\top s|\ge t)\le 2\exp(-t^2/(2\|a\|_2^2))$ for all $t\ge0$.
Using $\mathbb E|W|^q=q\int_0^\infty t^{q-1}\mathbb P(|W|\ge t)\,dt$ and the change of variables
$t=\|a\|_2\sqrt{2x}$, we obtain
\[
\|u^\top Z\|_{L_q}=\|a^\top s\|_{L_q}
\le \Big(q\,2^{q/2}\Gamma(q/2)\Big)^{1/q}\ \|a\|_2
=\Big(q\,2^{q/2}\Gamma(q/2)\Big)^{1/q}\ (u^\top\Sigma u)^{1/2}.
\]
Since $X=RZ$ and $R\perp\!\!\!\perp Z$,
\[
\|u^\top X\|_{L_q} = \|R\|_{L_q}\ \|u^\top Z\|_{L_q}.
\]
Also $B^q=n$ so $\mathbb E[R^q]=pB^q+(1-p)A^q\le pn+(1-p)^{-q/2}\le 1/8+2^{q/2}$,
since $n\ge2$ implies $p\le 1/16$ and thus $(1-p)^{-1}\le 2$.
Therefore
\[
\sup_{u\in\mathbb S^{d-1}} \frac{\|u^\top X\|_{L_q}}{(u^\top\Sigma u)^{1/2}}
\le \Big(q\,2^{q/2}\Gamma(q/2)\Big)^{1/q}\Big(\frac18+2^{q/2}\Big)^{1/q}
=:K(q).
\]

\smallskip
\noindent\textbf{Operator-norm deviation lower bound}
Let $(R_i,Z_i)_{i=1}^n$ be i.i.d.\ copies of $(R,Z)$, and define $X_i=R_iZ_i$ and
$\hat\Sigma=n^{-1}\sum_{i=1}^n X_iX_i^\top$.
Write $\zeta_i=\mathbf 1\{R_i=B\}$ and define disjoint events
\[
E_k:=\{\zeta_1=0,\dots,\zeta_{k-1}=0,\ \zeta_k=1\},\qquad k=1,\dots,n.
\]
Then $\bigcup_{k=1}^n E_k=\{\exists i:\zeta_i=1\}$ and
\begin{equation}\label{eq:2}
    \sum_{k=1}^n\mathbb P(E_k)=\Pb\Big(\bigcup_{k=1}^nE_k\Big)=1-(1-p)^n\ge 1-e^{-np}=1-e^{-1/8}\ge \frac19,
\end{equation}
where the last inequality uses $1-e^{-x}\ge x/(1+x)$ with $x=1/8$. We fix $k$, and on the event $E_k$ define the unit vector
\[
v_k:=\frac{Z_k}{\|Z_k\|_2}=\frac{Z_k}{\sqrt{\tr(\Sigma)}},
\]
where we used \eqref{eq:1}. On $E_k$, $R_k=B$, hence
\begin{equation}\label{eq:3}
    (v_k^\top X_k)^2 = (v_k^\top (BZ_k))^2 = B^2\|Z_k\|_2^2 = B^2\tr(\Sigma)= n^{2/q}\tr(\Sigma).
\end{equation}Let $Y_i:=(v_k^\top X_i)^2$ for $i\ne k$ and $S_{-k}:=\sum_{i\ne k}Y_i$, so
\begin{equation}\label{eq:4}
    v_k^\top \hat\Sigma v_k = \frac1n (v_k^\top X_k)^2 + \frac1n S_{-k}.
\end{equation}

Let $\mathcal G_k:=\sigma(R_1,\dots,R_k,Z_k)$ and $\mathcal H_k:=\sigma((R_i,Z_i):i>k)$.
By independence of i.i.d.\ blocks across $i$, $\mathcal H_k$ is independent of $\mathcal G_k$.
Since $E_k\in\sigma(R_1,\dots,R_k)\subset\mathcal G_k$ and $v_k$ is $\sigma(Z_k)\subset\mathcal G_k$-measurable,
$\mathcal H_k$ is independent of $\sigma(E_k,v_k)$.
Consequently, conditional on $(E_k,v_k)$, the variables $(X_i)_{i>k}$ remain independent with the same law as $X$.
Also, on $E_k$ we have $R_i=A$ deterministically for all $i<k$.

\smallskip
Using $Z_k=U\Lambda^{1/2}s^{(k)}$ and $s^{(k)}_j=\pm1$,
\begin{equation}\label{eq:5}
    v_k^\top \Sigma v_k
= \frac{Z_k^\top \Sigma Z_k}{Z_k^\top Z_k}
= \frac{s^{(k)\top}\Lambda^2 s^{(k)}}{s^{(k)\top}\Lambda s^{(k)}}
= \frac{\sum_{j=1}^d \lambda_j^2}{\sum_{j=1}^d \lambda_j}
=:m_\Sigma.
\end{equation}In particular, $m_\Sigma\le \|\Sigma\|_{\rm op}$.

\smallskip
For $i>k$, by $\mathbb E[R^2]=1$ and $\mathbb E[Z_iZ_i^\top]=\Sigma$,
\begin{equation}\label{eq:6}
    \mathbb E[Y_i\mid E_k,v_k]=\mathbb E[(v_k^\top X)^2\mid v_k]=v_k^\top\Sigma v_k = m_\Sigma.
\end{equation}For $i<k$, on $E_k$ we have $X_i=AZ_i$, hence $\mathbb E[Y_i\mid E_k,v_k]=A^2 m_\Sigma$.
Therefore,
\begin{equation}\label{eq:7}
    \mathbb E[S_{-k}\mid E_k,v_k] = (k-1)A^2 m_\Sigma + (n-k)m_\Sigma \ge (n-1)A^2 m_\Sigma.
\end{equation}
Next, we bound $\Var(Y_i\mid E_k,v_k)$ uniformly.
Condition on $v_k$ (equivalently on $Z_k$).
Then $v_k^\top Z_i$ is a Rademacher sum with coefficients of $\ell_2$-norm $\sqrt{m_\Sigma}$,
hence $\mathbb E[(v_k^\top Z_i)^4\mid v_k]\le 3 m_\Sigma^2$ (the standard Rademacher fourth-moment bound).
Also, since $q\ge4$, $B^4=n^{4/q}\le n$ and thus $pB^4\le 1/8$; and $A^2\le (1-p)^{-1}\le 2$ implies $A^4\le 4$.
Therefore $\mathbb E[R^4]=pB^4+(1-p)A^4\le 1/8+4\le 5$.
Hence for $i>k$,
\[
\mathbb E[Y_i^2\mid E_k,v_k]
=\mathbb E[R^4]\ \mathbb E[(v_k^\top Z_i)^4\mid v_k]
\le 5\cdot 3\, m_\Sigma^2 = 15 m_\Sigma^2,
\]
so $\Var(Y_i\mid E_k,v_k)\le 15m_\Sigma^2$.
For $i<k$, $Y_i=A^2(v_k^\top Z_i)^2$ and thus
$\Var(Y_i\mid E_k,v_k)\le \mathbb E[Y_i^2\mid E_k,v_k]\le 3A^4m_\Sigma^2\le 12m_\Sigma^2\le 15m_\Sigma^2$.
By conditional independence of $(Y_i)_{i\ne k}$ given $(E_k,v_k)$,
\begin{equation}\label{eq:8}
    \Var(S_{-k}\mid E_k,v_k)\le 15(n-1)m_\Sigma^2\le 15n\,m_\Sigma^2.
\end{equation} Define
\[
F_k:=\Big\{S_{-k}\ge (n-1)A^2m_\Sigma - 2m_\Sigma\sqrt{15n}\Big\}.
\] The conditional Chebyshev inequality and \eqref{eq:8} imply that
\begin{equation}\label{eq:9}
    \mathbb P(F_k\mid E_k,v_k)\ge \frac34,
\qquad\mbox{and hence}\qquad
\mathbb P(F_k\mid E_k)\ge \frac34.
\end{equation}On $E_k\cap F_k$, combining \eqref{eq:3}--\eqref{eq:5} and $v_k^\top \Sigma v_k=m_\Sigma$,
\begin{align*}
v_k^\top(\hat\Sigma-\Sigma)v_k
&= \frac1n(v_k^\top X_k)^2 + \frac1n S_{-k} - m_\Sigma\ge \frac{\tr(\Sigma)}{n^{1-2/q}} - \frac{m_\Sigma}{n} - (1-A^2)m_\Sigma - \frac{2\sqrt{15}\,m_\Sigma}{\sqrt n}.
\end{align*}
Using $m_\Sigma\le \|\Sigma\|_{\rm op}$ and $1/n\le 1/\sqrt n$, it remains to bound $1-A^2$:
\[
1-A^2 = \frac{p(B^2-1)}{1-p}\le \frac{pB^2}{1-p}\le 2pB^2=\frac14\,n^{2/q-1}\le \frac{1}{4\sqrt n},
\]
where the last inequality uses $q\ge4$.
Therefore, on $E_k\cap F_k$,
\begin{align*}\label{eq:10}
    \|\hat\Sigma-\Sigma\|_{\rm op}\ge v_k^\top(\hat\Sigma-\Sigma)v_k\ge \frac{\tr(\Sigma)}{n^{1-2/q}} - \frac{1+1/4+2\sqrt{15}}{\sqrt n}\,\|\Sigma\|_{\rm op}
\nonumber\\
\ge \frac{\tr(\Sigma)}{n^{1-2/q}} - \frac{9}{\sqrt n}\,\|\Sigma\|_{\rm op}\ge \frac{1}{10}\,\frac{\tr(\Sigma)}{n^{1-2/q}}
= \frac{1}{10}\,\|\Sigma\|_{\rm op}\,\frac{r(\Sigma)}{n^{1-2/q}}
\end{align*}
Hence
\[
\left\{\|\hat\Sigma-\Sigma\|_{\rm op}\ge \frac{1}{10}\,\|\Sigma\|_{\rm op}\,\frac{r(\Sigma)}{n^{1-2/q}}\right\}
\supset \bigcup_{k=1}^n (E_k\cap F_k).
\]
Since the events $E_k$ are disjoint, the events $E_k\cap F_k$ are disjoint, and thus by \eqref{eq:2} and \eqref{eq:9},
\[
\mathbb P\!\left(\|\hat\Sigma-\Sigma\|_{\rm op}\ge \frac{1}{10}\,\|\Sigma\|_{\rm op}\,\frac{r(\Sigma)}{n^{1-2/q}}\right)
\ge \sum_{k=1}^n \mathbb P(F_k\mid E_k)\mathbb P(E_k)
\ge \frac34\sum_{k=1}^n \mathbb P(E_k)
\ge\frac{1}{12}.
\]
This proves the claim.
\end{proof}

\begin{proposition}[Fuk--Nagaev inequality for the Euclidean sample mean]\label{prop:euclid_fuk_nagaev_mean}\label{prop:D5}
Let $X_1,\dots,X_n\sim X\in\mathbb{R}^d$ be i.i.d.\ with $\mathbb{E}X=0$ and covariance
$\Sigma:=\mathbb{E}[XX^\top]$. Fix $q>2$ and assume the directional $L_q$--$L_2$
equivalence: there exists $K\ge 1$ such that
\begin{equation}\label{eq:dir_Lq_L2_equiv}
  \Eb^{1/q}[|u^\top X|^q]\leq K\,(u^\top \Sigma u)^{1/2}
  \qquad \forall u\in\mathbb{S}^{d-1}.
\end{equation}
Then there exists a constant $C_q>0$ depending only on $q$ such that the following holds.
\begin{equation}\label{eq:prob_le_delta}
\mathbb{P}\!\left(
\left\|\frac1n\sum_{i=1}^n X_i\right\|_2
\ge \norm{\Sigma}_{\rm op}^{1/2}\bigg[\sqrt{\frac{4r(\Sigma)+3\log(2/\delta)}{n}}+C_q\frac{K r(\Sigma)^{1/2}}{\delta^{1/q}n^{1-1/q}}\bigg]
\right)\le \delta.
\end{equation}
\end{proposition}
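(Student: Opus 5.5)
The plan is to derive this Fuk--Nagaev-type bound from the Banach-space deviation inequality of Einmahl and Li (Theorem~3.1 of \cite{EinmahlLi2008LILBanach}), applied in $(\Real^d,\norm{\cdot}_2)$. This mirrors the derivation of \eqref{cond:samgamma} in the proof of Corollary~\ref{cor:PP1}. With $S_n=\sum_{i=1}^n X_i$, and taking $s=q$, $\eta=\delta'=1$ in their statement, we get for every $t>0$
\[
\Pb\big(\norm{S_n}_2\ge 2\Eb\norm{S_n}_2+t\big)\le \exp\!\Big(-\frac{t^2}{3\Lambda_n}\Big)+C_q\frac{n\Eb\norm{X}_2^q}{t^q},
\]
where $\Lambda_n=\sup_{\norm{u}_2\le1}\sum_i\Eb(u^\top X_i)^2=n\norm{\Sigma}_{\rm op}$, since the dual unit ball consists of the linear functionals $u\mapsto u^\top x$.

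Next, bound the two moment quantities. First, $\Eb\norm{S_n}_2\le(\Eb\norm{S_n}_2^2)^{1/2}=\sqrt{n\,\tr(\Sigma)}=\sqrt{n\norm{\Sigma}_{\rm op}r(\Sigma)}$, by independence and centering. Second, rotate to the eigenbasis $\Sigma=U\Lambda U^\top$ and repeat the Jensen argument from the proof of Lemma~\ref{lem:C1}, with weights $\lambda_k/\tr(\Sigma)$, using \eqref{eq:dir_Lq_L2_equiv} along the eigenvectors. This gives $\Eb\norm{X}_2^q\le K^q\tr(\Sigma)^{q/2}=K^q\norm{\Sigma}_{\rm op}^{q/2}r(\Sigma)^{q/2}$.

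Now set $t=nx$ and make each tail term at most $\delta/2$:
\[
x=\max\Big\{\norm{\Sigma}_{\rm op}^{1/2}\sqrt{\tfrac{3\log(2/\delta)}{n}},\ (2C_q)^{1/q}\frac{K\norm{\Sigma}_{\rm op}^{1/2}r(\Sigma)^{1/2}}{\delta^{1/q}n^{1-1/q}}\Big\}.
\]
The leading term is $2\Eb\norm{S_n}_2/n\le 2\norm{\Sigma}_{\rm op}^{1/2}\sqrt{r(\Sigma)/n}=\norm{\Sigma}_{\rm op}^{1/2}\sqrt{4r(\Sigma)/n}$. Bound the maximum by the sum and use $\sqrt a+\sqrt b\le\sqrt2\sqrt{a+b}$. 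The first piece combines with the leading term into at most $\sqrt2\,\norm{\Sigma}_{\rm op}^{1/2}\sqrt{(4r(\Sigma)+3\log(2/\delta))/n}$.

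The one step needing care is the leftover constant $\sqrt2$. It cannot go on the second term, because \eqref{eq:prob_le_delta} has no slack constant in front of the square root. I would try two fixes. The first is to apply the Einmahl--Li inequality with a smaller $\delta'$, so the leading term is $(1+\delta')\Eb\norm{S_n}_2$ with a variance factor $(2+\delta')\Lambda_n$, which lets the constants be tuned. If that fails, the fallback is to use $\sqrt{a}+\sqrt{b}\le\sqrt{4r+3\log}$ directly after absorbing the cross term, adjusting the $4$ and $3$ as needed. Everything else is routine: the constant $C_q$ is enlarged to absorb $(2C_q)^{1/q}$, and the case $\delta\ge1$ is trivial.
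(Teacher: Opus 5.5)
Your proposal follows the paper's proof step for step (Einmahl--Li's Theorem~3.1 in $(\Real^d,\norm{\cdot}_2)$, $\Lambda_n=n\norm{\Sigma}_{\rm op}$, $\Eb\norm{S_n}_2\le\sqrt{n\,\tr(\Sigma)}$, the Jensen bound of Lemma~\ref{lem:C1}, and $t=nx$). You are also right that the paper's ``elementary substitution'' with $\eta=\delta'=1$ only yields $2\sqrt{r(\Sigma)/n}+\sqrt{3\log(2/\delta)/n}$, which can exceed the stated bracket by a factor of up to $\sqrt2$.

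Your first fix closes this gap. Take, e.g., $\eta=\delta'=1/20$; Cauchy--Schwarz then gives $(1+\eta)\sqrt{r}+\sqrt{(2+\delta')\log(2/\delta)}\le\big(\tfrac{(1+\eta)^2}{4}+\tfrac{2+\delta'}{3}\big)^{1/2}\sqrt{4r+3\log(2/\delta)}$, and the prefactor is $<1$. The Einmahl--Li constant still depends only on $q$ for this fixed choice, so your second fallback, which would change the constants $4$ and $3$, is unnecessary.
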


\begin{proof}
Let $\Xc=(\mathbb{R}^d,\|\cdot\|_2)$ and set $S_n:=\sum_{i=1}^n X_i$. Apply
Theorem~3.1 of \cite{EinmahlLi2008LILBanach} with $s=q$, and choose the
parameters $\eta=1$ and $\delta=1$. The theorem yields that for every $t>0$,
\begin{equation}\label{eq:Sn_tail_from_EL_app}
\mathbb{P}\!\left(
\|S_n\|_2 \ge 2\,\mathbb{E}\|S_n\|_2 + t
\right)
\le
\exp\!\left(-\frac{t^2}{3\,\Lambda_n}\right)
+
C_q\,\frac{n\, \mathbb{E}\|W_1\|_2^q}{t^q}.
\end{equation}
where
\begin{equation*}\label{eq:Lambda_n_def}
\Lambda_n := \sup\left\{\sum_{j=1}^n \mathbb{E}\big[f(W_j)^2\big] : f\in \Xc_1^*\right\},
\end{equation*}
$\Xc_1^*$ denotes the unit ball of the dual space $\Xc^*$, and $C_q$ depends only on $q$. Every $f\in \Xc^*$ is of the form
$f_u(x)=u^\top x$ for some $u\in\mathbb{R}^d$, and $\|f_u\|_{\Xc^*}=\|u\|_2$.
Thus $\Xc_1^*=\{f_u:\|u\|_2\le 1\}$ and therefore
\[
\Lambda_n
=
\sup_{\|u\|_2\le 1}\sum_{j=1}^n \mathbb{E}\,(u^\top W_j)^2
=
\sup_{\|u\|_2\le 1} n\,u^\top \Sigma u
=
n\,\|\Sigma\|_{\rm op}.
\] 
By Cauchy--Schwarz, we have $\mathbb{E}\|S_n\|_2 \le \big(\mathbb{E}\|S_n\|_2^2\big)^{1/2}.$ Using independence and $\mathbb{E}X_i=0$,
\[
\mathbb{E}\|S_n\|_2^2
=
\mathbb{E}\,\mathrm{tr}(S_nS_n^\top)
=
\mathrm{tr}\big(\mathbb{E}[S_nS_n^\top]\big)
=
\mathrm{tr}\Big(\sum_{i=1}^n \mathbb{E}[X_iX_i^\top]\Big)
=
n\,\mathrm{tr}(\Sigma),
\]
hence $\mathbb{E}\|S_n\|_2 \le \sqrt{n\,\mathrm{tr}(\Sigma)}.$ Moreover, from Lemma~\ref{lem:C1}, we get $\mathbb{E}\|W_1\|_2^q \le K^q\,\tr(\Sigma)^{q/2}.$ Set $t=nx$ in \eqref{eq:Sn_tail_from_EL_app} to deduce
\[
\mathbb{P}\!\left(
\left\|\frac1n\sum_{i=1}^n X_i\right\|_2
\ge
2\sqrt{\frac{\mathrm{tr}(\Sigma)}{n}} + x
\right)
\le
\exp\!\left(-\frac{n x^2}{3\|\Sigma\|_{\rm op}}\right)
+
C_q\,\frac{K^q\,\mathrm{tr}(\Sigma)^{q/2}}{n^{q-1}x^q}.
\] Now, an elementary substitution completes the proof.
\end{proof}

\end{document}